\definecolor{dgreen}{RGB}{0,180,0}
\theoremstyle{plain}
\newtheorem{thm}{Theorem}[section]
\newtheorem{cor}[thm]{Corollary}
\newtheorem{lem}[thm]{Lemma}
\newtheorem{prop}[thm]{Proposition}
\def\@rst #1 #2other{#1}
\newcommand\MR[1]{\relax\ifhmode\unskip\spacefactor3000 \space\fi
  \MRhref{\expandafter\@rst #1 other}{#1}}
\newcommand{\MRhref}[2]{\href{http://www.ams.org/mathscinet-getitem?mr=#1}{MR#2}}
\theoremstyle{definition}
\newtheorem{defn}[thm]{Definition}
\newtheorem{remark}[thm]{Remark}
\numberwithin{equation}{section}
\newcommand{\dsb}{\begin{adjustwidth}{2.5em}{0pt}
\begin{footnotesize}}
\newcommand{\dse}{\end{footnotesize}
\end{adjustwidth}}
\newcommand{\ssb}{\begin{adjustwidth}{2.5em}{0pt}}
\newcommand{\sse}{\end{adjustwidth}}
\newcommand{\aryb}{\begin{eqnarray*}}
\newcommand{\arye}{\end{eqnarray*}}
\def\alb#1\ale{\begin{align*}#1\end{align*}}
\def\allb#1\alle{\begin{align}#1\end{align}}
\newcommand{\eqb}{\begin{equation}}
\newcommand{\eqe}{\end{equation}}
\newcommand{\eqbn}{\begin{equation*}}
\newcommand{\eqen}{\end{equation*}}
\newcommand{\BB}{\mathbbm}
\newcommand{\ol}{\overline}
\newcommand{\op}{\operatorname}
\newcommand{\bd}{\mathbf}
\newcommand{\im}{\operatorname{Im}}
\newcommand{\re}{\operatorname{Re}}
\newcommand{\frk}{\mathfrak}
\newcommand{\eqD}{\overset{d}{=}}
\newcommand{\ep}{\varepsilon}
\newcommand{\rta}{\rightarrow}
\newcommand{\wt}{\widetilde}
\newcommand{\wh}{\widehat} 
\newcommand{\mcl}{\mathcal}
\newcommand{\bdy}{\partial}
\newcommand{\rng}{\mathring}
\let\originalleft\left
\let\originalright\right
\renewcommand{\left}{\mathopen{}\mathclose\bgroup\originalleft}
\renewcommand{\right}{\aftergroup\egroup\originalright}
\title{Harmonic functions on mated-CRT maps}
\date{  }
\author{
\begin{tabular}{c} Ewain Gwynne\\[-5pt]\small MIT \end{tabular}
\begin{tabular}{c} Jason Miller\\[-5pt]\small Cambridge \end{tabular}
\begin{tabular}{c} Scott Sheffield\\[-5pt]\small MIT \end{tabular}
}
\begin{document}

\maketitle

\begin{abstract}
A \emph{mated-CRT map} is a random planar map obtained as a discretized mating of correlated continuum random trees.
Mated-CRT maps provide a coarse-grained approximation of many other natural random planar map models (e.g., uniform triangulations and spanning tree-weighted maps), and are closely related to $\gamma$-Liouville quantum gravity (LQG) for $\gamma \in (0,2)$ if we take the correlation to be $-\cos(\pi\gamma^2/4)$. 
We prove estimates for the Dirichlet energy and the modulus of continuity of a large class of discrete harmonic functions on mated-CRT maps, which provide a general toolbox for the study of the quantitative properties of random walk and discrete conformal embeddings for these maps.

For example, our results give an independent proof that the simple random walk on the mated-CRT map is recurrent, and a polynomial upper bound for the maximum length of the edges of the mated-CRT map under a version of the Tutte embedding.  
Our results are also used in other work by the first two authors which shows that for a class of random planar maps --- including mated-CRT maps and the UIPT --- the spectral dimension is two (i.e., the return probability of the simple random walk to its starting point after $n$ steps is $n^{-1+o_n(1)}$) and the typical exit time of the walk from a graph-distance ball is bounded below by the volume of the ball, up to a polylogarithmic factor. 
\end{abstract}

\tableofcontents

\section{Introduction}
\label{sec-intro}

\subsection{Overview} 
\label{sec-overview}

There has been substantial interest in random planar maps in recent years.  
One reason for this is that random planar maps are the discrete analogs of \emph{$\gamma$-Liouville quantum gravity} (LQG) surfaces for $\gamma \in (0,2)$. 
Such surfaces have been studied in the physics literature since the 1980's~\cite{polyakov-qg1,polyakov-qg2}, and can be rigorously defined as metric measure spaces with a conformal structure~\cite{shef-kpz,lqg-tbm1,lqg-tbm2,lqg-tbm3,gm-uniqueness}. The parameter $\gamma$ depends on the particular type of random planar map model under consideration. For example, $\gamma=\sqrt{8/3}$ for uniform random planar maps, $\gamma = \sqrt 2$ for spanning-tree weighted maps, and $\gamma = \sqrt{4/3}$ for bipolar-oriented maps. 

Central problems in the study of random planar maps include describing the large-scale behavior of graph distances;
analyzing statistical mechanics models on the map; and understanding the \emph{conformal structure} of the map, which involves studying the simple random walk on the map and various ways of embedding the map into $\BB C$. Here we will focus on this last type of question for a particular family of random planar maps called \emph{mated-CRT maps}, which (as we will discuss more just below) are directly connected to many other random planar map models and to LQG. 

To define mated-CRT maps, fix $\gamma \in (0,2)$ (which corresponds to the LQG parameter) and let $(L,R) : \BB R\rta \BB R^2$ be a pair of correlated, two-sided standard linear Brownian motions normalized so that $L_0 = R_0 = 0$ with correlation $-\cos(\pi\gamma^2/4)$, i.e., $\op{corr}(L_t , R_t) = -\cos(\pi\gamma^2/4)$ for each $t\in\BB R  \setminus \{0\}$ and $(L_t,R_t)$ can be obtained from a standard planar Brownian motion by applying an appropriate linear transformation. We note that the correlation ranges from $-1$ to 1 as $\gamma$ ranges from $0$ to 2. 
The mated CRT map is the random planar map obtained by mating, i.e., gluing together, discretized versions of the continuum random trees (CRT's) constructed from $L$ and $R$~\cite{aldous-crt1,aldous-crt2,aldous-crt3}. 
More precisely, the \emph{$\ep$-mated-CRT map}\footnote{In this paper we only consider the mated-CRT map with the plane topology. Mated-CRT maps with the disk and sphere topology are studied in~\cite{gms-tutte}.} associated with $(L,R)$ is the random graph with vertex set $\ep\BB Z$, with two vertices $x_1,x_2\in\ep\BB Z$ with $x_1 < x_2$ connected by an edge if and only if either
\eqb \label{eqn-inf-adjacency}
\left( \inf_{t\in [x_1-\ep ,x_1]} L_t\right) \vee \left( \inf_{t\in [x_2-\ep ,x_2]} L_t \right)  \leq \inf_{t\in [x_1,x_2-\ep]} L_t; 
\eqe
or the same holds with $R$ in place of $L$. If $|x_2-x_1| > \ep$ and~\eqref{eqn-inf-adjacency} holds for both $L$ and $R$, then $x_1$ and $x_2$ are connected by two edges.
We note that the law of the planar map $\mcl G^\ep$ does not depend on $\ep$ due to Brownian scaling, but for reasons which will become apparent just below it is convenient to think of the whole collection of maps $\{\mcl G^\ep\}_{\ep > 0}$ coupled together with the same Brownian motion $(L,R)$. 
See Figure~\ref{fig-mated-crt-map} for an illustration of the definition of $\mcl G^\ep$ and an explanation of how to endow it with a canonical planar map structure under which it is a triangulation.

\begin{figure}[ht!]
 \begin{center}
\includegraphics[scale=.75]{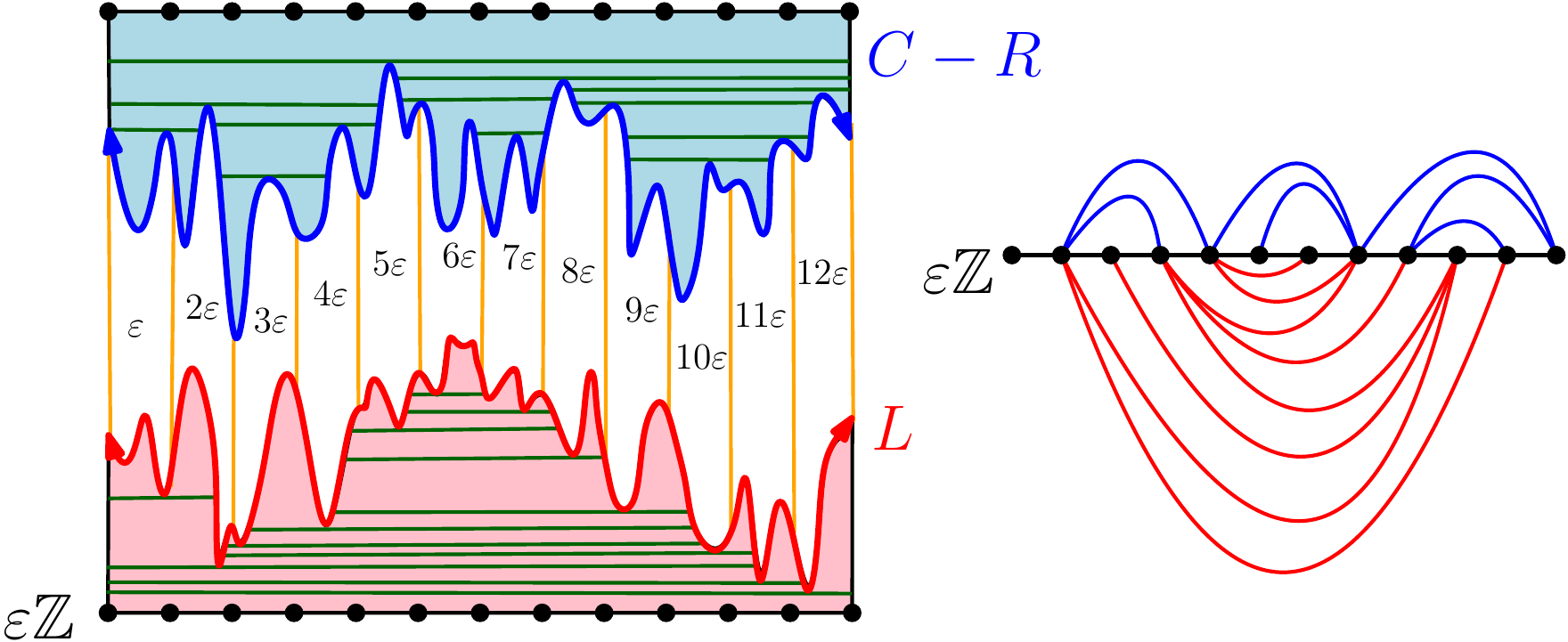} 
\caption{\textbf{Left:} To construct the mated-CRT map $\mcl G^\ep$ geometrically, one can draw the graph of $L$ (red) and the graph of $C-R$ (blue) for some large constant $C > 0$ chosen so that the parts of the graphs over some time interval of interest do not intersect. Here, this time interval is $[0,12\ep]$. One then divides the region between the graphs into vertical strips (boundaries shown in orange). Each vertical strip corresponds to the vertex $x\in \ep \BB Z$ which is the horizontal coordinate of its rightmost points. Vertices $x_1,x_2\in \ep\BB Z$ are connected by an edge if and only if the corresponding vertical strips are connected by a horizontal line segment which lies under the graph of $L$ or above the graph of $C-R$. For each pair of vertices for which the condition holds for $L$ (resp.\ $C-R$), we have drawn the lowest (resp.\ highest) segment for which joins the corresponding vertical strips in green. Equivalently, for each $x\in \ep\BB Z$, we let $t_x$ be the time in $[x-\ep,x]$ at which $L$ attains its minimum value and we draw in green the longest horizontal segment under the graph of $L$ which contains $(t_x , L_{t_x})$; and we perform a similar procedure for $R$. 
Note that consecutive vertices are always joined by an edge.
\textbf{Right:} One can draw the graph $\mcl G^\ep$ in the plane by connecting two vertices $x_1,x_2 \in  \ep \BB Z$ by an arc above (resp.\ below) the real line if the corresponding vertical strips are connected by a horizontal segment above (resp.\ below) the graph of $L$ (resp.\ $C-R$); and connecting each pair of consecutive vertices of $\ep \BB Z$ by an edge. This gives $\mcl G^\ep$ a planar map structure. 
With this planar map structure, each face of $\mcl G^\ep$ corresponds to a horizontal strip below the graph of $L$ or above the graph of $C-R$ which is bounded by two horizontal green segments and two segments of either the graph of $L$ or the graph of $C-R$. 
Almost surely, neither $L$ nor $R$ attains a local minimum at any point in $\ep\BB Z$ and neither $L$ nor $R$ has two local minima where it attains the same value.
From this, it follows that a.s.\ the boundary of each horizontal strip intersects the boundaries of exactly three vertical strips (two of these intersections each consist of a segment of the graph of $L$ or $C-R$, and one is a single point). 
This means that a.s.\ each face of $\mcl G^\ep$ has exactly three vertices on its boundary, so $\mcl G^\ep$ is a triangulation.
}\label{fig-mated-crt-map}
\end{center}
\end{figure}

Mated-CRT maps are an especially natural family of random planar maps to study. One reason for this is that these maps provide a bridge between many other interesting random planar map models and their continuum analogs: LQG surfaces. 
Let us now explain the precise sense in which this is the case, starting with the link between mated-CRT maps and other random planar map models; see Figure~\ref{fig-bijections} for an illustration. 

A number of random planar maps can be bijectively encoded by pairs of discrete random trees (equivalently, two-dimensional random walks) by discrete versions of the above definition of the mated-CRT map. Consequently, the mated-CRT map (with $\gamma$ depending on the particular model) can be viewed as a coarse-grained approximation of any of these random planar maps. For example, Mullin's bijection~\cite{mullin-maps} (see~\cite{bernardi-maps,shef-burger,chen-fk} for more explicit expositions) shows that if we replace $(L,R)$ by a two-sided simple random walk on $\BB Z^2$ and construct a graph with adjacency defined by a direct discrete analog of~\eqref{eqn-inf-adjacency}, then we obtain the infinite-volume local limit of random planar maps sampled with probability proportional to the number of spanning trees they admit. The left-right ordering of the vertices corresponds to the depth-first ordering of the spanning tree. There are similar bijective constructions, with different laws for the random walk, which produce the uniform infinite planar triangulation (UIPT)~\cite{bernardi-dfs-bijection,bhs-site-perc} as well as a number of natural random planar maps decorated by statistical mechanics models~\cite{shef-burger,gkmw-burger,kmsw-bipolar,lsw-schnyder-wood}.

At least in the case when the encoding walk has i.i.d.\ increments, one can use a strong coupling result for random walk and Brownian motion~\cite{kmt,zaitsev-kmt}, which says that the random walk $\mcl Z$ and the Brownian motion $Z$ can be coupled together so that $\max_{-n\leq j \leq n} |\mcl Z_j - Z_j| = O(\log n)$ with high probability, to couple one of these other random planar maps with the mated-CRT map. This allows us to compare the maps directly. 
This approach is used in~\cite{ghs-map-dist} to couple the maps in such a way that graph distances differ by at most a polylogarithmic factor, which allows one to transfer the estimates for graph distances in the mated-CRT map from~\cite{ghs-dist-exponent} to a larger class of random planar map models. A similar approach is used in~\cite{gm-spec-dim,gh-displacement} to prove estimates for random walk on these same random planar map models.

On the other hand, the mated-CRT map possesses an a priori relationship with SLE-decorated Liouville quantum gravity. 
We will describe this relationship in more detail in Section~\ref{sec-peanosphere}, but let us briefly mention it here. 
Suppose $h$ is the random distribution on $\BB C$ which describes a \emph{$\gamma$-quantum cone}, a particular type of $\gamma$-LQG surface. 
Let $\eta$ be a whole-plane space-filling SLE$_{\kappa'}$ from $\infty$ to $\infty$ with\footnote{
Here we follow the imaginary geometry~\cite{ig1,ig2,ig3,ig4} convention of writing $\kappa'$ instead of $\kappa$ for the SLE parameter when it is bigger than 4.
} $\kappa' =16/\gamma^2 > 4$,
sampled independently of $h$ and then parameterized by $\gamma$-LQG mass with respect to $h$ (we recall the definition and basic properties of space-filling SLE in Section~\ref{sec-wpsf-prelim}). It follows from~\cite[Theorem 1.9]{wedges} that if we let $\mcl G^\ep$ for $\ep > 0$ be the graph whose vertex set is $\ep\BB Z$, with two vertices $x_1,x_2 \in \ep\BB Z$ connected by an edge if and only if 
the corresponding cells $\eta([x_1-\ep , x_1])$ and $\eta([x_2-\ep ,x_2])$ share a non-trivial boundary arc, then $\{\mcl G^\ep\}_{\ep > 0}$ has the same law as the family of mated-CRT maps defined above.
 
The above construction gives us an embedding of the mated-CRT map into $\BB C$ by mapping each vertex to the corresponding space-filling SLE cell. 
It is shown in~\cite{gms-tutte} that the simple random walk on $\mcl G^\ep$ under this embedding converges in law to Brownian motion modulo time parameterization (which implies that the above SLE/LQG embedding is close when $\ep $ is small to the so-called \emph{Tutte embedding}). 
The main theorem of~\cite{gms-tutte} is proven using a general scaling limit result for random walk in certain random environments~\cite{gms-random-walk}, which in turn is proven using ergodic theory. 
The theorem gives us control on the large-scale behavior of random walk and harmonic functions on $\mcl G^\ep$ under the SLE/LQG embedding, but provides very little information about their behavior at smaller scales and no quantitative bounds for rates of convergence.
 
The goal of this paper is to prove quantitative estimates for discrete harmonic functions on $\mcl G^\ep$, which can be applied at mesoscopic scales and which include polynomial bounds for the rate of convergence of the probabilities that the estimates hold. In particular, we obtain estimates for the Dirichlet energy and the modulus of continuity of a large class of such discrete harmonic functions. See Section~\ref{sec-main-results} for precise statements. 
We will not use the main theorem of~\cite{gms-tutte} in our proofs. Instead, we will rely on a quantitative law-of-large-numbers type bound for integrals of functions defined on $\BB C$ against certain quantities associated with the cells $\eta([x-\ep,x])$. 

Our results provide a general toolbox for the study of random walk on mated-CRT maps, and thereby random walk on other random planar maps thanks to the coupling results discussed above.  
For example, our results give an independent proof that the random walk on the mated-CRT map is recurrent (Theorem~\ref{thm-green-lower}; this can also be deduced from the general criterion of Gurel-Gurevich and Nachmias \cite{gn-recurrence}, see Section~\ref{sec-mated-crt-map}).
We also obtain a polynomial (in $\ep$) upper bound for the maximum length of the edges of the mated-CRT map under the so-called Tutte embedding with identity boundary data (Corollary~\ref{cor-max-edge}). We note that~\cite{gms-tutte} shows only that the maximum length of these embedded edges tends to zero as $\ep\rta 0$, but does not give any quantitative bound for the rate of convergence.

The results of this paper will also play a crucial role in the subsequent work~\cite{gm-spec-dim}, which proves that the spectral dimension of a large class of random planar maps --- including mated-CRT maps, spanning-tree weighted maps, and the UIPT --- is two (i.e., the return probability after $n$ steps is $n^{-1+o_n(1)}$) and also proves a lower bound for the graph distance displacement of the random walk on these maps which is correct up to polylogarithmic errors (the complementary upper bound is proven in~\cite{gh-displacement}). 
We expect that our results may also have eventual applications to the study of discrete conformal embedddings of random planar maps, e.g., to the problem of showing that the maximal size of the faces of certain random planar maps --- like uniform triangulations and spanning tree-weighted maps --- under the Tutte embedding tends to 0. See the discussion just after Corollary~\ref{cor-max-edge}. 
 
One way to think about the approach used in this paper is as follows. A powerful technique for studying random walk and harmonic functions on random planar maps is to embed the map into $\BB C$ in some way, then consider how the embedded map interacts with paths and functions in $\BB C$. A number of recent works have used this technique with the embedding given by the circle packing of the map~\cite{stephenson-circle-packing}; see, e.g.,~\cite{benjamini-schramm-topology,gn-recurrence,abgn-bdy,gill-rohde-type,angel-hyperbolic,lee-conformal-growth,lee-uniformizing}. Here, we study random walk and harmonic functions on the mated-CRT map using the embedding of this map coming from SLE/LQG instead of the circle packing. For many quantities of interest, one can get stronger estimates using this embedding than using circle packing since we have good estimates for the behavior of space-filling SLE and the $\gamma$-LQG measure.

\begin{figure}[ht!]
 \begin{center}
\includegraphics[scale=.8]{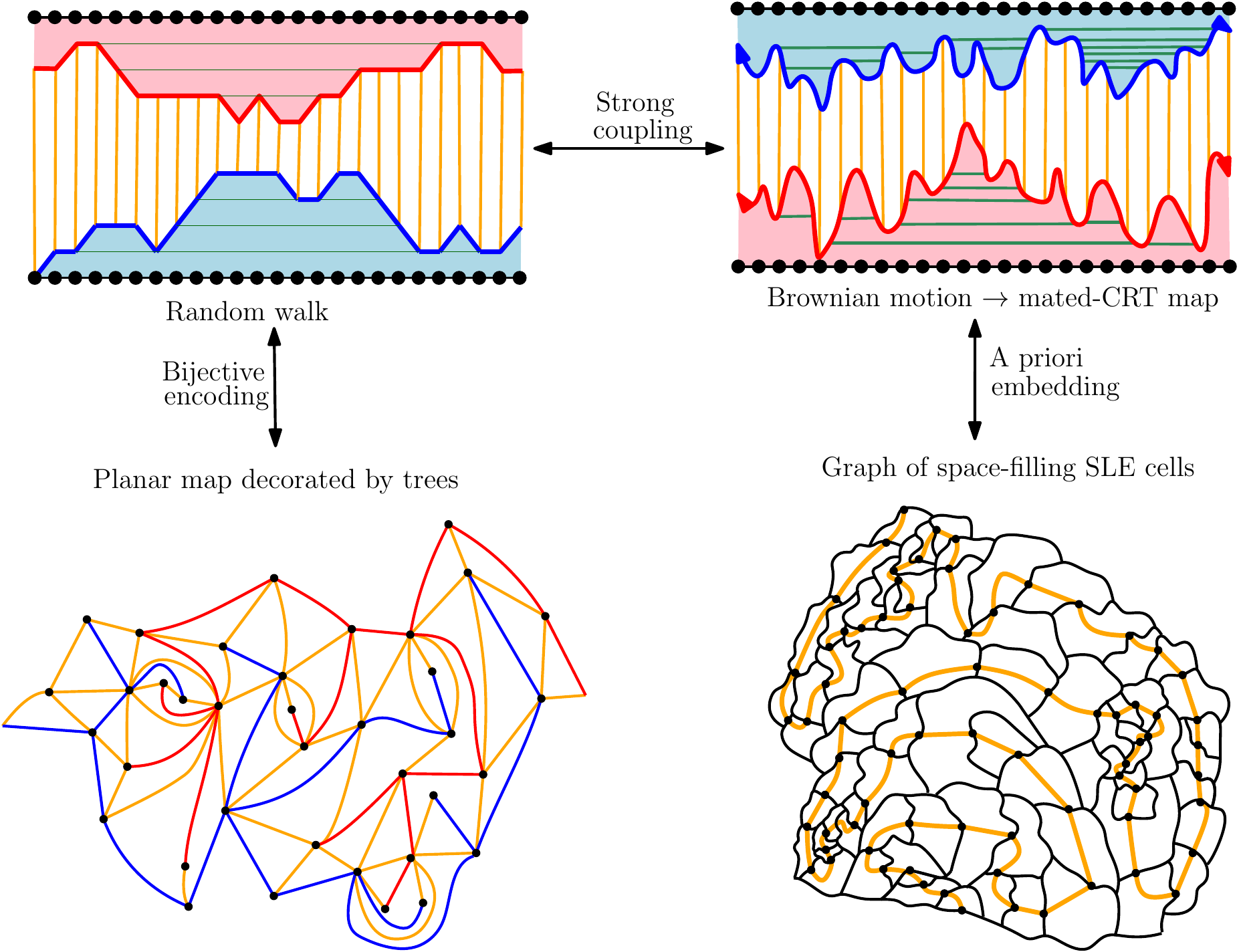} 
\caption{A visual representation of the relationship between mated-CRT maps and other objects. \textbf{Left:} Various random planar maps (e.g., the UIPT or spanning-tree weighted maps) can be encoded by means of a two-dimensional random walk via a discrete version of the construction of the mated-CRT map (we will not use these bijections in this paper). \textbf{Right:} The mated-CRT map is defined using a pair of Brownian motions and has an embedding into $\BB C$ as the adjacency graph on the ``cells" $\eta([x-\ep,x])$ for $x\in\ep\BB Z$ of a space-filling SLE parameterized by $\gamma$-LQG mass. This paper proves estimates for the mated-CRT map under this embedding. One can transfer these estimates to other random planar maps (up to a polylogarithmic error) using a strong coupling of the encoding walk for the other planar map and the Brownian motion used to define the mated-CRT map; see~\cite{ghs-map-dist,gm-spec-dim}.
}\label{fig-bijections}
\end{center}
\end{figure}

\subsection{Mated-CRT maps and SLE-decorated Liouville quantum gravity} 
\label{sec-peanosphere}

We now describe the connection between mated-CRT maps and SLE-decorated LQG, as alluded to at the end of Section~\ref{sec-overview}. 
This connection gives an embedding of the mated-CRT map into $\BB C$, which will be our main tool for analyzing mated-CRT maps. Moreover, most of our main results will be stated in terms of this embedding. 
See Section~\ref{sec-prelim} for additional background on the objects involved.

Heuristically speaking, for $\gamma \in (0,2)$ a $\gamma$-LQG surface parameterized by a domain $D\subset \BB C$ is the random two-dimensional Riemannian manifold with metric tensor $  e^{\gamma h}\, dx\otimes dy$, where $h$ is some variant of the Gaussian free field (GFF) on $D$~\cite{shef-gff,ss-contour,ig1,ig4} and $dx\otimes dy$ is the Euclidean metric tensor. This does not make literal sense since $h$ is a random distribution, not a pointwise-defined function. Nevertheless, one can make literal sense of $\gamma$-LQG in various ways. Duplantier and Sheffield~\cite{shef-kpz} constructed the volume form associated with a $\gamma$-LQG surface, a measure $\mu_h$ which is the limit of regularized versions of $e^{\gamma h(z)} \,dz$, where $dz$ denotes Lebesgue measure. One can similarly define a $\gamma$-LQG boundary length measure $\nu_h$ on certain curves in $D$, including $\bdy D$ and SLE$_\kappa$-type curves for $\kappa = \gamma^2$~\cite{shef-zipper}.
These measures are a special case of a more general theory called \emph{Gaussian multiplicative chaos}; see~\cite{kahane,rhodes-vargas-review,berestycki-gmt-elementary}.

Mated-CRT maps are related to SLE-decorated LQG via the peanosphere (or mating-of-trees) construction of~\cite[Theorem 1.9]{wedges}, which we now describe. 
Suppose $h$ is the random distribution on $\BB C$ corresponding to the particular type of $\gamma$-LQG surface called a \emph{$\gamma$-quantum cone}. Then $h$ is a slight modification of a whole-plane GFF plus $-\gamma \log |\cdot|$ (see Section~\ref{sec-lqg-prelim} for more on this field). 
Also let $\kappa' = 16/\gamma^2  > 4$ and let $\eta$ be a whole-plane space-filling SLE$_{\kappa'}$ curve from $\infty$ to $\infty$ sampled independently from $h$ and then parameterized in such a way that $\eta(0) = 0$ and the $\gamma$-LQG mass satisfies $\mu_h(\eta([t_1,t_2])) = t_2 - t_1$ whenever $t_1 , t_2 \in \BB R$ with $t_1 < t_2$ (see Section~\ref{sec-wpsf-prelim} and the references therein more on space-filling SLE).

\begin{figure}[ht!]
 \begin{center}
\includegraphics[scale=.8]{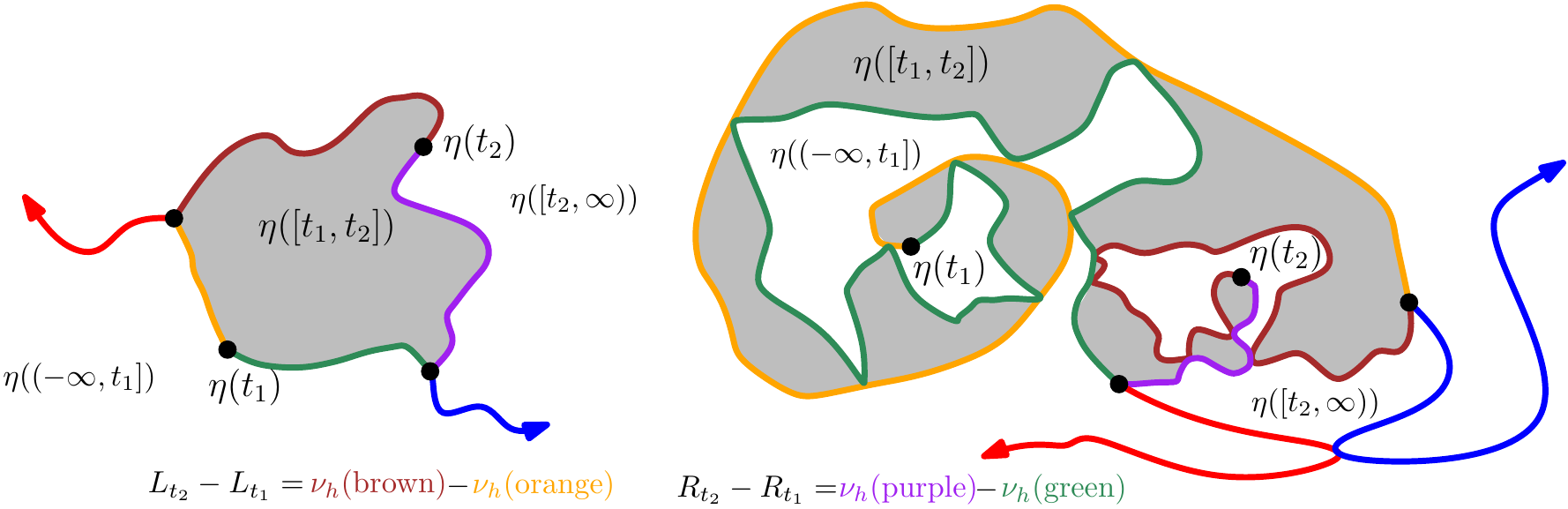} 
\caption{Illustration of the definition of the left/right boundary length process $(L,R)$ for space-filling SLE on a $\gamma$-quantum cone. The left figure corresponds to the case when $\kappa' \geq 8$, so that $\eta([t_1,t_2])$ is simply connected. The right figure corresponds to the case when $\kappa' \in (4,8)$, in which case the topology is more complicated since the left and right boundaries of the curve can intersect each other, but the definition of the left/right boundary length process is the same. In both cases, the intersection of the left (resp.\ right) outer boundaries of $\eta((-\infty,t_1])$ and $\eta([t_2,\infty))$ is shown in red (resp.\ blue). The black dots on the boundary correspond to the endpoints $\eta(t_1)$ and $\eta(t_2)$ and the points where $\eta((-\infty,t_1]) \cap \eta([t_1,t_2])$ and $\eta([t_2,\infty))\cap\eta([t_1,t_2])$ meet. These latter two points are hit by $\eta$ at the times when $L$ and $R$, respectively, attain their minima on $[t_1,t_2]$.
}\label{fig-peanosphere-bm-def}
\end{center}
\end{figure}

Let $\nu_h$ be the $\gamma$-LQG length measure associated with $h$ and define a process $L : \BB R \rta \BB R$ in such way that $L_0 = 0$ and for $t_1,t_2 \in \BB R$ with $t_1<t_2$, 
\allb \label{eqn-peanosphere-bm}
L_{t_2}- L_{t_1} &= \nu_h\left( \text{left boundary of $\eta([t_1,t_2]) \cap \eta([t_2,\infty))$} \right) \notag \\
&\qquad - \nu_h\left( \text{left boundary of $\eta([t_1,t_2]) \cap \eta((-\infty , t_1])$} \right) .
\alle 
Define $R_t$ similarly but with ``right" in place of ``left" and set $Z_t = (L_t , R_t)$. See Figure~\ref{fig-peanosphere-bm-def} for an illustration. It is shown in~\cite[Theorem 1.9]{wedges} that $Z$ evolves as a correlated two-dimensional Brownian motion with correlation $-\cos(\pi\gamma^2/4)$, i.e., $Z$ has the same law as the Brownian motion used to construct the mated-CRT map with parameter $\gamma$ (up to multiplication by a deterministic constant, which does not affect the definition of the mated-CRT map). Moreover, by~\cite[Theorem 1.11]{wedges}, $Z$ a.s.\ determines $(h,\eta)$ modulo rotation and scaling. 

We can re-phrase the adjacency condition~\eqref{eqn-inf-adjacency} in terms of $(h,\eta)$. In particular, for $x_1,x_2 \in \ep\BB Z$ with $x_1 < x_2$,~\eqref{eqn-inf-adjacency} is satisfied if and only if the \emph{cells} $\eta([x_1-\ep , x_1])$ and $\eta([x_2-\ep , x_2])$ intersect along a non-trivial connected arc of their left outer boundaries; and similarly with ``$R$" in place of ``$L$" and ``left" in place of ``right". Indeed, this follows from the explicit description of the curve-decorated topological space $(\BB C , \eta)$ in terms of $(L,R)$ given in~\cite[Section 8.2]{wedges}.

Consequently, the mated-CRT map $\mcl G^\ep$ is precisely the graph with vertex set $\ep\BB Z$, with two vertices connected by an edge if and only if the corresponding cells $\eta([x_1-\ep , x_1])$ and $\eta([x_2-\ep , x_2])$ share a non-trivial connected boundary arc.
The graph on cells is sometimes called the \emph{$\ep$-structure graph} of the curve $\eta$ since it encodes the topological structure of the cells. 
The identification of $\mcl G^\ep$ with the $\ep$-structure graph of $\eta$ gives us an embedding of $\mcl G^\ep$ into $\BB C$ by sending each vertex $x\in\ep\BB Z$ to the point $\eta(x)$. See Figure~\ref{fig-structure-graph-def} for an illustration.

\begin{figure}[ht!]
\begin{center}
\includegraphics[scale=.8]{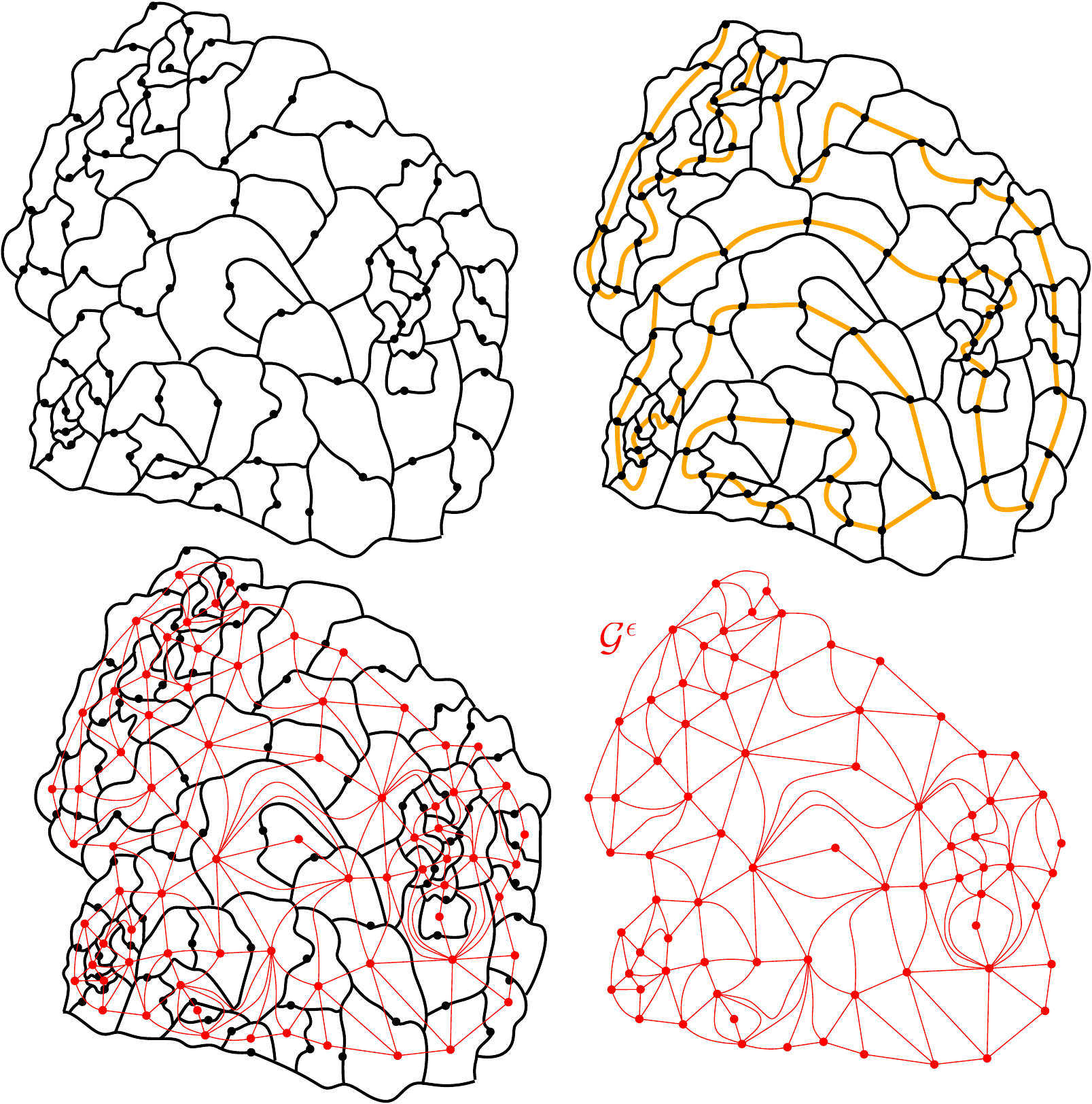} 
\caption{\label{fig-structure-graph-def} \textbf{Top left:} A segment of a space-filling curve $\eta : \BB R\rta \BB C$, divided into cells $  \eta([x-\ep ,x])$ for $x\in\ep\BB Z $. This figure looks like what we would expect to see for a space-filling SLE$_{\kappa'}$ parameterized by $\gamma$-quantum mass when $\kappa' \geq 8$, since this is the range when the curve does not make and fill in bubbles (see Section~\ref{sec-wpsf-prelim}). \textbf{Top right:} Same as top-left but with an orange path showing the order in which cells are hit by $\eta$. \textbf{Bottom left:} A point in each cell is shown in red, and is connected to each adjacent cell by a red edge. 
As explained in Figure~\ref{fig-mated-crt-map}, $\mcl G^\ep$ can be viewed as a planar triangulation.
In the present picture, the faces correspond to the points where three of the black curves meet.
Note that we cannot have more than three black curves meeting at a single point or we would have a face of degree greater than three (this can also be seen directly from the geometry of space-filling SLE; see~\cite[Section 8.2]{wedges}).
\textbf{Bottom right:} If we forget the original cells $\eta([x-\ep,x])$ but keep the red edges we get an embedding of $\mcl G^\ep$ into $\BB C$.  
}
\end{center}
\end{figure}


\subsection{Basic notation} 
\label{sec-basic-notation}

\noindent
We write $\BB N$ for the set of positive integers and $\BB N_0 = \BB N\cup \{0\}$. 
\vspace{6pt}

\noindent
For $a,b \in \BB R$ with $a<b$ and $r > 0$, we define the discrete intervals $[a,b]_{r \BB Z} := [a, b]\cap (r \BB Z)$ and $(a,b)_{r \BB Z} := (a,b)\cap (r\BB Z)$.
\vspace{6pt}
 
\noindent
For $K\subset \BB C$, we write $\op{area}(K)$ for the Lebesgue measure of $K$ and $\op{diam}(K)$ for its Euclidean diameter.
For $r > 0$ and $z\in\BB C$ we write $B_r(z)$ be the open disk of radius $r$ centered at $z$. 
For $K\subset\BB C$, we also write $B_r(K)$ for the (open) set of points $z\in\BB C$ which lie at Euclidean distance less than $r$ from $K$. 
\vspace{6pt}

\noindent
If $a$ and $b$ are two ``quantities'' (i.e., functions from any sort of ``configuration space'' to the real numbers) we write $a\preceq b$ (resp.\ $a \succeq b$) if there is a constant $C > 0$ (independent of the values of $a$ or $b$ and certain other parameters of interest) such that $a \leq C b$ (resp.\ $a \geq C b$). We write $a \asymp b$ if $a\preceq b$ and $a \succeq b$. We typically describe dependence of implicit constants in lemma/proposition statements and require constants in the proof to satisfy the same dependencies.
\vspace{6pt}

\noindent
If $a$ and $b$ are two quantities depending on a variable $x$, we write $a = O_x(b)$ (resp.\ $a = o_x(b)$) if $a/b$ remains bounded (resp.\ tends to 0) as $x\rta 0$ or as $x\rta\infty$ (the regime we are considering will be clear from the context). We write $a = o_x^\infty(b)$ if $a = o_x(b^s)$ for every $s\in\BB R$.  
\vspace{6pt}

\noindent
For a graph $G$, we write $\mcl V(G)$ and $\mcl E(G)$, respectively, for the set of vertices and edges of $G$, respectively. We sometimes omit the parentheses and write $\mcl VG = \mcl V(G)$ and $\mcl EG = \mcl E(G)$. For $v\in\mcl V(G)$, we write $\op{deg}(v;G)$ for the degree of $v$ (i.e., the number of edges with $v$ as an endpoint). 
\vspace{6pt}

\subsection{Setup}
\label{sec-standard-setup}

In this subsection, we describe the setup we consider throughout most of the paper and introduce some relevant notation.  
Let $h$ be a random distribution on $\BB C$ whose $\gamma$-quantum measure $\mu_h$ is well-defined and has infinite total mass. 
We will most frequently consider the case when $h$ is the distribution corresponding to a $\gamma$-quantum cone, since this is the case for which the corresponding structure graph coincides with a mated-CRT map. However, we will also have occasion to consider a choice of $h$ which does not have a $\gamma$-log singularity at the origin---in particular, we will sometimes take $h$ to be either a whole-plane GFF or the distribution corresponding to a 0-quantum cone.

Let $\eta$ be a whole-plane space-filling SLE$_{\kappa'}$ sampled independently from $h$ and then parameterized by $\gamma$-quantum mass with respect to $h$.
For $\ep > 0$, we let $\mcl G^\ep$ be the graph with vertex set $\ep\BB Z  $, with two vertices $x_1,x_2\in \ep\BB Z $ connected by an edge if and only if the cells $\eta([x_1-\ep , x_1])$ and $\eta([x_2-\ep,x_2])$ share a non-trivial boundary arc. 

We abbreviate cells by
\eqb \label{eqn-cell-def}
H_x^\ep := \eta([x-\ep , x]) ,\quad \forall x \in \ep\BB Z  
\eqe 
and for $z\in\BB C$, we define the vertex
\eqb \label{eqn-pt-vertex}
x_z^\ep := \min\left\{ x\in\ep\BB Z :    z\in H_x^\ep \right\} ,
\eqe
so that $H_{x_z^\ep}^\ep$ is the (a.s.\ unique) structure graph cell containing $z$. 
 
For a set $D\subset \BB C$, we write $\mcl G^\ep(D)$ for the sub-graph of $\mcl G^\ep$ with vertex set
\eqb \label{eqn-structure-graph-domain}
\mcl V\mcl G^\ep(D) := \left\{ x_z^\ep : z\in D \right\} =  \left\{ x\in\ep\BB Z  : H_x^\ep \cap D\not=\emptyset\right\} 
\eqe 
with two vertices connected by an edge if and only if they are connected by an edge in $\mcl G^\ep$. 
See Figure~\ref{fig-structure-graph-restrict} for an illustration of the above definitions.

\begin{figure}[ht!]
\begin{center}
\includegraphics[scale=.8]{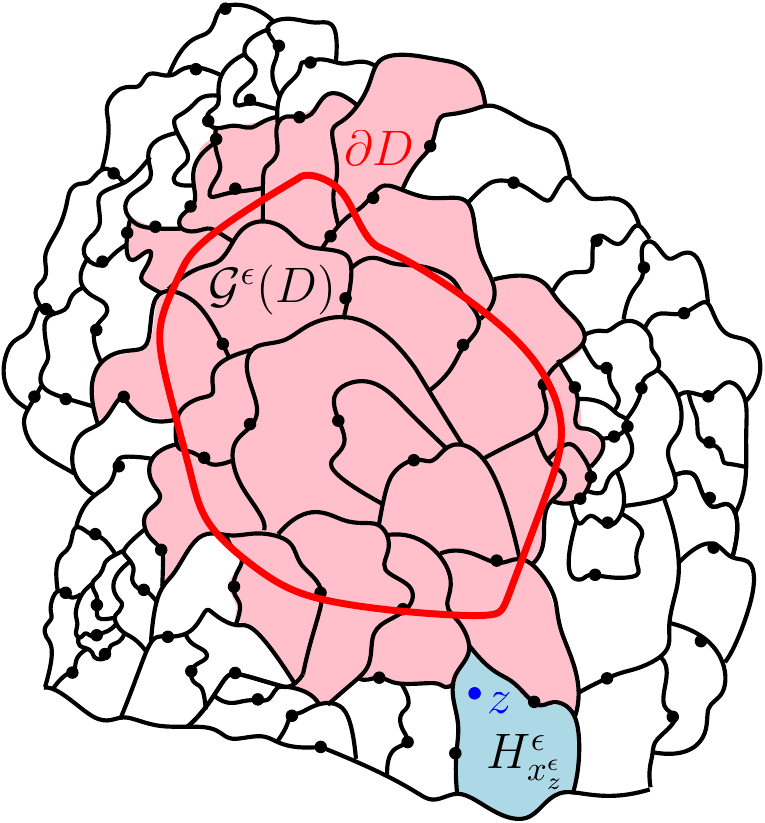} 
\caption{\label{fig-structure-graph-restrict} Illustration of the definitions in Section~\ref{sec-standard-setup}. A collection of cells of $\mcl G^\ep$ is shown with black boundaries and a domain $D$ is shown with red boundary. The pink cells are the those of the form $H_x^\ep$ for vertices $x  \in \mcl V\mcl G^\ep(D)$. Also shown is a point $z\in \BB C$ and the cell $H_{x_z^\ep}^\ep$ containing it (light blue). 
}
\end{center}
\end{figure}

\subsection{Main results}
\label{sec-main-results}

Suppose we are in the setting of Section~\ref{sec-standard-setup} with $h$ equal to the circle-average embedding of a $\gamma$-quantum cone (i.e., $h$ is the random distribution from Definition~\ref{def-quantum-cone} with $\alpha = \gamma$), so that the graphs $\{\mcl G^\ep\}_{\ep > 0}$ are the same in law as the $\ep$-mated CRT maps defined in Section~\ref{sec-overview}. 

We will study discrete harmonic functions on sub-maps of $\mcl G^\ep$ corresponding to domains in $\BB C$. 
We want to work at positive distance from $\bdy \BB D$ to avoid complications arising from the choice of normalization of the field,\footnote{In particular, the law of $h|_{\BB D}$ agrees in law with the corresponding restriction of the whole-plane GFF plus $-\gamma\log|\cdot|$, but this property does not hold outside of $\BB D$; see Section~\ref{sec-lqg-prelim}.} so we fix $\rho \in (0,1)$ and restrict attention to $B_{\rho}(0)$. Let $D\subset B_{\rho}(0)$ be an open set and let $f : \ol D\rta \BB C$ be a continuous function. 

Recall the sub-graph $\mcl G^\ep(D) \subset\mcl G^\ep$ from~\eqref{eqn-structure-graph-domain} and let $\frk f^\ep : \mcl V\mcl G^\ep( D)  \rta \BB R$ be the function such that 
\eqb \label{eqn-bdy-data}
\frk f^\ep(x) = \sup_{z\in H_x^\ep \cap \bdy D} f(z) ,\quad \forall x\in \mcl V\mcl G^\ep(\bdy D) 
\eqe 
and $\frk f^\ep$ is discrete harmonic on $\mcl V\mcl G^\ep( D) \setminus \mcl V\mcl G^\ep( \bdy D) $. 
The first main result of this paper shows that the discrete Dirichlet energy of $\frk f^\ep$ can be bounded above by a constant times the Dirichlet energy of $f$.  

\begin{defn} \label{def-discrete-dirichlet}
For a graph $G$ and a function $g : \mcl V(G) \rta \BB R$, we define its \emph{Dirichlet energy} to be the sum over unoriented edges
\eqbn
\op{Energy}(g; G) := \sum_{\{x,y\} \in \mcl E(G)} (g(x) - g(y))^2 ,
\eqen
with $n$-tuple edges counted $n$ times.  
\end{defn}

\begin{defn}  \label{def-continuum-dirichlet}
For a domain $D\subset \BB C$ and a function $f : D\rta \BB R$ whose gradient $\nabla f$ exists in the distributional sense, we define its \emph{Dirichlet energy} 
\eqbn
\op{Energy}(f ; D) := \int_D |\nabla f(z)|^2 \,dz .
\eqen 
\end{defn}
 
\begin{thm}[Dirichlet energy bound] \label{thm-dirichlet-harmonic0}
Suppose $f$ is continuously differentiable, the gradient $\nabla f$ is Lipschitz continuous, and $D$ has \emph{bounded convexity} in the sense that there exists $C = C(D) > 0$ such that any two points $z,w\in D$ can be joined by a path in $D$ of Euclidean length at most $C|z-w|$.
There are constants $\alpha    >0$ (depending only on $\gamma$) and $A   > 0$ (depending only on $D$, $\gamma$, and the Lipschitz constants for $f$ and $\nabla f$) such that with probability at least $1 - O_\ep(\ep^\alpha)$, the discrete and continuum Dirichlet energies of $\frk f^\ep$ and $f$ are related by
\eqb \label{eqn-dirichlet-harmonic0}
\op{Energy}\left( \frk f^\ep ; \mcl G^\ep(D) \right) \leq A \op{Energy}(f ; D)  .
\eqe 
\end{thm}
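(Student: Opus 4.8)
The strategy is to compare the discrete Dirichlet energy of $\frk f^\ep$ with that of a natural discrete function built directly from $f$, and then compare the latter to the continuum Dirichlet energy of $f$. Let $\bar f^\ep : \mcl V\mcl G^\ep(D)\rta \BB R$ be the function $\bar f^\ep(x) := f(\eta(x))$ (or, equivalently up to negligible error, the average of $f$ over the cell $H_x^\ep$), so that $\bar f^\ep$ agrees with $\frk f^\ep$ on the boundary vertices $\mcl V\mcl G^\ep(\bdy D)$ up to an error controlled by the diameter of the boundary cells times the Lipschitz constant of $f$. The key variational fact is that among all functions with prescribed boundary values, the discrete harmonic one minimizes the Dirichlet energy; hence $\op{Energy}(\frk f^\ep;\mcl G^\ep(D)) \leq \op{Energy}(\tilde f^\ep ; \mcl G^\ep(D))$ where $\tilde f^\ep$ is any function agreeing with the boundary data of $\frk f^\ep$ on $\mcl V\mcl G^\ep(\bdy D)$ — in particular we may take $\tilde f^\ep$ to be $\bar f^\ep$ suitably corrected near the boundary. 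So it suffices to bound $\op{Energy}(\bar f^\ep;\mcl G^\ep(D))$ (plus the boundary correction term) by a constant times $\op{Energy}(f;D)$.

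For a single edge $\{x,y\}\in\mcl E(\mcl G^\ep(D))$, the two cells $H_x^\ep$ and $H_y^\ep$ share a boundary arc, so $\eta(x)$ and $\eta(y)$ lie within distance $\op{diam}(H_x^\ep)+\op{diam}(H_y^\ep)$ of each other; since $\nabla f$ is Lipschitz (hence $f$ is $\mathrm{Lip}(f)$-Lipschitz and $\nabla f$ is locally bounded), and using the bounded-convexity hypothesis on $D$ to join $\eta(x)$ to $\eta(y)$ by a short path staying in $D$ (after a small inflation $B_\delta(D)$ to absorb the case where the segment exits $D$), we get
\[
(\bar f^\ep(x) - \bar f^\ep(y))^2 \ \preceq\ \left(\int_{\gamma_{xy}} |\nabla f|\right)^2 \ \preceq\ \ell(\gamma_{xy}) \int_{\gamma_{xy}} |\nabla f|^2 \ \preceq\ (\op{diam} H_x^\ep + \op{diam} H_y^\ep)\int_{\gamma_{xy}} |\nabla f|^2 ,
\]
by Cauchy–Schwarz. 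Summing over all edges, each point of $D$ is covered by $O(1)$ of the path-neighborhoods $\gamma_{xy}$ (using that $\mcl G^\ep$ is a triangulation of bounded... — actually here one must be careful: the degrees are not bounded, so instead one bounds $\sum_{\{x,y\}}(\op{diam} H_x^\ep + \op{diam} H_y^\ep) \mathbbm 1_{z\in\gamma_{xy}}$ pointwise). The cleanest route is: enlarge each $\gamma_{xy}$ to the union $H_x^\ep\cup H_y^\ep$ (which contains a short connecting path when $\ep$ is small, by bounded convexity of the cells — which holds with high probability), so that
\[
\op{Energy}(\bar f^\ep ; \mcl G^\ep(D)) \ \preceq\ \sum_{\{x,y\}\in\mcl E(\mcl G^\ep(D))} (\op{diam} H_x^\ep + \op{diam} H_y^\ep) \int_{H_x^\ep \cup H_y^\ep} |\nabla f|^2\,dz.
\]
Now I would interchange the sum and integral: the coefficient of $\int_{H_x^\ep}|\nabla f|^2$ is $\sum_{y\sim x}(\op{diam} H_x^\ep + \op{diam} H_y^\ep)$, and the crucial input — this is where the SLE/LQG estimates enter — is a high-probability bound of the form $\sum_{y \sim x} \op{diam} H_y^\ep \preceq 1$ (up to $\ep^{o(1)}$ or a genuine polynomial loss) uniformly over $x$ with $H_x^\ep\subset B_\rho(0)$, together with $\deg(x;\mcl G^\ep)\cdot \op{diam} H_x^\ep \preceq 1$. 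Such estimates should follow from the law-of-large-numbers / quantile bounds for cell diameters and degrees advertised in the introduction ("a quantitative law-of-large-numbers type bound for integrals of functions defined on $\BB C$ against certain quantities associated with the cells"), applied with the test function $z\mapsto |\nabla f(z)|^2$ (which is bounded and Lipschitz). This yields $\op{Energy}(\bar f^\ep;\mcl G^\ep(D)) \preceq \int_{B_\delta(D)}|\nabla f|^2 \preceq A\,\op{Energy}(f;D)$, where the last step again uses bounded convexity to extend $f$ past $\bdy D$ with comparable energy, and $A$ absorbs the Lipschitz constants.

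Finally I would handle the boundary-correction term $\op{Energy}(\frk f^\ep;\mcl G^\ep(D)) \leq \op{Energy}(\bar f^\ep; \cdots) + (\text{edges touching }\bdy D \text{ terms})$: on boundary cells, $|\frk f^\ep(x) - \bar f^\ep(x)| \preceq \mathrm{Lip}(f)\op{diam} H_x^\ep$, so each such edge contributes $O(\mathrm{Lip}(f)^2 (\op{diam} H_x^\ep)^2)$, and summing over the $O(1/\ep)$-many... — more precisely, over all edges incident to $\mcl V\mcl G^\ep(\bdy D)$ — gives a total $\preceq \mathrm{Lip}(f)^2 \sum_{x : H_x^\ep \cap \bdy D \neq \emptyset} \deg(x)(\op{diam} H_x^\ep)^2 \preceq \mathrm{Lip}(f)^2 \sum_x \op{diam} H_x^\ep$, which is $O(1)$ with high probability by the same cell-diameter sum estimate (this time over a neighborhood of the one-dimensional set $\bdy D$, so actually $o_\ep(1)$). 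The main obstacle, and the place where all the probabilistic content sits, is establishing the high-probability bound $\sum_{y\sim x}\op{diam} H_y^\ep \preceq 1$ (and the associated degree bounds) with a polynomial rate $1 - O_\ep(\ep^\alpha)$ — this requires the quantitative cell-size and moment estimates for space-filling SLE on the $\gamma$-quantum cone, and controlling the exceptional event where some cell near $B_\rho(0)$ is atypically large or has atypically large degree; everything else is Cauchy–Schwarz, the variational principle for discrete harmonic functions, and bookkeeping with the bounded-convexity hypothesis.
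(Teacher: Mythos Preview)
Your overall architecture matches the paper: define $f^\ep(x) := f(\eta(x))$, use the variational principle to reduce to bounding $\op{Energy}(f^\ep; \mcl G^\ep(D))$, estimate edge by edge using a bounded-convexity path in $D$, and then sum. But the step where you ``enlarge each $\gamma_{xy}$ to the union $H_x^\ep\cup H_y^\ep$'' is dimensionally wrong. Cauchy--Schwarz along the path gives $(f^\ep(x)-f^\ep(y))^2 \leq \ell(\gamma_{xy})\int_{\gamma_{xy}}|\nabla f|^2\,ds$, a \emph{line} integral, and there is no reason this is dominated by the \emph{area} integral $\int_{H_x^\ep\cup H_y^\ep}|\nabla f|^2\,dz$; for a cell with $\op{area}\ll\op{diam}^2$ the line integral is larger by a factor $\asymp\op{diam}/\op{area}$. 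This is not cosmetic: the possible skinniness of cells is exactly what makes the problem nontrivial, and the key quantity it forces you to track is $\op{diam}(H_x^\ep)^2/\op{area}(H_x^\ep)$, not just $\op{diam}(H_x^\ep)$. (Also: the cells are \emph{not} boundedly convex --- for $\kappa'\in(4,8)$ they are not even simply connected --- so that remark should be dropped; the bounded convexity hypothesis is on $D$, and that is what the paper uses.)

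The paper fixes this by using the hypothesis that $\nabla f$ (not just $f$) is Lipschitz: along $\gamma_{xy}$ one has $|\nabla f(u)|\leq|\nabla f(\eta(x))|+O(\op{diam}(H_x^\ep)+\op{diam}(H_y^\ep))$, so $(f^\ep(x)-f^\ep(y))^2\preceq|\nabla f(\eta(x))|^2\op{diam}(H_x^\ep)^2+(\text{lower order})$. Summing over edges yields $\sum_{x}|\nabla f(\eta(x))|^2\,\op{diam}(H_x^\ep)^2\,\op{deg}(x;\mcl G^\ep)$, and writing $\op{diam}^2=\tfrac{\op{diam}^2}{\op{area}}\cdot\op{area}$ converts this sum into the integral $\int_D|\nabla f(z)|^2\,u^\ep(z)\,dz$, where $u^\ep(z):=\tfrac{\op{diam}(H_{x_z^\ep}^\ep)^2}{\op{area}(H_{x_z^\ep}^\ep)}\,\op{deg}(x_z^\ep;\mcl G^\ep)$. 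The genuine probabilistic input is then the law-of-large-numbers bound $\int_D g\,u^\ep\,dz\leq A\int_D g\,dz+\ep^\alpha$ with probability $1-O_\ep(\ep^\alpha)$ (Proposition~\ref{prop-area-diam-deg-gamma}), applied with $g=|\nabla f|^2$. Your proposed key input, a uniform bound $\sum_{y\sim x}\op{diam}(H_y^\ep)\preceq 1$, does follow easily from the crude estimates $\op{deg}\preceq(\log\ep^{-1})^2$ and $\op{diam}\preceq\ep^q$, but it is not what is needed once the dimensional error above is corrected.
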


We will actually prove a more quantitative version of Theorem~\ref{thm-dirichlet-harmonic0} below (see Theorem~\ref{thm-dirichlet-harmonic}), which makes the dependence of $A$ more explicit.

One reason why bounds for Dirichlet energy are important is that one can express many quantities related to random walk on the graph --- such as the Green's function, effective resistances, and return probabilities --- in terms of the discrete Dirichlet energy of certain functions (see, e.g.,~\cite[Section 2]{lyons-peres}). 
These relationships together with Theorem~\ref{thm-dirichlet-harmonic0} lead to a lower bound for the Green's function of random walk on $\mcl G^\ep$ on the diagonal, or equivalently for the effective resistance to the boundary of a Euclidean ball (Theorem~\ref{thm-green-lower} just below). Further applications of our Dirichlet energy estimates will be explored in~\cite{gm-spec-dim}. 

For $n\in\BB N$ and $\ep > 0$, let $\op{Gr}_n^\ep(\cdot,\cdot)$ be the Green's function of $\mcl G^\ep$ at time $n$, i.e., $\op{Gr}_n^\ep(x,y)$ for vertices $x,y \in \mcl V\mcl G^\ep$  gives the (conditional given $\mcl G^\ep$) expected number of times that simple random walk on $\mcl G^\ep$ started from $x$ hits $y$ before time $n$.  
 
\begin{thm}[Green's function on the diagonal] \label{thm-green-lower} 
Fix $\rho \in (0,1)$ and let $\tau^\ep$ for $\ep > 0$ be the exit time of simple random walk on $\mcl G^\ep$ from $\mcl V\mcl G^\ep(B_{\rho}(0))$. There exists $\alpha >0$ (depending only on $\gamma$) and $A   > 0$ (depending only on $\rho$ and $\gamma$) such that 
\eqb \label{eqn-green-lower}
\BB P\left[ \frac{ \op{Gr}_{\tau^\ep}^\ep(0,0) }{ \op{deg}\left(0 ; \mcl G^\ep \right)}  \geq \frac{1}{A} \log \ep^{-1} \right] \geq 1 - O_\ep(\ep^\alpha) \quad \text{as $\ep\rta 0$}.
\eqe 
Furthermore, the simple random walk on $\mcl G^\ep$ is a.s.\ recurrent. 
\end{thm}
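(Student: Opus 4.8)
The plan is to deduce both the quantitative lower bound~\eqref{eqn-green-lower} and the recurrence statement from Theorem~\ref{thm-dirichlet-harmonic0}, via the standard electrical-network dictionary between Green's functions, effective resistances, and Dirichlet energies (as in~\cite[Section 2]{lyons-peres}). First I would recall that for simple random walk on a graph $G$ killed upon exiting a set $\mcl V(G)$, one has the identity
\[
\frac{\op{Gr}_{\tau}(0,0)}{\op{deg}(0;G)} = \mcl R_{\op{eff}}\left(0 \lrta \partial\right),
\]
the effective resistance between $0$ and the complement of the domain (unit resistance per edge). By Dirichlet's principle, the effective conductance $\mcl C_{\op{eff}}(0\lrta\partial) = 1/\mcl R_{\op{eff}}(0\lrta\partial)$ equals the minimum of $\op{Energy}(g;\mcl G^\ep(B_\rho(0)))$ over all $g$ that equal $1$ at $0$ and vanish on $\mcl V\mcl G^\ep(\partial B_\rho(0))$. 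So to lower-bound the effective resistance by $\asymp\log\ep^{-1}$, it suffices to exhibit one such $g$ whose discrete Dirichlet energy is $\preceq 1/\log\ep^{-1}$.

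The natural candidate comes from the continuum: take $f$ to be (a smoothed, truncated version of) the radial log function, e.g. $f(z) = \phi(\log|z|)$ where $\phi$ interpolates smoothly from $1$ near radius $\ep^{1-\delta}$ (or some fixed small inner radius) down to $0$ near radius $\rho$, chosen so that $\op{Energy}(f;B_\rho(0)) \asymp 1/\log\ep^{-1}$ — this is just the standard fact that the Dirichlet energy of the discrete/continuum log kernel across an annulus of modulus $\asymp\log\ep^{-1}$ is $\asymp 1/\log\ep^{-1}$. One subtlety: $f$ is not defined on a nice convex domain near $0$ if we truncate at a radius comparable to $\ep$, and Theorem~\ref{thm-dirichlet-harmonic0} wants a domain of bounded convexity with $f, \nabla f$ Lipschitz with controlled constants. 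I would handle this by working on the annulus $D = B_\rho(0)\setminus \ol{B_{r_0}(0)}$ for a \emph{fixed} small $r_0$ (so the Lipschitz constants of $f,\nabla f$ are $O(1)$ and $D$ has bounded convexity), applying Theorem~\ref{thm-dirichlet-harmonic0} to get $\op{Energy}(\frk f^\ep;\mcl G^\ep(D)) \preceq \op{Energy}(f;D) \asymp 1/\log\ep^{-1}$ with probability $1-O_\ep(\ep^\alpha)$, and then crudely bounding the resistance of the bounded-modulus region $B_{r_0}(0)$ (from $0$ out to $\partial B_{r_0}(0)$) by a constant — a short separate argument, e.g. using that $\mcl G^\ep(B_{r_0}(0))$ has $\ep^{-2+o_\ep(1)}$ vertices so even the trivial bound $\mcl R_{\op{eff}} \le \#\text{edges}$ is far too weak, but a slightly better bound via a test function that is $1$ at $0$ and $0$ on $\partial B_{r_0}(0)$ and linear-in-log in between gives $O(1)$. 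Extending $\frk f^\ep$ by $1$ on $\mcl V\mcl G^\ep(B_{r_0}(0))$ and concatenating gives a global test function with total energy $\preceq 1/\log\ep^{-1}$, which by Dirichlet's principle yields~\eqref{eqn-green-lower}.

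For recurrence: I would send $\rho \uparrow 1$ and then exhaust $\BB C$. Concretely, for each fixed $\rho$ the argument above shows $\mcl R_{\op{eff}}(0\lrta\partial B_\rho(0)) \succeq \log\ep^{-1}$ with high probability; but recurrence is a statement about the fixed graph $\mcl G = \mcl G^1$ (recall its law does not depend on $\ep$), so I would instead fix $\ep = 1$ and use a sequence of larger and larger Euclidean balls. Rescaling: by Brownian scaling, $\mcl G^\ep$ restricted to $B_\rho(0)$ is the same in law as $\mcl G^1$ restricted to a Euclidean ball of radius $\asymp\rho\ep^{-1}\cdot(\text{random factor})$ around a suitable vertex; more cleanly, apply the coupling of the $\mcl G^\ep$'s built from one Brownian motion and note that $\mcl G^\ep(B_\rho(0))$ and $\mcl G^1$ are related by the coordinate change sending $\ep\BB Z$ to $\BB Z$. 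The upshot is that for the single graph $\mcl G^1$, the effective resistance from a fixed vertex to the boundary of a combinatorial/Euclidean ball of radius $R$ grows at least like $\log R$ along a subsequence of $R\to\infty$ with probability $\to 1$; by a Borel–Cantelli / monotonicity argument this forces $\mcl R_{\op{eff}}(0 \lrta \infty) = \infty$ a.s., which is exactly recurrence. I would need to be a little careful that the "with probability $1-O_\ep(\ep^\alpha)$" events, after the change of variables, become events about balls of radius $R_k \to \infty$ with failure probability $O(R_k^{-\alpha})$ summable along $R_k = 2^k$, so Borel–Cantelli applies; alternatively cite that recurrence also follows from~\cite{gn-recurrence} and present this only as the "independent proof."

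The main obstacle I anticipate is not the electrical-network bookkeeping but making the application of Theorem~\ref{thm-dirichlet-harmonic0} clean near the origin: the theorem's hypotheses (bounded convexity of $D$, Lipschitz $f$ and $\nabla f$ with constants feeding into $A$) interact badly with wanting the annulus to extend all the way down to scale $\ep$. Decoupling "scale $r_0$ down to $0$" (handled by a crude fixed-$r_0$ resistance bound, or by monotonicity of effective resistance under vertex identification / the trivial bound that a finite graph has finite resistance) from "scale $\rho$ down to scale $r_0$" (handled by Theorem~\ref{thm-dirichlet-harmonic0} with $O(1)$ constants) is the key structural move, and verifying that the concatenated test function indeed has the claimed energy — using that energy is subadditive under gluing test functions that agree on the interface — is the one spot needing genuine (if routine) care.
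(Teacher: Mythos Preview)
Your overall structure is right --- Dirichlet's principle plus a log-type test function --- but there is a genuine gap in the step where you ``handle the subtlety'' by passing to a \emph{fixed} inner radius $r_0$. With $r_0$ fixed, the continuum harmonic function $f$ on $B_\rho(0)\setminus \ol{B_{r_0}(0)}$ has Dirichlet energy $2\pi/\log(\rho/r_0)$, which is a constant independent of $\ep$; Theorem~\ref{thm-dirichlet-harmonic0} then only yields $\op{Energy}(\frk f^\ep;\mcl G^\ep(D))\preceq 1$, and hence only $\op{Gr}_{\tau^\ep}^\ep(0,0)/\op{deg}(0;\mcl G^\ep)\succeq 1$, not $\succeq \log\ep^{-1}$. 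Your sentence ``$\op{Energy}(f;D)\asymp 1/\log\ep^{-1}$'' is simply false for fixed $r_0$. (Also, the separate ``crude resistance bound'' on $B_{r_0}(0)$ is unnecessary: extending the test function by the constant $1$ on $\mcl V\mcl G^\ep(B_{r_0}(0))$ adds zero energy, since the boundary data already equal $1$ there.)

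The paper fixes this exactly where you flinched: it takes the inner radius to shrink like $s=\ep^\beta$ for a small $\beta=\beta(\gamma)>0$, so that the continuum log function has energy $2\pi/\log(\rho/s)\asymp 1/\log\ep^{-1}$. The price is that $f$, $\nabla f$, and $\nabla^2 f$ now blow up like negative powers of $s$; this is precisely why the paper proves and uses the quantitative version Theorem~\ref{thm-dirichlet-harmonic} (allowing $\ep^{-\beta}$-Lipschitz constants) rather than the qualitative Theorem~\ref{thm-dirichlet-harmonic0}. Having the energy bound for $s=\ep^\beta$, one then extends $\frk f_s^\ep$ by $1$ on $\mcl V\mcl G^\ep(B_s(0))$ and observes that $s\mapsto \op{Energy}(\frk f_s^\ep)$ is monotone (harmonic functions minimize energy for given boundary data), so the same bound holds at $s=0$. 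This is Lemma~\ref{lem-annulus-energy}, and~\eqref{eqn-green-lower} follows immediately via Dirichlet's principle. For recurrence your sketch is morally right, but the relation between $\mcl G^1(B_{\rho R}(0))$ and $\mcl G^\ep(B_\rho(0))$ goes through the $\gamma$-quantum cone scale invariance (\cite[Proposition~4.13(i)]{wedges}), not naive Brownian scaling of the abstract graph; the paper makes this precise and then applies Borel--Cantelli along $b=2^k$.
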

 
The recurrence of random walk on $\mcl G^\ep$ can also be deduced from the general recurrence criterion for random planar maps due to Gurel-Gurevich and Nachmias~\cite{gn-recurrence} (see Section~\ref{sec-mated-crt-map}), but our results give an independent proof. 
Note, however, that our results do \emph{not} give an independent proof of the recurrence of random walk on other planar maps, such as the UIPT. 
 
Our next main result gives a H\"older continuity bound for the functions $\frk f^\ep$ for $\ep  >0$ in terms of the Euclidean metric. 
 
\begin{thm}[H\"older continuity] \label{thm-cont0}
Suppose $D$ is simply connected and $f|_{\bdy D}$ is $\chi$-H\"older continuous (with respect to the ambient Euclidean metric) for some exponent $\chi  >0$. 
There are constants $\alpha  = \alpha(\gamma )  >0$, $\xi = \xi(\rho,\chi,\gamma)> 0$, and $A' = A'(f,D,\gamma) > 0$ such that with probability at least $1-O_\ep(\ep^\alpha)$, the discrete harmonic function $\frk f^\ep$ defined just below~\eqref{eqn-bdy-data} satisfies 
\allb \label{eqn-log-cont0} 
|\frk f^\ep(x) - \frk f^\ep(y)| 
 \leq A' (\ep \vee |\eta(x) - \eta(y)|)^\xi  ,\quad \forall x , y \in \mcl V\mcl G^\ep(D ) ,
\alle 
where $\eta$ is the space-filling SLE$_{\kappa'}$ as in Section~\ref{sec-standard-setup}. 
\end{thm}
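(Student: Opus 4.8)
The plan is to exploit that $\frk f^\ep$ is a bounded martingale along simple random walk on the subgraph $\mcl G^\ep(D)$ --- it is discrete harmonic on $\mcl V\mcl G^\ep(D)\setminus\mcl V\mcl G^\ep(\bdy D)$, equals the cellwise supremum of $f$ on $\mcl V\mcl G^\ep(\bdy D)$, and is bounded since $f$ is continuous on the compact set $\ol D$ --- and to control this martingale using two random-walk regularity inputs of exactly the kind established in this paper en route to Theorem~\ref{thm-green-lower}. Everything is carried out on a global event of probability $1 - O_\ep(\ep^\alpha)$ on which: \textbf{(a)} every cell $H_x^\ep$ meeting $B_\rho(0)$ has Euclidean diameter at most $\ep^{u}$ for a fixed $u = u(\gamma)>0$; \textbf{(E1)} for every Euclidean ball $B_r(z)$ with $z$ in a fine net of $B_\rho(0)$ and $\ep^v \le r\le 1$, if $g$ is discrete harmonic on $\mcl V\mcl G^\ep(B_r(z))$ then $\op{osc}_{\mcl V\mcl G^\ep(B_{r/2}(z))} g \le (1-c)\,\op{osc}_{\mcl V\mcl G^\ep(B_r(z))} g$ for a constant $c = c(\gamma)\in(0,1)$; and \textbf{(E2)} for every $w\in\bdy D$ and every such $r$, a walk on $\mcl G^\ep(D)$ started from any vertex of $\mcl V\mcl G^\ep(B_r(w)\cap D)$ hits $\mcl V\mcl G^\ep(\bdy D)$ before hitting $\mcl V\mcl G^\ep(\bdy B_{2r}(w))$ with probability at least $c$. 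Estimate \textbf{(E1)} is standard elliptic-regularity bookkeeping once one knows the walk from a vertex of $\mcl V\mcl G^\ep(B_{r/2}(z))$ hits any prescribed other such vertex before exiting $\mcl V\mcl G^\ep(B_r(z))$ with probability $\ge c$, a Green's-function lower bound of the flavor of Theorem~\ref{thm-green-lower}. Estimate \textbf{(E2)} uses that, since $D$ is simply connected and $D\subset B_\rho(0)$, the set $\ol{\BB C}\setminus D$ is connected and contains $w$ together with points arbitrarily far from $w$, hence contains a connected crossing of the annulus $B_{2r}(w)\setminus B_r(w)$; because cells are connected sets, any cell meeting this crossing which also meets $D$ must meet $\bdy D$, so the crossing is a barrier that the walk cannot pass without first landing in $\mcl V\mcl G^\ep(\bdy D)$ --- except by going ``the long way around,'' which it does with probability $\le 1-c$.

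Granting these, fix $x,y\in\mcl V\mcl G^\ep(D)$, set $\delta := \ep\vee|\eta(x)-\eta(y)|$, $d_x:=\op{dist}(\eta(x),\bdy D)$ and $d_y$ similarly; we may assume $\delta$ is smaller than a $\gamma$-dependent constant, since otherwise $|\frk f^\ep(x)-\frk f^\ep(y)|\le 2\|f\|_\infty$ already has the form of~\eqref{eqn-log-cont0} after taking $A'$ large. Fix a small $\beta=\beta(\gamma)\in(0,1)$; since $|d_x-d_y|\le\delta\le\delta^\beta$, the pair $x,y$ falls into one of two cases. \emph{Interior case} ($d_x>2\delta^\beta$): using \textbf{(a)}, $\frk f^\ep$ is discrete harmonic on $\mcl V\mcl G^\ep(B_{\delta^\beta}(\eta(x)))$, so iterating \textbf{(E1)} over the $\asymp\log(1/\delta)$ dyadic scales from $\delta^\beta$ down to $2\delta$ gives $\op{osc}_{\mcl V\mcl G^\ep(B_{2\delta}(\eta(x)))}\frk f^\ep\preceq \delta^{(1-\beta)c'}\|f\|_\infty$ with $c'=c'(\gamma)>0$, and $x,y$ both lie in $\mcl V\mcl G^\ep(B_{2\delta}(\eta(x)))$. \emph{Boundary case} ($d_x\le 2\delta^\beta$): let $w\in\bdy D$ realize $d_x$. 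By the martingale property $\frk f^\ep(x)=\mathbf E_x[\frk f^\ep(X_T)]$ where $T$ is the first hitting time of $\mcl V\mcl G^\ep(\bdy D)$; iterating \textbf{(E2)} over the dyadic scales from $\asymp\delta^\beta$ up to $R:=\delta^{\beta/2}$ shows that with probability $\ge 1-\delta^{c''\beta/2}$ (some $c''=c''(\gamma)>0$) the walk hits $\mcl V\mcl G^\ep(\bdy D)$ with $\eta(X_T)\in B_R(w)$, and then by \textbf{(a)} the cell $H_{X_T}^\ep$ meets $\bdy D$ within distance $R+\ep^u$ of $w$, so $|\frk f^\ep(X_T)-f(w)|\preceq (R+\ep^u)^\chi\preceq\delta^{\beta\chi/2}$ by $\chi$-H\"older continuity of $f|_{\bdy D}$; on the complementary event $|\frk f^\ep(X_T)-f(w)|\le 2\|f\|_\infty$. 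Hence $|\frk f^\ep(x)-f(w)|\preceq\delta^{\beta\min(\chi,c'')/2}$, and likewise for $y$ and its closest boundary point $w'$; since $|w-w'|\preceq\delta^\beta$ we get $|f(w)-f(w')|\preceq\delta^{\beta\chi}$, and the triangle inequality closes this case.

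In both cases $|\frk f^\ep(x)-\frk f^\ep(y)|\le A'\delta^\xi$, where $\xi>0$ is obtained by balancing the two exponents above over the choice of $\beta$ (it depends on $\chi$, $\gamma$, and on $\rho$ through the exponents in \textbf{(E1)}--\textbf{(E2)}) and $A'$ absorbs $\|f\|_\infty$, the H\"older seminorm of $f|_{\bdy D}$, $\op{diam}(D)$, and the implicit constants. To obtain the statement for \emph{all} $x,y$ simultaneously off a single event, one phrases \textbf{(a)}, \textbf{(E1)}, \textbf{(E2)} uniformly over the net of centers --- polynomially many in $\ep$, and every ball appearing above may be replaced by a concentric net ball of comparable radius --- so that a union bound keeps the total failure probability $O_\ep(\ep^\alpha)$ after shrinking $\alpha$; the few pairs with $d_x$ below the net scale are handled directly, since then $x$ lies within a controlled graph-distance of $\mcl V\mcl G^\ep(\bdy D)$.

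The main obstacle is \textbf{(E2)} --- specifically, the per-scale claim that a walk on $\mcl G^\ep(D)$ confined near $\bdy D$ hits $\mcl V\mcl G^\ep(\bdy D)$ before crossing a dyadic Euclidean annulus with probability bounded below independently of $\ep$ and of the scale, and with polynomial-in-$\ep$ control on the exceptional set, down to scales just above the typical cell diameter. The subtlety is that cells of $\mcl G^\ep$ have highly non-uniform Euclidean sizes, so the clean continuum picture (Beurling's projection theorem for the connected set $\ol{\BB C}\setminus D$) must be pushed through the combinatorics of the structure graph: one has to verify that a connected crossing of an annulus by $\ol{\BB C}\setminus D$ genuinely separates the relevant part of $\mcl G^\ep(D)$ once $\mcl V\mcl G^\ep(\bdy D)$ is removed, and then combine this with an annulus-crossing lower bound of the type used for Theorem~\ref{thm-green-lower}. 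The interior estimate \textbf{(E1)} is easier but rests on the same Green's-function lower bounds; everything else --- the case analysis, iterating the per-scale bounds into a power of $\delta$, and the union bound --- is routine once these inputs are in place.
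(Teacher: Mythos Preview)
Your overall architecture --- split into an interior case and a boundary case, iterate a per-scale estimate over dyadic annuli, and package everything on a global high-probability event via a union bound over a polynomial net --- is exactly the paper's approach (Theorem~\ref{thm-cont}). The paper likewise uses the simple connectedness of $D$ to produce a barrier in $\BB C\setminus D$ that forces the walk to hit $\mcl V\mcl G^\ep(\bdy D)$ before crossing a dyadic annulus, and iterates to get a power of $\delta$. So the skeleton is right.

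The gap is in how you justify the per-scale inputs \textbf{(E1)} and \textbf{(E2)}. You attribute both to ``a Green's-function lower bound of the flavor of Theorem~\ref{thm-green-lower},'' but that theorem only gives the \emph{on-diagonal} bound $\op{Gr}_{\tau^\ep}^\ep(0,0)/\op{deg}(0)\succeq \log\ep^{-1}$, i.e., a lower bound on effective resistance from the origin to the boundary of a ball. Your \textbf{(E1)} needs that the walk from $x\in\mcl V\mcl G^\ep(B_{r/2}(z))$ hits a prescribed \emph{other} vertex $y$ before exiting $\mcl V\mcl G^\ep(B_r(z))$ with probability $\ge c$; by the strong Markov property this probability equals $\op{Gr}(x,y)/\op{Gr}(y,y)$, so you would need an \emph{off-diagonal lower bound} together with an \emph{on-diagonal upper bound}, neither of which is in Theorem~\ref{thm-green-lower}. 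Similarly, \textbf{(E2)} requires a uniform lower bound on the probability that the walk crosses a Euclidean annulus ``the short way,'' and an effective-resistance lower bound to the outer boundary does not by itself give this (large resistance is compatible with the walk wandering around the annulus many times before crossing).

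The paper obtains both inputs not from Green's-function bounds but from the Dirichlet-energy machinery: Theorem~\ref{thm-dirichlet-harmonic} plus a Cauchy--Schwarz averaging argument over families of circles and radial segments (Lemma~\ref{lem-path-sum}) yields the annulus-crossing estimate Lemma~\ref{lem-annulus-cross}, which is then upgraded to ``the walk follows a prescribed path with positive probability'' (Proposition~\ref{prop-pos-prob}). From there one gets the disconnection estimate Lemma~\ref{lem-pos-disconnect}, and Wilson's algorithm (Lemma~\ref{lem-disconnect-coupling}) converts disconnection into a total-variation bound on hitting measures --- which is exactly your \textbf{(E1)}. So your plan is sound, but the inputs sit one level deeper in the logical chain than you indicate: they come from Proposition~\ref{prop-pos-prob}, not from Theorem~\ref{thm-green-lower}.
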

 
As in the case of Theorem~\ref{thm-dirichlet-harmonic0}, we will prove a more quantitative version of Theorem~\ref{thm-cont0}; see Theorem~\ref{thm-cont}.  
The proof of this theorem proceeds by way of a ``uniform ellipticity" type estimate for simple random walk on $\mcl G^\ep$, which says that the walk has uniformly positive probability to stay close to a fixed path in $\BB C$ (Proposition~\ref{prop-pos-prob}). 
 
Theorem~\ref{thm-cont0} gives a polynomial bound for the rate at which the maximal length of an edge of the graph $\mcl G^\ep(D)$ under the so-called Tutte embedding with identity boundary data converges to 0 as $\ep\rta 0$ (since $\mcl G^\ep$ is a triangulation, this is equivalent to the analogous statement with faces in place of edges). Note that~\cite{gms-tutte} shows that the Tutte embedding with identity boundary data converges to the identity, but gives no quantitative bound on the maximal length of the embedded edges.   

To state this more precisely, let $\Phi_1^\ep$ be the function $\frk f^\ep$ from above with $f(z) = \re z$ and let $\Phi_2^\ep$ be defined analogously with $f(z) = \im z$. Then $\Phi^\ep := (\Phi_1^\ep , \Phi_2^\ep) : \mcl V\mcl G^\ep(D) \rta \BB R^2$ is discrete harmonic on the interior of $\mcl G^\ep(D)$ and approximates the map $x\mapsto \eta(x)$ on $\mcl V\mcl G^\ep(\bdy D)$. The function $\Phi^\ep$ is called the \emph{Tutte embedding of $\mcl G^\ep(D)$ with identity boundary data}. 

It is easy to see that the maximal size of the cells $H_x^\ep$ for $x\in \mcl V\mcl G^\ep(D)$ is at most some positive power of $\ep$ with probability tending to 1 as $\ep\rta 0$ (Lemma~\ref{lem-max-cell-diam}). Applying Theorem~\ref{thm-cont0} to each coordinate of $\Phi^\ep$ and considering vertices $x$ and $y$ which are connected by an edge in $\mcl G^\ep$ yields the following.

\begin{cor}[Maximal length of embedded edges] \label{cor-max-edge}
Define the Tutte embedding $\Phi^\ep$ with identity boundary data as above. If $D\subset B_{\rho}(0)$ is simply connected, then there exists $\xi' = \xi'(\rho,\gamma) > 0$ such that with probability tending to 1 as $\ep\rta 0$, 
\eqb \label{eqn-max-edge}
\max_{\{x,y\} \in \mcl E\mcl G^\ep(D)} | \Phi^\ep(x) - \Phi^\ep(y)| \leq O_\ep\left( \ep^{\xi'} \right)  
\eqe
\end{cor}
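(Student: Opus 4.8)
The plan is to derive Corollary~\ref{cor-max-edge} directly from Theorem~\ref{thm-cont0}, applied separately to the two coordinate functions $f(z) = \re z$ and $f(z) = \im z$, together with a Euclidean a priori bound on the size of the cells $H_x^\ep$. Both coordinate functions are smooth on $\BB C$ and in particular are $1$-H\"older (indeed Lipschitz) on $\bdy D$ with a universal constant, so Theorem~\ref{thm-cont0} applies to each of $\Phi_1^\ep$ and $\Phi_2^\ep$ with exponent $\chi = 1$, yielding constants $\alpha = \alpha(\gamma) > 0$, $\xi = \xi(\rho,\gamma) > 0$, and $A' = A'(D,\gamma) > 0$ such that with probability $1 - O_\ep(\ep^\alpha)$,
\eqb \label{eqn-max-edge-proof1}
|\Phi_i^\ep(x) - \Phi_i^\ep(y)| \leq A' (\ep \vee |\eta(x) - \eta(y)|)^\xi, \quad \forall x,y \in \mcl V\mcl G^\ep(D), \quad i = 1,2.
\eqe
Summing the squares of the two coordinate bounds gives $|\Phi^\ep(x) - \Phi^\ep(y)| \leq \sqrt 2\, A' (\ep \vee |\eta(x) - \eta(y)|)^\xi$ for all such $x,y$ on the same high-probability event.

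Next I would control the right-hand side when $\{x,y\}$ is an \emph{edge} of $\mcl G^\ep(D)$. If $x$ and $y$ are joined by an edge, then the cells $H_x^\ep$ and $H_y^\ep$ share a non-trivial boundary arc, so $\eta(x) \in H_x^\ep$ and $\eta(y) \in H_y^\ep$ lie in the union $H_x^\ep \cup H_y^\ep$, which is connected; hence $|\eta(x) - \eta(y)| \leq \op{diam}(H_x^\ep) + \op{diam}(H_y^\ep) \leq 2 \max_{w \in \mcl V\mcl G^\ep(D)} \op{diam}(H_w^\ep)$. By Lemma~\ref{lem-max-cell-diam}, there is $\beta = \beta(\gamma) > 0$ such that with probability tending to $1$ as $\ep \to 0$ we have $\max_{w \in \mcl V\mcl G^\ep(D)} \op{diam}(H_w^\ep) \leq \ep^\beta$ (using that $D \subset B_\rho(0)$ is bounded away from $\bdy \BB D$). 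On the intersection of this event with the event from~\eqref{eqn-max-edge-proof1}, which still has probability tending to $1$, we get for every edge $\{x,y\} \in \mcl E\mcl G^\ep(D)$ that $\ep \vee |\eta(x) - \eta(y)| \leq \ep \vee 2\ep^\beta \leq 2 \ep^{\beta \wedge 1}$ for $\ep$ small, and therefore
\eqb \label{eqn-max-edge-proof2}
\max_{\{x,y\} \in \mcl E\mcl G^\ep(D)} |\Phi^\ep(x) - \Phi^\ep(y)| \leq \sqrt 2\, A' (2 \ep^{\beta \wedge 1})^\xi = O_\ep\left( \ep^{\xi (\beta \wedge 1)} \right).
\eqe
Setting $\xi' := \xi (\beta \wedge 1) = \xi'(\rho,\gamma) > 0$ gives the claim.

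There is essentially no obstacle here beyond bookkeeping: the only mild subtlety is making sure the two high-probability events are combined correctly and that the implicit constant in the final $O_\ep(\cdot)$ genuinely depends only on the parameters claimed. The constant $A'$ from Theorem~\ref{thm-cont0} is allowed to depend on $f$, but for $f(z) = \re z$ and $f(z) = \im z$ the relevant H\"older data is universal, so $A'$ depends only on $D$ and $\gamma$; since $D$ is in turn constrained only through $\rho$ in the exponent $\xi$, the output exponent $\xi'$ depends only on $\rho$ and $\gamma$ as asserted, while the multiplicative constant hidden in $O_\ep$ may depend on $D$ as well — which is harmless since the statement only claims a bound of the form $O_\ep(\ep^{\xi'})$. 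One should also note that $\mcl G^\ep$ being a triangulation (Figure~\ref{fig-mated-crt-map}) means the edge bound~\eqref{eqn-max-edge-proof2} immediately upgrades to a bound on the Euclidean diameter of every embedded face, which is the form in which the result is used in applications.
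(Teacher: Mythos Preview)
Your proposal is correct and is essentially identical to the paper's own argument, which explicitly says (just before the corollary) that one applies Lemma~\ref{lem-max-cell-diam} to bound cell diameters by a positive power of $\ep$ and then applies Theorem~\ref{thm-cont0} to each coordinate of $\Phi^\ep$ for adjacent $x,y$. Your bookkeeping of constants and the observation that the $D$-dependence of $A'$ is harmlessly absorbed into the $O_\ep(\cdot)$ are accurate; the $\beta\wedge 1$ is simply $\beta$ since any admissible exponent from Lemma~\ref{lem-max-cell-diam} lies in $(0,1)$.
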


It is a major open problem to prove that the maximal length of the embedded edges of other types of random planar maps---e.g., uniform random planar maps or planar maps sampled with probability proportional to the number of spanning trees---under the Tutte embedding (or under other embeddings, like the circle packing~\cite{stephenson-circle-packing}) tends to 0 as the total number of vertices tends to 0. Indeed, this is believed to be a key obstacle to proving that such embedded maps converge to $\gamma$-LQG in various senses, as conjectured, e.g., in~\cite{shef-kpz,shef-zipper,dkrv-lqg-sphere,curien-glimpse}. 

Corollary~\ref{cor-max-edge} suggests a possible approach to proving that the maximal edge length for various additional types of embedded random planar maps, besides just the mated-CRT map, also tends to zero. The reason for this is that in many cases it is possible to transfer estimates from the mated-CRT map to estimates for other random planar maps modulo polylogarithmic multiplicative errors. So far, this has been done for graph distances~\cite{ghs-map-dist}, random walk speed~\cite{gm-spec-dim,gh-displacement}, and random walk return probabilities~\cite{gm-spec-dim}. However, we have not yet found a way to transfer modulus of continuity bounds for harmonic functions, which is what is needed to deduce an analog of Corollary~\ref{cor-max-edge} for other planar map models. 
 

\subsection{Outline}
\label{sec-outline}

Figure~\ref{fig-outline} shows a diagram of the logical connections between the main results related to this paper.
In Section~\ref{sec-prelim}, we will review some facts from the theory of SLE and LQG, prove that the law of the degree of a vertex of the mated-CRT map has an exponential tail (Lemma~\ref{lem-exp-tail}), and prove that the maximum diameter of the cells of $\mcl G^\ep$ which intersect a fixed Euclidean ball decays polynomially in $\ep$ (Lemma~\ref{lem-max-cell-diam}). 
We then state an estimate (Proposition~\ref{prop-area-diam-deg-gamma}) which says that if $D\subset \BB C$ and $f :D\rta \BB R$ is a sufficiently regular function, then except on an event of probability decaying polynomially in $\ep$, 
\eqb \label{eqn-area-diam-deg-int0}
\int_D f(z) \frac{\op{diam}(H_{x_z^\ep}^\ep)^2}{\op{area}(H_{x_z^\ep}^\ep)} \op{deg}(H_{x_z^\ep}^\ep) \,dz = O_\ep(1)
\eqe 
where here we recall that $H_{x_z^\ep}^\ep$ is the cell of $\mcl G^\ep$ containing $z$. The proof of this estimate is deferred to Section~\ref{sec-area-diam-deg}. Intuitively,~\eqref{eqn-area-diam-deg-int0} says that the measure which assigns mass $\frac{\op{diam}(H_{x_z^\ep}^\ep)^2}{\op{area}(H_{x_z^\ep}^\ep)} \op{deg}(H_{x_z^\ep}^\ep)$ to each $z\in\BB C$ is not too much different from Lebesgue measure, which in turn is a consequence of the fact that $\frac{\op{diam}(H_{x_z^\ep}^\ep)^2}{\op{area}(H_{x_z^\ep}^\ep)} \op{deg}(H_{x_z^\ep}^\ep)$ is of constant order for most $z\in D$.

In Section~\ref{sec-energy-estimate}, we assume the aforementioned estimate~\eqref{eqn-area-diam-deg-int0} and deduce our main results. We first prove in Section~\ref{sec-sum-to-int} an estimate to the effect that if $f : D\rta\BB R$ is as in~\eqref{eqn-area-diam-deg-int0}, then with high probability
\eqb  \label{eqn-diam-deg-sum0}
\sum_{x\in  \mcl V\mcl G^\ep(D)} f(\eta(x)) \op{diam}(H_x^\ep)^2 \op{deg}(H_x^\ep)  = O_\ep(1) ,
\eqe 
which follows from~\eqref{eqn-area-diam-deg-int0} by breaking up the integral in~\eqref{eqn-area-diam-deg-int0} into integrals over individual cells. The bound~\eqref{eqn-diam-deg-sum0} is used in Section~\ref{sec-dirichlet-mean} to prove an upper bound for the discrete Dirichlet energy of $x\mapsto f  (\eta(x))$, which in turn implies (a more precise version of) Theorem~\ref{thm-dirichlet-harmonic0} since discrete harmonic functions minimize Dirichlet energy. In Section~\ref{sec-recurrence}, we deduce Theorem~\ref{thm-green-lower} from this more general bound.
In Section~\ref{sec-pos-prob}, we use our Dirichlet energy bound to show that the simple random walk on $\mcl G^\ep$ has uniformly positive probability to stay close to a fixed Euclidean path, even if we condition on $\mcl G^\ep$. The basic idea is to first prove a lower bound for the probability of hitting the inner boundary of an annulus before the outer boundary (using Dirichlet energy estimates and the Cauchy-Schwarz inequality) then cover a path by such annuli. 
 In Section~\ref{sec-cont}, we use the result of Section~\ref{sec-pos-prob} to prove a H\"older continuity estimate for harmonic functions on $\mcl G^\ep(D)$ which includes Theorem~\ref{thm-cont0} as a special case. 
 
In Section~\ref{sec-area-diam-deg}, we prove~\eqref{eqn-area-diam-deg-int0}, taking the moment bounds for the squared diameter over area and degree of the cells of $\mcl G^\ep$ from~\cite[Theorem 4.1]{gms-tutte} as a starting point. Heuristically, these moment bounds say that cells are not too likely to be ``long and skinny" and are not too likely to have large degree. The proof is outlined in Section~\ref{sec-lln-outline}, and is based on using long-range independence properties for the GFF to bound the variance of the integral appearing in~\eqref{eqn-area-diam-deg-int0}. 

Appendix~\ref{sec-gff-abs-cont} contains some basic estimates for the GFF which are needed in our proofs. Appendix~\ref{sec-index} contains an index of notation.

\begin{figure}[ht!]
 \begin{center}
\includegraphics[scale=.85]{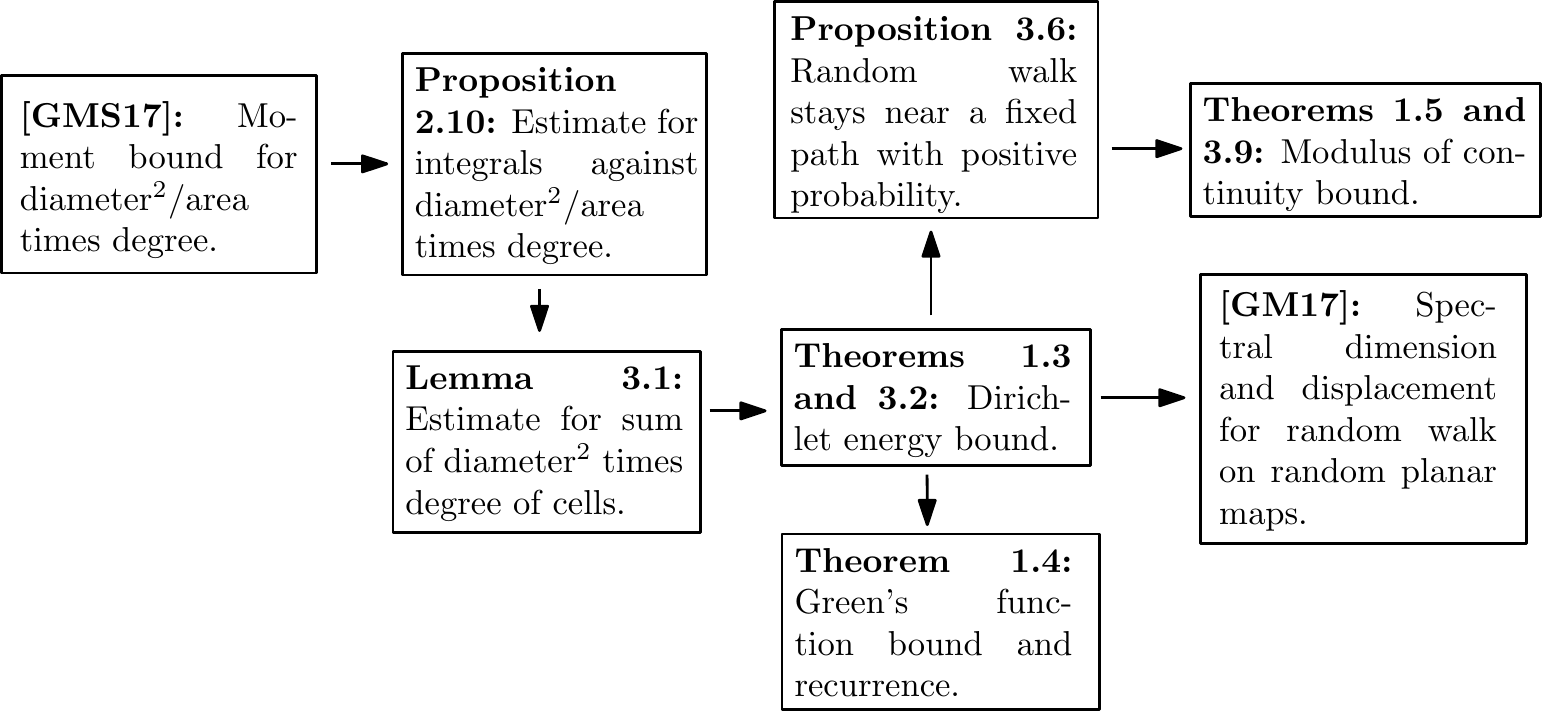} 
\caption{Schematic illustration of how various results related to this paper fit together.
}\label{fig-outline}
\end{center}
\end{figure}

\medskip
\noindent\textbf{Acknowledgements.}
We thank two anonymous referees for helpful comments on an earlier version of this article.
We thank the Mathematical Research Institute of Oberwolfach for its hospitality during a workshop where part of this work was completed.  E.G.\ was partially funded by NSF grant DMS-1209044.  S.S.\ was partially supported by NSF grants DMS-1712862 and DMS-1209044 and a Simons Fellowship with award number 306120.

\section{Preliminaries}
\label{sec-prelim}

\subsection{Background on GFF, LQG, and SLE}
\label{sec-sle-lqg-prelim}

Throughout this paper, we always fix an LQG parameter $\gamma \in (0,2)$ and a corresponding SLE parameter $\kappa'  = 16/\gamma^2  > 4$. 
Here we provide some background on the main continuum objects involved in this paper, namely the Gaussian free field, 
Liouville quantum gravity, and space-filling SLE$_{\kappa'}$. A reader who is already familiar with these objects can safely skip this subsection.

\subsubsection{The Gaussian free field}
\label{sec-gff-prelim}
 
Here we give a brief review of the definition of the zero-boundary and whole-plane Gaussian free fields. We refer the reader to~\cite{shef-gff} and the introductory sections of~\cite{ss-contour,ig1,ig4} for more detailed expositions. 

For an open domain $D\subset \BB C$ with harmonically non-trivial boundary (i.e., Brownian motion started from a point in $D$ a.s.\ hits $\bdy D$), we define $\mcl H(D)$ be the Hilbert space completion of the set of smooth, compactly supported functions on $D$ with respect to the \emph{Dirichlet inner product},
\eqb \label{eqn-dirichlet}
(\phi,\psi)_\nabla = \frac{1}{2\pi} \int_D \nabla \phi(z) \cdot \nabla \psi(z) \,dz .
\eqe
In the case when $D= \BB C$, constant functions $c$ satisfy $(c,c)_\nabla = 0$, so to get a positive definite norm in this case we instead take $\mcl H(\BB C)$ to be the Hilbert space completion of the set of smooth, compactly supported functions $\phi$ on $\BB C$ with $\int_{\BB C} \phi(z) \,dz = 0$, with respect to the same inner product~\eqref{eqn-dirichlet}.  
 
The \emph{(zero-boundary) Gaussian free field} on $D$ is defined by the formal sum
\eqb \label{eqn-gff-sum}
h = \sum_{j=1}^\infty X_j \phi_j 
\eqe
where the $X_j$'s are i.i.d.\ standard Gaussian random variables and the $\phi_j$'s are an orthonormal basis for $\mcl H(D)$. The sum~\eqref{eqn-gff-sum} does not converge pointwise, but for each fixed $\phi \in \mcl H(D)$, the formal inner product $(h ,\phi)_\nabla := \sum_{j=1}^\infty X_j (\phi_j , \phi)_\nabla $ is a centered Gaussian random variable and these random variables have covariances $\BB E [(h,\phi)_\nabla (h,\psi)_\nabla] = (\phi,\psi)_\nabla$. In the case when $D \not=\BB C$ and $D$ has harmonically non-trivial boundary, one can use integration by parts to define the ordinary $L^2$ inner products $(h,\phi) := -2\pi (h,\Delta^{-1}\phi)_\nabla$, where $\Delta^{-1}$ is the inverse Laplacian with zero boundary conditions, whenever $\Delta^{-1} \phi \in \mcl H(D)$. This allows one to define the GFF as a distribution (generalized function). See~\cite[Section 2]{shef-gff} for some discussion about precisely which spaces of distributions the GFF takes values in.

For $z\in D$ and $r > 0$ such that $B_r(z) \subset D$, we write $h_r(z) $ for the circle average of $h$ over $\partial B_r(z)$, as in~\cite[Section 3.1]{shef-kpz}. 
Following~\cite[Section 3.1]{shef-kpz}, to define this circle average precisely, one can let $\xi_r^z(w) := -\log\max\{r , |w-z|\}$, so that $-\Delta\xi_r^z$ (defined in the distributional sense) is $2\pi$ times the uniform measure on $\bdy B_r(z)$. One then defines $h_r(z)$ to be the Dirichlet inner product $ (h , \xi_r^z)_\nabla$.

In the case when $D=\BB C$, one can similarly define $(h ,\phi) := -2\pi (h ,\Delta^{-1}\phi)_\nabla$ where $\Delta^{-1}$ is the inverse Laplacian normalized so that $\int_{\BB C} \Delta^{-1} \phi(z) \, dz = 0$. With this definition, one has $(h+c , \phi) = (h ,\phi) + (c,\phi) = (h,\phi)$ for each $\phi \in \mcl H(\BB C)$, so the whole-plane GFF is only defined as a distribution modulo a global additive constant; that is, $h$ can be viewed as an equivalence class of distributions under the equivalence relation whereby two distributions are equivalent if their difference is a constant.  
 We will typically fix the additive constant for the GFF (i.e., choose a particular equivalence class representative) by requiring that the circle average $h_1(0)$ over $\bdy\BB D$ is zero. That is, we consider the field $h - h_1(0)$, which is well-defined not just modulo additive constant.  
The law of the whole-plane GFF is scale and translation invariant modulo additive constant, which means that for $z\in\BB C$ and $r>0$ one has $h(r\cdot +z) - h_r(z) \eqD h - h_1(0)$. 

If $h$ is a GFF on $D$, we can define the restriction of $h$ to an open set $U\subset D$ as the restriction of the distributional pairing $\phi \mapsto (h,\phi)$ to test functions $\phi$ which are supported on $V$.
It does not make literal sense to restrict the GFF to a closed set $K\subset D$, but the $\sigma$-algebra generated by $h|_K$ can be defined as $\bigcap_{\ep > 0} \sigma(h|_{B_\ep(K)})$, where $B_\ep(K)$ is the Euclidean $\ep$-neighborhood of $K$. 
Hence it makes sense to speak of, e.g., ``conditioning on $h|_K$".
 
The zero-boundary GFF on $D$ possesses the following Markov property (see, e.g.,~\cite[Section 2.6]{shef-gff}). Let $U \subset D$ be a sub-domain with harmonically non-trivial boundary.
Then we can write $h = \frk h + \rng h$, where $\frk h$ is a random distribution on $D$ which is harmonic on $U$ and is determined by $h|_{D\setminus U}$; and $\rng h$ is a zero-boundary GFF on $U$ which is independent from $h|_{D\setminus U}$. 
The restrictions of these distributions to $U$ are called the \emph{harmonic part} and \emph{zero-boundary part} of $h|_U$, respectively.

In the whole-plane case, one has a slightly more complicated Markov property due to the need to fix the additive constant.  
We state two versions of this Markov property, one with the field viewed modulo additive constant and one with the additive constant fixed.
The first version is a re-statement of~\cite[Proposition 2.8]{ig4}.

\begin{lem} \label{lem-whole-plane-markov'}
Let $h'$ be a whole-plane GFF viewed modulo additive constant. 
For each open set $U\subset\BB C$ with harmonically non-trivial boundary, we have the decomposition 
\eqb
h'  =   \frk h' + \rng h' ,
\eqe
where $\frk h'$ is a random distribution viewed modulo additive constant which is harmonic on $U$ and is determined by $h'|_{\BB C\setminus U}$, viewed modulo additive constant;
 and $\rng h'$ is a zero-boundary GFF on $\BB C\setminus U$ which is determined by the equivalence class of $h'|_{\BB C\setminus U}$ modulo additive constant.
\end{lem}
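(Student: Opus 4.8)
The plan is to adapt the orthogonal-decomposition proof of the Markov property for the zero-boundary GFF (cf.\ \cite[Section 2.6]{shef-gff}) to the whole plane, carrying the global additive constant along at each step; the statement is in any case a reformulation of \cite[Proposition 2.8]{ig4}, which one may simply cite. Concretely, I would write $h' = \sum_j X_j \phi_j$ modulo additive constant, with $(X_j)$ i.i.d.\ standard Gaussians and $\{\phi_j\}$ an orthonormal basis of $\mcl H(\BB C)$. Since every $\phi \in C_c^\infty(U)$ lies in $\mcl H(\BB C)$ (subtract a bump with the same total integral, spread out enough to have negligible Dirichlet norm) and the Dirichlet inner product of functions supported in $U$ is the same computed over $U$ or over $\BB C$, the space $\mcl H(U)$ is a closed subspace of $\mcl H(\BB C)$. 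I would then take $\{\phi_j\}$ to be the union of an orthonormal basis of $\mcl H(U)$ and one of $\mcl W := \mcl H(U)^\perp$, and set $\rng h' := \sum_{j:\phi_j\in\mcl H(U)} X_j\phi_j$ and $\frk h' := \sum_{j:\phi_j\in\mcl W} X_j\phi_j$, each viewed modulo additive constant, so that $h' = \frk h' + \rng h'$.

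By construction $\rng h'$ is a zero-boundary GFF on $U$ (carrying no additive-constant ambiguity). The field $\frk h'$ is harmonic on $U$: for $\psi\in C_c^\infty(U)\subset\mcl H(U)$, orthogonality gives $(\frk h',\psi)_\nabla = 0$, which says $\Delta\frk h' = 0$ on $U$ in the distributional sense, so $\frk h'$ agrees a.s.\ on $U$ with a smooth harmonic function by Weyl's lemma. Since $\rng h'$ and $\frk h'$ are measurable functions of disjoint sub-families of the i.i.d.\ sequence $(X_j)$, they are independent.

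It remains to identify the conditioning $\sigma$-algebras: that $\frk h'$ is determined by $h'|_{\BB C\setminus U}$ modulo additive constant and that $\rng h'$ is independent of it, where $\sigma(h'|_{\BB C\setminus U}) := \bigcap_{\ep>0}\sigma(h'|_{B_\ep(\BB C\setminus U)})$. Pairing only against mean-zero test functions (so everything is well defined modulo constants), and using that $h'|_{B_\ep(\BB C\setminus U)}$ may be tested against functions supported in $B_\ep(\BB C\setminus U)$ — which reaches a distance $\ep$ into $U$ — one finds after intersecting over $\ep$ that $\sigma(h'|_{\BB C\setminus U})$ is generated by the Gaussians $(h',\psi)_\nabla = -\tfrac{1}{2\pi}(h',\Delta\psi)$ for $\psi\in\mcl H(\BB C)$ with $\Delta\psi$ supported in the closed set $\BB C\setminus U$; these $\psi$ are exactly the ones harmonic on $U$, i.e.\ the elements of $\mcl W$ (those with smooth or layer-type $\Delta\psi$ being dense). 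Since $\frk h' = \op{Proj}_{\mcl W} h'$, this makes $\frk h'$ measurable with respect to $\sigma(h'|_{\BB C\setminus U})$, while $\rng h'$ — whose information coincides with that of $\{(h',\psi)_\nabla : \psi\in\mcl H(U)\}$, orthogonal to all the above generators ($\mcl H(U)\perp\mcl W$) — is independent of $\sigma(h'|_{\BB C\setminus U})$ by joint Gaussianity. I expect this last point to be the main obstacle: one must keep exact track of the additive constant (checking that $\frk h'$ is recovered from $h'|_{\BB C\setminus U}$ up to such a constant and no better, and that the constant-unambiguous field $\rng h'$ is independent of the whole equivalence class of $h'|_{\BB C\setminus U}$), and handle carefully the $\sigma$-algebra of the field restricted to the closed set $\BB C\setminus U$ — i.e.\ pairings of $h'$ against distributions supported on $\bdy U$, and the density of nice $\psi$ in $\mcl W$ — which is exactly the technical heart of the bounded-domain statement in \cite[Section 2.6]{shef-gff}. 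A route that avoids part of this is to fix the constant by $h'_1(0)=0$, realize $h'$ as the $R\to\infty$ limit of $h^R - h^R_1(0)$ for $h^R$ a zero-boundary GFF on $B_R(0)$, apply the already-established Markov property on each $B_R(0)$, and check that the harmonic and zero-boundary parts converge as $R\to\infty$.
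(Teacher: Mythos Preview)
Your proposal is correct, and in fact goes well beyond what the paper does: the paper gives no proof of this lemma at all, simply introducing it as ``a re-statement of~\cite[Proposition 2.8]{ig4}''. Your first sentence already captures exactly the paper's treatment; the orthogonal-decomposition sketch you add is a sound outline of how one would actually prove the cited result, and your identification of the measurability of $\frk h'$ and independence of $\rng h'$ with respect to $\sigma(h'|_{\BB C\setminus U})$ as the delicate point is accurate. (Incidentally, note that the lemma statement as printed contains typos: $\rng h'$ should be a zero-boundary GFF on $U$, not on $\BB C\setminus U$, and it should be \emph{independent from} rather than \emph{determined by} $h'|_{\BB C\setminus U}$; the proof of the subsequent Lemma~\ref{lem-whole-plane-markov} makes the intended meaning clear, and you have interpreted it correctly.)
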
 

We refer to the distributions $\frk h'|_U$ and $\rng h'|_U$ as the \emph{harmonic part} and \emph{zero-boundary part} of $h'|_U$, respectively.

Now suppose we want to fix the additive constant for the field so that $h_1(0) = 0$, i.e., we want to consider $h' - h'_1(0)$. 
In the setting of Lemma~\ref{lem-whole-plane-markov'}, the distributions $\frk h' - h_1'(0)$ and $\rng h'$ are \emph{not} independent if $\bdy\BB D\cap U\not=\emptyset$ since $h_1'(0)$ depends on $\rng h'$. 
Nevertheless, it turns out that a slight modification of these distributions are independent.

\begin{lem} \label{lem-whole-plane-markov}
Let $h$ be a whole-plane GFF with the additive constant chosen so that $h_1(0) = 0$. 
For each open set $U\subset\BB C$ with harmonically non-trivial boundary, we have the decomposition
\eqb
h  =   \frk h + \rng h
\eqe
where $\frk h$ is a random distribution which is harmonic on $U$ and is determined by $h|_{\BB C\setminus U}$ and $\rng h$ is independent from $\frk h$ and has the law of a zero-boundary GFF on $U$ minus its average over $\bdy \BB D \cap U$. If $U$ is disjoint from $\bdy \BB D$, then $\rng h$ is a zero-boundary GFF and is independent from $h|_{\BB C\setminus U}$. 
\end{lem}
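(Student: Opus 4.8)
The plan is to bootstrap from the modulo-additive-constant Markov property, Lemma~\ref{lem-whole-plane-markov'}, and carefully track how the normalization $h_1(0) = 0$ gets apportioned between the harmonic and zero-boundary parts. Applying Lemma~\ref{lem-whole-plane-markov'} to $h$ viewed modulo additive constant produces a decomposition $h = \frk h' + \rng h'$ (modulo additive constant) in which $\rng h'$ is a genuine (not just modulo-constant) zero-boundary GFF on $U$, independent of the harmonic part $\frk h'$, and $\frk h'$ is harmonic on $U$ and measurable with respect to the class of $h|_{\BB C\setminus U}$ modulo additive constant. Everything that follows is about upgrading this to an honest statement with the constant fixed.

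The key step is to select the right honest representatives. I would let $\frk h$ be the unique representative of the equivalence class $\frk h'$ whose circle average over $\bdy\BB D$ vanishes; this is well defined since the class is exactly the set of translates of any one representative by real constants, and the selection map ``class $\mapsto$ representative with zero $\bdy\BB D$-average'' is measurable, so $\frk h$ is still measurable with respect to $\sigma(h|_{\BB C\setminus U})$ (the $\sigma$-algebra of the class modulo constant is contained in $\sigma(h|_{\BB C\setminus U})$). Let $a$ denote the circle average of $\rng h'$ over $\bdy\BB D$; this makes sense even when $\bdy\BB D$ crosses $\bdy U$, e.g.\ as $h_1(0)$ minus the (well-defined) $\bdy\BB D$-circle-average of the harmonic extension of $h|_{\bdy U}$, and since $\rng h'$ is supported on $\ol U$ it equals $\tfrac{1}{2\pi}\int_{\bdy\BB D\cap U}\rng h'$. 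Now $h - \rng h'$ is a representative of $\frk h'$ with $\bdy\BB D$-circle-average $h_1(0) - a = -a$, while $\frk h$ is the representative with $\bdy\BB D$-circle-average $0$; hence $h - \rng h' = \frk h - a$, i.e.\ $h = \frk h + (\rng h' - a)$. Setting $\rng h := \rng h' - a = h - \frk h$, one then checks: $\frk h$ is harmonic on $U$ and $\sigma(h|_{\BB C\setminus U})$-measurable; $\rng h$ has the law of a zero-boundary GFF on $U$ minus its average over $\bdy\BB D\cap U$; and, since $\frk h$ is a function of $\frk h'$ only and $\rng h$ is a function of $\rng h'$ only, the independence of $\frk h'$ and $\rng h'$ from Lemma~\ref{lem-whole-plane-markov'} yields that $\frk h$ and $\rng h$ are independent.

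For the last assertion, suppose $U$ is disjoint from $\bdy\BB D$. Then $\bdy\BB D$ lies outside $\ol U$ up to a set of zero length, so $\rng h'$ (being supported on $\ol U$) has $a = 0$, whence $\rng h = \rng h'$ is an honest zero-boundary GFF on $U$. Moreover in this case $h|_{\BB C\setminus U}$ determines both its own class modulo additive constant and, via $h_1(0) = 0$ together with $\bdy\BB D\subset\BB C\setminus U$, the missing constant; hence $\sigma(h|_{\BB C\setminus U})$ coincides with the $\sigma$-algebra generated by the class of $h|_{\BB C\setminus U}$ modulo constant, which is $\sigma(\frk h')$, and the independence of $\rng h' = \rng h$ from $\frk h'$ upgrades to independence from $h|_{\BB C\setminus U}$.

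The step I expect to require the most care is the case $U\cap\bdy\BB D\neq\emptyset$ in the second paragraph: there the normalizing average $h_1(0)$ receives a contribution from the ``fresh'' zero-boundary field inside $U$, so the naive choice of $\frk h$ as the harmonic extension of $h|_{\bdy U}$ would leave $\frk h$ and $\rng h$ coupled through the normalization and break independence. Choosing instead the representative of $\frk h'$ with vanishing $\bdy\BB D$-circle-average and absorbing the leftover constant $a$ into the zero-boundary part is exactly what decouples them, and it is also the origin of the ``minus its average over $\bdy\BB D\cap U$'' correction appearing in the law of $\rng h$. A secondary technical point is the well-definedness of $a$ when $\bdy\BB D$ crosses $\bdy U$, which is handled by writing it in terms of the circle average of the harmonic extension as indicated above.
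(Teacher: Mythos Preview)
Your proposal is correct and follows essentially the same approach as the paper's own proof: both start from the modulo-additive-constant Markov property of Lemma~\ref{lem-whole-plane-markov'}, define $\frk h$ as the representative of the harmonic part with vanishing $\bdy\BB D$-circle-average, and set $\rng h$ equal to the zero-boundary part minus its $\bdy\BB D\cap U$ average (your $a$ is the paper's $\rng h_1'(0)$). Your discussion of the measurability of $\frk h$ and the well-definedness of $a$ when $\bdy\BB D$ crosses $\bdy U$ is a bit more explicit than the paper's, but the argument is the same.
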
 
\begin{proof}
Let $h'$ be a whole-plane GFF viewed modulo additive constant, so that $h = h' - h_1'(0)$. 
Write $h' = \frk h'  + \rng h'$ as in Lemma~\ref{lem-whole-plane-markov'}.  
Let $\rng h_1'(0)$ be the average of $\rng h'$ over $\bdy \BB D$ (equivalently, over $\bdy\BB D\cap U$).
Also let $\frk h'_1(0) = h_1'(0) - \rng h_1'(0)$ be the average of $\frk h'$ over $\bdy\BB D$. 
We define
\eqb
\frk h := \frk h' - \frk h_1'(0) \quad \text{and} \quad \rng h := \rng h' - \rng h_1'(0) .
\eqe
Then $\frk h$ is a harmonic function in $U$ and is well-defined (not just modulo additive constant) and $\rng h$ is a zero-boundary GFF in $U$ minus its average over $\bdy \BB D \cap U$.
By definition, we have $\frk h + \rng h = h' - h_1'(0) = h$.  Furthermore, $\frk h$ (resp.\ $\rng h$) is determined by $\frk h'$ (resp.\ $\rng h'$), so $\frk h$ and $\rng h$ are independent. Since $\frk h'$ is determined by $h'|_{\BB C\setminus U}$, viewed modulo additive constant, it follows that $\frk h'$ is determined by $h|_{\BB C\setminus U}$. Hence $\frk h$ is determined by $h|_{\BB C\setminus U}$.  

If $\bdy\BB D$ is disjoint from $U$, then $\rng h'_1(0) = 0$ so $\frk h|_{\BB C\setminus \ol U} = h|_{\BB C\setminus \ol U}$. 
This implies that $\rng h$ is a zero-boundary GFF and $\rng h$ is independent from $ h|_{\BB C\setminus \ol U}$. 
\end{proof}

\subsubsection{Liouville quantum gravity}
\label{sec-lqg-prelim}

Fix $\gamma \in (0,2)$. Following~\cite{shef-kpz,shef-zipper,wedges}, we define a \emph{$\gamma$-Liouville quantum gravity (LQG) surface} to be an equivalence class of pairs $(D,h)$, where $D\subset \BB C$ is an open set and $h$ is a distribution on $D$ (which will always be taken to be a realization of a random distribution which locally looks like the Gaussian free field), with two such pairs $(D,h)$ and $(\wt D , \wt h)$ declared to be equivalent if there is a conformal map $f : \wt D \rta D$ such that
\eqb \label{eqn-lqg-coord}
\wt h = h\circ f + Q\log |f'| \quad \text{for} \quad Q = \frac{2}{\gamma}  + \frac{\gamma}{2} .
\eqe
One can similarly define a $\gamma$-LQG surface with $k\in\BB N$ marked points. This is an equivalence class of $k+2$-tuples $(D,h,x_1,\dots,x_k)$ with the equivalence relation defined as in~\eqref{eqn-lqg-coord} except that the map $f$ is required to map the marked points of one surface to the corresponding marked points of the other.
We call different choices of the distribution $h$ corresponding to the same LQG surface different \emph{embeddings} of the surface. 

If $h$ is a random distribution on $D$ which can be coupled with a GFF on $D$ in such a way that their difference is a.s.\ a continuous function, then one can define the \emph{$\gamma$-LQG area measure} $\mu_h$ on $D$, which is defined to the a.s.\ limit
\eqbn
\mu_h = \lim_{\ep \rta 0} \ep^{\gamma^2/2} e^{\gamma h_\ep(z)} \, dz
\eqen
in the Prokhorov distance (or local Prokhorov distance, if $D$ is unbounded) as $\ep\rta 0$ along powers of 2~\cite{shef-kpz}. Here $h_\ep(z)$ is the circle-average of $h$ over $\bdy B_\ep(z)$, as defined in~\cite[Section~3.1]{shef-kpz} and discussed in Section~\ref{sec-prelim}. One can similarly define a boundary length measure $\nu_h$ on certain curves in $D$, including $\bdy D$~\cite{shef-kpz} and SLE$_\kappa$ type curves for $\kappa =\gamma^2$ which are independent from $h$~\cite{shef-zipper}. If $h$ and $\wt h$ are related by a conformal map as in~\eqref{eqn-lqg-coord}, then $f_* \mu_{\wt h} = \mu_h$ and $f_* \nu_{\wt h} = \nu_h$. 
Hence $\mu_h$ and $\nu_h$ can be viewed as measures on the LQG surface $(D,h)$. We note that there is a more general theory of regularized measures of this type, called \emph{Gaussian multiplicative chaos} which originates in work of Kahane~\cite{kahane}. See~\cite{rhodes-vargas-review,berestycki-gmt-elementary} for surveys of this theory. 
   
In this paper, we will be interested in two different types of $\gamma$-LQG surface. The first and most basic type of LQG surface we consider is the one where $h$ is a whole-plane GFF, as in Section~\ref{sec-gff-prelim}. We will typically fix the additive constant for the whole-plane GFF by requiring that the circle average over $\bdy\BB D$ is 0. 

The other type of $\gamma$-LQG surface with the topology of the plane which we will be interested in is the \emph{$\alpha$-quantum cone} for $\alpha \in (-\infty,Q)$, which is a doubly marked LQG surface $(\BB C ,h , 0, \infty)$ introduced in~\cite[Definition~4.10]{wedges}.  
Roughly speaking, the $\alpha$-quantum cone is obtained by starting with a whole-plane GFF plus $\alpha\log (1/|\cdot|)$ then ``zooming in" near the origin and re-scaling~\cite[Proposition~4.13(ii) and Lemma~A.10]{wedges}. 

We will not need the precise definition of the $\alpha$-quantum cone in this paper, but we recall it here for completeness.
Recall the Hilbert space $\mcl H(\BB C)$ used in the definition of the whole-plane GFF. 
Let $\mcl H^0(\BB C)$ (resp.\ $\mcl H^\dagger(\BB C)$) be the subspace of $\mcl H(\BB C)$ consisting of functions which are constant (resp.\ have mean zero) on each circle $\bdy B_r(0)$ for $r > 0$. By~\cite[Lemma 4.9]{wedges}, $\mcl H(\BB C)$ is the orthogonal direct sum of $\mcl H^0(\BB C)$ and $\mcl H^\dagger(\BB C)$. 

\begin{defn}[Quantum cone] \label{def-quantum-cone}
For $\alpha < Q$, the $\alpha$-quantum cone is the LQG surface $(\BB C , h , 0 , \infty)$ with the distribution $h$ defined as follows.
Let $B$ be a standard linear Brownian motion and let $\wh B$ be a standard linear Brownian motion conditioned so that $\wh B_t + (Q-\alpha) t > 0$ for all $t >0$. 
Let $A_t = B_t - \alpha t$ for $t\geq 0$ and let $A_t = \wh B_{-t} + \alpha t$ for $t < 0$. 
Then the projection of $h$ onto $\mcl H^0(\BB C)$ takes the constant value $A_t$ on each circle $B_{e^{-t}}(0)$. 
The projection of $h$ onto $\mcl H^\dagger(\BB C)$ is independent from the projection onto $\mcl H^0(\BB C)$ and agrees in law with the corresponding projection of a whole-plane GFF.
\end{defn}

We will typically be interested in quantum cones with $\alpha = 0$ or $\alpha = \gamma$. The case $\alpha =\gamma$ is special since a $\gamma$-LQG surface has a $\gamma$-log singularity at a typical point sampled from its $\gamma$-LQG measure (see, e.g.,~\cite[Section 3.3]{shef-kpz}), so the $\gamma$-quantum cone describes the local behavior of such a surface near a quantum typical point. The $\gamma$-quantum cone is also the type of LQG surface appearing in the embedding of the mated-CRT map. Similarly, the 0-quantum cone describes the behavior of a $\gamma$-LQG surface near a Lebesgue typical point. 
 
By the definition of an LQG surface, one can get another distribution describing the $\gamma$-quantum cone by replacing $h$ by $h(r\cdot) + Q\log r$ for some $r > 0$.
But, we will almost always consider the particular choice of $h$ appearing in Definition~\ref{def-quantum-cone}, which satisfies $\sup\{r > 0 : h_r(0) +Q \log r = 0\} = 1$. 
This choice of $h$ is called the \emph{circle average embedding}.  
A useful property of the circle average embedding (which is essentially immediate from~\cite[Definition 4.10]{wedges}) is that $h|_{\BB D}$ agrees in law with the corresponding restriction of a whole-plane GFF plus $-\alpha \log |\cdot|$, normalized so that its circle average over $\bdy \BB D$ is 0.

\subsubsection{Space-filling SLE$_{\kappa'}$}
\label{sec-wpsf-prelim}

The Schramm-Loewner evolution (SLE$_\kappa$) for $\kappa > 0$ is a one-parameter family of random fractal curves originally defined by Schramm in~\cite{schramm0}. 
SLE$_\kappa$ curves are simple for $\kappa \in (0,4]$, self-touching, but not space-filling or self-crossing, for $\kappa \in (4,8)$, and space-filling (but still not self-crossing) for $\kappa \geq 8$~\cite{schramm-sle}. 
One can consider SLE$_\kappa$  curves between two marked boundary points of a simply connected domain (chordal), from a boundary point to an interior point (radial), or between two points in $\BB C\cup\{\infty\}$ (whole-plane). We refer to~\cite{lawler-book} or~\cite{werner-notes} for an introduction to SLE. 
We will occasionally make reference to whole-plane SLE$_\kappa(\rho)$, a variant of whole-plane SLE$_\kappa$ where one keeps track of an extra marked ``force point" which is defined in~\cite[Section 2.1]{ig4}. However, we will not need many of its properties so we will not provide a detailed definition here.

Space-filling SLE$_{\kappa'}$ is a variant of SLE$_{\kappa'}$ for $\kappa ' > 4$ which was originally defined in~\cite[Section~1.2.3]{ig4} (see also~\cite[Section~1.4.1]{wedges} for the whole-plane case). Here we will review the construction of whole-plane space-filling SLE$_{\kappa'}$ from~$\infty$ to~$\infty$, which is the only version we will use in this paper. 

The basic idea of the construction is that, by SLE duality~\cite{zhan-duality1,zhan-duality2,dubedat-duality,ig1,ig4}, the outer boundary of an ordinary SLE$_{\kappa'}$ curve stopped at any given time is a union of SLE$_\kappa$-type curves for $\kappa = 16/\kappa' \in (0,4)$. It is therefore natural to try to construct a space-filling SLE$_{\kappa'}$-type curve by specifying its outer boundary at each fixed time. 
To construct the needed boundary curves, we will use the theory of imaginary geometry, which allows us to couple many different SLE$_\kappa$ curves with a common GFF. 

Let $\chi^{\op{IG}} := 2/\sqrt\kappa -\sqrt\kappa/2$. 
Following~\cite[Section 2.2]{ig4}, we define a \emph{whole-plane GFF viewed modulo a global additive multiple of $2\pi \chi^{\op{IG}}$} to be a random equivalence class of distributions obtained as follows. First, sample $h^{\op{IG}}$ from the law of the whole-plane GFF with the additive constant chosen so that $h^{\op{IG}}_1(0) = 0$.
Then, consider the equivalence class of $h^{\op{IG}}$ w.r.t.\ the equivalence relation whereby $h_1\sim h_2$ if and only if $h_1-h_2$ is a constant in $2\pi\chi^{\op{IG}}\BB Z$. 
Here, IG stands for ``Imaginary Geometry" and is used to distinguish the field $h^{\op{IG}}$ from the field $h$ corresponding to an LQG surface).  

Let $h^{\op{IG}}$ be a whole-plane GFF viewed modulo a global additive multiple of $2\pi\chi^{\op{IG}}$.
By~\cite[Theorem~1.1]{ig4}, for each fixed $z\in \BB C$ and $\theta \in (0,2\pi)$, one can define the \emph{flow line} of $h^{\op{IG}}$ started from $z$ with angle $\theta$, which is a whole-plane SLE$_\kappa(2-\kappa)$ curve from $z$ to $\infty$ coupled with $h^{\op{IG}}$, where $\kappa = 16/\kappa' \in (0,4)$ is the dual SLE parameter. Whole-plane SLE$_\kappa(2-\kappa)$ is a variant of SLE$_\kappa$ which is defined rigorously in~\cite[Section~2.1]{ig4}. For our purposes we will only need the flow lines started from points $z\in\BB Q^2$ with angles $\pi/2$ and $-\pi/2$, which we denote by $\eta_z^L$ and $\eta_z^R$, respectively (the $L$ and $R$ stand for ``left" and ``right", for reasons which will become apparent momentarily). 

For distinct $z,w \in \BB Q^2$, the flow lines $\eta_z^L$ and $\eta_w^L$ a.s.\ merge upon intersecting, and similarly with $R$ in place of $L$. The two flow lines $\eta_z^L$ and $\eta_z^R$ started at the same point a.s.\ do not cross, but these flow lines bounce off each other without crossing if and only if $\kappa' \in (4,8)$, equivalently $\kappa  \in (2,4)$~\cite[Theorem~1.7]{ig4}. 

We define a total order on $\BB Q^2$ by declaring that $z$ comes before $w$ if and only if $w$ lies in a connected component of $\BB C\setminus (\eta_z^L\cup \eta_z^R)$ which lies to the right of $\eta_z^L$ (equivalently, to the left of $\eta_z^R$).  The whole-plane analog of~\cite[Theorem~4.12]{ig4} (which can be deduced from the chordal case; see~\cite[Footnote 4]{wedges}) shows that there is a well-defined continuous curve $\eta : \BB R\rta \BB C$ which traces the points of $\BB Q^2$ in the above order, is such that $\eta^{-1}(\BB Q^2)$ is a dense set of times, and is continuous when parameterized by Lebesgue measure, i.e., in such a way that $\op{area}(\eta([a,b])) =b-a$ whenever $a < b$. The curve $\eta$ is defined to be the \emph{whole-plane space-filling SLE$_{\kappa'}$ from $\infty$ to $\infty$} associated with $h^{\op{IG}}$. 
 
The definition of $\eta$ implies that for each $z\in\BB C$, it is a.s.\ the case that the left and right boundaries of $\eta$ stopped when it first hits $z$ are equal to the flow lines $\eta_z^L$ and $\eta_z^R$ (which can be defined for a.e.\ $z\in\BB C$ simultaneously as the limits of the curves $\eta_w^L$ and $\eta_w^R$ as $\BB Q^2 \ni w \rta z$ w.r.t., e.g., the local Hausdorff distance). 
See Figure~\ref{fig-peanosphere-bm-def}.
The topology of $\eta$ is rather simple when $\kappa' \geq 8$. In this case, the left/right boundary curves $\eta_z^L$ and $\eta_z^R$ do not bounce off each other, so for $a < b$ the set $\eta([a,b])$ has the topology of a disk. In the case when $\kappa' \in (4,8)$, the curves $\eta_z^L$ and $\eta_z^R$ intersect in an uncountable fractal set and for $a<b$ the interior of the set $\eta([a,b])$ a.s.\ has countably many connected components, each of which has the topology of a disk. 

It is shown in~\cite[Theorem~1.16]{ig4} that for chordal space-filling SLE, the curve~$\eta$ is a.s.\ determined by~$h^{\op{IG}}$. The analogous statement in the whole-plane case can be proven using the same argument or deduced from the chordal case and~\cite[Footnote 4]{wedges}. We will need the following refined version of this statement. 

\begin{lem} \label{lem-wpsf-determined}
Let $D\subset \BB C$ and let $h^{\op{IG}}$ and $\eta$ be as above. Assume that $\eta$ is parameterized by Lebesgue measure. 
Let $U \subset D$ be an open set and for $z\in U\cap \BB Q^2$, let $T_z$ (resp.\ $S_z$) be the last time $\eta$ enters $U$ before hitting $z$ (resp.\ the first time $z$ exits $U$ after hitting $z$). Then for each $\delta > 0$, $h^{\op{IG}}|_{B_\delta(U)}$ a.s.\ determines the collection of curve segments
\eqb \label{eqn-wpsf-segments}
\left( \eta(\cdot + T_z)|_{[0 , S_z - T_z]} \right)_{z\in U\cap\BB Q^2} .
\eqe 
Here $B_\delta(U)$ is the Euclidean $\delta$-neighborhood of $U$, as in Section~\ref{sec-basic-notation}.
\end{lem}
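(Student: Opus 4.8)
\textbf{Proof plan for Lemma~\ref{lem-wpsf-determined}.}
The plan is to reduce the whole-plane statement to the known determination result for whole-plane space-filling SLE$_{\kappa'}$ (the whole-plane analog of~\cite[Theorem~1.16]{ig4}) by a locality argument for flow lines of the imaginary geometry field. The key point is that the curve segments in~\eqref{eqn-wpsf-segments} are precisely the ``excursions of $\eta$ into $U$,'' and that each such excursion, together with the order in which excursions are traced, can be read off from the left/right flow lines $\eta_z^L, \eta_z^R$ for $z \in U \cap \BB Q^2$, which in turn are local functions of $h^{\op{IG}}$.

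First I would recall that $\eta$ is a.s.\ determined by $h^{\op{IG}}$, so the collection of flow lines $(\eta_z^L , \eta_z^R)_{z \in \BB Q^2}$ and the induced total order on $\BB Q^2$ are determined by $h^{\op{IG}}$; moreover, for $z \in U \cap \BB Q^2$, the segment $\eta(\cdot + T_z)|_{[0, S_z - T_z]}$ is the connected component containing $z$ of $\eta^{-1}(U)$'s image, and its two endpoints together with the parameterization (by Lebesgue measure, i.e.\ by area of the traced region) are determined once we know which points of $U \cap \BB Q^2$ are traced during this excursion and in what order. So the goal reduces to showing that the flow lines $\eta_z^L, \eta_z^R$ for $z \in U \cap \BB Q^2$, \emph{restricted to the portion before they first exit $B_\delta(U)$}, are determined by $h^{\op{IG}}|_{B_\delta(U)}$, and that this partial information suffices to reconstruct each excursion into $U$ and the cyclic/linear order of excursions. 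The first part is a standard locality property: a flow line of $h^{\op{IG}}$ from $z$ is a local set, and its trajectory up until it first exits a domain $V \supset \{z\}$ is measurable with respect to $h^{\op{IG}}|_V$; this follows from the local-set/Markov property of the GFF together with the fact that the flow line up to exiting $V$ is determined by the flow line of $h^{\op{IG}}|_V$ with appropriate boundary conditions — I would cite the relevant statements from~\cite{ig1,ig4} (e.g.\ the characterization of flow lines as local sets and~\cite[Theorem 1.2]{ig4}). Applying this with $V = B_\delta(U)$ gives that $\eta_z^L|_{[0,\sigma_z^L]}$ and $\eta_z^R|_{[0,\sigma_z^R]}$ are determined by $h^{\op{IG}}|_{B_\delta(U)}$, where $\sigma_z^\cdot$ is the first exit time of $B_\delta(U)$.

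The second part — reconstructing the excursion segments from this partial flow-line data — is where I expect the main subtlety. The point is that when $\eta$ makes an excursion into $U$ starting at time $T_z$ and ending at time $S_z$, its left and right boundaries at intermediate times are flow lines which, before the excursion ends, cannot have exited $B_\delta(U)$ far; more precisely, during an excursion of $\eta$ into $U$, the relevant boundary arcs stay within $\ol U \subset B_\delta(U)$, so the ordering of points within a single excursion (which is determined by the flow lines $\eta_z^L, \eta_z^R$ restricted to a neighborhood of $\ol U$) is visible from $h^{\op{IG}}|_{B_\delta(U)}$. One then needs to argue that distinct excursions into $U$ are separated in the global order by the structure of the complementary flow lines, but that for the purpose of writing down each segment $\eta(\cdot + T_z)|_{[0, S_z - T_z]}$ as a curve (forgetting how it embeds into the global time parameterization of $\eta$, which is how the lemma is stated — it asks only for the collection of curve segments) one only needs the within-excursion order plus the Lebesgue (area) parameterization, both local to $\ol U$. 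I would make this rigorous by fixing a countable dense set of ``excursion-separating'' points and showing that the partition of $U \cap \BB Q^2$ into excursions, and the order within each part, is a measurable function of the restricted flow lines. The main obstacle is handling the case $\kappa' \in (4,8)$, where $\eta_z^L$ and $\eta_z^R$ bounce off each other and the set $\eta([a,b])$ is not simply connected, so ``the excursion into $U$'' may itself consist of several connected pieces and the order structure near $\bdy U$ is more delicate; I would deal with this by working with $B_\delta(U)$ for small $\delta$ (the statement quantifies over all $\delta > 0$) and using the fact, from the whole-plane analog of~\cite[Theorem~4.12]{ig4}, that the order on $\BB Q^2$ is determined locally by the flow-line pattern, so that intersecting it with $U \cap \BB Q^2$ and reading off consecutive runs recovers exactly the excursion segments.
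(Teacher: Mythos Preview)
Your proposal is correct and follows essentially the same approach as the paper's proof: use that the flow lines $\eta_z^L,\eta_z^R$ stopped upon exiting $U$ are local sets for $h^{\op{IG}}$ (via~\cite[Theorem~1.1]{ig4}) and determined by $h^{\op{IG}}$ (via~\cite[Theorem~1.2]{ig4}), hence determined by $h^{\op{IG}}|_{B_\delta(U)}$; then observe that the excursion segments~\eqref{eqn-wpsf-segments} are recoverable from these stopped flow lines by the very definition of space-filling SLE$_{\kappa'}$. The paper stops the flow lines at their exit from $U$ rather than $B_\delta(U)$, and dispatches your ``second part'' in one sentence (``it is clear from the definition of space-filling SLE$_{\kappa'}$\dots''), so your worries about the $\kappa'\in(4,8)$ case and about separating distinct excursions are unnecessary: the total order on $U\cap\BB Q^2$ and the Lebesgue parameterization within each excursion are read off directly from the stopped flow-line data, uniformly in $\kappa'$.
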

 

\begin{proof}
It is shown in~\cite[Theorem~1.2]{ig4} that the left/right boundary curves $\eta_z^L$ and $\eta_z^R$ for $z\in\BB Q^2$ are a.s.\ determined by $h^{\op{IG}}$. 
For $z\in U\cap\BB Q^2$, let $\tau_z^L$ (resp.\ $\tau_z^R$) be the exit time of $\eta_z^L$ (resp.\ $\eta_z^R$) from $U$. 
By~\cite[Theorem~1.1]{ig4}, each of the sets $\eta_z^L([0,\tau_z^L])$ and $\eta_z^R([0,\tau_z^R])$ is a local set for $h^{\op{IG}}$ in the sense of~\cite[Lemma~3.9]{ss-contour}, so is a.s.\ conditionally independent from $h^{\op{IG}}|_{\BB C\setminus B_\delta(U)}$ given $h^{\op{IG}}|_{B_\delta(U)}$. Each of these sets is a.s.\ determined by $h^{\op{IG}}$, so is a.s.\ determined by $h^{\op{IG}}|_{B_\delta(U)}$. It is clear from the definition of space-filling SLE$_{\kappa'}$ given above that the curve segments~\eqref{eqn-wpsf-segments} are a.s.\ determined by $\eta_z^L|_{[0,\tau_z^L]}$ and $\eta_z^R|_{[0,\tau_z^R]}$ for $z\in U\cap\BB Q^2$. 
\end{proof}

\subsection{The degree of the root vertex has an exponential tail}
\label{sec-mated-crt-map}

Most of the results in this paper make use of the embedding of the mated-CRT maps which comes from SLE-decorated LQG (see Section~\ref{sec-peanosphere}).  However, the following result is proved directly from the ``Brownian motion" definition of the mated-CRT maps in~\eqref{eqn-inf-adjacency}, and does not rely on this embedding.

\begin{lem} \label{lem-exp-tail}
Let $\gamma \in (0,2)$ and let $\mcl G^\ep$ for $\ep > 0$ be a mated-CRT map. There are constants $c_0,c_1 > 0$, depending only on $\gamma$, such that for $n\in\BB N$, $\ep > 0$, and $x\in\ep\BB Z  $, 
\eqbn
\BB P\left[ \op{deg}\left( x ; \mcl G^\ep \right)  > n  \right] \leq c_0 e^{-c_1 n} .
\eqen
\end{lem}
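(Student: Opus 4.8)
The plan is to work directly with the Brownian definition~\eqref{eqn-inf-adjacency} and, by Brownian scaling, reduce to the case $\ep = 1$ and $x = 0$, so that we must bound the probability that the vertex $0 \in \BB Z$ has more than $n$ neighbors in $\mcl G^1$. The degree of $0$ splits into a ``left'' part coming from the $L$-adjacency condition and a ``right'' part coming from the $R$-condition, plus the two deterministic edges to $\pm 1$; by symmetry and a union bound it suffices to bound the number of $L$-neighbors of $0$, and again by symmetry (reflecting time about the interval $[-1,0]$ and using that $L$ is two-sided) it suffices to bound the number of neighbors $x \in \BB Z_{>0}$. So the goal becomes: show $\BB P[ \#\{x \in \BB N : x \sim_L 0\} > n] \le c_0 e^{-c_1 n}$.

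First I would make the adjacency condition concrete. Let $m := \inf_{t \in [-1,0]} L_t$, which is a.s.\ finite and negative. For $x \ge 2$, the condition~\eqref{eqn-inf-adjacency} for the pair $\{0,x\}$ reads $m \vee (\inf_{[x-1,x]} L) \le \inf_{[0,x-1]} L$. Since $\inf_{[0,x-1]} L \le \inf_{[x-1,x']} L$ is false in general, note that $\inf_{[0,x-1]}L \le \inf_{[0,x]}L$ is wrong direction — rather $\inf_{[0,x-1]}L \ge \inf_{[0,x]}L$; the point is that for the condition to hold we need in particular $\inf_{[0,x-1]} L \ge m$, i.e.\ the running infimum of $L$ on $[0,x-1]$ has not yet dropped below the level $m$. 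Thus if we let $\sigma := \inf\{t \ge 0 : L_t < m\}$ be the first time after $0$ that $L$ goes below its infimum over $[-1,0]$, then every $L$-neighbor $x \in \BB N$ of $0$ satisfies $x - 1 \le \sigma$, i.e.\ $x \le \sigma + 1$. Consequently the number of positive $L$-neighbors of $0$ is at most $\sigma + 2$ (counting integers in $[1, \sigma+1]$ plus the vertex $1$), and it remains to show $\sigma$ has an exponential tail, i.e.\ $\BB P[\sigma > n] \le c_0 e^{-c_1 n}$.

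To control $\sigma$: conditionally on $m = \inf_{[-1,0]} L$ and on $L_0$, the process $(L_t)_{t \ge 0}$ is a Brownian motion started from $L_0 \ge m$, and $\sigma$ is the first time it hits the level $m$, which is at distance $L_0 - m \ge 0$ below the start. On each unit time interval $[k, k+1]$, Brownian motion has probability at least some universal $p_0 > 0$ of decreasing by at least, say, $D$ for any fixed $D$; more to the point, the distance $L_0 - m$ has a tail that needs to be handled too. The cleanest route: let $M := \sup_{[-1,0]} L - \inf_{[-1,0]} L$ be the oscillation of $L$ on $[-1,0]$, which has a Gaussian-type (in fact sub-exponential, even sub-Gaussian squared) tail, so $L_0 - m \le M$ has an exponential tail. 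Then on the event $\{M \le \ell\}$ we need the Brownian motion started at $L_0$ to drop by at most $\ell$ to reach $m$; splitting $[0,\infty)$ into unit intervals and using that on each interval there is probability $\ge q(\ell) > 0$ that $L$ decreases by at least $\ell$ (reaching any level within $\ell$ below the interval's starting point), we get $\BB P[\sigma > k \mid M \le \ell] \le (1 - q(\ell))^{k}$ roughly — actually one must be slightly careful since after a drop $L$ can come back up, but one can instead use the stronger event ``on $[k,k+1]$, $\inf L$ over that interval is $\le m$,'' whose conditional probability given $\mcl F_k$ and $\{M \le \ell, L_k \ge m\}$ is bounded below by the Gaussian probability of a BM dropping $\ell$ over unit time, a fixed $q(\ell) > 0$. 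Then $\BB P[\sigma > n] \le \BB P[M > \ell_n] + (1-q(\ell_n))^{n}$ and optimizing the threshold $\ell_n$ (e.g.\ $\ell_n = \delta \sqrt n$ or $\ell_n = \delta \log n$ — a logarithmic choice suffices since $q(\ell) \gtrsim e^{-c\ell^2}$, giving $(1-q(\ell_n))^n \le \exp(-n e^{-c \delta^2 (\log n)^2})$... hmm, that is not quite exponential) — actually the right balance is $\ell_n$ a small constant times $\sqrt{n}$: then $M > \ell_n$ has probability $\le e^{-c n}$ and $q(\ell_n) \ge e^{-c' n}$... that also fails. The correct choice is $\ell_n = $ constant: fix $\ell$ large enough that $\BB P[M > \ell]$ is small, then $\BB P[\sigma > n] \le \BB P[M > \ell] \cdot(\text{handled separately by conditioning on }M) + \ldots$; more precisely condition on the whole path on $[-1,0]$, so $m$ is known, then $\BB P[\sigma > n \mid L|_{[-1,0]}] \le (1 - q_0)^{\lfloor n \rfloor}$ where $q_0 > 0$ is a \emph{universal} lower bound on the probability that a Brownian motion started at any point drops below a level at most distance $L_0 - m$ below it within one unit of time — but $q_0$ depends on $L_0 - m$. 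The honest fix: $\BB P[\sigma > n] \le \sum_{j} \BB P[j \le L_0 - m < j+1]\, (1-q_j)^{n}$ with $q_j \asymp e^{-c j^2}$ and $\BB P[L_0 - m \ge j] \le e^{-c j^2}$; then split the sum at $j \le \sqrt{n}/C$ versus $j > \sqrt n / C$: for small $j$, $(1 - q_j)^n \le \exp(-n e^{-c j^2}) \le \exp(- n e^{-c n / C^2})$, which is $\le e^{-c' n}$ if $C$ is large; for large $j$, $\BB P[L_0 - m \ge j] \le e^{-c n/C^2}$. Summing gives $\BB P[\sigma > n] \le c_0 e^{-c_1 n}$, as desired.

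\textbf{Main obstacle.} The conceptual steps (scaling reduction, the observation that neighbors $x$ of $0$ force $x - 1 \le \sigma$, and that $\sigma$ has an exponential tail) are straightforward; the only genuinely fiddly point is the last one — getting a clean \emph{exponential} (not merely sub-polynomial or stretched-exponential) tail for $\sigma$, which requires correctly trading off the Gaussian tail of the depth $L_0 - m$ that the Brownian motion must descend against the $e^{-cj^2}$-type cost of descending depth $j$ in unit time, and splitting the resulting sum at the right scale $j \asymp \sqrt n$. One should also double-check that the $L$- and $R$-conditions can be handled by the same argument and combined by a union bound, and that the two deterministic edges to $\pm 1$ (and any double edges) only change the constants. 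I expect the write-up to be short once the threshold optimization is pinned down.
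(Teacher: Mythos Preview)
Your reduction steps (Brownian scaling to $\ep=1$, $x=0$; symmetry in $L$ vs.\ $R$; symmetry in forward vs.\ backward time) are correct and match the paper. The fatal gap is the step ``it remains to show $\sigma$ has an exponential tail.'' It does not: the first passage time of standard Brownian motion to a fixed level $m<0$ satisfies $\BB P[\sigma>n\mid m]=\BB P[|N(0,1)|<|m|/\sqrt n]\asymp |m|/\sqrt n$, and after integrating out $m$ (which has the law of $\inf_{[-1,0]}L$) you get $\BB P[\sigma>n]\asymp n^{-1/2}$. Your attempted optimization $\BB P[\sigma>n\mid |m|\approx j]\le (1-q_j)^n$ is also incorrect: on the interval $[k,k+1]$ the walk starts from $L_k$, which can be arbitrarily far above $0$, so the chance of reaching $m$ in that unit is governed by $L_k-m$, not by $|m|$ alone. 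No choice of threshold $j\asymp\sqrt n$ rescues this; the bound $N\le \sigma+O(1)$ is simply too lossy.

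What you discarded is exactly what makes the tail exponential. The adjacency condition has two halves: $\inf_{[0,y-1]}L>m$ \emph{and} $\inf_{[y-1,y]}L<\inf_{[0,y-1]}L$. The second says that each right $L$-neighbor $y$ of $0$ is a \emph{record} interval---the running minimum strictly decreases on $[y-1,y]$. The paper exploits this: one can set up stopping times $0=\tau_0<\tau_1<\cdots$ (each a time at which $L$ sits at its running minimum, spaced at least one unit apart) so that the increments $L_{\tau_j}-L_{\tau_{j+1}}$ are i.i.d.\ copies of $-\inf_{[0,1]}B\sim|N(0,1)|$, and so that $N$ is bounded by a constant times $K:=\min\{j:L_{\tau_j}\le m\}$. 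Then $\BB P[K>n\mid m]=\BB P[\sum_{j<n}|N_j|<|m|]$, and combining the large-deviations bound $\BB P[\sum_{j<n}|N_j|<cn]\le e^{-c'n}$ with the Gaussian tail $\BB P[|m|>cn]\le e^{-c''n^2}$ gives the exponential tail. The paper's one-sentence sketch (``for each stopping time $\tau$ with $L_\tau\le 0$, positive conditional probability that $\inf_{[\tau,\tau+1]}L\le m$; along with the Gaussian tail for $m$'') is pointing at exactly this record/drop structure, not at bounding $\sigma$.
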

\begin{proof}
By Brownian scaling and translation invariance, the law of the pointed graph $(\mcl G^\ep , x)$ does not depend on $\ep$ or $x$, so we can assume without loss of generality that $\ep  =1$ and $x = 0$.
By~\eqref{eqn-inf-adjacency}, the time reversal symmetry of $(L,R)$, and the fact that $(L,R) \eqD (R,L)$, it suffices to show that there exists constants $c_0,c_1 > 0$ as in the statement of the lemma such that with
\eqbn
N :=  \#  \left\{ y\in \BB N   :  \left( \inf_{t\in [-1,0]} L_t \right) \vee \left( \inf_{t\in [y-1,y]} L_t \right) < \inf_{t \in [0,y-1]} L_t \right\} ,
\eqen
we have $\BB P[N > n] \leq c_0 e^{-c_1 n}$. This follows from a straightforward Brownian motion argument based on the fact that for each stopping time $\tau$ for $L$ with $L_\tau \leq L_0 = 0$, it holds with positive conditional probability given $L|_{(-\infty,\tau]}$ that $\inf_{t\in [\tau , \tau + 1]} L_t \leq \inf_{t\in [-1,0]} L_t$; along with the Gaussian tail bound for $\inf_{t\in [-1,0]} L_t$.
\end{proof} 

From Lemma~\ref{lem-exp-tail} and a union bound, we get the following upper bound for the maximal degree of the cells of $\mcl G^\ep$ which intersect a specified Euclidean ball. 
  
\begin{lem} \label{lem-max-deg-nonlocal}
Suppose we are in the setting of Section~\ref{sec-standard-setup} with $h$ equal to the circle-average embedding of a $\gamma$-quantum cone into $(\BB C , 0, \infty)$. 
If we define the vertex $x_z^\ep \in \ep \BB Z  $ as in~\eqref{eqn-pt-vertex}, then for each $\zeta > 0$,
\eqb \label{eqn-max-deg-nonlocal}
\BB P\left[ \max_{z\in\BB D} \op{deg}\left( x_z^\ep ; \mcl G^\ep \right)  \leq (\log \ep^{-1})^{1+\zeta} \right] \geq  1 -  o_\ep^\infty(\ep)  .
\eqe
\end{lem}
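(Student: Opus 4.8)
The plan is to reduce the statement to the single-vertex degree bound of Lemma~\ref{lem-exp-tail} by a union bound over the cells of $\mcl G^\ep$ that intersect $\BB D$. The key point is that the target value $n := (\log\ep^{-1})^{1+\zeta}$ has been chosen precisely so that the single-vertex estimate decays faster than any power of $\ep$: for any fixed $x\in\ep\BB Z$, Lemma~\ref{lem-exp-tail} gives
\[
\BB P\left[\op{deg}(x;\mcl G^\ep) > (\log\ep^{-1})^{1+\zeta}\right] \le c_0 e^{-c_1(\log\ep^{-1})^{1+\zeta}} = c_0\,\ep^{\,c_1(\log\ep^{-1})^\zeta} ,
\]
which is $o_\ep^\infty(\ep)$ because $(\log\ep^{-1})^\zeta\to\infty$, and remains $o_\ep^\infty(\ep)$ even after being multiplied by any fixed power of $\ep^{-1}$. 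Thus it suffices to show that outside an event of probability $o_\ep^\infty(\ep)$ the vertex set $\mcl V\mcl G^\ep(\BB D)=\{x_z^\ep:z\in\BB D\}$ is contained in a \emph{deterministic} set of at most $\ep^{-M}$ vertices, for some fixed $M=M(\gamma)>0$, after which a plain union bound over that set together with the display above finishes the proof.

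To confine $\mcl V\mcl G^\ep(\BB D)$, I would first invoke Lemma~\ref{lem-max-cell-diam}: outside an event of probability $o_\ep^\infty(\ep)$, every cell $H_x^\ep$ meeting $\BB D$ has Euclidean diameter less than $1$, hence lies in $B_2(0)$; in particular $\eta(x)\in B_2(0)$ for all $x\in\mcl V\mcl G^\ep(\BB D)$, so $\mcl V\mcl G^\ep(\BB D)\subseteq\eta^{-1}(\overline{B_2(0)})\cap\ep\BB Z$ up to a shift by $\ep$. Since $\eta$ is continuous with $\eta(t)\to\infty$ as $|t|\to\infty$, the set $\eta^{-1}(\overline{B_2(0)})$ is a.s.\ compact; what is needed is the \emph{quantitative} version, namely that for a suitable fixed $M=M(\gamma)$ we have $\eta^{-1}(\overline{B_2(0)})\subseteq[-\ep^{-M},\ep^{-M}]$ outside an event whose probability is $O_\ep(\ep^s)$ for any prescribed $s>0$. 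This is the main obstacle. It amounts to a polynomial upper tail for $\sup\{|t|:\eta(t)\in\overline{B_2(0)}\}$ — equivalently, for the amount of $\mu_h$-mass that $\eta$ must fill before it stops returning to $\overline{B_2(0)}$, i.e.\ before $\eta([0,t])\supseteq\overline{B_2(0)}$ and $\eta((-\infty,-t])\supseteq$ the remaining part. Such a bound can be read off from estimates for the $\gamma$-quantum cone measure: the circle-average embedding (Definition~\ref{def-quantum-cone}, Section~\ref{sec-lqg-prelim}) makes $\mu_h(B_r(0))$ grow at least like a positive power of $r$ modulo lower-order (Brownian and GMC) fluctuations, so the event that $\eta([0,t])$ has not yet absorbed $\overline{B_2(0)}$ has probability decaying polynomially in $t$. (Alternatively one can bound $\BB P[\exists x:H_x^\ep\cap\BB D\ne\emptyset,\ \op{deg}(x;\mcl G^\ep)>n]\le\sum_{x\in\ep\BB Z}\BB P[H_x^\ep\subseteq B_2(0),\ \op{deg}(x;\mcl G^\ep)>n]$ and estimate this infinite sum via Hölder's inequality together with polynomial decay of $\BB P[\eta(x)\in B_2(0)]$ in $|x|$; the required input is essentially the same.)

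Granting this confinement estimate, I would finish as follows. Fix an arbitrary $s>0$. Choose $M=M(s,\gamma)$ large enough that both ``bad'' events above — that some cell meeting $\BB D$ has diameter $\ge 1$, and that $\eta^{-1}(\overline{B_2(0)})\not\subseteq[-\ep^{-M},\ep^{-M}]$ — have probability $O_\ep(\ep^s)$; this is permissible since $o_\ep^\infty(\ep)$ only requires a bound for each fixed $s$ separately, so $M$ may depend on $s$. On the complementary event, $\mcl V\mcl G^\ep(\BB D)$ is contained in the deterministic set $[-\ep^{-M}-\ep,\ \ep^{-M}]_{\ep\BB Z}$, which has at most $O(\ep^{-M-1})$ elements, so by a union bound and Lemma~\ref{lem-exp-tail},
\[
\BB P\left[\max_{z\in\BB D}\op{deg}(x_z^\ep;\mcl G^\ep) > (\log\ep^{-1})^{1+\zeta}\right] \le O_\ep(\ep^s) + O_\ep(\ep^{-M-1})\cdot c_0\,\ep^{\,c_1(\log\ep^{-1})^\zeta} = O_\ep(\ep^s),
\]
the last step using that $c_1(\log\ep^{-1})^\zeta\to\infty$ eventually exceeds the fixed exponent $M+1$. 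Since $s>0$ was arbitrary, the probability in question is $o_\ep^\infty(\ep)$, which is the claim.
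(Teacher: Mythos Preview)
Your proposal is correct and follows essentially the same approach as the paper: control $\mcl V\mcl G^\ep(\BB D)$ by a deterministic interval $[-\ep^{-M},\ep^{-M}]_{\ep\BB Z}$ of polynomial size, then take a union bound using Lemma~\ref{lem-exp-tail}. The paper's version is slightly more direct: rather than first bounding cell diameters via Lemma~\ref{lem-max-cell-diam} and then arguing that $\eta^{-1}(\ol{B_2(0)})\subset[-\ep^{-M},\ep^{-M}]$, it simply cites a standard SLE/LQG estimate (\cite[Proposition~6.2]{hs-euclidean}) giving $\BB D\subset\eta([-\ep^{-M},\ep^{-M}])$ with probability $1-O_\ep(\ep^{cM})$, which already forces $x_z^\ep\in[-\ep^{-M},\ep^{-M}+\ep]_{\ep\BB Z}$ for all $z\in\BB D$; your detour through Lemma~\ref{lem-max-cell-diam} is unnecessary but harmless.
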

\begin{proof}
By standard SLE/LQG estimates (see, e.g.,~\cite[Proposition 6.2]{hs-euclidean}), for $M>0$ we have $\BB D \subset \eta([-\ep^{-M} , \ep^{-M}])$ except on an event of probability decaying like some positive power of $\ep^M$ (the power depends only on $\gamma$). By Lemma~\ref{lem-exp-tail} and a union bound, it holds with probability $1-o_\ep^\infty(\ep)$ that $\op{deg}(x ; \mcl G^\ep) \leq (\log \ep^{-1})^{1+\zeta}$ for each $x\in [-\ep^{-M} , \ep^{-M}]_{\ep\BB Z }$. Combining the above estimates and sending $M\rta\infty$ shows that~\eqref{eqn-max-deg-nonlocal} holds.
\end{proof}

In light of Lemma~\ref{lem-exp-tail}, we can deduce the recurrence of the simple random walk on $\mcl G^\ep$ from the results of~\cite{gn-recurrence} (we will give an independent proof in Section~\ref{sec-recurrence}).
Indeed, by~\cite[Theorem~1.1]{gn-recurrence}, the random walk on an infinite rooted random planar map $(G,\BB v)$ is recurrent provided the law of the degree of the root vertex $\BB v$ has an exponential tail and $(G,\BB v)$ is the distributional limit of finite rooted planar maps in the local (Benjamini-Schramm) topology~\cite{benjamini-schramm-topology}. The first condition for $(\mcl G^\ep , 0)$ follows from Lemma~\ref{lem-exp-tail}. To obtain the second condition, we observe that the law of $\mcl G^\ep$ is invariant under the operation of translating its vertex set by $\ep$, and consequently
$\mcl G^\ep$ is the distributional local limit as $n\rta\infty$ of the planar map whose vertex set is $[-n \ep ,n\ep]_{\ep\BB Z  }$, with two vertices connected by an edge if and only if they are connected by an edge in $\mcl G^\ep$, each rooted at a uniformly random vertex in $[-n\ep,n\ep]_{\BB Z  }$. 

We note that it is also known that the simple random walk on the adjacency graph of cells associated with a space-filling SLE$_{\kappa'}$ on an independent 0-quantum cone is recurrent: indeed, this follows from~\cite[Theorem 1.16]{gms-random-walk} and~\cite[Proposition 3.1]{gms-tutte}.

\subsection{Maximal cell diameter}
\label{sec-max-cell-diam}

In this brief subsection we establish a polynomial upper bound for the maximum size of the cells of $\mcl G^\ep$ which intersect a fixed Euclidean ball. In other words, we prove an analog of Corollary~\ref{cor-max-edge} with the SLE/LQG embedding in place of the Tutte embedding, which we will eventually use to prove Corollary~\ref{cor-max-edge}. 

\begin{lem} \label{lem-max-cell-diam}
Suppose we are in the setting of Section~\ref{sec-standard-setup}, with $h$ either a whole-plane GFF normalized so that its circle average over $\bdy\BB D$ is zero or the circle-average embedding of a 0-quantum cone or a $\gamma$-quantum cone. 
For each $q \in \left( 0 , \tfrac{2}{(2+\gamma)^2} \right)$, each $\rho \in (0,1)$, and each $\ep \in (0,1)$, 
\eqb \label{eqn-max-cell-diam}
\BB P\left[ \op{diam}\left(H_x^\ep \right) \leq \ep^q ,\: \forall x\in \mcl V\mcl G^\ep(B_{\rho}(0))  \right] \geq 1 -  \ep^{\alpha(q,\gamma) + o_\ep(1)}  ,
\eqe  
where the rate of the $o_\ep(1)$ depends only on $q$, $\rho$, and $\gamma$ and
\eqbn 
\alpha(q,\gamma) := \frac{q}{2\gamma^2  } \left( \frac{1}{q} - 2 - \frac{\gamma^2}{2} \right)^2 - 2 q  .
\eqen
\end{lem}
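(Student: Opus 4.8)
The plan is to reduce the maximal-diameter estimate over $\mcl V\mcl G^\ep(B_\rho(0))$ to a single-cell estimate for one fixed cell $H_x^\ep = \eta([x-\ep,x])$ via a union bound, then control the number of relevant cells and the tail of a single cell's diameter. First I would observe that by standard SLE/LQG estimates (as in the proof of Lemma~\ref{lem-max-deg-nonlocal}, e.g.~\cite[Proposition~6.2]{hs-euclidean}), for any fixed $M > 0$ we have $B_\rho(0) \subset \eta([-\ep^{-M}, \ep^{-M}])$ outside an event of probability $o_\ep^\infty(\ep)$, so that the cells intersecting $B_\rho(0)$ are a subset of $\{H_x^\ep : x \in [-\ep^{-M}, \ep^{-M}]_{\ep\BB Z}\}$, a collection of at most $2\ep^{-M-1}$ cells. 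Since $\op{diam}(H_x^\ep)$ has the same law for every $x$ (by translation invariance of the Brownian motion / stationarity of the structure graph under the shift by $\ep$ in the quantum cone case, and analogously in the GFF case away from $\bdy\BB D$), a union bound reduces matters to showing
\eqb
\BB P\left[ \op{diam}(H_x^\ep) > \ep^q \right] \leq \ep^{\alpha(q,\gamma) + M + o_\ep(1)}
\eqe
for a single $x$, after which one sends $M \to 0$ (having chosen $M$ tiny so as not to spoil the exponent; the $o_\ep(1)$ absorbs any loss).

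The heart of the matter is the single-cell diameter tail. The key mechanism is the relationship between Euclidean diameter and $\gamma$-LQG mass: a cell has $\mu_h$-mass exactly $\ep$, so if $\op{diam}(H_x^\ep) > \ep^q$ then the ball of radius $\ep^q$ around $\eta(x)$ has $\mu_h$-mass at least (a constant times) $\ep$; conversely, if $\mu_h(B_{\ep^q}(z)) < \ep$ for all relevant $z$, no cell near $z$ can have diameter exceeding $\ep^q$. So I would bound $\BB P[\mu_h(B_r(z)) \geq \ep]$ for $r = \ep^q$. For the whole-plane GFF (and, by absolute continuity on $\BB D$, for the circle-average embeddings of the $0$- and $\gamma$-quantum cones — this is exactly why the hypothesis restricts to $B_\rho(0) \subset \BB D$), the scaling relation $h(r\cdot + z) - h_r(z) \eqD h - h_1(0)$ gives $\mu_h(B_r(z)) \eqD r^{2} e^{\gamma h_r(z)} \mu_{\wt h}(\BB D)$ for an independent GFF-type field $\wt h$, with $h_r(z)$ Gaussian of variance $\log(1/r) + O(1)$. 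One then uses the moment bounds for $\mu_h(\BB D)$ (finite moments of all orders in $(0, 4/\gamma^2)$, from Gaussian multiplicative chaos theory~\cite{rhodes-vargas-review}) together with the Gaussian tail of $h_r(z)$, optimizing over which moment to apply: writing $r = \ep^q$ and $\mu_h(B_r(z)) \geq \ep$ as $\gamma h_r(z) \geq (1 - 2q)\log \ep^{-1} - \log \mu_{\wt h}(\BB D) + O(1)$, a Gaussian large-deviations estimate for $h_r(z)$ combined with a Markov bound on the moment of $\mu_{\wt h}(\BB D)$ produces a probability of order $\ep^{\beta(q,\gamma) + o_\ep(1)}$; the quadratic expression defining $\alpha(q,\gamma)$ is precisely the outcome of completing the square in this optimization (the term $-2q$ coming from the $r^2 = \ep^{2q}$ prefactor and the main quadratic from the Gaussian exponent $\frac{(\text{target})^2}{2\,\op{Var}}$ with $\op{Var} \approx \log\ep^{-1}$, rescaled by $\gamma^2$). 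The restriction $q < \frac{2}{(2+\gamma)^2}$ is exactly the range where the relevant moment exponent is admissible, i.e.\ where $\alpha(q,\gamma) > 0$ so the bound is nontrivial.

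Finally, I would assemble: choose a mesh of points $z$ in $B_{2\rho}(0)$ at spacing $\ep^q$, so that every cell of diameter $> \ep^q$ intersecting $B_\rho(0)$ forces $\mu_h(B_{2\ep^q}(z)) \geq \ep$ for some mesh point $z$ (there are $O(\ep^{-2q})$ such points, which contributes the $-2q$ correction already built into $\alpha$ — though more carefully this is absorbed into the $o_\ep(1)$ after being dominated by the dominant $\ep^{-M}$ from the first union bound, so the bookkeeping must be done with care about which terms go where). A union bound over mesh points and over the $O(\ep^{-M})$ cells, together with the single-ball estimate, yields~\eqref{eqn-max-cell-diam}. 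The main obstacle I anticipate is not any single ingredient but the careful tracking of exponents through the two nested union bounds and the moment optimization, making sure the $M$-loss and the $\ep^{-2q}$ mesh factor are genuinely lower-order (or already accounted for in the stated $\alpha$) and that the error terms collapse into the advertised $o_\ep(1)$ whose rate depends only on $q$, $\rho$, $\gamma$; a secondary point requiring care is justifying the transfer from the GFF to the quantum-cone embeddings via the local absolute continuity of $h|_{\BB D}$ with a whole-plane GFF plus a log singularity, which is where the hypotheses on $h$ and on $\rho \in (0,1)$ are used.
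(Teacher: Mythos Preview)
Your proposal has a genuine gap at the heart of the argument. The implication you state---``if $\op{diam}(H_x^\ep) > \ep^q$ then $\mu_h(B_{\ep^q}(\eta(x))) \gtrsim \ep$''---runs the wrong way: a cell of large diameter does \emph{not} fit inside a ball of radius $\ep^q$, so there is no reason that ball should inherit the cell's mass. (Your ``conversely'' is also backwards: if every ball of radius $\ep^q$ had mass $< \ep$, cells would be forced to be \emph{large}, not small, since a cell of mass $\ep$ could then fit in no such ball.) What is actually needed is a non-trivial space-filling SLE regularity input, namely~\cite[Proposition~3.4 and Remark~3.9]{ghm-kpz}: a segment of $\eta$ with Euclidean diameter $\geq d$ a.s.\ \emph{contains} a Euclidean ball of radius $\geq d^{1+o_d(1)}$. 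With this in hand the paper argues: a cell of diameter $\geq \ep^q$ contains a ball of radius $\geq \ep^{\wt q}$ for $\wt q$ slightly larger than $q$, and that ball has $\mu_h$-mass at most $\ep$; so a uniform \emph{lower} bound on the mass of all such balls (Lemma~\ref{lem-min-ball}, whose proof is precisely the Gaussian-tail/circle-average computation you sketch) rules out large cells. Your exponent calculation is essentially the one in Lemma~\ref{lem-min-ball}, but without the SLE ball-containment step there is no logical bridge from ball masses to cell diameters.

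There is a second problem with the union-bound-over-cells strategy. The law of $\op{diam}(H_x^\ep)$ is \emph{not} the same for all $x$: stationarity of $(L,R)$ gives stationarity of the mated-CRT map as a graph, but the Euclidean embedding is fixed by the circle-average normalization of $h$, which breaks translation invariance (shifting in $x$ corresponds to re-centering and \emph{randomly rescaling} the picture). Also, $B_\rho(0) \subset \eta([-\ep^{-M},\ep^{-M}])$ fails with probability of order $\ep^{cM}$ for some $c>0$, not $o_\ep^\infty(\ep)$, so you cannot send $M\to 0$ without losing the containment. The paper avoids all of this by union-bounding over a \emph{spatial} mesh of centers (inside the proof of Lemma~\ref{lem-min-ball}) rather than over cells; that mesh union bound is exactly the source of the $-2q$ in $\alpha(q,\gamma)$.
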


Lemma~\ref{lem-max-cell-diam} is an easy consequence of the following basic estimate for the $\gamma$-LQG measure.

\begin{lem} \label{lem-min-ball}
Let $h$ be as in Lemma~\ref{lem-max-cell-diam}. For $\delta \in (0,1)$, $p >2\gamma$, and $\rho\in (0,1)$, 
\eqb \label{eqn-min-ball} 
\BB P\left[ \inf_{z\in B_{\rho}(0) } \mu_{h}(B_\delta(z)) \geq \delta^{2+\tfrac{\gamma^2}{2} +p}  \right] \geq 1 - \delta^{\tfrac{p^2}{2\gamma^2} -2 + o_\delta(1)}  ,
\eqe 
with the rate of the $o_\delta(1)$ depending on $p$, $\rho$, and $\gamma$. 
\end{lem}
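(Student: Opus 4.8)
The plan is to reduce the uniform lower bound over $z \in B_\rho(0)$ to a single-point estimate via a union bound over a $\delta$-net, using the elementary containment $B_\delta(w) \subset B_{2\delta}(z)$ whenever $|z-w|<\delta$. So it suffices to show that for a fixed $z \in B_\rho(0)$, the probability that $\mu_h(B_\delta(z))$ is smaller than $\delta^{2+\gamma^2/2+p}$ is at most $\delta^{p^2/(2\gamma^2) - 2 + o_\delta(1)}$, with the $o_\delta(1)$ uniform over $z \in B_\rho(0)$; then summing over the $O(\delta^{-2})$ net points and adjusting $p$ slightly (absorbing the $\delta^{-2}$ into the $o_\delta(1)$, and replacing $2\delta$ by $\delta$ at the cost of a constant) gives the claim. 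First I would handle the single-point estimate. Using the circle-average embedding property that $h|_{\BB D}$ agrees in law with a whole-plane GFF plus $-\alpha\log|\cdot|$ (with $\alpha \in \{0,\gamma\}$), normalized to have zero average over $\bdy\BB D$ --- and noting that for the plain whole-plane GFF case $\alpha = 0$ --- one can work directly with the whole-plane GFF. The key tool is the exact lognormality of circle averages: $h_\delta(z) - h_1(0)$ (or the appropriate normalization for points $z \in B_\rho(0)$) is Gaussian with variance $\log \delta^{-1} + O_\rho(1)$.

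The heart of the argument is a standard ``one-scale'' lower bound for GMC mass. Write $h = h_\delta(z) + (h - h_\delta(z))$ near $B_\delta(z)$; by the Markov property of the GFF, the field $h - h_\delta(z)$ restricted to $B_\delta(z)$ is, up to a harmonic (hence bounded on $B_{\delta/2}(z)$, with controlled fluctuations) correction, a scaled copy of a GFF on a unit-size domain, independent of $h_\delta(z)$. By the LQG coordinate change / scaling relation (the behavior of $\mu_h$ under $z \mapsto \delta z$, which multiplies mass by $\delta^{2+\gamma^2/2}e^{\gamma h_\delta(z)}$ up to the harmonic correction), one gets
\[
\mu_h(B_\delta(z)) \geq \delta^{2+\gamma^2/2} e^{\gamma h_\delta(z)} \cdot (\text{positive factor with a non-degenerate law}) \cdot e^{-\gamma \cdot (\text{small fluctuation})}.
\]
Thus $\mu_h(B_\delta(z)) < \delta^{2+\gamma^2/2+p}$ forces either $h_\delta(z)$ to be atypically negative, roughly $h_\delta(z) \leq -p\gamma^{-1}\log\delta^{-1} \cdot(1 + o(1))$ (wait --- sign: $e^{\gamma h_\delta(z)} < \delta^p$ means $h_\delta(z) < (p/\gamma)\log\delta$, i.e. very negative), or the ``positive factor'' to be atypically small (a polynomially-small event, handled by crude GMC negative-moment bounds and absorbed into $o_\delta(1)$), or the harmonic correction to be large (a Gaussian tail, also negligible). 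Since $h_\delta(z)$ is Gaussian with variance $(1+o(1))\log\delta^{-1}$, the Gaussian tail bound gives
\[
\BB P\!\left[h_\delta(z) < \tfrac{p}{\gamma}\log\delta\right] \leq \exp\!\left(-\frac{(p/\gamma)^2(\log\delta^{-1})^2}{2\log\delta^{-1}}(1+o_\delta(1))\right) = \delta^{\frac{p^2}{2\gamma^2} + o_\delta(1)},
\]
which is exactly the exponent claimed (before the $-2$ from the union bound). One must also verify $p > 2\gamma$ is what makes the ``positive factor is small'' event negligible compared to the main Gaussian event --- this is where the negative-moment/Chebyshev bound for $\mu(\text{unit domain})$ enters, and where the hypothesis $p > 2\gamma$ gets used.

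The main obstacle I anticipate is controlling the error terms uniformly in $z \in B_\rho(0)$ and making the Markov-property decomposition precise: the ``harmonic correction'' $\mathfrak{h}$ on $B_\delta(z)$ is not literally constant, so one needs an estimate (of the kind collected in Appendix~\ref{sec-gff-abs-cont}) that $\sup_{B_{\delta/2}(z)}|\mathfrak{h} - \mathfrak{h}(z)|$ has Gaussian tails with variance $O_\rho(1)$, uniformly in $z$ and $\delta$, and similarly that the spatial average vs.\ circle-average discrepancies are negligible. A secondary subtlety is the passage from $B_\delta$ to $B_{2\delta}$ in the net argument: one should set up the single-point bound at scale $2\delta$ from the start, or note that the exponent $p^2/(2\gamma^2)$ is continuous in $p$ so the loss is swallowed by $o_\delta(1)$. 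None of these is deep; they are the routine ``make GMC scaling rigorous'' bookkeeping, and I would cite the relevant moment bounds for $\mu_h$ (e.g.\ from the references on Gaussian multiplicative chaos, or the estimates already invoked elsewhere in the paper) rather than reprove them. Finally, Lemma~\ref{lem-max-cell-diam} follows from Lemma~\ref{lem-min-ball} by the observation that a cell $H_x^\ep$ of diameter $\geq \ep^q$ which intersects $B_\rho(0)$ contains a Euclidean ball of radius $\asymp \ep^q$, hence has $\mu_h$-mass at least $\ep^{q(2+\gamma^2/2+p)}$; but $\mu_h(H_x^\ep) = \ep$ by the parameterization, so $\ep \geq \ep^{q(2+\gamma^2/2+p)}$, i.e.\ $q(2+\gamma^2/2+p) \geq 1$, and optimizing the choice of $p$ subject to this constraint yields the stated exponent $\alpha(q,\gamma)$.
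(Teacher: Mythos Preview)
Your proposal is correct and follows essentially the same route as the paper: a single-point lower tail bound for $\mu_h(B_\delta(z))$ obtained by separating out the circle average $e^{\gamma h_\delta(z)}$, a Gaussian tail for $h_\delta(z)$ giving the exponent $p^2/(2\gamma^2)$, and then a union bound over a $\delta$-net costing a factor $\delta^{-2}$.

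Two small remarks. First, the paper does not carry out your Markov-property decomposition by hand; it simply cites an external estimate (\cite[Lemma~3.12]{ghm-kpz}) to get $\BB P[\mu_h(B_\delta(z)) < \delta^{2+\gamma^2/2+\zeta} e^{\gamma h_\delta(z)}] = o_\delta^\infty(\delta)$, which packages your ``positive factor with non-degenerate law'' and ``harmonic correction'' terms into one black box. Your sketch of how to prove that black box is fine, but overkill here. Second, you slightly misplace the role of the hypothesis $p>2\gamma$: it is not needed to make the ``positive factor is small'' event negligible (that event is $o_\delta^\infty(\delta)$ regardless of $p$), but rather to ensure $p^2/(2\gamma^2) > 2$ so that the exponent survives the union bound. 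Your closing paragraph on Lemma~\ref{lem-max-cell-diam} is extraneous to the statement under review, and its claim that a cell of diameter $\geq \ep^q$ contains a ball of radius $\asymp \ep^q$ is not automatic; the paper handles this via a separate SLE regularity input from~\cite{ghm-kpz}.
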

\begin{proof}
If $h$ is a circle-average embedding of an $\alpha$-quantum cone, then $h|_{\BB D}$ agrees in law with a whole-plane GFF normalized so that its circle average over $\bdy\BB D$ is 0 plus $-\alpha\log|\cdot|$. If $h$ is a whole-plane GFF, then $\mu_{h-\alpha\log|\cdot|}(A) \geq \mu_h(A)$ for each Borel set $A\subset \BB D$. So, we can restrict attention to the case when $h$ is a whole-plane GFF normalized so that its circle average over $\bdy\BB D$ is 0.

Let $\{h_r\}_{r\geq 0}$ be the circle average process of $h$ and fix $\zeta \in (0,1)$. 
By standard estimates for the $\gamma$-LQG measure (see, e.g.,~\cite[Lemma~3.12]{ghm-kpz}), for $z\in \BB C$, 
\eqbn
\BB P\left[ \mu_{h}(B_\delta(z)) < \delta^{2+\tfrac{\gamma^2}{2} + \zeta }  e^{\gamma h_\delta(z)} \right] =o_\delta^\infty(\delta) .
\eqen
For $z\in B_{ \rho}(0)$, the random variable $h_\delta(z)$ is centered Gaussian with variance $\log \delta^{-1}  + O_\delta(1)$. Therefore,
\eqbn
\BB P\left[ e^{\gamma h_\delta(z)} < \delta^{p-\zeta} \right] \leq \delta^{\tfrac{(p-\zeta)^2}{2\gamma^2}} .
\eqen
Combining these estimates and sending $\zeta\rta 0$ shows that 
\eqb \label{eqn-min-ball-1pt}
\BB P\left[ \mu_{h}(B_\delta(z)) < \delta^{2+ \tfrac{\gamma^2}{2}  +p } \right] \leq \delta^{\tfrac{p^2}{2\gamma^2} + o_\delta(1)} .
\eqe 
We obtain~\eqref{eqn-min-ball} by applying~\eqref{eqn-min-ball-1pt} with $\delta/2$ in place of $\delta$ then taking a union bound over all $z\in (\frac{\delta}{4}\BB Z^2 ) \cap B_{ \rho}(0)$. 
\end{proof}

\begin{proof}[Proof of Lemma~\ref{lem-max-cell-diam}]
Fix $\wt q \in \left( q , \tfrac{2}{(2+\gamma)^2} \right)$. 
By Lemma~\ref{lem-min-ball} applied with $1/\wt q - 2-\gamma^2/2  >2\gamma$ in place of $p$ and $\ep^{\wt q}$ in place of $\delta$, 
it holds with probability at least $1-\ep^{\alpha(\wt q,\gamma) +o_\ep(1)}$ that each Euclidean ball contained in $B_{(1+\rho)/2}(0)$ with radius at least $ \ep^{ \wt q }  $ has $\mu_h$-mass at least $\ep$.  
By~\cite[Proposition~3.4 and Remark~3.9]{ghm-kpz}, it holds except on an event of probability $o_\ep^\infty(\ep)$ that each segment of $\eta $ contained in $\BB D$ with diameter at least $ \ep^{ q}  $ contains a Euclidean ball of radius at least $  \ep^{\wt q }  $. 
Hence with probability $1-\ep^{\alpha(\wt q,\gamma) +o_\ep(1)}$, each segment of $\eta$ which intersects $B_\rho(0)$ and has Euclidean diameter at lest $\ep^q$ has $\mu_h$-mass at least $\ep$. 
Each cell $H_x^\ep = \eta([x-\ep,x])$ is a segment of $\eta$ with $\mu_h$-mass $\ep$, so with probability at least $1-\ep^{\alpha(\wt q,\gamma) +o_\ep(1)}$ each such cell which intersects $B_{ \rho}(0)$ has diameter at most $\ep^{  q}$. 
Sending $\wt q \rta q$ concludes the proof.  
\end{proof}
 
\subsection{Estimates for integrals against structure graph cells}
\label{sec-cell-int}

A key input in our estimates for harmonic functions on $\mcl G^\ep$ (i.e., Theorem~\ref{thm-dirichlet-harmonic0} and Theorem~\ref{thm-cont0}) is a bound for the integrals of Euclidean functions against quantities associated with the cells of $\mcl G^\ep$. 
We state this bound in this subsection, and postpone its proof until Section~\ref{sec-area-diam-deg}. 
 
Suppose we are in the setting of Section~\ref{sec-standard-setup} and that $h$ is the circle-average embedding of  a $\gamma$-quantum cone.
For $z\in \BB C$, let 
\eqb \label{eqn-area-diam-deg-def}
u^\ep(z) :=  \frac{ \op{diam}(H_{x_z^\ep}^\ep )^2}{\op{area}(H_{x_z^\ep}^\ep )}  \op{deg}(x_z^\ep ; \mcl G^\ep) .
\eqe
Our main estimate for $u^\ep(z)$ is a one-sided ``law of large numbers" type estimate for integrals against $u^\ep(z)$. 
The following is a simplified (but perhaps more intuitive) version of our result, which states in quantitative way that the mean value of $u^\ep(z)$ tends to be smaller than a fixed constant.

\begin{prop} \label{prop-area-diam-deg-lln0} 
For each $\rho \in (0,1)$, there are constants $\alpha = \alpha(\gamma) > 0$ and $A =A( \rho, \gamma) > 0$ such that the following is true. Suppose $C>1$ and $\ep \in (0,1)$ and $D\subset B_{ \rho}(0)$ is a domain with $\op{area}(B_r(\bdy D)) \leq C r  $ for each $r \in (0,1)$ and $\op{area}(D) \geq \ep^\alpha$. Then
\eqb \label{eqn-area-diam-deg-lln0}
\BB P\left[ \int_{D}  u^\ep(z) \, dz  \leq  A \op{area}(D)  \right] \geq 1 - O_\ep(\ep^\alpha)
\eqe 
with the rate of the $O_\ep(\ep^\alpha)$ depending only on $C$, $\rho$, and $\gamma$. 
\end{prop}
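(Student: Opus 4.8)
\emph{Strategy.} Write $I^\ep := \int_D u^\ep(z)\,dz$. The plan is to show that $\BB E[I^\ep] \asymp \op{area}(D)$ and that $\op{Var}(I^\ep) \preceq \ep^{\beta}$ for some $\beta = \beta(\gamma) > 0$, and then to conclude by Chebyshev's inequality: using the hypothesis $\op{area}(D) \geq \ep^\alpha$,
\[
\BB P\left[ I^\ep > 2\,\BB E[I^\ep] \right] \leq \frac{\op{Var}(I^\ep)}{\BB E[I^\ep]^2} \preceq \frac{\ep^{\beta}}{\op{area}(D)^2} \leq \ep^{\beta - 2\alpha} ,
\]
which is $O_\ep(\ep^\alpha)$ as soon as $\alpha \leq \beta/3$; taking $A$ to be a suitable multiple of the implicit constant in $\BB E[I^\ep] \preceq \op{area}(D)$ then yields~\eqref{eqn-area-diam-deg-lln0}. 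The main work is the variance bound.

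\emph{First moment.} Since consecutive vertices of $\ep\BB Z$ are adjacent, $\op{deg}(x;\mcl G^\ep) \geq 2$, and by the isodiametric inequality $\op{diam}(H_x^\ep)^2/\op{area}(H_x^\ep) \geq 4/\pi$; hence $u^\ep$ is bounded below by a universal constant and $\BB E[I^\ep] \succeq \op{area}(D)$ for free. For the upper bound, Tonelli's theorem gives $\BB E[I^\ep] = \int_D \BB E[u^\ep(z)]\,dz$, so it suffices to prove $\BB E[u^\ep(z)] \leq A_0(\rho,\gamma)$ uniformly over $z\in B_\rho(0)$ and $\ep \in (0,1)$. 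This follows from the cellwise moment bounds for the squared diameter over area and the degree of the cells of $\mcl G^\ep$ proved in \cite[Theorem 4.1]{gms-tutte}, which we use as a black box; those bounds rest on the exact scale invariance of the $\gamma$-quantum cone (which accommodates the $-\gamma\log|\cdot|$ singularity of the circle-average embedding, so that $\BB E[u^\ep(z)]$ does not blow up near the origin), the exponential tail for the degree from Lemma~\ref{lem-exp-tail}, and the fact that the law of $\mcl G^\ep$ does not depend on $\ep$.

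\emph{Variance bound.} Here $\op{Var}(I^\ep) = \int_D\int_D \op{Cov}(u^\ep(z),u^\ep(w))\,dz\,dw$, and I would fix constants $0 < q_0 < q_1 < \tfrac{2}{(2+\gamma)^2}$ depending only on $\gamma$ and split this double integral according to whether $|z-w| \leq \ep^{q_0}$ or $|z-w| > \ep^{q_0}$. On the near-diagonal part, Cauchy--Schwarz and a moment of order slightly above $2$ from \cite[Theorem 4.1]{gms-tutte} (together with Lemma~\ref{lem-exp-tail}) give $|\op{Cov}(u^\ep(z),u^\ep(w))| \preceq 1$, and since for each $z$ the set $\{w : |z-w| \leq \ep^{q_0}\}$ has area $O(\ep^{2q_0})$, this part contributes $O(\op{area}(D)\,\ep^{2q_0}) = O(\ep^{2q_0})$. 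The off-diagonal part is the crux. Let $G^\ep$ be the event from Lemma~\ref{lem-max-cell-diam}, applied with $(1+\rho)/2$ in place of $\rho$ and exponent $q_1$, that every cell of $\mcl G^\ep$ intersecting $B_{(1+\rho)/2}(0)$ has diameter at most $\ep^{q_1}$; then $\BB P[(G^\ep)^c] \leq \ep^{\alpha(q_1,\gamma) + o_\ep(1)}$ with $\alpha(q_1,\gamma) > 0$ for $q_1$ small. On $G^\ep$, for $z\in B_\rho(0)$ the cell $H_{x_z^\ep}^\ep$, all of its $\mcl G^\ep$-neighbors, their $\mu_h$-masses, and the adjacencies among them --- hence $u^\ep(z)$ --- are determined by the restrictions of $h$ and of the imaginary-geometry field $h^{\op{IG}}$ (which determines $\eta$) to $B_{C\ep^{q_1}}(z)$ for a suitable $C = C(\gamma)$; the needed localization of arcs of $\eta$ is exactly Lemma~\ref{lem-wpsf-determined}. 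For $|z-w| > \ep^{q_0}$ the balls $B_{C\ep^{q_1}}(z)$ and $B_{C\ep^{q_1}}(w)$ are disjoint, so I would condition on $h$ and $h^{\op{IG}}$ outside the union of the larger balls $B_{\ep^{q_0/2}}(z)$ and $B_{\ep^{q_0/2}}(w)$: by the Markov property (Lemma~\ref{lem-whole-plane-markov} and the corresponding statement for $h^{\op{IG}}$) the conditional laws of the fields on these two larger balls are independent GFFs plus harmonic corrections, and an interior gradient estimate shows the harmonic corrections oscillate by at most $O\left(\ep^{q_1-q_0/2}\sqrt{\log\ep^{-1}}\right) = o_\ep(1)$ over the small balls $B_{C\ep^{q_1}}(z)$, $B_{C\ep^{q_1}}(w)$. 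Feeding this into an absolute-continuity estimate for the GFF of the type in Appendix~\ref{sec-gff-abs-cont} (and using the moment bounds for uniform integrability) shows that, conditionally on the outside field, $u^\ep(z)$ and $u^\ep(w)$ are independent up to an error that is a positive power of $\ep$. Combining with a Cauchy--Schwarz bound for the contribution of $(G^\ep)^c$ --- which is at most a product of high moments times $\BB P[(G^\ep)^c]$ to a positive power, hence a positive power of $\ep$ --- gives $|\op{Cov}(u^\ep(z),u^\ep(w))| \leq \ep^{\beta_0}$ for $|z-w| > \ep^{q_0}$ and some $\beta_0 = \beta_0(\gamma) > 0$, so this part contributes $O(\op{area}(D)^2\,\ep^{\beta_0}) = O(\ep^{\beta_0})$. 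Taking $\beta := \min(2q_0,\beta_0)$ completes the variance bound and the proof.

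\emph{Main difficulty.} The heart of the matter is the off-diagonal covariance estimate, and two points there require genuine care: (i) making rigorous the claim that, on $G^\ep$, $u^\ep(z)$ is a local functional of the field near $z$ --- a priori a cell or one of its neighbors could be macroscopically large, and even on $G^\ep$ one must verify that the combinatorial data (which cells touch which, and where the $\mu_h$-mass-$\ep$ subdivision of $\eta$ sits near $z$) is genuinely read off within a bounded neighborhood; and (ii) the quantitative decorrelation step, converting the small oscillation of the harmonic part of the conditioned field into a quantitative bound on $|\op{Cov}(u^\ep(z),u^\ep(w))|$, which relies on the absolute-continuity estimates of Appendix~\ref{sec-gff-abs-cont} together with the moment bounds. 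The regularity hypothesis $\op{area}(B_r(\bdy D)) \leq Cr$ is not needed for the argument above; it enters the refined, weighted statement Proposition~\ref{prop-area-diam-deg-gamma}.
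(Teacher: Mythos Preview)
Your overall strategy---bound the expectation, bound the variance via a near/far-diagonal split with GFF long-range independence for the far part, then Chebyshev---is exactly the route the paper takes (the paper proves the more general Proposition~\ref{prop-area-diam-deg-gamma} this way in Section~\ref{sec-area-diam-deg} and then sets $f\equiv 1$ on $D$ to get Proposition~\ref{prop-area-diam-deg-lln0}). So the architecture is right.

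However, your off-diagonal step has a genuine gap, and it is precisely the one you flag as ``main difficulty (i)'' without resolving. The claim that on $G^\ep$ the quantity $u^\ep(z)$ is determined by $(h,h^{\op{IG}})|_{B_{C\ep^{q_1}}(z)}$ is \emph{false}, and not for the reason that a cell might be large. The obstruction is the index shift (Remark~\ref{remark-cell-nonlocal}): even if you see the curve $\eta$ and the measure $\mu_h$ perfectly in a neighborhood of $z$, you still do not know which of the $\ep$-mass segments $\eta([t-\ep,t])$ containing $z$ is the actual cell $H_{x_z^\ep}^\ep$, because that requires knowing $\tau_z \bmod \ep$, a global quantity depending on all of $\mu_h$ along $\eta((-\infty,\tau_z])$. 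No high-probability event on cell diameters fixes this. The paper's resolution is not to argue locality of $u^\ep(z)$ but to replace it by an explicit local upper bound $u_*^\ep(z)$ (Section~\ref{sec-area-diam-deg-localize}): one takes the supremum of $\op{diam}^2/\op{area}$ over \emph{all} $\ep$-mass segments through $z$, and splits the degree into an ``inner'' count and an ``outer'' crossing count, both of which are shift-invariant and hence genuinely local (Lemma~\ref{lem-u_*-msrble}). One then runs the variance argument on $u_*^\ep$ and uses $u^\ep \leq 2u_*^\ep$ at the end.

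Two smaller points. First, your appeal to \cite[Theorem~4.1]{gms-tutte} for $\BB E[u^\ep(z)]\preceq 1$ uniformly in $z\in B_\rho(0)$ skips real work: those moment bounds are for the $0$-quantum cone at the origin, whereas here $h$ is a $\gamma$-quantum cone and $z$ is arbitrary. The paper handles this in two steps (Lemma~\ref{lem-moment-given-circle-avg} to move $z$, then Section~\ref{sec-gamma-cone-proof} to add the $-\gamma\log|\cdot|$ singularity via re-rooting at a quantum-typical point), and the latter step is exactly where the boundary hypothesis $\op{area}(B_r(\partial D))\leq Cr$ is used---so your closing remark that this hypothesis is unnecessary for the unweighted statement is not correct for the paper's approach. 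Second, in the variance argument the paper does not attempt to bound unconditional second moments of $u_*^\ep$; it works on the truncated variable $\BB 1_{F^\ep(z)}u_*^\ep(z)$ throughout and controls the complement via Proposition~\ref{prop-diam-reg}.
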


 
Proposition~\ref{prop-area-diam-deg-lln0} is an immediate consequence of Proposition~\ref{prop-area-diam-deg-gamma} below; it can be derived from Proposition~\ref{prop-area-diam-deg-gamma} by setting $f = 1_D$.  
  
\begin{prop} \label{prop-area-diam-deg-gamma}
For $\ep \in (0,1)$ and $z\in \BB C$, define $u^\ep(z)$ as in~\eqref{eqn-area-diam-deg-def}. 
There exists $\alpha = \alpha(\gamma ) >0$ and $\beta = \beta(\gamma )>0$, and $A = A(  \rho , \gamma) > 0$ such that the following is true. Let $C > 1$ and let $D\subset B_{  \rho}(0)$ be a domain such that $\op{area}(B_r(\bdy D)) \leq C r  $ for each $r \in (0,1)$. 
Also let $f : D\rta [0,\infty)$ be a non-negative function which is $C\ep^{-\beta}$-Lipschitz continuous and which satisfies $\|f\|_\infty\leq C\ep^{-\beta}$. 
Then
\eqb \label{eqn-area-diam-deg-gamma}
\BB P\left[ \int_{D} f(z) u^\ep(z) \, dz  \leq A \int_{D} f(z) \,dz + \ep^\alpha   \right] \geq 1 - O_\ep(\ep^\alpha)
\eqe 
with the rate of the $O_\ep(\ep^\alpha)$ depending only on $C$, $\rho$, and $\gamma$. 
\end{prop}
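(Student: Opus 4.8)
The plan is to prove Proposition~\ref{prop-area-diam-deg-gamma} by a second-moment (variance) argument, using the long-range independence of the Gaussian free field together with the moment bounds for the cell quantities from~\cite[Theorem 4.1]{gms-tutte}. The starting point is that, up to a constant, $\BB E[u^\ep(z)]$ should be bounded by a constant uniformly in $z$ (at least for $z$ at positive distance from $\bdy\BB D$): the circle-average embedding of a $\gamma$-quantum cone restricted to $\BB D$ is absolutely continuous with respect to a whole-plane GFF plus $-\gamma\log|\cdot|$, and by scaling/translation properties of the structure graph the law of the cell $H_{x_z^\ep}^\ep$ does not depend too badly on $z$. Combined with the $p$-th moment bounds $\BB E\big[(\op{diam}(H_x^\ep)^2/\op{area}(H_x^\ep))^p\big] \preceq 1$ and $\BB E[\op{deg}(x;\mcl G^\ep)^p]\preceq 1$ for appropriate $p > 1$ (from~\cite[Theorem 4.1]{gms-tutte} and Lemma~\ref{lem-exp-tail}), H\"older's inequality gives $\BB E[u^\ep(z)]\preceq 1$. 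Then $\BB E\big[\int_D f(z) u^\ep(z)\,dz\big] \preceq \int_D f(z)\,dz$, and it remains to control the fluctuations around this mean.

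The key step is the variance bound. I would write $\int_D f(z) u^\ep(z)\,dz = \int_D f(z) u^\ep(z)\,dz$ and compute its second moment by splitting the double integral over pairs $(z,w)\in D\times D$ into the regime $|z-w|\leq \ep^{\theta}$ (for a small exponent $\theta$ to be chosen) and the regime $|z-w| > \ep^\theta$. On the diagonal regime, one uses the moment bounds for $u^\ep$ together with Cauchy-Schwarz to bound $\BB E[u^\ep(z) u^\ep(w)]$ by a constant and observes that the region has area $O(\ep^\theta)$ times $\op{area}(D)$, so this contributes a negligible error after multiplying by $\|f\|_\infty^2 \leq C^2\ep^{-2\beta}$ provided $\theta$ is taken large compared to $\beta$. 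On the off-diagonal regime, the cells $H_{x_z^\ep}^\ep$ and $H_{x_w^\ep}^\ep$ are (with overwhelming probability) contained in small neighborhoods of $z$ and $w$ respectively — here one invokes Lemma~\ref{lem-max-cell-diam} — and the relevant functionals of $\eta$ near $z$ and near $w$ are, by Lemma~\ref{lem-wpsf-determined}, determined by $h^{\op{IG}}$ restricted to small neighborhoods of $z$ and $w$. By the Markov property of the GFF (Lemma~\ref{lem-whole-plane-markov}) and a standard estimate comparing the field in two separated balls to a pair of independent fields up to a bounded-density Radon--Nikodym derivative (this is exactly the kind of estimate relegated to Appendix~\ref{sec-gff-abs-cont}), one gets $\BB E[u^\ep(z) u^\ep(w)] \leq \BB E[u^\ep(z)]\BB E[u^\ep(w)] + (\text{small error})$, where the error decays polynomially in $\ep^\theta/|z-w|$. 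Integrating over the off-diagonal regime and using the bounded-convexity / boundary-area hypothesis on $D$ to control boundary effects, one obtains $\var\big(\int_D f(z) u^\ep(z)\,dz\big) \leq C' \ep^{\alpha'}$ for some $\alpha' > 0$. Chebyshev's inequality then yields~\eqref{eqn-area-diam-deg-gamma}.

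A few subsidiary points need attention. First, the moment bounds from~\cite{gms-tutte} are stated for the $\gamma$-quantum cone embedding; to apply the GFF Markov property one passes to the whole-plane GFF using absolute continuity on $\BB D$ (noting $\re z, \im z$ and the test function $f$ all live inside $B_\rho(0)\subset\BB D$), which costs only a bounded-density change of measure since $-\gamma\log|\cdot|$ is bounded on $B_\rho(0)$. Second, the event in Lemma~\ref{lem-max-cell-diam} that all relevant cells have diameter at most $\ep^q$ must be incorporated: on its (polynomially small) complement one simply bounds $\int_D f u^\ep$ crudely, and on its complement-within-the-good-event one uses that the cells near $z$ and $w$ genuinely are measurable with respect to the field near $z$ and $w$. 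Third, one must be careful that the Lipschitz/sup-norm blow-up of $f$ at rate $\ep^{-\beta}$ is dominated: this forces the choice $\theta \gg \beta$ and also $\alpha$ chosen small relative to $\theta, \alpha', \beta$ and the exponent $q$ in Lemma~\ref{lem-max-cell-diam}.

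The main obstacle, I expect, is the off-diagonal decorrelation estimate — i.e., showing $\BB E[u^\ep(z) u^\ep(w)]$ is close to the product of expectations with a quantitative, polynomial-in-$\ep$ rate. This requires (i) locating the cells $H_{x_z^\ep}^\ep, H_{x_w^\ep}^\ep$ inside small deterministic neighborhoods with high probability, (ii) arguing that the structure-graph combinatorics of these cells (diameter, area, degree) really are determined by $\eta$, hence by $h^{\op{IG}}$, restricted to those neighborhoods — which is where Lemma~\ref{lem-wpsf-determined} is essential but requires care because the cell $\eta([x-\ep,x])$ is defined via the \emph{global} time parameterization by quantum mass, so one must first know the quantum area of the cell is localized (again via Lemma~\ref{lem-max-cell-diam} applied to quantum mass, or the estimates in~\cite{ghm-kpz}), and (iii) turning conditional independence of the GFF across separated sets into a quantitative bound on a product expectation via the Radon--Nikodym derivative estimates of Appendix~\ref{sec-gff-abs-cont}, controlling the moments of that derivative using the moment bounds on $u^\ep$. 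Managing the interplay between the quantum-mass parameterization and the purely spatial locality of the GFF Markov property is the delicate part; everything else is bookkeeping with H\"older, Cauchy--Schwarz, and Chebyshev.
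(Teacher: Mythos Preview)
Your high-level strategy---second-moment bound via a diagonal/off-diagonal split, using GFF long-range independence and the cell moment bounds from~\cite{gms-tutte}---is the same as the paper's. However, two steps in your execution do not go through as written.

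First, your absolute-continuity transfer between the $\gamma$-quantum cone and the whole-plane GFF is incorrect: the claim that ``$-\gamma\log|\cdot|$ is bounded on $B_\rho(0)$'' is false, since the singularity sits at $0\in B_\rho(0)$. In particular $-\gamma\log|\cdot|$ has infinite Dirichlet energy on any neighborhood of $0$, so no Cameron--Martin argument yields a bounded-density change of measure, and the moment bounds for $u^\ep(z)$ cannot be obtained this way uniformly over $z\in B_\rho(0)$. The paper instead first proves the estimate for the $0$-quantum cone (equivalently the whole-plane GFF), where the origin is a Lebesgue-typical point and the moment bounds of~\cite[Theorem~4.1]{gms-tutte} transfer to arbitrary $z\in B_\rho(0)$ by translation invariance modulo additive constant. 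The passage to the $\gamma$-quantum cone is then done by \emph{adding} a $\gamma$-log singularity at a point $\BB z$ sampled uniformly from Lebesgue measure on $\BB D$: this corresponds to weighting the law of the whole-plane GFF by $\mu_h(\BB D)/\BB E[\mu_h(\BB D)]$, which has finite positive moments. A short chain of further manipulations (re-centering at $\BB z$, re-normalizing the additive constant, rescaling) completes the transfer; the Lipschitz and boundary-regularity hypotheses on $f$ and $D$ in the proposition statement are used precisely at this step, to control the error incurred when one shifts the integration domain by the random $\BB z$.

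Second, you correctly flag the localization problem---the cell $H_{x_z^\ep}^\ep$ is \emph{not} determined by $(h,h^{\op{IG}})$ restricted to a neighborhood of $z$, because which $\ep$-length segment of $\eta$ it is depends on the global index shift in $\ep\BB Z$---but you do not propose a resolution, and without one step~(ii) of your off-diagonal argument fails. The paper's device is to replace $u^\ep(z)$ by an explicit pointwise upper bound $u_*^\ep(z) := \op{DA}^\ep(z)\big(\op{deg}_{\op{in}}^\ep(z)+\op{deg}_{\op{out}}^\ep(z)\big)$, where $\op{DA}^\ep(z)$ is the supremum of $\op{diam}^2/\op{area}$ over \emph{all} $\ep$-mass segments of $\eta$ containing $z$ (not just the one with endpoints in $\ep\BB Z$), and the degree is split into an ``inner'' count of nearby $\ep$-mass segments and an ``outer'' count of annulus crossings. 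Each of these three quantities is genuinely determined by $(h,h^{\op{IG}})$ restricted to a neighborhood of $z$ on the high-probability event that the relevant segments stay in that neighborhood. The variance argument is run on $u_*^\ep$, and $u^\ep\leq 2u_*^\ep$ pointwise gives the conclusion.
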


\section{Estimates for harmonic functions on $\mcl G^\ep$}
\label{sec-energy-estimate}

Suppose we are in the setting of Section~\ref{sec-standard-setup} with $h$ equal to the circle-average embedding of a $\gamma$-quantum cone. 
Recall that $\eta$ is a whole-plane space-filling SLE$_{\kappa'}$ parameterized by $\gamma$-quantum mass with respect to $h$, and $\mcl G^\ep$ for $\ep > 0$ is the associated mated-CRT map. Recall also that for $z\in\BB C$, $x_z^\ep$ is the smallest (and a.s.\ only) element of $\ep\BB Z = \mcl V\mcl G^\ep$ for which $z$ is contained in the cell $H_{x_z^\ep}^\ep = \eta([x_z^\ep -\ep , x_z^\ep])$.

In this section, we assume Proposition~\ref{prop-area-diam-deg-gamma} and use it to deduce various bounds for harmonic functions on the sub-graph $\mcl G^\ep(D)$ defined as in~\eqref{eqn-structure-graph-domain}, which will eventually lead to Theorems~\ref{thm-dirichlet-harmonic0} and~\ref{thm-cont0}.  

Many of the estimates in this subsection will include constants $\alpha,\beta$, and $A$ which are required to be independent of $\ep$, but are allowed to be different in each lemma/proposition/theorem.

Throughout this section, we fix $\rho \in (0,1)$ and work on the ball $B_{\rho}(0)$.  

\subsection{Comparing sums over cells and Lebesgue integrals}
\label{sec-sum-to-int}

In this subsection we establish a variant of Proposition~\ref{prop-area-diam-deg-gamma} which allows us to compare the weighted sum of the values of a function on $\BB C$ over all cells in the restricted structure graph $\mcl V\mcl G^\ep(D)$ to its integral over $D$. 

 \begin{lem} \label{lem-sum-to-int}
There exists $\alpha = \alpha(\gamma ) >0$, $\beta = \beta(\gamma )>0$, and $A = A(   \rho , \gamma) > 0$ such that the following is true. Let $C\geq 1$ and let $D\subset B_{\rho}(0)$ be a domain such that $\op{area}(B_r(\bdy D)) \leq C r  $ for each $r \in (0,1)$. 
Let $f : D\rta [0,\infty)$ be a non-negative function which is $C\ep^{-\beta}$-Lipschitz continuous and which satisfies $\|f\|_\infty\leq C\ep^{-\beta}$ and define $f^\ep : \mcl V\mcl G^\ep(D) \rta \BB R$ by
\eqb \label{eqn-sum-to-int-function}
f^\ep (x) := 
\begin{cases}
f(\eta(x)) ,\quad &x\in \mcl V\mcl G^\ep(D)\setminus \mcl V\mcl G^\ep(\bdy D)  \\
\sup_{z\in H_x^\ep \cap \bdy D} f(z) ,\quad &x\in \mcl V\mcl G^\ep(\bdy D)  .
\end{cases}
\eqe 
Then
\eqb \label{eqn-sum-to-int}
\BB P\left[  \sum_{x\in \mcl V\mcl G^\ep(D)} f^\ep(x) \op{diam}(H_x^\ep)^2 \op{deg}(x;\mcl G^\ep)  \leq A \int_D f(z) \,dz  + \ep^\alpha \right] \geq 1-O_\ep( \ep^\alpha )
\eqe 
at a rate depending only on $C$, $\rho$, and $\gamma$. 
\end{lem}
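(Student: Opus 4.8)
The idea is to deduce Lemma~\ref{lem-sum-to-int} from Proposition~\ref{prop-area-diam-deg-gamma} by converting the sum over cells into an integral over $D$. For a vertex $x \in \mcl V\mcl G^\ep(D)$, the cell $H_x^\ep$ has $\mu_h$-mass exactly $\ep$, but that is not directly what appears; instead I want to compare $\op{diam}(H_x^\ep)^2 \op{deg}(x;\mcl G^\ep)$ with $\int_{H_x^\ep} u^\ep(z)\,dz = \op{area}(H_x^\ep) \cdot \frac{\op{diam}(H_x^\ep)^2}{\op{area}(H_x^\ep)}\op{deg}(x;\mcl G^\ep) / \op{area}(H_x^\ep)$ — wait, more precisely, for $z \in H_x^\ep$ (so that $x_z^\ep = x$ for a.e.\ such $z$) we have $u^\ep(z) = \frac{\op{diam}(H_x^\ep)^2}{\op{area}(H_x^\ep)}\op{deg}(x;\mcl G^\ep)$, hence $\int_{H_x^\ep} u^\ep(z)\,dz = \op{diam}(H_x^\ep)^2 \op{deg}(x;\mcl G^\ep)$. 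So the summand is \emph{exactly} $\int_{H_x^\ep} u^\ep(z)\,dz$. The cells $\{H_x^\ep\}_{x\in\ep\BB Z}$ have pairwise disjoint interiors and cover $\BB C$, so $\sum_{x\in\mcl V\mcl G^\ep(D)} \int_{H_x^\ep} u^\ep(z)\,dz = \int_{\bigcup_{x\in\mcl V\mcl G^\ep(D)} H_x^\ep} u^\ep(z)\,dz$.

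First I would handle the interior vertices, where $f^\ep(x) = f(\eta(x))$. Here the subtlety is that $f^\ep(x) = f(\eta(x))$ is the value of $f$ at the single point $\eta(x)$, whereas inside the integral over $H_x^\ep$ one sees $f(z)$ for all $z\in H_x^\ep$. To bridge this I would use the Lipschitz bound on $f$ together with Lemma~\ref{lem-max-cell-diam}: with probability $1 - O_\ep(\ep^\alpha)$, every cell $H_x^\ep$ meeting $B_\rho(0)$ has diameter at most $\ep^q$ for a suitable $q = q(\gamma) > 0$, so $|f(\eta(x)) - f(z)| \leq C\ep^{-\beta}\ep^q$ for all $z\in H_x^\ep$, provided $\beta < q$ (which I am free to choose). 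Thus $f^\ep(x) \leq \frac{1}{\op{area}(H_x^\ep)}\int_{H_x^\ep}(f(z) + C\ep^{q-\beta})\,dz$, and multiplying by $\op{diam}(H_x^\ep)^2\op{deg}(x;\mcl G^\ep)$ gives $f^\ep(x)\op{diam}(H_x^\ep)^2\op{deg}(x;\mcl G^\ep) \leq \int_{H_x^\ep} f(z) u^\ep(z)\,dz + C\ep^{q-\beta}\int_{H_x^\ep} u^\ep(z)\,dz$. Summing over interior vertices and extending the enlarged domain slightly (to $B_{2\ep^q}(D) \subset B_{(1+\rho)/2}(0)$, say, which still has $\op{area}(B_r(\bdy(\cdot))) \lesssim Cr$ up to adjusting $C$) lets me apply Proposition~\ref{prop-area-diam-deg-gamma} to both terms: the first gives $\lesssim A\int f + \ep^\alpha$, and the second, by Proposition~\ref{prop-area-diam-deg-lln0} applied to the enlarged domain (area bounded by a constant), gives $C\ep^{q-\beta}\cdot O_\ep(1) = O_\ep(\ep^\alpha)$ after shrinking $\alpha$.

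The boundary vertices, where $f^\ep(x) = \sup_{z\in H_x^\ep\cap\bdy D} f(z)$, are handled the same way: on the event that cells have diameter $\leq \ep^q$, this sup differs from any value $f(z)$, $z\in H_x^\ep$, by at most $C\ep^{q-\beta}$, so the same estimate $f^\ep(x) \leq \frac{1}{\op{area}(H_x^\ep)}\int_{H_x^\ep}(f(z)+C\ep^{q-\beta})\,dz$ holds (extending $f$ to a neighborhood if needed, or just using that $H_x^\ep$ meets $\bdy D$ and $f$ is defined on $\ol D$ — one can use a Lipschitz extension to $B_{\ep^q}(\bdy D)$). The union $\bigcup_{x\in\mcl V\mcl G^\ep(D)} H_x^\ep$ is contained in $B_{\ep^q}(D)$ once all cells have diameter $\leq \ep^q$, so the whole sum is controlled by integrals over this slightly enlarged domain, and the area-of-boundary-neighborhood hypothesis is inherited with a modified constant. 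Combining the interior and boundary contributions, absorbing the error terms, and relabeling constants yields the claim.

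The main obstacle is the bookkeeping around the point-value-versus-cell-average discrepancy: one must choose $\beta$ small relative to the diameter exponent $q$ from Lemma~\ref{lem-max-cell-diam} so that $C\ep^{q-\beta} \to 0$, and one must verify that the slightly enlarged domain $B_{\ep^q}(D)$ still satisfies the hypotheses of Proposition~\ref{prop-area-diam-deg-gamma} (which it does, since $\op{area}(B_r(\bdy B_{\ep^q}(D))) \leq \op{area}(B_{r+\ep^q}(\bdy D)) \leq C(r + \ep^q)$, and for the relevant range $r \geq \ep$ this is $\lesssim Cr$ up to enlarging $C$, while the case of very small $r$ is harmless). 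Everything else is a routine assembly of the two propositions and Lemma~\ref{lem-max-cell-diam}, together with a union bound over the three events involved.
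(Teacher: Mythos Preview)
Your proposal is correct and follows essentially the same approach as the paper: convert each summand into $\int_{H_x^\ep} u^\ep(z)\,dz$, use the cell-diameter bound from Lemma~\ref{lem-max-cell-diam} together with the Lipschitz hypothesis to replace $f^\ep(x)$ by $f(z)$ up to an error of size $C\ep^{q-\beta}$, and then apply Proposition~\ref{prop-area-diam-deg-gamma} to both the main and error terms. The one minor difference is in the treatment of boundary vertices: the paper bounds their contribution directly using $\|f\|_\infty$, the degree bound $\op{deg}\leq (\log\ep^{-1})^2$ from Lemma~\ref{lem-max-deg-nonlocal}, and the area estimate $\op{area}(B_{\ep^q}(\bdy D))\leq C\ep^q$, whereas you absorb them into the same Lipschitz argument by extending $f$ to the $\ep^q$-neighborhood of $D$ --- your route avoids invoking the degree bound but requires checking that the enlarged domain still satisfies the hypotheses of Proposition~\ref{prop-area-diam-deg-gamma} and that the extra integral over $B_{\ep^q}(D)\setminus D$ is $O_\ep(\ep^\alpha)$, both of which you correctly address.
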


We note that the choice of boundary data for $f^\ep$ in~\eqref{eqn-sum-to-int-function} (which will also show up in other places) is somewhat arbitrary---we just need boundary data which is close in some sense to the boundary data for $f$ when $\ep$ is small.

\begin{proof}[Proof of Lemma~\ref{lem-sum-to-int}]
Let $q  \in \left(0, \tfrac{2}{(2+\gamma)^2}\right)$, chosen later in a manner depending only on $\gamma$, and let $\beta > 0$ be smaller than the minimum of $q$ and the parameter $\beta$ of Proposition~\ref{prop-area-diam-deg-gamma}. We define the event
\eqb \label{eqn-sum-to-int-event}
\wh E_0^\ep :=   \left\{ \op{diam}\left( H_{x_z^\ep}^\ep \right) \leq \ep^{q} \, \op{and} \,  \op{deg}\left( x_z^\ep ; \mcl G^\ep \right) \leq (\log \ep^{-1})^2, \: \forall z \in B_{ \rho}(0) \right\} ,
\eqe
where here we recall that $H_{x_z^\ep}^\ep$ is the cell of $\mcl G^\ep$ containing $z$. 
By Lemmas~\ref{lem-max-deg-nonlocal} and~\ref{lem-max-cell-diam}, $\BB P[(\wh E_0^\ep)^c]$ decays faster than some positive power of $\ep$. 

Fix $\beta > 0$ to be chosen later in a manner depending only on $\gamma$.
We will bound the sum over $\mcl V\mcl G^\ep(\bdy D)$ and $\mcl V\mcl G^\ep( D) \setminus \mcl V\mcl G^\ep(\bdy D)$ separately. We start with the boundary vertices. If $x\in \mcl V\mcl G^\ep(\bdy D)$ then by the definition~\eqref{eqn-sum-to-int-event} of $\wh E_0^\ep$, we have $\op{deg}(x ;\mcl G^\ep)   \leq (\log \ep^{-1})^2$ and $H_x^\ep \subset B_{\ep^q}(\bdy D)$ on this event. By our hypotheses that $\op{area}(B_r(\bdy D)) \leq C r  $ for each $r \in (0,1)$ and $\|f\|_\infty \leq C \ep^{-\beta}$,  
\allb \label{eqn-sum-to-int-bdy}
&\BB 1_{\wh E_0^\ep} \sum_{x \in  \mcl V\mcl G^\ep(\bdy D) }  f^\ep(x) \op{diam}(H_x^\ep )^2 \op{deg}(x ;\mcl G^\ep)  
 \leq  \BB 1_{\wh E_0^\ep} \| f\|_\infty (\log \ep^{-1})^2 \sum_{x \in \mcl V\mcl G^\ep(\bdy D) } \op{diam}(H_x^\ep )^2   \notag \\
&\qquad \qquad \qquad \qquad \leq \|  f\|_\infty  (\log \ep^{-1})^2 \op{area}\left(    B_{\ep^q}(\bdy D)  \right)   
 \leq C^2 (\log\ep^{-1})^2  \ep^{-\beta     + q}   . 
\alle
Since $\beta < q   $ this last quantity is bounded above by $O_\ep(\ep^{\alpha_0})$ for $\alpha_0  \in (0,  q   - \beta  ) $.  

Now we turn our attention to the interior vertices.
Recall the definition~\eqref{eqn-area-diam-deg-def} of $u^\ep(z)$. For $x\in \mcl V\mcl G^\ep( D) \setminus \mcl V\mcl G^\ep(\bdy D)$,  
\allb \label{eqn-vertex-to-avg}
f^\ep(x) \op{diam}(H_x^\ep)^2 \op{deg}(x ;\mcl G^\ep) 
&= \int_{H_x^\ep} f^\ep(x) u^\ep(z) \,dz  \notag \\
&\leq \int_{H_x^\ep} f(z)  u^\ep(z)  \,dz    
  + C \ep^{- \beta} \op{diam}(H_x^\ep)^3 \op{deg}(x ;\mcl G^\ep)   ,
\alle  
where in the last inequality we use the $C\ep^{-\beta}$-Lipschitz continuity of $f$ to get that for $z\in H_x^\ep$, $|f^\ep(x) - f(z)| \leq C \ep^{-\beta} \op{diam}(H_x^\ep)  $. 
By~\eqref{eqn-vertex-to-avg}, 
\allb \label{eqn-sum-to-int-main}
&\sum_{x \in  \mcl V\mcl G^\ep( D)\setminus  \mcl V\mcl G^\ep(\bdy D) }   f^\ep(x) \op{diam}(H_x^\ep)^2 \op{deg}(x ;\mcl G^\ep)  \notag\\ 
&\qquad \leq  \int_D f(z) u^\ep(z)  \,dz   +   C \ep^{- \beta} \sum_{x\in   \mcl V\mcl G^\ep( D)\setminus  \mcl V\mcl G^\ep(\bdy D) }   \op{diam}(H_x^\ep)^3 \op{deg}(x ;\mcl G^\ep)  .
\alle
 
On $\wh E_0^\ep$, the sum on the right in~\eqref{eqn-sum-to-int-main} satisfies
\allb \label{eqn-sum-to-int-error}
\ep^{- \beta} \sum_{x\in \mcl V\mcl G^\ep( D)\setminus  \mcl V\mcl G^\ep(\bdy D) }   \op{diam}(H_x^\ep)^3  \op{deg}(x; \mcl G^\ep)  
&= \ep^{- \beta} \sum_{x\in \mcl V\mcl G^\ep( D)\setminus  \mcl V\mcl G^\ep(\bdy D) }   \int_{H_x^\ep}  \frac{ \op{diam}(H_x^\ep)^3}{\op{area}(H_x^\ep)}  \op{deg}(x; \mcl G^\ep)  \, dz\notag \\
&\leq   \ep^{- \beta} \int_{D}   \op{diam}(H_{x_z^\ep}^\ep ) u^\ep(z)   \, dz    
\leq \ep^{q- \beta} \int_{D}    u^\ep(z)   \, dz .
\alle  
Proposition~\ref{prop-area-diam-deg-gamma} (applied to $f$ and with 1 in place of $f$) shows that there are constants $\alpha_1 = \alpha_1(\gamma)   \in (0,\alpha_0]$ and $A = A(\rho,\gamma) > 0$ such that with probability at least $1-O_\ep(\ep^{\alpha_1})$, 
\eqb \label{eqn-sum-to-int-convert}
  \int_D f(z) u^\ep(z)  \,dz  \leq A \int_D f(z) \,dz + \ep^{\alpha_1}  \quad \op{and} \quad  \int_{D}    u^\ep(z)   \, dz \leq A \op{area}(D) + \ep^{\alpha_1} .
\eqe
Plugging~\eqref{eqn-sum-to-int-convert} and~\eqref{eqn-sum-to-int-error} into~\eqref{eqn-sum-to-int-main} and then adding the resulting estimate to~\eqref{eqn-sum-to-int-bdy} and possibly shrinking $\alpha_1$ shows that on $\wh E_0^\ep$, 
\eqb \label{eqn-sum-to-int0}
     \sum_{x \in \mcl V\mcl G^\ep( D) } f^\ep(x)   \op{diam}(H_x^\ep)^2 \op{deg}(x;\mcl G^\ep)    
     \leq A  \int_D f(z)  \,dz   + O_\ep(\ep^{\alpha_1}) .
\eqe 
Since $\BB P[(\wh E_0^\ep)^c]$ decays like a positive power of $\ep$, we obtain~\eqref{eqn-sum-to-int} with an appropriate choice of $\alpha \in (0,\alpha_1)$.   
\end{proof}

\subsection{Dirichlet energy bounds}
\label{sec-dirichlet-mean}

In this subsection, we will use Proposition~\ref{prop-area-diam-deg-gamma} to prove bounds for the Dirichlet energy of discrete harmonic functions on subgraphs of $\mcl G^\ep$ in terms of the Dirichlet energy of functions on subsets of $\BB C$. 
We will consider the following setup. Recall that we have fixed $\rho \in (0,1)$. 
For $C \geq 1$ and $\delta \in (0,1)$, let $\mcl C_C(\delta ) = \mcl C_C (\delta , \rho)$ be the set of pairs $(D,f)$ where $D$ is an open subset of $B_{\rho}(0)$ and $f : \ol D\rta\BB R$ is a differentiable function such that the following is true. 
\begin{enumerate}
\item $\op{area}(B_r(\bdy D)) \leq C r $ for each $r \in (0,1)$. 
\item $D$ has \emph{$C$-bounded convexity}, i.e., for each $z,w\in D$ there is a path from $z$ to $w$ contained in $D$ which has length at most $C |z-w|$. 
\item $\nabla f$ is $ \delta^{-1}$-Lipschitz continuous and both $\|f\|_\infty$ and $\|\nabla f\|_\infty$ are at most $ \delta^{-1}$.  
\end{enumerate}  
The main result of this subsection is the following more quantitative version of Theorem~\ref{thm-dirichlet-harmonic0}.

\begin{thm} \label{thm-dirichlet-harmonic} 
There are constants $\alpha = \alpha(\gamma ) > 0$ and $\beta =\beta(\gamma ) > 0$ such that for each $C\geq 1$ there exists $A = A( C ,   \rho , \gamma) >0$ such that the following hold for each $\ep\in (0,1)$, each $C\geq 1$, and each $(D,f) \in \mcl C_C (\ep^\beta  )$. 
Let $\frk f^\ep : \mcl V\mcl G^\ep( D)  \rta \BB R$ be the function such that 
\eqbn
\frk f^\ep(x) = \sup_{z\in H_x^\ep \cap \bdy D} f(z) ,\quad \forall x\in \mcl V\mcl G^\ep(\bdy D) 
\eqen
and $\frk f^\ep$ is discrete harmonic on $\mcl V\mcl G^\ep( D) \setminus \mcl V\mcl G^\ep( \bdy D) $. Then (recalling Definitions~\ref{def-discrete-dirichlet} and~\ref{def-continuum-dirichlet}),
\eqb \label{eqn-dirichlet-harmonic}
\BB P\left[ \op{Energy}\left(\frk f^\ep ; \mcl G^\ep(D) \right)  \leq A  \op{Energy}\left(f ; D \right)  + \ep^\alpha  \right] \geq 1 - O_\ep(   \ep^\alpha )
\eqe 
at a rate depending only on $C ,  \rho$, and $\gamma$. 
\end{thm}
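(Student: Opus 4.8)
The plan is to exploit the variational characterization of discrete harmonic functions: since $\frk f^\ep$ is discrete harmonic on the interior vertices with prescribed boundary values, it minimizes the discrete Dirichlet energy among all functions $g : \mcl V\mcl G^\ep(D) \rta \BB R$ agreeing with $\frk f^\ep$ on $\mcl V\mcl G^\ep(\bdy D)$. Hence it suffices to exhibit \emph{one} competitor function with small Dirichlet energy, and the natural competitor is $g(x) := f^\ep(x)$ from~\eqref{eqn-sum-to-int-function} in Lemma~\ref{lem-sum-to-int}, i.e., $f\circ\eta$ on the interior and the prescribed boundary data on $\mcl V\mcl G^\ep(\bdy D)$ (which matches $\frk f^\ep$ on the boundary by construction). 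So the whole theorem reduces to bounding $\op{Energy}(f^\ep ; \mcl G^\ep(D))$.

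First I would estimate, for each unoriented edge $\{x,y\}\in\mcl E\mcl G^\ep(D)$, the quantity $(f^\ep(x) - f^\ep(y))^2$. When both endpoints are interior vertices, $f^\ep(x) - f^\ep(y) = f(\eta(x)) - f(\eta(y))$, and since $\eta(x)\in H_x^\ep$, $\eta(y)\in H_y^\ep$, and $H_x^\ep \cap H_y^\ep \neq \emptyset$ (the cells share a boundary arc), the two points lie within Euclidean distance $\op{diam}(H_x^\ep) + \op{diam}(H_y^\ep)$ of each other. Using $\|\nabla f\|_\infty \leq \ep^{-\beta}$ together with a more refined bound — integrating $\nabla f$ along a short path joining $\eta(x)$ to $\eta(y)$ through a common point of the two cells, so that $|f(\eta(x)) - f(\eta(y))| \lesssim \|\nabla f\|_{\infty, H_x^\ep \cup H_y^\ep}\,(\op{diam}(H_x^\ep) + \op{diam}(H_y^\ep))$, and then splitting the supremum of $|\nabla f|$ over the cell into its value at a reference point plus a Lipschitz error controlled by $\ep^{-\beta}\op{diam}(H_x^\ep)$ — one gets $(f^\ep(x) - f^\ep(y))^2 \lesssim |\nabla f(\eta(x))|^2 (\op{diam}(H_x^\ep)^2 + \op{diam}(H_y^\ep)^2) + \ep^{-2\beta}(\op{diam}(H_x^\ep)^4 + \op{diam}(H_y^\ep)^4)$. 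Edges with at least one boundary endpoint are handled as in the boundary estimate~\eqref{eqn-sum-to-int-bdy} of Lemma~\ref{lem-sum-to-int}, using $\|f\|_\infty \leq \ep^{-\beta}$, the area bound $\op{area}(B_r(\bdy D)) \leq Cr$, and the degree/diameter bounds on the event $\wh E_0^\ep$, contributing only a term of size $O_\ep(\ep^{\alpha_0})$.

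Next I would sum over all edges. Each cell $H_x^\ep$ appears in $\op{deg}(x;\mcl G^\ep)$ edges, so summing the first term gives $\lesssim \sum_{x\in\mcl V\mcl G^\ep(D)} |\nabla f(\eta(x))|^2 \op{diam}(H_x^\ep)^2 \op{deg}(x;\mcl G^\ep)$, which is exactly of the form treated by Lemma~\ref{lem-sum-to-int} applied with the non-negative function $|\nabla f|^2$ in place of $f$ (note $|\nabla f|^2$ is $2\ep^{-2\beta}$-Lipschitz and bounded by $\ep^{-2\beta}$, so after relabeling $\beta$ it satisfies the hypotheses); this yields $\leq A \int_D |\nabla f(z)|^2\,dz + \ep^\alpha = A\,\op{Energy}(f;D) + \ep^\alpha$ with probability $1 - O_\ep(\ep^\alpha)$. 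Summing the second (error) term gives $\lesssim \ep^{-2\beta}\sum_x \op{diam}(H_x^\ep)^4 \op{deg}(x;\mcl G^\ep)$, which on $\wh E_0^\ep$ is bounded by $\ep^{-2\beta}\ep^{2q}\sum_x \op{diam}(H_x^\ep)^2\op{deg}(x;\mcl G^\ep)$; applying Lemma~\ref{lem-sum-to-int} with $f\equiv 1$ bounds $\sum_x \op{diam}(H_x^\ep)^2\op{deg}(x;\mcl G^\ep) \leq A\,\op{area}(D) + \ep^\alpha \lesssim 1$, so the error term is $O_\ep(\ep^{2q - 2\beta})$, which is a positive power of $\ep$ provided $\beta < q$. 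Combining the interior contribution, the boundary contribution, and the error contribution, and taking $\alpha$ smaller than all the exponents that appear, gives the competitor bound $\op{Energy}(f^\ep ; \mcl G^\ep(D)) \leq A\,\op{Energy}(f;D) + \ep^\alpha$ off an event of probability $O_\ep(\ep^\alpha)$; the minimality of $\frk f^\ep$ then gives~\eqref{eqn-dirichlet-harmonic}.

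The main obstacle I anticipate is not any single estimate but the bookkeeping needed to set up the edge-by-edge bound cleanly — in particular, justifying that one can join $\eta(x)$ and $\eta(y)$ for adjacent cells by a path whose length is comparable to $\op{diam}(H_x^\ep) + \op{diam}(H_y^\ep)$ and along which $|\nabla f|$ is controlled. Here one should use that $D$ has $C$-bounded convexity so such a path can be taken inside a slight neighborhood of $D$ where $f$ (extended, if necessary, using differentiability up to $\ol D$) still has gradient bounded by $\ep^{-\beta}$ with the same Lipschitz constant. Beyond that, the argument is a fairly mechanical combination of the variational principle with Lemma~\ref{lem-sum-to-int} applied twice (once to $|\nabla f|^2$, once to the constant function $1$), so no new probabilistic input beyond Proposition~\ref{prop-area-diam-deg-gamma} is required.
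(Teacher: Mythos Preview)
Your proposal is correct and follows essentially the same route as the paper: the paper's proof of Theorem~\ref{thm-dirichlet-harmonic} is literally the one-line observation that discrete harmonic functions minimize Dirichlet energy, reducing everything to Lemma~\ref{lem-dirichlet-mean}, whose proof is exactly the edge-by-edge gradient-plus-Lipschitz-error estimate you describe, followed by two applications of Lemma~\ref{lem-sum-to-int} (to $|\nabla f|^2$ and to the constant $1$). The only cosmetic difference is that the paper does not separate out boundary edges in Lemma~\ref{lem-dirichlet-mean} but instead absorbs them into the same sum via the definition of $F^\ep(x) := \sup_{z\in H_x^\ep\cap\bdy D}|\nabla f(z)|^2$ on boundary vertices, deferring the boundary-vs-interior split to the proof of Lemma~\ref{lem-sum-to-int}; and your ``main obstacle'' about joining $\eta(x)$ to $\eta(y)$ by a controlled path is exactly what the $C$-bounded convexity assumption is used for in the paper.
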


Theorem~\ref{thm-dirichlet-harmonic} will be an immediate consequence of the following estimate, which in turn is deduced from Lemma~\ref{lem-sum-to-int}. 

\begin{lem} \label{lem-dirichlet-mean}   
There are constants $\alpha = \alpha(\gamma ) > 0$,  $\beta =\beta( \gamma  ) > 0$, and $A = A( C ,  \rho , \gamma) >0$ such that the following is true for each $\ep\in (0,1)$ and each $(D,f) \in \mcl C_C(\ep^\beta  )$. 
As in Lemma~\ref{lem-sum-to-int}, define $f^\ep : \mcl V\mcl G^\ep(D) \rta \BB R$ by
\eqb \label{eqn-function-approx}
f^\ep (x) := 
\begin{cases}
f(\eta(x)) ,\quad &x\in \mcl V\mcl G^\ep(D)\setminus \mcl V\mcl G^\ep(\bdy D)  \\
\sup_{z\in H_x^\ep \cap \bdy D} f(z) ,\quad &x\in \mcl V\mcl G^\ep(\bdy D)  .
\end{cases}
\eqe 
Then
\eqb \label{eqn-dirichlet-mean}
 \BB P\left[ \op{Energy}\left( f^\ep  ; \mcl G^\ep(D) \right)  \leq A  \op{Energy}\left(f ; D \right) + \ep^\alpha   \right] \geq 1 - O_\ep( \ep^\alpha   )
\eqe  
at a rate depending only on $C , \rho$, and $\gamma$.
\end{lem}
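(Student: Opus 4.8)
The plan is to reduce $\op{Energy}(f^\ep;\mcl G^\ep(D))$ to a weighted sum over the cells of $\mcl G^\ep$ of exactly the form controlled by Lemma~\ref{lem-sum-to-int}, with the non-negative Lipschitz function $g:=|\nabla f|^2$ playing the role of ``$f$''. Two structural facts drive the reduction. First, if $\{x,y\}$ is an edge of $\mcl G^\ep(D)$ then $H_x^\ep$ and $H_y^\ep$ share a boundary arc, so $H_x^\ep\cup H_y^\ep$ is connected with diameter at most $\delta_{x,y}:=\op{diam}(H_x^\ep)+\op{diam}(H_y^\ep)$. Second, since each cell is connected, every interior vertex $x\in\mcl V\mcl G^\ep(D)\setminus\mcl V\mcl G^\ep(\bdy D)$ satisfies $H_x^\ep\subset D$, hence $\eta(x)\in D$; for $x\in\mcl V\mcl G^\ep(\bdy D)$ we instead fix a point $z_x\in H_x^\ep\cap\bdy D$ attaining the supremum defining $f^\ep(x)$ (it exists by compactness). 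Writing $w_x:=\eta(x)$ in the first case and $w_x:=z_x$ in the second, in every case $w_x\in H_x^\ep\cap\ol D$, $f^\ep(x)=f(w_x)$, and $g^\ep(x)\geq g(w_x)$, where $g^\ep$ is built from $g$ by the same formula~\eqref{eqn-function-approx}.

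Now fix an edge $\{x,y\}$. The points $w_x,w_y$ lie in $\ol D$ at Euclidean distance at most $\delta_{x,y}$, so by the $C$-bounded convexity of $D$ (extended to $\ol D$ by a routine limiting argument) they are joined by a path $\gamma\subset\ol D$ of length $\ell\leq C\delta_{x,y}$. Parameterizing $\gamma$ by arclength and applying Cauchy--Schwarz to $f^\ep(x)-f^\ep(y)=\int_0^\ell\nabla f(\gamma(t))\cdot\gamma'(t)\,dt$ bounds $(f^\ep(x)-f^\ep(y))^2$ by $\ell\int_\gamma|\nabla f|^2\leq\ell^2\sup_\gamma|\nabla f|^2$; since $\gamma$ stays within distance $C\delta_{x,y}$ of $w_x$ and $\nabla f$ is $\ep^{-\beta}$-Lipschitz on $\ol D$, this gives, with implicit constant depending only on $C$,
\[
(f^\ep(x)-f^\ep(y))^2\lesssim \delta_{x,y}^2\,g(w_x)+\ep^{-2\beta}\delta_{x,y}^4 ,
\]
and symmetrically with $w_y$ in place of $w_x$. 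Using $\delta_{x,y}^2\leq 2\op{diam}(H_x^\ep)^2+2\op{diam}(H_y^\ep)^2$, summing over the at most $\op{deg}(x;\mcl G^\ep)$ edges of $\mcl G^\ep(D)$ incident to each $x$, bounding $g(w_x)\leq g^\ep(x)$, and rewriting the cross terms $\op{diam}(H_y^\ep)^2g(w_x)$ as $\op{diam}(H_y^\ep)^2g^\ep(y)$ plus an $\ep^{-2\beta}\op{diam}(H_y^\ep)^2\delta_{x,y}^2$ error (again via the Lipschitz bound on $\nabla f$, since $|w_x-w_y|\leq\delta_{x,y}$), one obtains
\[
\op{Energy}(f^\ep;\mcl G^\ep(D))\lesssim \sum_{x\in\mcl V\mcl G^\ep(D)}g^\ep(x)\op{diam}(H_x^\ep)^2\op{deg}(x;\mcl G^\ep)+\ep^{-2\beta}\sum_{x\in\mcl V\mcl G^\ep(D)}\op{diam}(H_x^\ep)^4\op{deg}(x;\mcl G^\ep) .
\]

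It remains to bound the two sums. The first is exactly the left-hand side of Lemma~\ref{lem-sum-to-int} applied with $g=|\nabla f|^2$ in place of $f$: since $\nabla f$ is $\ep^{-\beta}$-Lipschitz with $\|\nabla f\|_\infty\leq\ep^{-\beta}$, the function $g$ is $2\ep^{-2\beta}$-Lipschitz with $\|g\|_\infty\leq\ep^{-2\beta}$, so after taking the $\beta$ of the present statement to be at most half of the one supplied by Lemma~\ref{lem-sum-to-int} we may invoke that lemma to get that the first sum is at most $A\int_D|\nabla f|^2+\ep^\alpha=A\op{Energy}(f;D)+\ep^\alpha$ with probability $1-O_\ep(\ep^\alpha)$. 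For the second sum, choose $q\in(0,2/(2+\gamma)^2)$ with $\alpha(q,\gamma)>0$ and work on the event of Lemma~\ref{lem-max-cell-diam}, which has probability $1-O_\ep(\ep^\alpha)$ and on which every cell meeting $B_\rho(0)\supset D$ has diameter at most $\ep^q$; there $\op{diam}(H_x^\ep)^4\leq\ep^{2q}\op{diam}(H_x^\ep)^2$, so the second sum is at most $\ep^{2q-2\beta}\sum_x\op{diam}(H_x^\ep)^2\op{deg}(x;\mcl G^\ep)$, and the remaining sum is $O_\ep(1)$ by Lemma~\ref{lem-sum-to-int} with $f\equiv1$. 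Taking $\beta<q$ makes this term $O_\ep(\ep^{2q-2\beta})$, which is $\leq\ep^\alpha$ after shrinking $\alpha$. A union bound over the finitely many events invoked then yields~\eqref{eqn-dirichlet-mean}.

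The only genuinely delicate point is the bookkeeping of exponents: one must take $\beta$ (depending only on $\gamma$) small enough relative to $q$ and to the exponent $\alpha$ coming from Lemma~\ref{lem-sum-to-int} and Proposition~\ref{prop-area-diam-deg-gamma}, all of which depend only on $\gamma$, and one must keep track of the many $\ep^{-2\beta}\op{diam}^4$-type errors generated by the Lipschitz continuity of $\nabla f$. It is also essential that the Cauchy--Schwarz-along-a-path step use the bounded convexity of $D$ so as to recover the \emph{integrated} quantity $\op{Energy}(f;D)=\int_D|\nabla f|^2$; the cruder estimate $(f^\ep(x)-f^\ep(y))^2\leq\|\nabla f\|_\infty^2|\eta(x)-\eta(y)|^2$ would only produce $\|\nabla f\|_\infty^2\op{area}(D)$, which is far too weak when $f$ is nearly constant. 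Beyond this, the lemma is a deterministic manipulation: all of the probabilistic content is already packaged in Lemma~\ref{lem-sum-to-int} (hence ultimately in Proposition~\ref{prop-area-diam-deg-gamma}), which is what makes $\op{diam}(H_x^\ep)^2\op{deg}(x;\mcl G^\ep)$ behave, on average over $z\in H_x^\ep$, like Lebesgue measure.
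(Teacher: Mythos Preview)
Your proof is correct and follows essentially the same route as the paper's: bound $|f^\ep(x)-f^\ep(y)|$ along a path supplied by $C$-bounded convexity, use the $\ep^{-\beta}$-Lipschitz property of $\nabla f$ to reduce to $\sum_x |\nabla f|^2(\,\cdot\,)\op{diam}(H_x^\ep)^2\op{deg}(x)$ plus an $\ep^{-2\beta}\sum_x\op{diam}(H_x^\ep)^4\op{deg}(x)$ error, and finish with Lemma~\ref{lem-sum-to-int} applied to $|\nabla f|^2$ and to $1$ (the latter after factoring out $\ep^{2q}$ on the small-cell event of Lemma~\ref{lem-max-cell-diam}). The only cosmetic difference is that the paper symmetrizes at the path step---bounding $\sup_{u}|\nabla f(u)|$ by both $\sqrt{F^\ep(x)}+\text{error}$ and $\sqrt{F^\ep(y)}+\text{error}$ and splitting the path length accordingly---so it never generates the cross-term $\op{diam}(H_y^\ep)^2 g(w_x)$ you have to repair; your repair via $g(w_x)\leq 2g(w_y)+2\ep^{-2\beta}\delta_{x,y}^2$ is fine and feeds into the same $\op{diam}^4$ error sum.
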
 
\begin{proof}
Fix $q \in \left(0, \tfrac{2}{(2+\gamma)^2}\right)$, chosen in a manner depending only on $\gamma$, and let  
\eqb \label{eqn-max-cell-event}
E_0^\ep = E_0^\ep(q,\rho ) := \left\{ \op{diam}\left( H_{x_z^\ep}^\ep \right) \leq \ep^{q}  ,\, \forall z\in B_{\rho}(0) \right\} .
\eqe 

Also fix $\beta > 0$ to be chosen later in a manner depending only on $\gamma$ and suppose $(D,f) \in \mcl C_C(\ep^\beta )$. 
By analogy with~\eqref{eqn-function-approx}, define
\eqbn
F^\ep (x) := 
\begin{cases}
|\nabla f(\eta(x))|^2 ,\quad &x\in \mcl V\mcl G^\ep(D)\setminus \mcl V\mcl G^\ep(\bdy D)  \\
\sup_{z\in H_x^\ep \cap \bdy D} |\nabla f(z)|^2 ,\quad &x\in \mcl V\mcl G^\ep(\bdy D)  .
\end{cases}
\eqen
Now consider an edge $\{x,y\} \in \mcl E\mcl G^\ep(D) $. 
Then $H_x^\ep\cap H_y^\ep\not=\emptyset$, so $\op{diam}(H_x^\ep\cup H_y^\ep) \leq \op{diam}(H_x^\ep) + \op{diam}(H_y^\ep)$.
By the $C$-convexity of $D$, for any $z\in H_x^\ep \cap D$ and any $w\in H_y^\ep \cap D$, there is a path $P_{z,w}$ from $z$ to $w$ in $D$ of Euclidean length at most $C (\op{diam}(H_x^\ep) + \op{diam}(H_y^\ep))$. By the $ \ep^{-\beta}$-Lipschitz continuity of $\nabla f$, for each $u\in P_{z,w}$ we have 
\eqbn
|\nabla f(u)| 
\leq |\nabla f(z)| + C \ep^{-\beta} \left(  \op{diam}(H_x^\ep  ) + \op{diam}(H_y^\ep  )     \right)
\leq \sqrt{F^\ep(x)} + 2 C \ep^{-\beta} \left(  \op{diam}(H_x^\ep  ) + \op{diam}(H_y^\ep  )     \right) ,
\eqen
and similarly with $y$ in place of $x$. 
Therefore,
\alb
|f^\ep(x) - f^\ep(y)| 
&\leq C \left(  \op{diam}(H_x^\ep  ) + \op{diam}(H_y^\ep  )     \right)  \sup_{z \in H_x^\ep \cap D ,\, w\in H_y^\ep \cap D} \sup_{u\in P_{z,w}} |\nabla f(u)|  \\
&\leq  C \sqrt{F^\ep(x)} \op{diam}(H_x^\ep ) +  C \sqrt{F^\ep(y) } \op{diam}(H_y^\ep)  
 +  4 C \ep^{-\beta} \left( \op{diam}(H_x^\ep )^2  + \op{diam}(H_y^\ep)^2  \right) . 
\ale   
Using the above estimate and the inequality $(a+b)^2\leq 2(a^2+b^2)$ and breaking up the sum over edges based on those edges which have a given vertex $x$ as an endpoint, we obtain that on $E_0^\ep$, 
\allb \label{eqn-deriv-sum-decomp}
&\op{Energy}\left( f^\ep  ; \mcl G^\ep(D) \right) 
 \preceq \sum_{x \in \mcl V\mcl G^\ep(D)  }   F^\ep(x) \op{diam}(H_x^\ep )^2 \op{deg}(x ;\mcl G^\ep)  
 + \ep^{-2\beta} \sum_{x\in \mcl V\mcl G^\ep(D)  }   \op{diam}(H_x^\ep )^4  \op{deg}(x; \mcl G^\ep)  \notag\\ 
&\qquad \qquad \preceq \sum_{x \in \mcl V\mcl G^\ep(D)  }   F^\ep(x) \op{diam}(H_x^\ep )^2 \op{deg}(x ;\mcl G^\ep)  
 + \ep^{2q-2\beta} \sum_{x\in \mcl V\mcl G^\ep(D)  } \op{diam}(H_x^\ep )^2    \op{deg}(x; \mcl G^\ep) 
\alle 
with implicit constant depending only on $C $, where here we use that $\op{diam}(H_x^\ep ) \leq \ep^q$ on $E_0^\ep$. Since $(D,f) \in \mcl C_C (\ep^\beta)$, the function $|\nabla f|^2$ is $2  \ep^{-2\beta}$-Lipschitz and $\| |\nabla f|^2\|_\infty \leq   \ep^{-2\beta}$. We can therefore apply Lemma~\ref{lem-sum-to-int} (with each of $|\nabla f|^2$ and $1$ in place of $f$) to see that if $\beta$ is smaller than the minimum of $q$ and $1/2$ times the parameter $\beta$ from Lemma~\ref{lem-sum-to-int}, then the following is true.
For appropriate constants $A,\alpha > 0$ as in the statement of the lemma, it holds except on an event of probability decaying faster than some positive power of $\ep$ that the right side of~\eqref{eqn-deriv-sum-decomp} is bounded above by $A  \op{Energy}\left(f ; D \right) + \ep^\alpha $. Since $\BB P[(E_0^\ep)^c]$ decays like a positive power of $\ep$, this concludes the proof. 
\end{proof}

\begin{proof}[Proof of Theorems~\ref{thm-dirichlet-harmonic0} and~\ref{thm-dirichlet-harmonic}]
Since discrete harmonic functions minimize Dirichlet energy subject to specified boundary data, Theorem~\ref{thm-dirichlet-harmonic} is an immediate consequence of Lemma~\ref{lem-dirichlet-mean}. Theorem~\ref{thm-dirichlet-harmonic0}, in turn, follows from Theorem~\ref{thm-dirichlet-harmonic}.  
\end{proof}

\subsection{Green's function and recurrence}
\label{sec-recurrence}

We will now explain why Theorem~\ref{thm-dirichlet-harmonic} implies Theorem~\ref{thm-green-lower}. The main step is the following upper bound for the Dirichlet energy of certain discrete harmonic functions on $\mcl G^\ep$.

\begin{lem} \label{lem-annulus-energy}
There exists $\alpha = \alpha(\gamma) > 0$ such that for each $\rho \in (0,1)$, there exists $A = A( \rho,\gamma) > 0$ such that for each $s\in [0,\rho/2]$ and each $\ep\in (0,1)$, it holds with probability at least $1-O_\ep(\ep^\alpha)$ (at a rate depending only on $\rho$ and $\gamma$) that the following is true. 
Let $\frk f_s^\ep : \mcl V\mcl G^\ep(B_{ \rho}(0) \setminus B_s(0) ) \to [0,1]$ be the function which is equal to $0$ on $\mcl V\mcl G^\ep(\bdy B_{\rho}(0) )$, $1$ on $\mcl V\mcl G^\ep(\bdy B_s(0)  )$, and is discrete harmonic on the rest of $\mcl V\mcl G^\ep(B_{\rho}(0) \setminus B_s(0) )$. 
Then (in the notation of Definition~\ref{def-discrete-dirichlet})
\eqb \label{eqn-annulus-energy}
\op{Energy}\left( \frk f_s^\ep ; \mcl G^\ep(B_{\rho}(0) ) \right) \leq \frac{A}{\log(\ep^{-1} \vee s^{-1})}  .
\eqe 
\end{lem}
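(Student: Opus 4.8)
The plan is to compare $\frk f_s^\ep$ with the continuum harmonic function on the Euclidean annulus and then invoke the Dirichlet energy bound of Theorem~\ref{thm-dirichlet-harmonic}. Since a discrete harmonic function minimizes Dirichlet energy among all functions with the same boundary data, it suffices to produce \emph{some} function on $\mcl V\mcl G^\ep(B_{\rho}(0)\setminus B_s(0))$ with boundary values $0$ on $\mcl V\mcl G^\ep(\bdy B_{\rho}(0))$ and $1$ on $\mcl V\mcl G^\ep(\bdy B_s(0))$ whose energy is at most $A/\log(\ep^{-1}\vee s^{-1})$ with the required probability. Let $\beta = \beta(\gamma)$ be the exponent appearing in Theorem~\ref{thm-dirichlet-harmonic} and put $s' := s \vee \ep^{\beta/2}$. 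On the annulus $D := B_{\rho}(0)\setminus B_{s'}(0)$ let $f : \ol D\to [0,1]$ be a smooth function which equals $0$ in a neighborhood of $\bdy B_{\rho}(0)$, equals $1$ in a neighborhood of $\bdy B_{s'}(0)$, and equals an appropriate scalar multiple of $\log(\rho/|z|)$ on the bulk of $D$, so that $\op{Energy}(f ; D) \asymp 1/\log(\rho/s') \lesssim 1/\log(\ep^{-1}\vee s^{-1})$ (using $s'\geq \ep^{\beta/2}$ and $s'\le\rho/2<1$).

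Next one checks that $(D,f)$ lies in the class $\mcl C_C(\ep^\beta)$ of Section~\ref{sec-dirichlet-mean} for a universal constant $C$. Indeed, an annulus $B_{\rho}(0)\setminus B_{s'}(0)$ satisfies $\op{area}(B_r(\bdy D))\leq C r$ for all $r\in(0,1)$ and has $C$-bounded convexity \emph{uniformly in} $s'\in(0,\rho)$ (any two points can be joined inside the annulus by concatenating a radial segment with a circular arc, of total length at most a universal multiple of their Euclidean distance); and the choice $s'\geq\ep^{\beta/2}$ gives $\|f\|_\infty\leq 1$, $\|\nabla f\|_\infty\lesssim (s')^{-1}$, and $\nabla f$ is $O((s')^{-2}) = O(\ep^{-\beta})$-Lipschitz, so the boundedness/Lipschitz conditions in the definition of $\mcl C_C(\ep^\beta)$ hold after possibly shrinking $\beta$. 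Applying Theorem~\ref{thm-dirichlet-harmonic} to $(D,f)$ yields, with probability $1-O_\ep(\ep^\alpha)$, a discrete harmonic function $\frk f^\ep$ on $\mcl V\mcl G^\ep(D)$ with boundary data $\sup_{z\in H_x^\ep\cap\bdy D} f(z)$ satisfying $\op{Energy}(\frk f^\ep ; \mcl G^\ep(D)) \leq A\,\op{Energy}(f;D) + \ep^\alpha$. On the event of Lemma~\ref{lem-max-cell-diam} that every cell meeting $B_{\rho}(0)$ has diameter at most $\ep^q$ for a suitable fixed $q=q(\gamma)>0$ (so in particular $\ep^q \ll \rho - s'$), no cell meets both $\bdy B_{\rho}(0)$ and $\bdy B_{s'}(0)$, hence this boundary data is identically $0$ on $\mcl V\mcl G^\ep(\bdy B_{\rho}(0))$ and identically $1$ on $\mcl V\mcl G^\ep(\bdy B_{s'}(0))$; therefore $\frk f^\ep$ coincides with $\frk f_{s'}^\ep$, and $\op{Energy}(\frk f_{s'}^\ep ; \mcl G^\ep(D)) \lesssim 1/\log(\ep^{-1}\vee s^{-1})$ on this event. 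On the same event there is no cell contained in $\ol{B_{s'}(0)}$, so after extending $\frk f_{s'}^\ep$ by the value $1$ inside $B_{s'}(0)$ (which adds no energy, as all such cells and their incident edges carry the value $1$) the energy over $\mcl G^\ep(D)$ agrees with the energy over $\mcl G^\ep(B_{\rho}(0))$. This proves the lemma when $s\geq \ep^{\beta/2}$.

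It remains to treat $s<\ep^{\beta/2}$, where $s'=\ep^{\beta/2}>s$. Here I would invoke Rayleigh monotonicity (see, e.g.,~\cite[Section~2]{lyons-peres}): since $\bdy B_{s'}(0)$ separates $\bdy B_s(0)$ from $\bdy B_{\rho}(0)$ in the plane and the cells of $\mcl G^\ep$ are connected, any chain of adjacent cells running from $\mcl V\mcl G^\ep(\bdy B_s(0))$ to $\mcl V\mcl G^\ep(\bdy B_{\rho}(0))$ must contain a cell meeting $\bdy B_{s'}(0)$; hence $\mcl V\mcl G^\ep(\bdy B_{s'}(0))$ is a cutset and, by the series law together with the fact that shorting can only decrease resistance, the effective resistance between $\mcl V\mcl G^\ep(\bdy B_s(0))$ and $\mcl V\mcl G^\ep(\bdy B_{\rho}(0))$ is at least that between $\mcl V\mcl G^\ep(\bdy B_{s'}(0))$ and $\mcl V\mcl G^\ep(\bdy B_{\rho}(0))$. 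Since for $0/1$ boundary data the Dirichlet energy of the harmonic extension is the reciprocal of this effective resistance, $\op{Energy}(\frk f_s^\ep ; \mcl G^\ep(B_{\rho}(0))) \leq \op{Energy}(\frk f_{s'}^\ep ; \mcl G^\ep(B_{\rho}(0)))$, and the previous paragraph bounds the right-hand side by $O(1/\log\ep^{-1})$, which is of the required order for $s'=\ep^{\beta/2}$. I expect the main point requiring care to be the bookkeeping in the second paragraph — checking that annuli with inner radius possibly polynomially small in $\ep$ satisfy the hypotheses defining $\mcl C_C(\ep^\beta)$ with constants independent of $s$ and $\ep$, and that the boundary data produced by Theorem~\ref{thm-dirichlet-harmonic} genuinely matches that of $\frk f_s^\ep$ off a negligible event — rather than any new idea.
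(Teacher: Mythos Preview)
Your approach is essentially the same as the paper's: apply Theorem~\ref{thm-dirichlet-harmonic} to (a smoothed version of) the continuum harmonic function on the annulus with inner radius $s' = s \vee \ep^{\beta}$, then use monotonicity of the discrete energy in $s$ (the paper phrases this via Dirichlet minimization after extending by $1$; you phrase it via Rayleigh/shorting --- these are equivalent) to handle $s < s'$.

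One point to correct: your claim that on the cell-size event ``there is no cell contained in $\ol{B_{s'}(0)}$'' is false --- cells have diameter at most $\ep^q$, which is typically much smaller than $s'$, so many cells fit inside $B_{s'}$. Your conclusion that extending $\frk f_{s'}^\ep$ by $1$ on $B_{s'}$ adds no energy is still correct, but for a different reason: any cell entirely contained in $B_{s'}$ can only be adjacent to cells that either are themselves contained in $B_{s'}$ or meet $\bdy B_{s'}$ (since adjacent cells intersect), and in both cases the neighbor carries the value $1$. So the added edges all contribute $0$.
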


We note that by Lemma~\ref{lem-max-cell-diam}, we have $\mcl V\mcl G^\ep(\bdy B_{\rho}(0)) \cap \mcl V\mcl G^\ep( B_{\rho/2}(0) ) = \emptyset$ --- which implies that $\frk f_s^\ep$ is well-defined for each $s \in [0,\rho/2]$ --- except on an event of probability decaying faster than some positive power of $\ep$. 

\begin{proof}[Proof of Lemma~\ref{lem-annulus-energy}]
To lighten notation, define the open annulus $\BB A_s := B_{ \rho}(0) \setminus \ol{B_s(0)}$. 
We will apply Theorem~\ref{thm-dirichlet-harmonic} to the function $\frk g_s  : \ol{\BB A_{s, }} \to [0,1]$ which is equal to $0$ on $\bdy B_{ \rho}(0)$, $1$ on $\bdy B_s(0)$, and is harmonic on the interior of $\BB A_{s  }$. That is, $\frk g_s(z) = \log(\rho/|z|) / \log(\rho/s)$. 
A direct calculation shows that the Euclidean Dirichlet energy of $\frk g_s$ on $\BB A_{s  }$ is $2\pi/\log(\rho/s)$. Furthermore, $\frk g_s$ and each of its first and second order partial derivatives are bounded above by a universal constant times a universal negative power of $s$ on $\ol{\BB A_{s }}$. 
Consequently, Theorem~\ref{thm-dirichlet-harmonic} implies that there exists $\beta=\beta(\gamma)> 0$ and an appropriate choice of $\alpha$ and $A$ as in the statement of the lemma such that the statement of the lemma is true if we impose the additional requirement that $s\geq \ep^\beta$. 

To remove the restriction that $s \geq \ep^\beta$, we extend $\frk f_s^\ep$ to all of $\mcl V\mcl G^\ep(B_{\rho}(0) )$ by requiring it to be identically equal to $1$ on $\mcl V\mcl G^\ep(B_s(0))$. Then the total Dirichlet energy of $\frk f_s^\ep$ is unchanged and if $s'  >s$, then $\frk f_s^\ep$ and $\frk f_{s'}^\ep$ agree on $ \mcl V\mcl G^\ep(\BB A_{s })$. Since $\frk f_{s }^\ep$ has the minimal Dirichlet energy among all functions on $ \mcl V\mcl G^\ep(\BB A_{s })$ with the same boundary data, we infer that $s\mapsto \op{Energy}\left( \frk f_s^\ep ; \mcl G^\ep(\BB A_{s } ) \right)$ is non-decreasing. Therefore,~\eqref{eqn-annulus-energy} for $s = \ep^\beta$ implies~\eqref{eqn-annulus-energy} for $s \in [0,\ep^\beta]$ with $A/\beta$ in place of $A$.  
\end{proof}   

\begin{proof}[Proof of Theorem~\ref{thm-green-lower}]
By Dirichlet's principle (see, e.g.,~\cite[Exercise 2.13]{lyons-peres}), if $\frk f_0^\ep : \mcl V\mcl G^\ep(B_\rho(0)) \to [0,1]$ is the function which vanishes on $\mcl V\mcl G^\ep(\bdy B_\rho(0))$, is equal to $1$ at $0$, and is otherwise discrete harmonic then 
\eqbn
\frac{\op{Gr}_{\tau^\ep}^\ep(0,0)}{\op{deg}\left(0 ; \mcl G^\ep \right)} =   \op{Energy}\left(\frk f_0^\ep ; \mcl G^\ep(B_{\rho}(0)  ) \right)^{-1}  
\eqen
Hence the Green's function bound~\eqref{eqn-green-lower} follows from Lemma~\ref{lem-annulus-energy}.
 
To deduce the recurrence of simple random walk on $\mcl G^\ep$ from this bound, it suffices to consider the case when $\ep = 1$ since the law of $\mcl G^\ep$ (as a graph) does not depend on $\ep$. We will use the scaling property of the $\gamma$-quantum cone (described just below) to produce an increasing sequence of sub-graphs of $\mcl G^1$ (each corresponding to an open ball of random radius), whose union is all of $\mcl G^1$, with the property that the Green's function of the walk stopped upon exiting these subgraphs a.s.\ tends to $\infty$, which implies recurrence by a well-known criterion~\cite[Theorem 2.3]{lyons-peres}.  
 For this purpose, for $b >0$ let $R_b := \sup\{r > 0 : h_r(0) + Q\log r = \frac{1}{\gamma}\log b\}$, where $h_r(0)$ denotes the circle average.
 Note that $R_0 = 1$ since $h$ is assumed to have the circle average embedding. By~\cite[Proposition 4.13(i)]{wedges}, for $b>0$ we have $h \eqD h^b$ for $h^b := h(R_b\cdot) + Q\log R_b - \frac{1}{\gamma} \log b $, where $Q = 2/\gamma+\gamma /2$ is as in~\eqref{eqn-lqg-coord}. It is easily seen from the definition of $h$ (Definition~\ref{def-quantum-cone}) that a.s.\ $R_b\rta\infty$ as $b\rta\infty$. Since $\eta$ is sampled independently from $h$ and then parameterized by $\gamma$-LQG mass with respect to $h$, it follows that $(h^b,\eta^b) \eqD (h,\eta)$ for $\eta^b := R_b^{-1} \eta(b\cdot)$. In particular, $\mcl G^1(B_{\rho R_b}(0)) \eqD \mcl G^{1/b}(B_{ \rho}(0)) $. Applying this with $b = 2^{ k}$ for $k\in\BB N$, using~\eqref{eqn-green-lower} with $\ep = 2^{-k}$, and applying the Borel-Cantelli lemma now gives the desired recurrence.
\end{proof}

\subsection{Random walk on $\mcl G^\ep$ stays close to a curve with positive probability}
\label{sec-pos-prob}

In this subsection, we will prove Proposition~\ref{prop-pos-prob}, which says that, roughly speaking, the simple random walk on $\mcl G^\ep$ has positive probability to stay close to a fixed Euclidean curve for a long time, even if we condition on $\mcl G^\ep$. This estimate is the key input in the proof of our modulus of continuity bound in Section~\ref{sec-cont} below, but we expect it to also have other applications. 

\begin{defn} \label{def-walk-pt}
For $x\in\mcl V\mcl G^\ep$, we write $\ol{\BB P}_x^\ep$ for the conditional law given $(h,\eta )$ (which determines $\mcl G^\ep$) of the simple random walk $X^\ep$ on $\mcl G^\ep$ started from $x$. 
\end{defn}

\begin{prop} \label{prop-pos-prob}
For each $\rho \in (0,1)$, there exists $s_0 = s_0(\rho,\gamma) > 0$, $\alpha=\alpha(\gamma) > 0$, and $\beta = \beta(\gamma) > 0$ such that the following is true. 
Let $P\subset B_{\rho}(0)$ be a compact connected set, let $\ep \in (0,1)$, and let $\wt r , r > 0$ with $\ep^\beta \leq \wt r < r \leq \op{dist}(P, \bdy B_{\rho}(0))$. 
Also set
\eqb \label{eqn-pos-prob-N}
N  = N(P , \wt r , r) := \min\left\{ n \in \BB N : \text{$B_{\wt r}(P)$ can be covered by $n$ Euclidean balls of radius $s_0 (r-\wt r) $} \right\} .
\eqe 
Then with probability at least $1- N O_\ep(   \ep^\alpha)$ (at a rate depending only on $\rho$ and $\gamma$), 
\eqb \label{eqn-pos-prob}
\min_{x\in\mcl V\mcl G^\ep(B_{\wt r}(P))} \min_{z\in B_{\wt r}(P) }
\ol{\BB P}_x^\ep\left[ \text{$X^\ep$ enters $\mcl V\mcl G^\ep(B_r(z))$ before leaving $\mcl V\mcl G^\ep(B_r(P))$} \right] \geq 2^{-N} .
\eqe 
\end{prop}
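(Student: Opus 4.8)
The plan is to prove a quantitative, graph-level version of the classical statement that planar Brownian motion in an annulus hits the two boundary circles with the ``right'' probabilities, and then to chain such estimates along a path. Precisely, I would establish the following \emph{annulus crossing estimate}: there are $c_* = c_*(\rho,\gamma) \in (0,1)$, $\alpha_* = \alpha_*(\gamma) > 0$, $\beta_* = \beta_*(\gamma) > 0$ such that for all $w \in \BB C$ and $0 < a < b$ with $\ol{B_b(w)} \subset B_\rho(0)$ and $a \geq \ep^{\beta_*}$, it holds with probability $1 - O_\ep(\ep^{\alpha_*})$ that, conditionally on $(h,\eta)$, every $x$ with $\eta(x) \in B_b(w)\setminus B_a(w)$ satisfies
\[
\ol{\BB P}_x^\ep\!\left[ X^\ep \text{ hits } \mcl V\mcl G^\ep(\bdy B_a(w)) \text{ before } \mcl V\mcl G^\ep(\bdy B_b(w)) \right] \;\geq\; c_* \,\frac{\log\!\big(b/|\eta(x)-w|\big)}{\log(b/a)} .
\]
For an annulus of bounded modulus the right-hand side is itself bounded below by a positive constant depending only on $\rho,\gamma$.

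To prove the crossing estimate, let $\frk g^\ep$ be the discrete harmonic function on $\mcl V\mcl G^\ep(B_b(w)\setminus B_a(w))$ equal to $1$ on $\mcl V\mcl G^\ep(\bdy B_a(w))$ and $0$ on $\mcl V\mcl G^\ep(\bdy B_b(w))$, so that the probability above equals $\frk g^\ep(x)$ and $\op{Energy}(\frk g^\ep;\mcl G^\ep)$ is the effective conductance between the two boundary circles. On one hand, Theorem~\ref{thm-dirichlet-harmonic} applied to the continuum harmonic function $z\mapsto \log(b/|z-w|)/\log(b/a)$, whose Euclidean Dirichlet energy is $2\pi/\log(b/a)$, shows this conductance is $\preceq 1/\log(b/a)$. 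On the other hand, the continuum radial unit current through the annulus is divergence-free and hence induces a unit flow on $\mcl G^\ep$ from the inner to the outer boundary --- the flux on an edge $\{x,y\}$ being the current through the shared boundary arc $H_x^\ep\cap H_y^\ep$, which by Green's theorem is $\preceq \op{diam}(H_x^\ep)/|\eta(x)-w|$ --- whose energy is $\preceq \sum_{x} |\eta(x)-w|^{-2}\op{diam}(H_x^\ep)^2\op{deg}(x;\mcl G^\ep)$; by Lemma~\ref{lem-sum-to-int} (equivalently Proposition~\ref{prop-area-diam-deg-gamma}) applied with $f(z) = |z-w|^{-2}$ this is $\preceq \int_{B_b(w)\setminus B_a(w)}|z-w|^{-2}\,dz \asymp \log(b/a)$, so the effective resistance between the boundary circles is also $\preceq \log(b/a)$. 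Having pinned the effective resistance between the circles up to constants, I would pass to the pointwise bound on $\frk g^\ep(x)$ by a Cauchy--Schwarz (second moment) argument: letting $V_A$ be the number of visits of $X^\ep$ to $\mcl V\mcl G^\ep(\bdy B_a(w))$ before it exits $\mcl V\mcl G^\ep(B_b(w))$, one has $\frk g^\ep(x) = \ol{\BB P}_x^\ep[V_A > 0] \geq \ol{\BB E}_x^\ep[V_A]^2 / \ol{\BB E}_x^\ep[V_A^2] \geq \ol{\BB E}_x^\ep[V_A]\big/\big(2\sup_v \ol{\BB E}_v^\ep[V_A]\big)$, and the two moments are controlled --- from below and above respectively --- by the effective-resistance estimates above together with Theorem~\ref{thm-dirichlet-harmonic} applied to sub-annuli centered at $w$. \emph{The main obstacle I anticipate is precisely this last step}: turning the energy/resistance estimates into a pointwise lower bound for the crossing probability requires quantitative control of the occupation measure of the inner circle, not merely of its total conductance to the outer circle.

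\textbf{Chaining.}
Granting the crossing estimate, fix $s_0 = s_0(\rho,\gamma) > 0$ small (chosen at the end), set $\delta := r - \wt r$, and work on the intersection of: the event of Lemma~\ref{lem-max-cell-diam} that every cell meeting $B_\rho(0)$ has diameter at most $\ep^q$ (so $X^\ep$ moves by at most $2\ep^q$ per step and cannot cross a circle of radius $\gg \ep^q$ without first entering the corresponding ``$\bdy B$'' vertex set); and, for each of the $\preceq N$ annuli used below, the good event of the crossing estimate --- a union bound over these annuli contributes $N\,O_\ep(\ep^\alpha)$ to the failure probability. Given $x$ and $z$, pick a path in $B_{\wt r}(P)$ from $\eta(x)$ to $z$ and mark points $\eta(x) = z_0, z_1, \dots, z_m = z$ along it with $|z_k - z_{k+1}| \leq \delta/20$; routing the path through the centers of an essentially minimal covering of $B_{\wt r}(P)$ by balls of radius $s_0\delta$ gives $m \leq C\, N s_0$ for a universal constant $C$. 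For each $k$ the ball $B_{\delta/2}(z_{k+1})$ lies in $B_r(P)\cap B_\rho(0)$ (since $z_{k+1}\in B_{\wt r}(P)$ and $s_0<1/2$), so the crossing estimate applies to the bounded-modulus annulus $B_{\delta/2}(z_{k+1})\setminus B_{\delta/20}(z_{k+1})$ and, combined with the fact that any vertex whose cell meets $B_{\delta/20}(z_k)$ has $\eta$-image within $\delta/9$ of $z_{k+1}$, yields: from any such vertex the walk reaches $\mcl V\mcl G^\ep(B_{\delta/20}(z_{k+1}))$ before exiting $\mcl V\mcl G^\ep(B_{\delta/2}(z_{k+1})) \subseteq \mcl V\mcl G^\ep(B_r(P))$ with conditional probability at least a constant $c_1 = c_1(\rho,\gamma) > 0$ that does \emph{not} depend on $s_0$. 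Applying the strong Markov property at the successive entrance times --- and noting that $z$ lies within $2 s_0\delta < r$ of the target ball of the last step, so that reaching it places the walk in $\mcl V\mcl G^\ep(B_r(z))$ --- the walk from $x$ reaches $\mcl V\mcl G^\ep(B_r(z))$ before exiting $\mcl V\mcl G^\ep(B_r(P))$ with conditional probability at least $c_1^{m} \geq c_1^{C N s_0} = (c_1^{C s_0})^N$. Choosing $s_0$ small enough, depending only on $\rho$ and $\gamma$, that $c_1^{C s_0} \geq 1/2$ makes this at least $2^{-N}$, which is~\eqref{eqn-pos-prob}. (The degenerate case $N = 1$, in which $\eta(x)$ and $z$ already lie within $2 s_0\delta$ of each other, is immediate, and $\beta$ is taken small enough that $\beta < q$ so that $\ep^q \ll s_0\delta$ for small $\ep$.) The remaining points --- that entering the ``$\mcl V\mcl G^\ep(\bdy B)$'' sets really places the walk on the correct side of the relevant circle given the small-cell bound, and that these sets separate the graph --- are routine bookkeeping once the crossing estimate is in hand.
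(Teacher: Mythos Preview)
Your chaining step matches the paper's. The genuine gap is exactly where you flag it, and the second-moment argument you sketch for the pointwise crossing bound is in fact circular: with $V_A$ the number of visits to the inner-boundary vertex set before exit, strong Markov at the first visit gives $\ol{\BB E}_x^\ep[V_A] = \frk g^\ep(x)\,\ol{\BB E}_\pi^\ep[V_A]$ and $\ol{\BB E}_x^\ep[V_A^2] = \frk g^\ep(x)\,\ol{\BB E}_\pi^\ep[V_A^2]$ for $\pi$ the hitting distribution on the inner circle, so $\ol{\BB E}_x^\ep[V_A]^2/\ol{\BB E}_x^\ep[V_A^2]= \frk g^\ep(x)\cdot \ol{\BB E}_\pi^\ep[V_A]^2/\ol{\BB E}_\pi^\ep[V_A^2]$ and the Paley--Zygmund inequality collapses to the triviality $1\geq \ol{\BB E}_\pi^\ep[V_A]^2/\ol{\BB E}_\pi^\ep[V_A^2]$. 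Two-sided resistance estimates between the boundary circles, however sharp, do not by themselves produce a pointwise lower bound on $\frk g^\ep$ without some further input such as an elliptic Harnack inequality, and none is available here. (Your unit-flow argument for the resistance lower bound via Lemma~\ref{lem-sum-to-int} is correct and pleasant, but the paper never needs a resistance lower bound.)

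The paper's passage from an energy bound to a pointwise bound is a different mechanism and is the idea you are missing. Writing $\frk f_s^\ep$ for the discrete harmonic function which vanishes on $\mcl V\mcl G^\ep(\bdy B_{sr}(z))$ and equals $1$ on $\mcl V\mcl G^\ep(\bdy B_r(z))$, Theorem~\ref{thm-dirichlet-harmonic} gives $\op{Energy}(\frk f_s^\ep)\preceq 1/\log s^{-1}$, and then Lemma~\ref{lem-path-sum} (a Cauchy--Schwarz estimate over a one-parameter family of paths $\{P_t\}$ with $\op{dist}(P_s,P_t)\succeq|s-t|$) is applied twice: once with the $P_t$ equal to concentric circles $\bdy B_t(z)$ for $t$ in a range of width $\asymp sr$ near the inner boundary, and once with the $P_t$ equal to radial segments from $\bdy B_{sr}(z)$ to $\bdy B_{10sr}(z)$. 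In each case the \emph{average} over $t$ of $\sum_{\{x,y\}\in\mcl E\mcl G^\ep(P_t)}|\frk f_s^\ep(x)-\frk f_s^\ep(y)|$ is $\preceq (sr)^{-1}\cdot (sr)\cdot\op{Energy}(\frk f_s^\ep)^{1/2}\preceq 1/\sqrt{\log s^{-1}}$, so there exist a specific circle $P^{1,\ep}$ and a specific radial segment $P^{2,\ep}$ on which the total variation of $\frk f_s^\ep$ is at most $A/\sqrt{\log s^{-1}}$. Their union is connected, meets $\bdy B_{sr}(z)$ (where $\frk f_s^\ep=0$), and separates an inner annular region from $\bdy B_r(z)$; the maximum principle then forces $\frk f_s^\ep\leq 2A/\sqrt{\log s^{-1}}$ on that region (Lemma~\ref{lem-annulus-cross}). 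Choosing $s_0$ so that $2A/\sqrt{\log s_0^{-1}}\leq 1/2$ gives the per-step crossing probability $\geq 1/2$ needed for the chaining.
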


We note that Proposition~\ref{prop-pos-prob} is not implied by the quenched convergence of $X^\ep$ to Brownian motion modulo time parameterization (proven in~\cite[Theorem 3.4]{gms-tutte}) since the latter convergence does not give a quantitative bound for the annealed probability that~\eqref{eqn-pos-prob} holds. 

To prove Proposition~\ref{prop-pos-prob}, we will show that a simple random walk on $\mcl G^\ep$ started close to the inner boundary of a Euclidean annulus is likely to hit the inner boundary before the outer boundary (Lemma~\ref{lem-annulus-cross}). This leads to Proposition~\ref{prop-pos-prob} by considering $N$ such annuli with the property that the union of their inner boundaries contains a path from $\eta(x)$ to $z$. See Figure~\ref{fig-pos-prob} for an illustration of the proof.

\begin{figure}[ht!]
\begin{center}
\includegraphics[scale=.7]{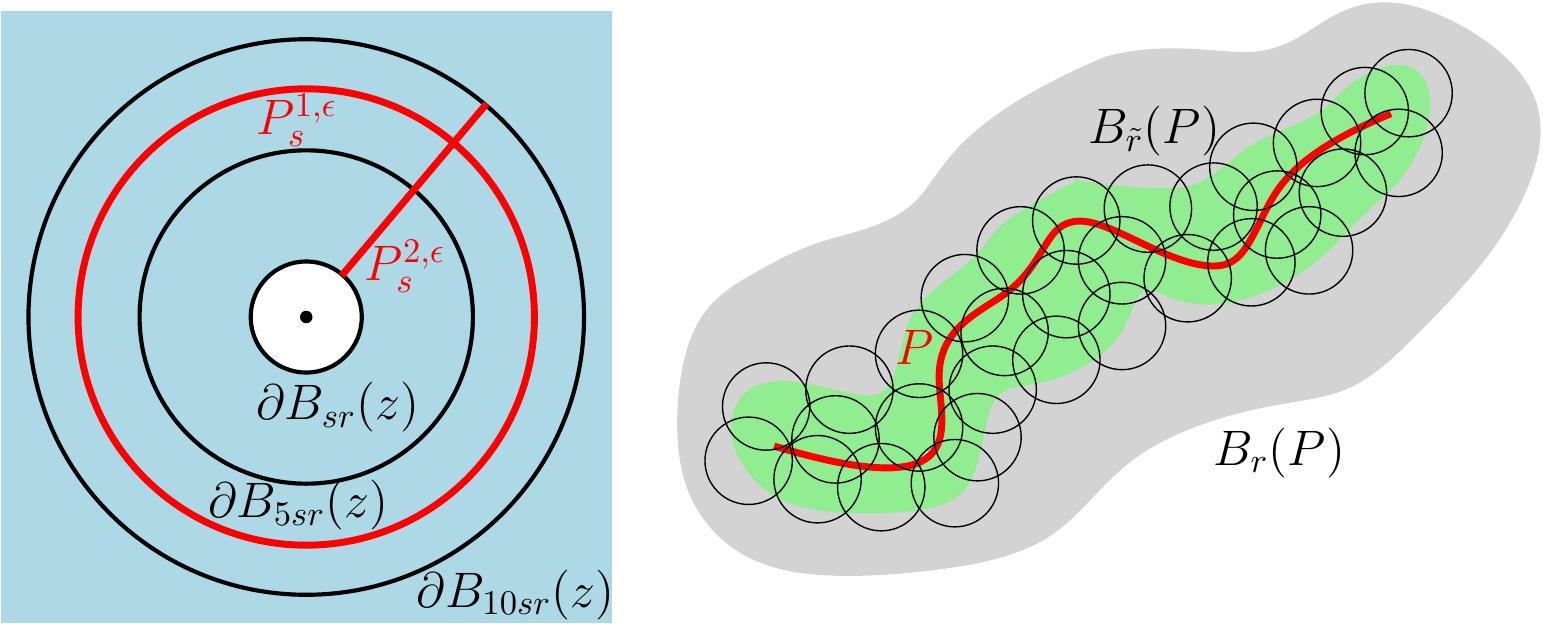} 
\caption{\label{fig-pos-prob} \textbf{Left:} Illustration of the proof of Lemma~\ref{lem-annulus-cross}. We use Lemma~\ref{lem-path-sum} and Theorem~\ref{thm-dirichlet-harmonic} to find a circle $P_{s }^{1,\ep}$ separating $\bdy B_{5 sr}(z)$ and $\bdy B_{10 sr}(z)$ and a radial line segment $P_{s }^{2,\ep}$ from $\bdy B_{sr}(z)$ to $\bdy B_{10 sr}(z)$ over which the total variation of $\frk f_s^\ep$ is at most a constant times $1/\sqrt{\log s^{-1}}$. By the maximum principle, this gives us a bound for the maximum value of $\frk f_s^\ep$ on $B_{5sr}(z)\setminus B_{sr}(z)$. \textbf{Right:} Illustration of the proof of Proposition~\ref{prop-pos-prob}. We cover $B_{\wt r}(P)$ by $N$ balls of radius $s_0 (r-\wt r)$, then use Lemma~\ref{lem-annulus-cross} to force a random walk to follow a ``string" of such balls from $\eta(x)$ to $z$. 
}
\end{center}
\end{figure}

Our desired bound for the probability of exiting an annulus at a point of its inner boundary can be re-phrased as a pointwise bound for a certain discrete harmonic function on $\mcl G^\ep$. 
The following technical lemma (which is a variant of~\cite[Lemma 2.16]{gms-random-walk}) enables us to transfer from the Dirichlet energy bounds of Section~\ref{sec-dirichlet-mean} to the needed pointwise bounds.
The idea of the statement and proof of the lemma is to consider a collection of paths $\{P_t\}_{t\in [a,b]}$, indexed by some finite interval, with the property that the Euclidean distance between $P_s$ and $P_t$ is bounded below by $|s-t|$. 
Due to the Cauchy-Schwarz inequality, the $t$-average of the total variation of a function on $\mcl V\mcl G^\ep$ over the paths $P_t$ can be bounded above in terms of the Dirichlet energy of $f^\ep$. 
There must be one path $P_t$ over which the total variation of $f^\ep$ is smaller than average, which (together with the maximum principle) will allow us to prove pointwise bounds for harmonic functions on $\mcl G^\ep$ in Lemma~\ref{lem-annulus-cross} below.

\begin{lem} \label{lem-path-sum}
There exists $\alpha = \alpha(  \gamma) >0$ such that for each $C\geq 1$ and each $\rho\in (0,1)$, we can find $A = A( C,  \rho,\gamma) > 0$ such that the following is true. Let $D\subset  B_{\rho}(0)$ be a domain such that $\op{area}(B_r(\bdy D)) \leq C r $ for each $r \in (0,1)$. 
For $\ep\in (0,1)$, it holds with probability at least $1-O_\ep(\ep^\alpha)$, at a rate depending only on $  C,  \rho$, and $\gamma$, that the following holds.
Let $\{P_t\}_{t\in [a,b]}$ be a collection of compact subsets of $\ol D$ such that $\op{dist}(P_s,P_t)  \geq C^{-1} |s-t|$ for each $s,t\in [a,b]$. 
Then for each function $f^\ep : \mcl V\mcl G^\ep(D) \rta \BB R$, 
\eqb \label{eqn-path-sum} 
  \int_a^b \sum_{\{x,y\} \in \mcl E \mcl G^\ep(P_t) } |f^\ep(x) - f^\ep(y)| \, dt \leq   
  A (\op{area}(D) + \ep^\alpha)^{1/2}  \op{Energy}\left(  f^\ep  ; \mcl G^\ep(D) \right)^{ 1/2}  .
\eqe   
\end{lem}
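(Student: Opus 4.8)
The plan is to reduce~\eqref{eqn-path-sum} to a weighted sum over edges of $\mcl G^\ep(D)$ via Fubini, apply the Cauchy--Schwarz inequality, and then control the resulting weights using Lemma~\ref{lem-sum-to-int} with the constant function $f\equiv 1$; this is the discrete-graph analogue of the argument for~\cite[Lemma~2.16]{gms-random-walk}. For an edge $e=\{x,y\}\in\mcl E\mcl G^\ep(D)$, let $\ell_e$ be the Lebesgue measure of the set of $t\in[a,b]$ for which $e\in\mcl E\mcl G^\ep(P_t)$, i.e.\ for which $P_t$ meets both cells $H_x^\ep$ and $H_y^\ep$. Working on the high-probability event of Lemma~\ref{lem-max-cell-diam} (on which all cells meeting $B_\rho(0)$ have diameter at most $\ep^q$ for a suitable $q=q(\gamma)>0$, which lets us treat $\mcl V\mcl G^\ep(P_t)$ as a subset of $\mcl V\mcl G^\ep(D)$ up to a negligible enlargement of $D$, so that every edge of every $\mcl G^\ep(P_t)$ has both endpoints where $f^\ep$ is defined), Fubini rewrites the left-hand side of~\eqref{eqn-path-sum} as $\sum_{e\in\mcl E\mcl G^\ep(D)}|f^\ep(x)-f^\ep(y)|\,\ell_e$, and Cauchy--Schwarz bounds this by $\op{Energy}(f^\ep;\mcl G^\ep(D))^{1/2}\bigl(\sum_{e}\ell_e^2\bigr)^{1/2}$, the first factor being exactly as in Definition~\ref{def-discrete-dirichlet}.

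The one genuinely new estimate is a deterministic bound on $\ell_e$ coming from the separation hypothesis: if $P_s$ and $P_t$ both meet a compact set $K$, then $C^{-1}|s-t|\le\op{dist}(P_s,P_t)\le\op{diam}K$, so the set of times at which $P_t$ meets $K$ is contained in an interval of length at most $C\op{diam}K$. Applying this with $K=H_x^\ep$ and with $K=H_y^\ep$ gives $\ell_e\le C\min(\op{diam}H_x^\ep,\op{diam}H_y^\ep)$, hence $\ell_e^2\le \tfrac{C^2}{2}\bigl(\op{diam}(H_x^\ep)^2+\op{diam}(H_y^\ep)^2\bigr)$. Summing over edges and regrouping by vertex,
\[
\sum_{e\in\mcl E\mcl G^\ep(D)}\ell_e^2\le \frac{C^2}{2}\sum_{x\in\mcl V\mcl G^\ep(D)}\op{diam}(H_x^\ep)^2\op{deg}(x;\mcl G^\ep(D))\le \frac{C^2}{2}\sum_{x\in\mcl V\mcl G^\ep(D)}\op{diam}(H_x^\ep)^2\op{deg}(x;\mcl G^\ep).
\]
Now I would apply Lemma~\ref{lem-sum-to-int} with $f\equiv 1$ (trivially $C\ep^{-\beta}$-Lipschitz with sup norm at most $C\ep^{-\beta}$): with probability at least $1-O_\ep(\ep^\alpha)$ the last sum is at most $A\op{area}(D)+\ep^\alpha$ for appropriate $A=A(C,\rho,\gamma)$ and $\alpha=\alpha(\gamma)>0$. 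Plugging this into the Cauchy--Schwarz bound, using $(A\op{area}(D)+\ep^\alpha)^{1/2}\le A^{1/2}(\op{area}(D)+\ep^\alpha)^{1/2}$, shrinking $\alpha$ so that it also dominates the error in Lemma~\ref{lem-max-cell-diam}, and absorbing $C$ and $A$ into a new constant $A'$ yields~\eqref{eqn-path-sum}.

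I do not expect a serious obstacle here: everything except the call to Lemma~\ref{lem-sum-to-int} (hence ultimately Proposition~\ref{prop-area-diam-deg-gamma}) is deterministic, and — crucially — the high-probability event produced depends only on $(h,\eta)$, not on the family $\{P_t\}$ or on $f^\ep$, so it handles all admissible choices simultaneously with no union bound over $\{P_t\}$. The only points requiring minor care are the measurability needed for Fubini (transparent in the intended applications, where $\{P_t\}$ is a continuously varying family of arcs or circles, and handled by outer measure in general, using that $\{t:P_t\cap K\ne\emptyset\}$ always lies in an interval of length $C\op{diam}K$) and the bookkeeping of cells that meet $\bdy D$ but not $D$, which is the purpose of the slight enlargement of $D$ mentioned above. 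The substantive content is entirely inherited from Lemma~\ref{lem-sum-to-int}.
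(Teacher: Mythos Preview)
Your argument is correct and follows essentially the same route as the paper's proof: Fubini to convert the integral into a weighted edge sum, a bound on the time-measure $\ell_e$ (the paper's $M^\ep(x,y)$) coming from the separation hypothesis $\op{dist}(P_s,P_t)\geq C^{-1}|s-t|$, Cauchy--Schwarz, regrouping by vertex, and then Lemma~\ref{lem-sum-to-int} with $f\equiv 1$. The only cosmetic differences are that the paper bounds $M^\ep(x,y)\leq C(\op{diam}H_x^\ep+\op{diam}H_y^\ep)$ via $\op{diam}(H_x^\ep\cup H_y^\ep)$ rather than your slightly sharper $\ell_e\leq C\min(\op{diam}H_x^\ep,\op{diam}H_y^\ep)$, and that the paper does not explicitly invoke Lemma~\ref{lem-max-cell-diam} for the bookkeeping of cells on $\bdy D$.
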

\begin{proof}
By Lemma~\ref{lem-sum-to-int} (applied with $f = \BB 1_D$) we can find $\alpha  = \alpha (  \gamma)>0$ and $A_0 = A_0(  C,    \rho,\gamma) >0$ such that with probability $1 -  O_\ep(\ep^{\alpha })$, 
\eqb \label{eqn-path-sum-area}
 \sum_{x \in \mcl V \mcl G^\ep( D )} \op{diam}(H_x^\ep)^2 \op{deg}(x ; \mcl G^\ep)  \leq A_0 \left( \op{area}(D) + \ep^{\alpha } \right) . 
\eqe 
It therefore suffices to show that if~\eqref{eqn-path-sum-area} holds, then for an appropriate choice of $A$ as in the statement of the lemma, the estimate~\eqref{eqn-path-sum} holds for every possible choice of $\{P_t\}_{t\in [a,b]}$ and $f^\ep$. 

For such a collection of paths $\{P_t\}_{t\in[a,b]}$ and an edge $\{x,y\} \in  \mcl E\mcl G^\ep(D)$, let $M^\ep(x,y)$ be the Lebesgue measure of the set of $t\in [a,b]$ for which $\{x,y\} \in \mcl E\mcl G^\ep(P_t )$. By interchanging the order of integration and summation, for any $f^\ep : \mcl V\mcl G^\ep(D)\rta \BB R$, 
\eqb \label{eqn-path-sum-count}
 \int_a^b \sum_{\{x,y\} \in \mcl E \mcl G^\ep(P_t) } |f^\ep(x) - f^\ep(y)| \, dt
 \leq   \sum_{\{x,y\} \in \mcl E \mcl G^\ep(D) } |f^\ep(x) - f^\ep(y)| M^\ep(x,y) .
\eqe 
Since the cells $H_x^\ep$ and $H_y^\ep$ intersect whenever $\{x,y\} \in \mcl E\mcl G^\ep$, our hypothesis on the paths $P_t$ implies that if $\{x,y\} \in \mcl E\mcl G^\ep(P_t )$, then $\{x,y\} \notin \mcl E\mcl G^\ep(P_s)$ whenever $|s-t| \geq C \op{diam}(H_x^\ep \cup H_y^\ep)$. Therefore,
\eqb \label{eqn-path-count-diam}
M^\ep(x,y) \leq C (\op{diam}(H_x^\ep) + \op{diam}(H_y^\ep) )  .
\eqe 
By~\eqref{eqn-path-sum-count},~\eqref{eqn-path-count-diam}, and the Cauchy-Schwarz inequality, we see that if~\eqref{eqn-path-sum-area} holds, then 
\allb \label{eqn-path-sum-holder}
 \int_a^b \sum_{\{x,y\} \in \mcl E \mcl G^\ep(P_t) } |f^\ep(x) - f^\ep(y)| \, dt
&\preceq  C \left( \sum_{\{x,y\} \in \mcl E \mcl G^\ep(D )}  (\op{diam}(H_x^\ep)^2 + \op{diam}(H_y^\ep)^2 ) \right)^{1/2}  \op{Energy}\left(  f^\ep  ; \mcl G^\ep(D) \right)^{1/2}      \notag \\
&\preceq  C  \left( \sum_{x \in \mcl V \mcl G^\ep(D )}     \op{diam}(H_x^\ep)^2 \op{deg}(x ; \mcl G^\ep)         \right)^{1/2}     \op{Energy}\left(  f^\ep  ; \mcl G^\ep(D) \right)^{1/2} \notag \\
&\leq     C A_0^{1/2} \left( \op{area}(D) + \ep^{\alpha_0}  \right)^{1/2}    \op{Energy}\left(  f^\ep  ; \mcl G^\ep(D) \right)^{1/2} ,
\alle
with universal implicit constants. Thus~\eqref{eqn-path-sum} holds for an appropriate choice of $A$. 
\end{proof}

The following lemma says that the random walk on $\mcl G^\ep$ started close to the inner boundary of a Euclidean annulus is likely to hit the inner boundary before the outer boundary. 
The lemma will be a consequence of Lemma~\ref{lem-path-sum} applied to the discrete harmonic function which equals 0 on the inner boundary and 1 on the outer boundary.

\begin{lem} \label{lem-annulus-cross}
For each $\rho\in (0,1)$, there exists $\alpha = \alpha(\gamma) > 0$, $\beta = \beta(\gamma) > 0$, and $A = A( \rho,\gamma) > 0$ 
such that for each $r\in [\ep^\beta ,\rho]$, each $\ep\in (0,1)$, and each $z\in B_{\rho}(0)$ such that $B_r(z) \subset B_{\rho}(0)$, it holds with probability at least $1-O_\ep(\ep^\alpha)$ (at a rate depending only on $\rho$ and $\gamma$) that for each $s\in [\ep^\beta ,1/10]$, 
\eqb \label{eqn-annulus-cross}
\min_{x\in \mcl V\mcl G^\ep( B_{4 s r}(z)  )} \ol{\BB P}^\ep_x \left[ \text{$X^\ep$ hits $\mcl V\mcl G^\ep(\bdy B_{s r}(z) )$ before $\mcl V\mcl G^\ep(\bdy B_r(z) ) $} \right] \geq 1 - \frac{A}{\sqrt{\log s^{-1} } } .
\eqe 
\end{lem}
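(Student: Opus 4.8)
I would argue that, for each $s$ in the relevant range, the discrete harmonic function
\[
\frk f_s^\ep(x) := \ol{\BB P}_x^\ep\!\left[ X^\ep \text{ hits } \mcl V\mcl G^\ep(\bdy B_r(z)) \text{ before } \mcl V\mcl G^\ep(\bdy B_{sr}(z)) \right],
\]
which is one minus the probability appearing in~\eqref{eqn-annulus-cross}, is bounded above by a constant times $1/\sqrt{\log s^{-1}}$ on $\mcl V\mcl G^\ep(B_{4sr}(z))$. Since any two adjacent cells of $\mcl G^\ep$ intersect, a random walk path joining a cell contained in $B_{sr}(z)$ to a cell disjoint from $\ol{B_{sr}(z)}$ must pass through a cell meeting $\bdy B_{sr}(z)$; hence $\frk f_s^\ep$ is precisely the discrete harmonic function on $\mcl V\mcl G^\ep(B_r(z)\setminus\ol{B_{sr}(z)})$ equal to $0$ on $\mcl V\mcl G^\ep(\bdy B_{sr}(z))$ and $1$ on $\mcl V\mcl G^\ep(\bdy B_r(z))$, and moreover $\frk f_s^\ep$ is non-increasing in $s$ (shrinking the hole only makes it easier to reach $\bdy B_r(z)$ first). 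Because of this monotonicity and the fact that $\log s^{-1}$ changes by only a bounded factor between consecutive dyadic scales, it suffices to prove the bound for $s$ ranging over the powers of $2$ in $[\ep^\beta,1/20]$ (the finitely many values $s\in(1/20,1/10]$ being handled trivially by taking $A$ large), comparing a general $s$ with the nearest dyadic scale below it; the resulting union bound over $O(\log\ep^{-1})$ scales costs only a factor $\log\ep^{-1}$, absorbed by decreasing $\alpha$.

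Fix such a dyadic $s$, write $\BB A_s := B_r(z)\setminus\ol{B_{sr}(z)}$, and let $\frk g_s(w) := \log(|w-z|/(sr))/\log s^{-1}$ be the Euclidean harmonic function on $\BB A_s$ with boundary values $0$ on $\bdy B_{sr}(z)$ and $1$ on $\bdy B_r(z)$, whose Dirichlet energy is $\op{Energy}(\frk g_s;\BB A_s)=2\pi/\log s^{-1}$ by a direct computation. An annulus has area-of-boundary-neighbourhood and bounded-convexity constants controlled by universal constants, and since $sr\geq\ep^{2\beta}$ one checks that $\|\frk g_s\|_\infty$, $\|\nabla\frk g_s\|_\infty$, and the Lipschitz constant of $\nabla\frk g_s$ on $\ol{\BB A_s}$ are all at most a universal constant times $(sr)^{-2}\le\ep^{-5\beta}$; so if $\beta$ is small enough depending on $\gamma$ we have $(\BB A_s,\frk g_s)\in\mcl C_C(\ep^{\beta'})$ for a universal $C$ and the $\beta'$ of Theorem~\ref{thm-dirichlet-harmonic}. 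As long as no cell meets both boundary circles (true on the event of Lemma~\ref{lem-max-cell-diam}), the cell boundary data determined by $\frk g_s$ is exactly $0$ on $\mcl V\mcl G^\ep(\bdy B_{sr}(z))$ and $1$ on $\mcl V\mcl G^\ep(\bdy B_r(z))$, so the discrete harmonic extension in Theorem~\ref{thm-dirichlet-harmonic} is $\frk f_s^\ep$ itself. Thus, except on an event of probability $O_\ep(\ep^\alpha)$,
\[
\op{Energy}\!\left(\frk f_s^\ep;\mcl G^\ep(\BB A_s)\right)\ \leq\ A\,\frac{2\pi}{\log s^{-1}}+\ep^\alpha\ \preceq\ \frac{1}{\log s^{-1}},
\]
using $\log s^{-1}\le\log\ep^{-\beta}$ and $\ep^\alpha\log s^{-1}\to 0$.

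The heart of the matter is to upgrade this $L^2$-type bound to a pointwise bound, and this is where Lemma~\ref{lem-path-sum} enters. Set $D:=B_{20sr}(z)\setminus\ol{B_{sr}(z)}\subseteq\BB A_s$, so $\op{area}(D)\asymp(sr)^2$, $\op{area}(B_t(\bdy D))\preceq t$, and $\op{Energy}(\frk f_s^\ep;\mcl G^\ep(D))\le\op{Energy}(\frk f_s^\ep;\mcl G^\ep(\BB A_s))\preceq1/\log s^{-1}$. On the (single) event furnished by Lemma~\ref{lem-path-sum} for this $D$, I would apply its conclusion first to the family of concentric circles $P_t:=\bdy B_t(z)$, $t\in[12sr,19sr]$, which satisfy $\op{dist}(P_t,P_{t'})=|t-t'|$: averaging the resulting inequality over this interval and inserting the energy bound produces a radius $t^*$ for which $\sum_{\{x,y\}\in\mcl E\mcl G^\ep(\bdy B_{t^*}(z))}|\frk f_s^\ep(x)-\frk f_s^\ep(y)|\preceq1/\sqrt{\log s^{-1}}$; since the cells meeting $\bdy B_{t^*}(z)$ form a connected cyclic band, this bounds the oscillation of $\frk f_s^\ep$ over that band by a constant times $1/\sqrt{\log s^{-1}}$. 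Then I would apply Lemma~\ref{lem-path-sum} again to the radial segments joining $\bdy B_{sr}(z)$ to $\bdy B_{20sr}(z)$, parametrized by arclength at the inner radius so that distinct segments are at distance at least a constant times their parameter difference; the same averaging yields a radial segment over which the total variation of $\frk f_s^\ep$ is $\preceq1/\sqrt{\log s^{-1}}$. That segment meets $\bdy B_{sr}(z)$, where $\frk f_s^\ep=0$, and it crosses $\bdy B_{t^*}(z)$, so $\frk f_s^\ep$ is $\preceq1/\sqrt{\log s^{-1}}$ at some cell of the band and hence, by the oscillation bound, on the whole band. Finally, $\frk f_s^\ep$ is discrete harmonic on the set of cells contained in $B_{t^*}(z)$ and disjoint from $\bdy B_{sr}(z)$ (such cells meet neither boundary circle), and the intersection property of cells shows the discrete boundary of this set lies in $\mcl V\mcl G^\ep(\bdy B_{sr}(z))\cup\mcl V\mcl G^\ep(\bdy B_{t^*}(z))$, on which $\frk f_s^\ep$ is $0$ or $\preceq1/\sqrt{\log s^{-1}}$; the discrete maximum principle then gives $\frk f_s^\ep\preceq1/\sqrt{\log s^{-1}}$ on every cell contained in $B_{t^*}(z)$. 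On the event of Lemma~\ref{lem-max-cell-diam} that all cells meeting $B_\rho(0)$ have diameter at most $\ep^q<sr$, every cell meeting $B_{10sr}(z)$ is contained in $B_{11sr}(z)\subseteq B_{t^*}(z)$, which in particular covers $\mcl V\mcl G^\ep(B_{4sr}(z))$ and, after the dyadic comparison, yields~\eqref{eqn-annulus-cross}.

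The step I expect to be the main obstacle is precisely this passage from the global Dirichlet energy bound to a pointwise bound on a macroscopic region: the gain of the square root — and therefore the fact that the error $A/\sqrt{\log s^{-1}}$ actually tends to $0$ as $s\to0$ — comes entirely from the Cauchy–Schwarz step inside Lemma~\ref{lem-path-sum}, which forces the existence of one circle and one radial segment along which the variation of $\frk f_s^\ep$ is of order $(\mathrm{energy})^{1/2}$ rather than $(\mathrm{energy})$, and then one must combine these with the discrete maximum principle on an annular region. A secondary, purely technical nuisance is keeping the various radii (here $4sr$, $10sr$, $12sr$, $19sr$, $20sr$, $r$) consistent so that all the circles and segments used fit inside $B_r(z)$ and so that the discrete boundary identifications hold; this is routine given the polynomial upper bound on cell diameters.
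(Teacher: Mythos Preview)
Your proposal is correct and follows essentially the same approach as the paper: bound the Dirichlet energy of $\frk f_s^\ep$ via Theorem~\ref{thm-dirichlet-harmonic} applied to the Euclidean harmonic function on the annulus, then invoke Lemma~\ref{lem-path-sum} on two families of paths (concentric circles and radial segments) to extract one circle and one segment along which the total variation of $\frk f_s^\ep$ is $\preceq 1/\sqrt{\log s^{-1}}$, and combine these with the discrete maximum principle. The only differences are cosmetic: the paper uses radii in the range $[sr,10sr]$ rather than $[sr,20sr]$, treats the circle and segment jointly as a single connected set rather than first bounding the oscillation on the circle band, and handles the passage to all $s$ by choosing a covering family of scales with the slack $5s$ versus $4s$ rather than by your monotonicity-in-$s$ argument; both devices serve the same purpose and cost only a $\log\ep^{-1}$ factor in the union bound.
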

\begin{proof}
See Figure~\ref{fig-pos-prob}, left panel, for an illustration of the proof.
Fix $z\in B_{\rho}(0)$ and $r\in (0,\rho]$ with $B_r(z) \subset B_{\rho}(0)$. To lighten notation, define the open annulus
\eqbn
\BB A_{a,b}(z) := B_b(z) \setminus \ol{B_a(z)} ,\quad \forall b > a > 0. 
\eqen
For $s \in (0,1/2]$ and $\ep\in (0,1)$, let $\frk f_s^\ep : \mcl V\mcl G^\ep( \BB A_{s r , r}(z) ) \rta [0,1]$ be the function which equals 0 on $\mcl V\mcl G^\ep(\bdy B_{s r}(z) )$, 1 on $\mcl V\mcl G^\ep(\bdy B_r(z) )$, and is discrete harmonic on the rest of $\mcl V\mcl G^\ep( \BB A_{sr , r}(z) )$. We need an upper bound for the values of $\frk f_s^\ep$ on $\mcl V\mcl G^\ep(\BB A_{sr , 4sr}(z))$. 
\medskip

\noindent\textit{Step 1: Dirichlet energy bound.} We first bound the Dirichlet energy of $\frk f_{s }^\ep$ . By Theorem~\ref{thm-dirichlet-harmonic}, applied to the Euclidean harmonic function  on $\BB A_{sr , r}(z)$ which equals 0 on $\bdy B_{s r}(z)$ and 1 on $\bdy B_r(z)$, and the same argument used in the proof of Lemma~\ref{lem-annulus-energy}, we find that there exists $\alpha_0 = \alpha_0(\gamma) > 0$ and $A_0 = A_0(\rho,\gamma) > 0$ such that for each $\ep \in (0,1)$ and each fixed $s\in [0 , 1/2]$ it holds with probability at least $1-O_\ep(\ep^{\alpha_0})$ (at a rate depending only on $\rho$ and $\gamma$) that 
\eqb \label{eqn-annulus-dirichlet}
\op{Energy}\left( \frk f_s^\ep ;  \mcl V\mcl G^\ep( \BB A_{s r , r}(z) )  \right) \leq   \frac{  A_0}{  \log s^{-1}   }  .
\eqe
\medskip

\noindent\textit{Step 2: averaging over segments and circles.} 
We will now apply Lemma~\ref{lem-path-sum} to two different collections of paths to deduce~\eqref{eqn-annulus-cross} from~\eqref{eqn-annulus-dirichlet}. 
For $s\in  [\ep^{\beta_0} ,1/10]$ define the concentric circles
\eqbn
P_{s,t}^1 :=  \bdy B_{5 (s r + t) }(z) ,\quad \forall t \in \left[ 0 ,   s r \right] 
\eqen
and the radial line segments across $\BB A_{sr, 10 sr}(z)$ 
\eqbn
P_{s,t}^2 := \left[ s r \exp\left( 2\pi \BB i ( s r)^{-1} t \right) + z , 10 s  r \exp\left( 2\pi \BB i (s r)^{-1}  t \right) + z \right]
\quad \forall t \in \left[0 , s r \right] .
\eqen 
We observe that for each $t \in [0, s r ]$, each of $P_{s,t}^1$ and $P_{s,t}^2$ is contained in $\ol{\BB A_{  s r,  10 s r}(z)}$.
Furthermore, there is a universal constant $C>0$ such that for $t,t' \in [0,  sr]$ and $i\in \{1,2\}$ we have $\op{dist}(P_{s,t}^i,P_{s,t'}^i) \geq C^{-1} |t-t'|$.

By Lemma~\ref{lem-path-sum} (applied with $D =  \BB A_{  s r, 10 s r}(z)$ and each of the collections of paths $\{P_{s,t}^i\}_{t\in [0,  s r]}$ for $i\in \{1,2\}$) and~\eqref{eqn-annulus-dirichlet}, we can find constants $\alpha_1 = \alpha_1(\gamma) \in (0,\alpha_0 ]$ and $A_1 = A_1( \rho,\gamma) > 0$ such that for each $\ep\in (0,1)$ and each $s \in [\ep^{\alpha_1/2} , 1 /10]$ it holds with probability $1 - O_\ep(\ep^{\alpha_1})$ that for each $i\in \{1,2\}$, 
\allb \label{eqn-annulus-cross-mean}
\frac{1}{ s r} \int_0^{s r} \sum_{\{x,y\} \in \mcl E \mcl G^\ep(P_t ) } | \frk f_s^\ep (x) -  \frk f_s^\ep (y)|   \, dt
&\leq \frac{A_1  ( ( s r)^2  + \ep^{\alpha_1}  )^{1/2} }{ s r}   \op{Energy}\left(   \frk f_s^\ep  ; \mcl G^\ep( \BB A_{  s r , 10 s r}(z) ) \right)^{ 1/2}  \notag\\
&\leq  \frac{A_2}{ \sqrt{ \log s^{-1}  }} 
\alle
for some constant $A_2 > 0$ depending only on $\rho$ and $\gamma$. 
\medskip

\noindent\textit{Step 3: existence of good paths.} 
If~\eqref{eqn-annulus-cross-mean} holds for each $i\in \{1,2\}$, then since the average of any function is bounded below by its minimum value, there must exist $t_i^\ep \in [0, s r]$ such that with $P_s^{i,\ep} := P_{s,t_i^\ep}^\ep$, 
\eqb \label{eqn-annulus-cross-sum}
\sum_{\{x,y\} \in \mcl E \mcl G^\ep(P_s^{i,\ep}  ) } | \frk f_s^\ep (x) -  \frk f_s^\ep (y)|  \leq \frac{A_2}{ \sqrt{ \log s^{-1} } } .
\eqe 
The union $P_{s}^{1,\ep} \cup P_{s}^{2,\ep}$ is connected, intersects $\bdy B_{ s r}(z)$, and disconnects $\BB A_{s r , 5 s r }(z)$ from $\infty$. Since $\frk f_s^\ep$ vanishes on $\mcl V\mcl G^\ep(\bdy B_{s r}(z) )$, we infer from~\eqref{eqn-annulus-cross-sum} and the maximum principle for the discrete harmonic function $\frk f_s^\ep$ that with probability at least $1-O_\ep(\ep^{\alpha_1})$, 
\eqbn
\max_{x\in \mcl V\mcl G^\ep(\BB A_{ s r  , 5 s r}(z) )} \frk f_s^\ep(x) \leq \frac{2A_2}{ \sqrt{ \log s^{-1} } } , 
\eqen 
which by the definition of $\frk f_s^\ep$ implies 
\eqb \label{eqn-annulus-cross0}
\min_{x\in \mcl V\mcl G^\ep(\BB A_{ 5 s r}(z) )} \ol{\BB P}^\ep_x \left[ \text{$X^\ep$ hits $\mcl V\mcl G^\ep(\bdy B_{s r}(z) )$ before $\mcl V\mcl G^\ep(\bdy B_r(z ) ) $} \right] \geq 1 - \frac{2A_2}{\sqrt{\log s^{-1} } } .
\eqe 
The statement of the lemma with $\alpha$ slightly smaller than $\alpha_1$, $\beta  =  \alpha_1/2   $, and $A = 4A_2$ follows by applying~\eqref{eqn-annulus-cross0} for a collection of $O_\ep(\log\ep^{-1})$ different values of $s\in [\ep^\beta , 1/10] $, chosen so that each interval $[s , 4s]$ for $s\in [\ep^\beta , 1/4]$ is contained in $[s ' , 5 s']$ for some $s'$ in this collection, then taking a union bound (note that this last step is why we used $5s$ instead of $4s$ above). 
\end{proof}

\begin{proof}[Proof of Proposition~\ref{prop-pos-prob}]
We will iteratively apply Lemma~\ref{lem-annulus-cross} and the Markov property of the random walk to a collection of balls which cover $B_{\wt r}(P)$. 
Let $\alpha = \alpha(\gamma)>0$, $\beta=\beta(\gamma) > 0$, and $A=A(\rho,\gamma) > 0$ be as in Lemma~\ref{lem-annulus-cross} and let $s_0 \in (0,1/10]$ be chosen so that such that $A / \sqrt{\log s^{-1}} \leq 1/2$. To lighten notation, set $r' := s_0(r-\wt r)$. 

By~\eqref{eqn-pos-prob-N}, for each $P, \ep , \wt r , r$ as in the statement of the lemma we can find a finite deterministic set $W \subset B_{\wt r + r'}(P)$ such that
\eqb \label{eqn-finite-ball-cover}
\# W \leq N \quad \op{and} \quad  B_{\wt r}(P) \subset \bigcup_{w\in W} B_{r' }(w) .
\eqe  
By Lemma~\ref{lem-annulus-cross} (applied with $r-\wt r$ in place of $r$) and a union bound over all $w\in W$, it holds with probability at least $1-N O_\ep(\ep^\alpha)$ that
\eqb \label{eqn-all-ball-pos}
\min_{w\in W} \min_{x\in \mcl V\mcl G^\ep( B_{4r'}(w) )} \ol{\BB P}^\ep_x \left[ \text{$X^\ep$ hits $\mcl V\mcl G^\ep(\bdy B_{r'}(w) )$ before $\mcl V\mcl G^\ep(\bdy B_{r-\wt r}(w) ) $} \right] \geq \frac12 .
\eqe

Suppose now that~\eqref{eqn-all-ball-pos} holds. By~\eqref{eqn-finite-ball-cover}, for each $x\in \mcl V\mcl G^\ep(B_{\wt r}(P))$ and each $z \in B_{\wt r}(P)$ we can find distinct points $w_0,\dots,w_m \in W$ such that $H_x^\ep\cap  B_{r'}(w_0) \not=\emptyset$, $z \in  B_{r'}(w_m)$, and $B_{r'}(w_{k-1})\cap B_{r'}(w_k)\not=\emptyset$ for each $k\in [1,m]_{\BB Z}$. 
Since $s_0 \leq 1/10$ and each $w \in W$ lies in $B_{\wt r + r'}(P)$, each ball $B_{4r'}(w_k)$ for $k\in [1,m]_{\BB Z}$ is contained in $B_r(P)$.
Moreover, each $B_{r'}(w_{k-1})$ is contained in $B_{4r'}(w_k)$. 
By $m$ applications of~\eqref{eqn-all-ball-pos} and the Markov property of $X^\ep$, it holds with $\ol{\BB P}_x^\ep$-probability at least $2^{-m} \geq 2^{-N}$ that $X^\ep$ enters each $\mcl V\mcl G^\ep( B_{r'}(w_k) )$ before leaving $\mcl V\mcl G^\ep(B_r(P))$, in which case $X^\ep$ enters $\mcl V\mcl G^\ep(B_r(z)) \supset \mcl V\mcl G^\ep(B_{r'}(w_m))$ before leaving $\mcl V\mcl G^\ep(B_r(P))$. Thus~\eqref{eqn-pos-prob} holds. 
\end{proof}

\subsection{H\"older continuity for harmonic functions on $\mcl G^\ep$}
\label{sec-cont}

In this subsection, we use Proposition~\ref{prop-pos-prob} to deduce a uniform ($\ep$-independent) H\"older continuity estimate for harmonic functions on $\mcl G^\ep$, which is a more quantitative version of Theorem~\ref{thm-cont0}
 
\begin{thm} \label{thm-cont}
For each $\rho \in (0,1)$, there exists $\alpha  = \alpha(\gamma) > 0$, $\xi=\xi(\rho, \gamma) > 0$, and $A = A(\rho,\gamma)> 0$ such that for each $\ep\in (0,1)$, the following holds with probability at least $1-O_\ep(\ep^\alpha)$. Let $D\subset B_{\rho}(0)$ be a connected domain and let $\frk f^\ep : \mcl V\mcl G^\ep(D) \rta \BB R$ be discrete harmonic on $\mcl V\mcl G^\ep(D)\setminus \mcl V\mcl G^\ep(\bdy D)$. Then we have the interior continuity estimate
\eqb \label{eqn-interior-cont}
|\frk f^\ep(x) - \frk f^\ep(y)| \leq A \|\frk f^\ep\|_\infty  \left(    \frac{\ep\vee |\eta(x) - \eta(y)|}{\op{dist}(\eta(x), \bdy D)} \right)^\xi ,\quad \forall x,y\in \mcl V\mcl G^\ep(D) ,
\eqe 
where $\|\frk f^\ep\|_\infty$ denotes the $L^\infty$ norm.  

If $D$ is simply connected, then we also have the boundary continuity estimate
\allb \label{eqn-bdy-cont}
& \min_{y \in   \mcl V\mcl G^\ep(\bdy D\cap B_{t}(\eta(x)) )} \frk f^\ep(y) - A  \| \frk f^\ep \|_\infty \left( \frac{t}{\op{dist}(\eta(x) ,\bdy D)} \right)^{-\xi} \leq \frk f^\ep(x)  \notag \\
& \qquad 
  \leq \max_{y \in   \mcl V\mcl G^\ep(\bdy D\cap B_{t}(\eta(x)) )} \frk f^\ep(y) + A \| \frk f^\ep \|_\infty \left( \frac{t}{\op{dist}(\eta(x) ,\bdy D)} \right)^{-\xi} ,  \quad \forall x \in \mcl V\mcl G^\ep(D), \quad \forall t > 0  .
\alle
In particular, if $\frk f^\ep$ has H\"older continuous boundary data, in the sense that there exists $\chi \in (0,1]$ and $C >0$ such that
\eqb \label{eqn-holder-cont-hyp}
|\frk f^\ep(x) - \frk f^\ep(y)| \leq C \left(\ep \vee |\eta(x) - \eta(y)| \right)^\chi ,\quad \forall  x,y\in \mcl V\mcl G^\ep(\bdy D)
\eqe 
then 
\eqb \label{eqn-holder-cont}
|\frk f^\ep(x) - \frk f^\ep(y)| \leq \max\left\{ C , A\|\frk f^\ep\|_\infty \right\} \left(\ep \vee |\eta(x) - \eta(y)| \right)^{(\xi \wedge\chi)/4} ,\quad \forall  x,y\in \mcl V\mcl G^\ep( D).
\eqe
\end{thm}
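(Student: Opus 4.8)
The strategy is to deduce the H\"older continuity estimates from Proposition~\ref{prop-pos-prob} by a standard argument: use the positive-probability-to-follow-a-curve estimate to obtain, via iteration, an exponential decay of the probability that the random walk crosses a large number of concentric annuli before exiting $D$, and then convert this into an oscillation bound for $\frk f^\ep$ using the fact that $\frk f^\ep$ can be written as an expectation of its boundary values under the walk.

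First I would set up the annulus-crossing machinery. Fix $x \in \mcl V\mcl G^\ep(D)$ and let $\delta := \op{dist}(\eta(x), \bdy D)$. Consider the dyadic-type annuli $\BB A_k := B_{2^{-k}\delta}(\eta(x)) \setminus \ol{B_{2^{-k-1}\delta}(\eta(x))}$ for $k = 0, 1, \dots$. Applying Proposition~\ref{prop-pos-prob} with $P$ taken to be (say) a circle $\bdy B_{2^{-k-1}\delta}(\eta(x))$ and with suitable choices of $\wt r, r$ comparable to $2^{-k}\delta$, I obtain that, on an event of probability $1 - O_\ep(\ep^\alpha)$ (after a union bound over the $O_\ep(\log\ep^{-1})$ relevant scales, which only costs a polynomial factor and can be absorbed by shrinking $\alpha$), the conditional probability that the walk started anywhere in $\mcl V\mcl G^\ep(B_{2^{-k-1}\delta}(\eta(x)))$ reaches $\mcl V\mcl G^\ep(\bdy B_{2^{-k}\delta}(\eta(x)))$ before hitting $\mcl V\mcl G^\ep(\bdy B_{2^{-k-2}\delta}(\eta(x)))$ is at most $1 - 2^{-N_0}$ for a constant $N_0 = N_0(\gamma)$ depending only on how many balls of the appropriate radius are needed to cover a circle (this $N_0$ is universal because the ratio of radii is bounded). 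By the Markov property of $X^\ep$, the probability that the walk started from $x$ exits $\mcl V\mcl G^\ep(B_\delta(\eta(x)))$ before hitting $\mcl V\mcl G^\ep(B_{2^{-m}\delta}(\eta(x)))$ is at most $(1-2^{-N_0})^{cm}$ for a constant $c > 0$; equivalently, the walk is very likely to penetrate deep into the nested balls before leaving. Turning this around (a maximum-principle / Beurling-type estimate), the probability that the walk from $x$ reaches $\mcl V\mcl G^\ep(\bdy B_\delta(\eta(x)))$ before returning to $\mcl V\mcl G^\ep(B_{2^{-m}\delta}(\eta(x)))$ decays like $\lambda^m$ for some $\lambda = \lambda(\gamma) \in (0,1)$. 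The continuity exponent $\xi$ will be $-\log_2 \lambda$ up to constants, which is why $\xi$ depends only on $\gamma$ (and $\rho$ enters through the range of scales).

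Next I would convert the crossing estimate into the interior bound~\eqref{eqn-interior-cont}. Given $x, y \in \mcl V\mcl G^\ep(D)$ with $|\eta(x)-\eta(y)|$ small compared to $\delta$, let $m$ be the largest integer with $2^{-m}\delta \gtrsim \ep \vee |\eta(x)-\eta(y)|$, so $2^{-m} \asymp (\ep \vee |\eta(x)-\eta(y)|)/\delta$. Since $\frk f^\ep$ is discrete harmonic on $\mcl V\mcl G^\ep(D) \setminus \mcl V\mcl G^\ep(\bdy D)$, for a vertex $v$ in the interior we have $\frk f^\ep(v) = \ol{\BB E}_v^\ep[\frk f^\ep(X^\ep_{\tau})]$ where $\tau$ is the exit time from the interior. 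Decomposing according to whether the walk started from $x$ exits $\mcl V\mcl G^\ep(B_\delta(\eta(x)))$ before hitting $\mcl V\mcl G^\ep(B_{2^{-m}\delta}(\eta(x)))$: on the complementary (high-probability under $\ol{\BB P}_x^\ep$) event, one couples the walks from $x$ and from $y$ after they both enter the small ball $\mcl V\mcl G^\ep(B_{2^{-m}\delta}(\eta(x)))$ — note $y \in \mcl V\mcl G^\ep(B_{2^{-m}\delta}(\eta(x)))$ when $|\eta(x)-\eta(y)|$ is small enough — and the contributions cancel; the remaining contribution is bounded by $2\|\frk f^\ep\|_\infty$ times the probability $\lesssim \lambda^m \asymp 2^{-m\xi}$ of the bad event. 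This yields $|\frk f^\ep(x) - \frk f^\ep(y)| \lesssim \|\frk f^\ep\|_\infty (2^{-m})^\xi \asymp \|\frk f^\ep\|_\infty ((\ep\vee|\eta(x)-\eta(y)|)/\delta)^\xi$, which is~\eqref{eqn-interior-cont}. For the boundary estimate~\eqref{eqn-bdy-cont} when $D$ is simply connected, one uses the same annulus-crossing estimate but now, by simple connectivity and connectedness of $\bdy D$ inside each annulus, a walk started from $x$ that crosses many annuli centered at $\eta(x)$ must hit $\mcl V\mcl G^\ep(\bdy D \cap B_t(\eta(x)))$ before exiting $B_t(\eta(x))$ with probability $1 - O((\op{dist}(\eta(x),\bdy D)/t)^\xi)$ — a Beurling-projection-type phenomenon in the discrete — and writing $\frk f^\ep(x)$ as an average of boundary values gives the stated two-sided inequality.

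Finally, the H\"older consequence~\eqref{eqn-holder-cont}: given the hypothesis~\eqref{eqn-holder-cont-hyp} on the boundary data, fix $x,y$ and split into cases. If $|\eta(x)-\eta(y)|$ (or $\ep$) is at least a fixed power, say the $1/2$ power, of $\op{dist}(\eta(x),\bdy D)$, then $\op{dist}(\eta(x),\bdy D)$ is itself small (at most $(\ep\vee|\eta(x)-\eta(y)|)^{2}$-ish), so $\eta(x)$ is close to $\bdy D$, and we can find a boundary vertex within distance comparable to a power of $(\ep\vee|\eta(x)-\eta(y)|)$ of $\eta(x)$; combining the boundary estimate~\eqref{eqn-bdy-cont} with $t$ a suitable power of $\ep\vee|\eta(x)-\eta(y)|$, the interior estimate, and the H\"older bound~\eqref{eqn-holder-cont-hyp} on $\bdy D$, controls $|\frk f^\ep(x)-\frk f^\ep(y)|$ by $\max\{C, A\|\frk f^\ep\|_\infty\}(\ep\vee|\eta(x)-\eta(y)|)^{(\xi\wedge\chi)/4}$. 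If instead $\ep\vee|\eta(x)-\eta(y)|$ is small compared to $\op{dist}(\eta(x),\bdy D)^{1/2}$ we are essentially in the interior regime and~\eqref{eqn-interior-cont} directly gives a bound with exponent $\xi/2 \geq (\xi\wedge\chi)/4$. Optimizing the split point gives the exponent $(\xi\wedge\chi)/4$; the constant $4$ is just the price of this dichotomy and is not meant to be sharp.

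\textbf{Main obstacle.} The genuine content is the exponential decay of the annulus-crossing probability, i.e.\ upgrading the single-annulus estimate of Proposition~\ref{prop-pos-prob}/Lemma~\ref{lem-annulus-cross} into a uniform multi-scale statement on one high-probability event, and in particular controlling the union bound over $O_\ep(\log\ep^{-1})$ scales without the error probability degrading — this works precisely because the per-scale failure probability is $O_\ep(\ep^\alpha)$, a power of $\ep$, so multiplying by $\log\ep^{-1}$ still leaves a power of $\ep$. The other delicate point is the boundary estimate, where one must use simple connectivity of $D$ to ensure that the relevant portion of $\bdy D$ genuinely separates $\eta(x)$ from the outer circle in the right topological sense so that the walk cannot avoid it; handling the case $\op{dist}(\eta(x),\bdy D)$ very small (comparable to $\ep$) requires care and is where Lemma~\ref{lem-max-cell-diam} (control on cell diameters) is used to relate graph-theoretic and Euclidean boundary neighborhoods.
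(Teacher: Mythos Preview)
Your overall strategy matches the paper's: iterate the positive-probability estimate from Proposition~\ref{prop-pos-prob} across dyadic scales to obtain geometric decay, then exploit harmonicity. Your treatment of the boundary estimate~\eqref{eqn-bdy-cont} and the case-split deduction of~\eqref{eqn-holder-cont} are essentially as in the paper.

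There is, however, a genuine gap in your interior estimate. Your coupling step --- ``couple the walks from $x$ and from $y$ after they both enter the small ball'' --- does not work as stated: you chose $m$ precisely so that $y\in\mcl V\mcl G^\ep(B_{2^{-m}\delta}(\eta(x)))$, so both walks \emph{start} in that ball, and random walks started from distinct vertices of a common region do not automatically admit a successful coupling of their exit points from $D$. (Relatedly, the probability you write down --- that the walk from $x$ exits $B_\delta(\eta(x))$ before hitting $B_{2^{-m}\delta}(\eta(x))$ --- is trivially $0$, since $x$ is already in the small ball.) What is missing is a mechanism that converts the annulus estimate into a total-variation bound on the hitting distributions on $\mcl V\mcl G^\ep(\bdy D)$.

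The paper supplies this via two ingredients you did not invoke. First, Lemma~\ref{lem-pos-disconnect} (obtained from Proposition~\ref{prop-pos-prob} applied to arcs of a circle) shows that the walk started anywhere in $\mcl V\mcl G^\ep(B_r(z))$ has uniformly positive conditional probability to \emph{disconnect} $\mcl V\mcl G^\ep(\bdy B_{r/2}(z))$ from $\mcl V\mcl G^\ep(\bdy B_{2r}(z))$ before reaching the latter; iterating across the dyadic annuli gives that the walk from $x$ disconnects $y$ from $\mcl V\mcl G^\ep(\bdy D)$ before hitting $\mcl V\mcl G^\ep(\bdy D)$ with probability at least $1-A\bigl((\ep\vee|\eta(x)-\eta(y)|)/\op{dist}(\eta(x),\bdy D)\bigr)^\xi$. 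Second, Lemma~\ref{lem-disconnect-coupling} (a Wilson's-algorithm identity) says that this disconnection probability upper-bounds the total-variation distance between the $\bdy D$-hitting laws from $x$ and from $y$, which immediately bounds $|\frk f^\ep(x)-\frk f^\ep(y)|$ by $\|\frk f^\ep\|_\infty$ times that quantity. Without disconnection plus the Wilson-algorithm coupling (or an alternative oscillation-decay argument, which you did not set up), the interior bound~\eqref{eqn-interior-cont} does not follow.
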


We note that~\eqref{eqn-holder-cont} immediately implies Theorem~\ref{thm-cont0}. 
The basic idea of the proof of Theorem~\ref{thm-cont} is to bound the total variation distance between the conditional laws given $(h,\eta )$ of the positions where the simple random walks on $\mcl G^\ep$ started at two nearby vertices of $\mcl V\mcl G^\ep(D)$ first hit $\mcl V\mcl G^\ep(\bdy D)$. This is sufficient to establish a modulus of continuity bound since we are working with discrete harmonic functions. Our bound for total variation distance will be established using the following lemma, which is an easy consequence of Wilson's algorithm (see~\cite[Lemma 3.12]{gms-random-walk} for a proof).

\begin{lem}[\!\!\cite{gms-random-walk}] \label{lem-disconnect-coupling}
Let $G$ be a connected graph and let $A\subset \mcl V(G)$ be a set such that the simple random walk started from any vertex of $G$ a.s.\ hits $A$ in finite time.
For $x\in \mcl V(G)$, let $X^x$ be the simple random walk started from $x$ and let $\tau^x$ be the first time $X^x$ hits $A$. 
For $x,y\in \mcl V(G) \setminus A$,
\eqb \label{eqn-disconnect-coupling}
\BB d_{\op{TV}}\left(X^x_{\tau^x} , X^y_{\tau^y} \right) \leq  1 - \BB P\left[ \text{$X^x$ disconnects $y$ from $A$ before time $\tau^x$} \right] ,
\eqe 
where $\BB d_{\op{TV}}$ denotes the total variation distance.
\end{lem}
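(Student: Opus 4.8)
The plan is to prove the bound by exhibiting a convenient coupling of $X^x$ and $X^y$ built from Wilson's algorithm; since the left side of~\eqref{eqn-disconnect-coupling} depends only on the marginal laws of $X^x_{\tau^x}$ and $X^y_{\tau^y}$ and the right side only on the law of $X^x$, we are free to choose the joint law of $(X^x,X^y)$ however we like. Consider the uniform spanning tree of $G$ with the set $A$ wired to a single root vertex (equivalently, the UST of the graph $G/A$); the hypothesis that simple random walk from every vertex a.s.\ reaches $A$ guarantees this tree exists and that Wilson's algorithm converges. Recall that Wilson's algorithm processes the vertices in an arbitrary order, at each step running a simple random walk from the current vertex until it hits the portion of the tree built so far and adding the loop erasure of that walk to the tree, and that the law of the resulting tree does not depend on the order. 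For $v\notin A$, write $\Psi(v)\in A$ for the endpoint in $A$ of the branch of the tree from $v$; running Wilson's algorithm with $v$ processed first shows that $\Psi(v)$ has the same law as $X^v_{\tau^v}$, because loop erasure preserves the endpoint of a path.

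I would now run Wilson's algorithm processing $y$ first and $x$ second. First run a simple random walk $X^y$ from $y$ to $A$ and set $\beta_y := \mathrm{LE}(X^y[0,\tau^y])$, a simple path in $G$ from $y$ to $\Psi(y) = X^y_{\tau^y}$. Then run an independent simple random walk $X^x$ from $x$ to $A$; the second Wilson step is precisely $X^x$ stopped at the first time it hits $\beta_y\cup A$, and if $X^x$ hits $\beta_y$ before $A$ then the branch of $x$ merges into $\beta_y$ and $\Psi(x)=\Psi(y)$, while otherwise $\Psi(x)=X^x_{\tau^x}$ (and in the degenerate case $x\in\beta_y$ we have $\Psi(x)=\Psi(y)$ directly). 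By order-independence of Wilson's algorithm the marginal of $\Psi(x)$ is the law of $X^x_{\tau^x}$, so $(\Psi(x),\Psi(y))$ is a coupling of the two hitting laws and
\[
\BB d_{\op{TV}}\left(X^x_{\tau^x}, X^y_{\tau^y}\right) \leq \BB P\left[\Psi(x) \neq \Psi(y)\right] \leq \BB P\left[X^x \text{ hits } A \text{ before } \beta_y\right] .
\]

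It then remains to show the inclusion $\{X^x \text{ disconnects } y \text{ from } A \text{ before } \tau^x\} \subseteq \{X^x \text{ hits } \beta_y \text{ before } A\}$, which is the crux of the argument. On the disconnection event there is a time $t < \tau^x$ such that every path in $G$ from $y$ to $A$ passes through the vertex set $X^x([0,t])$ (this covers both the case where $y$ and $A$ lie in different connected components of $G \setminus X^x([0,t])$ and the degenerate case $y \in X^x([0,t])$; note that since $t < \tau^x$ the set $X^x([0,t])$ is disjoint from $A$). Since $\beta_y$ is itself a path in $G$ from $y$ to $A$, it must contain some vertex $v = X^x_s$ with $s \leq t < \tau^x$; thus $X^x$ visits $\beta_y$ at time $s$, and since $\tau^x$ is the \emph{first} time $X^x$ hits $A$ and $s < \tau^x$, the walk $X^x$ indeed hits $\beta_y$ strictly before $A$. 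Combining this inclusion with the displayed inequality yields exactly~\eqref{eqn-disconnect-coupling}.

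The step I expect to require the most care is the bookkeeping around Wilson's algorithm: setting it up in the wired/collapsed-root formulation, verifying order-independence there, and making sure that the loop-erased path $\beta_y$ produced in the first step is independent of the walk $X^x$ used in the second step, so that the disconnection event — which is measurable with respect to $X^x$ alone — may be analyzed conditionally on the fixed path $\beta_y$. Once these points are in place the topological inclusion in the previous paragraph is immediate and no estimation is needed, which is why this is an ``easy consequence of Wilson's algorithm.''
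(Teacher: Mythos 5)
Your proof is correct and follows the same route the paper indicates: this paper does not reprove the lemma but cites~\cite[Lemma 3.12]{gms-random-walk} and explicitly says the statement is ``an easy consequence of Wilson's algorithm,'' which is exactly the mechanism you use (processing $y$ then $x$ in Wilson's algorithm on $G$ with $A$ wired, and observing that on the disconnection event $\beta_y$ must meet $X^x$ strictly before $A$, forcing $\Psi(x)=\Psi(y)$). The handling of the degenerate cases and the independence bookkeeping are exactly the points that need care, and you treated them correctly.
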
 

To apply Lemma~\ref{lem-disconnect-coupling} in our setting, we need a lower bound for the probability that simple random walk on $\mcl G^\ep$ surrounds a nearby point before traveling a long distance. Proposition~\ref{prop-pos-prob} tells us that random walk on $\mcl G^\ep$ has uniformly positive probability to surround the inner boundary of an annulus of fixed aspect ratio before hitting the outer boundary (see Lemma~\ref{lem-pos-disconnect}). Iterating this over dyadic annuli and applying Lemma~\ref{lem-disconnect-coupling} will then give us a polynomial upper bound on the total variation distance between the hitting distributions for random walk started from two nearby vertices of $\mcl G^\ep$. 
For the statement of the next lemma, we recall the notation $\ol{\BB P}_x^\ep$ from Definition~\ref{def-walk-pt}.

\begin{lem} \label{lem-pos-disconnect}
For each $\rho \in (0,1)$, there exists $\alpha=\alpha(\gamma) > 0$, $\beta = \beta(\gamma) > 0$, and $p = p(\rho,\gamma)  > 0$ such that for each $\ep \in (0,1)$, the following holds with probability $1-O_\ep(\ep^\alpha)$. For each $z\in B_{\rho}(0)$ and each $r \in [\ep^\beta ,\rho]$ with $B_{2r}(z) \subset B_{\rho}(0)$,
\allb \label{eqn-pos-disconnect} 
&\min_{x\in \mcl V\mcl G^\ep(B_r(z))} 
\ol{\BB P}_x^\ep\big[ \text{$X^\ep$ disconnects $\mcl V\mcl G^\ep(\bdy B_{r/2}(z))$ from $\mcl V\mcl G^\ep(\bdy B_{2r}(z))$} \notag \\
& \qquad\qquad\qquad\qquad\qquad\qquad \text{before hitting $\mcl V\mcl G^\ep(\bdy B_{2r}(z))$} \big] 
\geq p .
\alle
\end{lem}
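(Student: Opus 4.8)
The plan is to deduce this from Proposition~\ref{prop-pos-prob} by taking $P$ to be a suitable circle around $z$ and choosing the radii so that a random walk which stays near $P$ and makes a full loop around it will disconnect the inner from the outer boundary of the annulus. First I would fix $\rho \in (0,1)$ and, for a given $z \in B_\rho(0)$ and $r \in [\ep^\beta, \rho]$ with $B_{2r}(z) \subset B_\rho(0)$, set $P := \bdy B_r(z)$, which is a compact connected subset of $B_\rho(0)$. I would then apply Proposition~\ref{prop-pos-prob} with this $P$, with $\wt r$ a small fixed multiple of $r$ (say $\wt r = r/8$) and $r_{\mathrm{Prop}}$ another fixed multiple of $r$ (say $r/4$), so that $B_{\wt r}(P) \subset \BB A_{r/2, 2r}(z)$ and $B_{r_{\mathrm{Prop}}}(P) \subset \BB A_{r/2, 2r}(z)$ as well; note $\op{dist}(P, \bdy B_\rho(0)) \geq r$ by the containment hypothesis, so the constraint $\wt r < r_{\mathrm{Prop}} \leq \op{dist}(P, \bdy B_\rho(0))$ is satisfied, and $\wt r = r/8 \geq \ep^\beta/8$; after shrinking $\beta$ by a constant factor we may assume $\wt r \geq \ep^{\beta'}$ where $\beta'$ is the exponent from Proposition~\ref{prop-pos-prob}. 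Crucially, since $P$ is a circle of radius $r$, the number $N = N(P, \wt r, r_{\mathrm{Prop}})$ of balls of radius $s_0(r_{\mathrm{Prop}} - \wt r) \asymp s_0 r$ needed to cover the annular neighborhood $B_{\wt r}(P)$ is bounded above by a constant $N_0$ depending only on $s_0$ (hence only on $\rho,\gamma$), because $B_{\wt r}(P)$ is an annular region of fixed aspect ratio scaled by $r$. Thus Proposition~\ref{prop-pos-prob} gives, with probability $1 - N_0 \, O_\ep(\ep^\alpha) = 1 - O_\ep(\ep^\alpha)$, the bound that from any $x \in \mcl V\mcl G^\ep(B_{\wt r}(P))$ and any target $z' \in B_{\wt r}(P)$, the walk enters $\mcl V\mcl G^\ep(B_{r_{\mathrm{Prop}}}(z'))$ before leaving $\mcl V\mcl G^\ep(B_{r_{\mathrm{Prop}}}(P))$ with probability at least $2^{-N_0}$.

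The next step is a topological/geometric argument turning "the walk visits a prescribed sequence of points around $P$" into "the walk disconnects $\bdy B_{r/2}(z)$ from $\bdy B_{2r}(z)$." I would pick a finite set of points $z_1, \dots, z_k$ spaced evenly around the circle $P$ (with $k$ a constant depending only on $s_0$) such that consecutive balls $B_{r_{\mathrm{Prop}}}(z_i), B_{r_{\mathrm{Prop}}}(z_{i+1})$ overlap and their union, as $i$ ranges over the cycle, forms an annular region $\mathcal A \subset \BB A_{r/2,2r}(z)$ which separates $B_{r/2}(z)$ from the complement of $B_{2r}(z)$. Applying the conclusion of Proposition~\ref{prop-pos-prob} iteratively together with the Markov property of the walk (exactly as in the last paragraph of the proof of Proposition~\ref{prop-pos-prob}), starting from any $x \in \mcl V\mcl G^\ep(B_r(z))$ — which for an appropriate choice of constants lies in $\mcl V\mcl G^\ep(B_{\wt r}(P))$, or can be connected into it by one more application — the walk visits $\mcl V\mcl G^\ep(B_{r_{\mathrm{Prop}}}(z_i))$ for every $i$ in cyclic order before leaving $\mcl V\mcl G^\ep(B_{r_{\mathrm{Prop}}}(P)) \subset \mcl V\mcl G^\ep(\BB A_{r/2,2r}(z))$, with probability at least $2^{-k N_0} =: p > 0$, a constant depending only on $\rho, \gamma$. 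The trace of the walk over this time window is a connected subset of $\mcl G^\ep$ whose union of cells covers $\mathcal A$, hence it disconnects $\mcl V\mcl G^\ep(\bdy B_{r/2}(z))$ from $\mcl V\mcl G^\ep(\bdy B_{2r}(z))$ in $\mcl G^\ep$; and all of this happens before the walk hits $\mcl V\mcl G^\ep(\bdy B_{2r}(z))$, since the walk stays in $\mcl V\mcl G^\ep(B_{r_{\mathrm{Prop}}}(P))$ throughout. This gives~\eqref{eqn-pos-disconnect} on the same probability-$(1-O_\ep(\ep^\alpha))$ event.

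I expect the main obstacle to be the last step's topological bookkeeping: carefully verifying that the union of cells $H_x^\ep$ along the walk's trajectory genuinely separates the inner and outer boundary vertices in the planar map $\mcl G^\ep$ (not merely in $\BB C$), and that the discrete neighborhoods $\mcl V\mcl G^\ep(B_\bullet(\cdot))$ nest correctly, given that cells have some positive (polynomially small) diameter. This requires invoking Lemma~\ref{lem-max-cell-diam} to ensure $\op{diam}(H_x^\ep) \leq \ep^q \ll r$ for all relevant cells (which holds on an event of probability $1 - o_\ep^\infty(\ep)$ and forces $r \geq \ep^\beta$ to dominate cell sizes), so that cells which meet $B_{r/2}(z)$ are disjoint from cells meeting the complement of $B_{2r}(z)$, and so that a connected chain of cells covering the topological annulus $\mathcal A$ does indeed form a separating set of vertices. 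The quantitative probability bound and the uniformity over $z$ and $r \in [\ep^\beta, \rho]$ are handled by the fact that $N_0$ and $k$ are absolute (given $\rho,\gamma$), so a single application of Proposition~\ref{prop-pos-prob}'s good event (which is already uniform over $P, \wt r, r$) suffices after a union bound over a polynomially-fine net of radii $r$, absorbing the resulting polynomial factor into the exponent $\alpha$ — exactly the device used at the end of the proof of Lemma~\ref{lem-annulus-cross}.
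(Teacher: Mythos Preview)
Your approach is essentially the same as the paper's: apply Proposition~\ref{prop-pos-prob} with $P$ a piece of the circle $\bdy B_r(z)$, observe that $N$ is bounded by a constant depending only on $\rho,\gamma$ because the aspect ratio is fixed, iterate via the Markov property to force a loop around $z$ inside the annulus, and finish with a union bound over a polynomial net. The paper uses three arcs of $\bdy B_r(z)$ rather than the full circle plus chosen target points, and it first reduces to $x\in\mcl V\mcl G^\ep(\bdy B_r(z))$ via the strong Markov property (which cleanly handles the issue you flag about starting points not lying in $B_{\wt r}(P)$), but these are cosmetic differences.

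Two small corrections. First, Proposition~\ref{prop-pos-prob} is stated for a \emph{fixed} triple $(P,\wt r,r)$, so its good event is not ``already uniform over $P,\wt r,r$''; you need the union bound over a net of centers $z$ \emph{and} radii $r$, exactly as the paper does at the end of its proof (over $z\in (\ep^{\alpha_0/100}\BB Z^2)\cap B_\rho(0)$ and $r\in \ep^{\alpha_0/100}\BB Z$). Second, the walk's trace does not ``cover $\mcl A$''; what you need (and have) is that the union of visited cells is a connected subset of $\BB C$ which stays in $\BB A_{r/2,2r}(z)$ and winds once around $z$, hence separates the two boundary circles---and, after invoking Lemma~\ref{lem-max-cell-diam} so that cell diameters are $\ll r$, this separation in $\BB C$ implies the corresponding separation of vertex sets in $\mcl G^\ep$.
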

\begin{proof}
If $X^\ep$ starts inside $\mcl V\mcl G^\ep(B_r(z))$, then $X^\ep$ must hit $\mcl V\mcl G^\ep(\bdy B_r(z))$ before hitting $\mcl V\mcl G^\ep\bdy B_{2r}(z))$.
By the strong Markov property of $X^\ep$, it suffices to prove~\eqref{eqn-pos-disconnect} with the minimum taken over $\mcl V\mcl G^\ep(\bdy B_r(z))$ instead of $\mcl V\mcl G^\ep(B_r(z))$.
Let $\alpha_0 =\alpha_0(\gamma) > 0$ and $\beta_0 = \beta_0(\gamma) > 0$ be chosen so that the conclusion of Proposition~\ref{prop-pos-prob} is satisfied. 
It is easily seen from Proposition~\ref{prop-pos-prob}, applied with $r/4$ in place of $r$, $\wt r = r/8$, say, and $P$ equal to each of three appropriately chosen arcs of $\bdy B_r(z)$ that there is a $p = p(\rho,\gamma) > 0$ such that for each fixed $z\in B_{\rho}(0)$ and each fixed $r \in [ \ep^{\beta_0} ,\rho]$ with $B_{5r/4}(z) \subset B_{\rho}(0)$, it holds with probability at least $1-O_\ep(\ep^{\alpha_0})$ that  
\allb 
 \min_{x\in\mcl V\mcl G^\ep( \bdy B_r(z)   )}  
&\ol{\BB P}_x^\ep\big[ \text{$X^\ep$ disconnects $\mcl V\mcl G^\ep(\bdy B_{3r/4}(z))$ from $\mcl V\mcl G^\ep(\bdy B_{5r/4}(z))$}\notag \\ 
&\qquad \qquad \qquad \qquad \qquad  \text{before hitting $\mcl V\mcl G^\ep(\bdy B_{5r/4}(z))$}  \big]  \geq p. 
\alle
The statement of the lemma for a small enough choice of $\alpha \in (0,\alpha_0)$ and $\beta \in (0,\beta_0)$ follows from this last estimate by taking a union bound over all $z\in (\ep^{\alpha_0/100} \BB Z^2 )\cap B_{\rho}(0)$ and all $r\in  \ep^{\alpha_0/100} \BB Z $ with $r \geq \ep^{\beta_0}$ and $B_{2r}(z) \subset B_{\rho}(0)$. 
\end{proof}

\begin{proof}[Proof of Theorem~\ref{thm-cont}]
Let $\alpha  , \beta ,$ and $p$ be chosen so that the conclusion of Lemma~\ref{lem-pos-disconnect} is satisfied for $\ep > 0$. We can assume without loss of generality that $\beta < \frac{1}{50 (2+\gamma)^2}$, so that Lemma~\ref{lem-max-cell-diam} applies with $q = 100\beta$. Let $E^\ep$ be the event that~\eqref{eqn-pos-disconnect} holds for each $z\in B_{\rho}(0)$ and each $r \in [\ep^\beta ,\rho]$ with $B_{2r}(z) \subset B_{\rho}(0)$ and that $\op{diam}(H_x^\ep) \leq \ep^{100\beta}$ for each $x\in\mcl V\mcl G^\ep(B_{\rho}(0))$, so that by Lemma~\ref{lem-pos-disconnect} and Lemma~\ref{lem-max-cell-diam}, after possibly shrinking $\alpha$ we can arrange that $\BB P[E^\ep] = 1-O_\ep(\ep^\alpha)$. 
Throughout the proof we assume that $E^\ep$ occurs and we let $\frk f^\ep : \mcl V\mcl G^\ep(D) \rta \BB R$ be a discrete harmonic function as in the theorem statement. 

Applying the Markov property of the walk $X^\ep$ and the estimate~\eqref{eqn-pos-disconnect} with $r  =e^{-k}$ for each $k\in [\log \lfloor \op{dist}(\eta(x) , \bdy D)^{-1} \rfloor , \log \lceil (2 (\ep^\beta \vee |\eta(x) - \eta(y)| ) )^{-1} \rceil ]_{\BB Z}$, then multiplying over all such $k$, shows that for each $x,y \in \mcl V\mcl G^\ep(D)$, 
\allb \label{eqn-pos-iterate}
&\ol{\BB P}_x^\ep\left[ \text{$X^\ep$ disconnects $y$ from $\mcl V\mcl G^\ep(\bdy D)$ before hitting $\mcl V\mcl G^\ep(\bdy D)$} \right] \notag \\
&\qquad \qquad \geq 1 - p^{\log \tfrac{ \op{dist}(\eta(x), \bdy D) }{ \ep^\beta \vee |\eta(x) - \eta(y)|} - 2}  \geq 1 - A \left(    \frac{\ep\vee |\eta(x) - \eta(y)|}{\op{dist}(\eta(x), \bdy D)} \right)^\xi
\alle
for constants $A > 0$ and $\xi > 0$ depending only on $p$ and $\beta$ (and hence only on $\rho$ and $\gamma$). Note that we have absorbed $\beta$ into $\xi$. 
By~\eqref{eqn-pos-iterate} and Lemma~\ref{lem-disconnect-coupling}, we find that the total variation distance between the $\ol{\BB P}_x^\ep$-law of the first place where $X^\ep$ hits $\mcl V\mcl G^\ep(\bdy D)$ and the $\ol{\BB P}_y^\ep$-law of the first place where $X^\ep$ hits $\mcl V\mcl G^\ep(\bdy D)$ is at most the right side of~\eqref{eqn-pos-iterate}. Since $\frk f^\ep$ is discrete harmonic, this implies~\eqref{eqn-interior-cont}. 

Now assume that $D$ is simply connected.
To prove the boundary estimate~\eqref{eqn-bdy-cont}, we observe that if $x\in \mcl V\mcl G^\ep(D)$ and $r \geq 2 \op{dist}(\eta(x) ,\bdy D)$, then since $\BB C\setminus D$ is connected, in order for a random walk started from $x$ to disconnect $\mcl V\mcl G^\ep(B_{r/2}(0))$ from $\infty$, it must first hit $\mcl V\mcl G^\ep(\bdy D)$. By applying the Markov property of $X^\ep$ and~\eqref{eqn-pos-disconnect} with $r = e^{-k}$ for $k\in \left[\log \lceil t^{-1} \rceil  ,  \log \lfloor 2  \op{dist}(\eta(x) ,\bdy D)^{-1} \rfloor  \right]$, we get that for $t >2 \op{dist}(\eta(x) , \bdy D)$, 
\allb  
&\ol{\BB P}_x^\ep\left[ \text{$X^\ep$ first hits $\mcl V\mcl G^\ep(\bdy D)$ at a point in $\mcl V\mcl G^\ep(\bdy D\cap B_{t}(\eta(x) ) )$} \right] \notag \\
&\qquad \qquad \geq 1 - p^{\tfrac{t}{\op{dist}(\eta(x) ,\bdy D)}  - 2}  \geq 1 - A \left( \frac{t}{\op{dist}(\eta(x) ,\bdy D)} \right)^{-\xi}  
\alle
for a possibly larger choice of $A$ and smaller choice of $\xi$. Again using that $\frk f^\ep$ is discrete harmonic, we obtain~\eqref{eqn-bdy-cont} (the when case $t< 2 \op{dist}(\eta(x) , \bdy D)$ can be dealt with by increasing $A$). 
  
Assume now that the boundary H\"older continuity condition~\eqref{eqn-holder-cont-hyp} holds. We will deduce~\eqref{eqn-holder-cont} from~\eqref{eqn-interior-cont} and~\eqref{eqn-bdy-cont}. If $x,y\in \mcl V\mcl G^\ep(D)$ with $  \op{dist}(\eta(x), \bdy D) \leq (\ep \vee |\eta(x) - \eta(y)| )^{1/2} $, then by~\eqref{eqn-interior-cont} we get $|\frk f^\ep(x) - \frk f^\ep(y)| \leq A \|\frk f^\ep\|_\infty (\ep\vee |\eta(x) - \eta(y)| )^{\xi/2}$. On the other hand, if $ \op{dist}(\eta(x), \bdy D) \leq (\ep \vee |\eta(x) - \eta(y)|)^{1/2}$ then also $ \op{dist}(\eta(y), \bdy D) \leq 2 (\ep \vee |\eta(x) - \eta(y)|)^{1/2}$. By applying~\eqref{eqn-bdy-cont} to each of $x$ and $y$ and using~\eqref{eqn-holder-cont-hyp} to estimate the boundary terms, we get that for $t>0$, 
\eqb \label{eqn-use-bdy-holder}
|\frk f^\ep(x) - \frk f^\ep(y)| \leq  C t^\chi  + A  \| \frk f^\ep \|_\infty \left( \frac{t}{(\ep \vee |\eta(x) - \eta(y)|)^{1/2} } \right)^{-\xi} . 
\eqe 
Choosing $t = (\ep \vee |\eta(x) - \eta(y)|)^{1/4}$ and combining with our earlier estimate for the case when $  \op{dist}(\eta(x), \bdy D) \leq (\ep \vee |\eta(x) - \eta(y)| )^{1/2} $ gives~\eqref{eqn-holder-cont}. 
\end{proof}

\section{Estimates for the area, diameter, and degree of a cell}
\label{sec-area-diam-deg}

The goal of this section is to prove Proposition~\ref{prop-area-diam-deg-gamma}.
Throughout most of this section, we restrict attention to the case when $h$ is either the circle-average embedding of a 0-quantum cone or a whole-plane GFF normalized so that $h_1(0) = 0$. Note that the structure graph corresponding to such a choice of $h$ is \emph{not} the same as the mated-CRT map. We will transfer to the $\gamma$-quantum cone case, which corresponds to the mated-CRT map, in Section~\ref{sec-gamma-cone-proof}.
Throughout, we define the cell $H_{x_z^\ep}^\ep$ containing a fixed point $z\in\BB C$ as in Section~\ref{sec-standard-setup} and we define $u^\ep(z)$ as in~\eqref{eqn-area-diam-deg-def}.

Most of this section is devoted to the proof of the following variant of Proposition~\ref{prop-area-diam-deg-gamma} for a 0-quantum cone or whole-plane GFF which does not assume any continuity conditions for $f$ or $D$. Proposition~\ref{prop-area-diam-deg-gamma} (which we recall is a statement about the $\gamma$-quantum cone) will be deduced from this proposition and an absolute continuity argument in Section~\ref{sec-gamma-cone-proof}. 

\begin{prop} \label{prop-area-diam-deg-lln}
Suppose we are in the setting of Section~\ref{sec-standard-setup} with $h$ equal to either the circle-average embedding of a 0-quantum cone or a whole-plane GFF normalized so that $h_1(0) = 0$.
For each $\rho \in (0,1)$, there are constants $A =A(\rho, \gamma) > 0$ and $\alpha = \alpha(\gamma)  >0$ such that for each $\ep \in (0,1)$, each bounded measurable function $f :  B_{\rho}(0) \rta [0,\infty)$, and each Borel measurable set $D\subset B_{\rho}(0) $, 
\eqb \label{eqn-area-diam-deg-lln}
\BB P\left[ \int_{D} f(z) u^\ep(z) \, dz  \leq A \int_{D} f(z) \,dz + \ep^\alpha \|f\|_\infty \right] \geq 1 - O_\ep(\ep^\alpha) 
\eqe 
at a rate depending only on $\rho$ and $\gamma$. 
\end{prop}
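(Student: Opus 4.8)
Dividing through by $\|f\|_\infty$ we may assume $0\le f\le 1$ on $B_\rho(0)$ (the term $\ep^\alpha\|f\|_\infty$ in~\eqref{eqn-area-diam-deg-lln} reappears when we undo this normalization). The plan is to control the first two moments of the random variable $S^\ep:=\int_D f(z)\,u^\ep(z)\,dz$ and then conclude by Chebyshev's inequality. Concretely, I would show that there are constants $A_1=A_1(\rho,\gamma)$ and $\alpha_0=\alpha_0(\gamma)>0$, uniform over $\ep\in(0,1)$, over Borel sets $D\subset B_\rho(0)$ and over $0\le f\le 1$ supported in $B_\rho(0)$, such that $\BB E[S^\ep]\le A_1\int_D f(z)\,dz$ and $\var(S^\ep)\le\ep^{2\alpha_0}$. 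Granting this, set $A:=2A_1$ and $\alpha:=\tfrac23\alpha_0$; since $\{S^\ep>A\int_D f+\ep^\alpha\}\subset\{|S^\ep-\BB E S^\ep|>\ep^\alpha\}$, Chebyshev gives $\BB P[S^\ep>A\int_D f+\ep^\alpha]\le\ep^{2\alpha_0}/\ep^{2\alpha}=O_\ep(\ep^\alpha)$, which is~\eqref{eqn-area-diam-deg-lln}.

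\textbf{First moment.} Here I would invoke~\cite[Theorem~4.1]{gms-tutte}, which bounds moments of $\op{diam}(H_x^\ep)^2/\op{area}(H_x^\ep)$ uniformly in $\ep$ and in $x$ with $H_x^\ep$ meeting a fixed ball, together with Lemma~\ref{lem-exp-tail} (whence $\op{deg}(x;\mcl G^\ep)$ has an $\ep$-uniform exponential tail), to deduce $\sup_{z\in B_\rho(0)}\BB E[u^\ep(z)]\le A_1(\rho,\gamma)<\infty$. This applies equally when $h$ is a whole-plane GFF (normalized by $h_1(0)=0$) or the circle-average embedding of a $0$-quantum cone, whose restrictions to $\BB D$ agree in law; I would carry both cases in parallel, noting the mutual absolute continuity on $\BB D$ with controlled Radon--Nikodym derivative where needed. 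Then $\BB E[S^\ep]=\int_D f(z)\,\BB E[u^\ep(z)]\,dz\le A_1\int_D f$.

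\textbf{Second moment: the split.} Write $\var(S^\ep)=\int_D\int_D f(z)f(w)\,\cov(u^\ep(z),u^\ep(w))\,dz\,dw$ and fix a small $\delta=\delta(\gamma)>0$. On the near-diagonal part $\{|z-w|\le\ep^\delta\}$ I would bound $\cov(u^\ep(z),u^\ep(w))\le\BB E[u^\ep(z)u^\ep(w)]$ by a uniform constant via Cauchy--Schwarz (or Hölder with a truncation argument, should the relevant exponent in~\cite[Theorem~4.1]{gms-tutte} sit at the edge of integrability); since $\{(z,w)\in D\times D:|z-w|\le\ep^\delta\}$ has area at most $\pi\ep^{2\delta}\op{area}(D)\le\pi^2\ep^{2\delta}$, this part contributes $O_\ep(\ep^{2\delta})$. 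The far part is at most $\big(\int_D f\big)^2\sup_{|z-w|>\ep^\delta}|\cov(u^\ep(z),u^\ep(w))|\le\pi^2\sup_{|z-w|>\ep^\delta}|\cov(u^\ep(z),u^\ep(w))|$, so it remains to prove a decorrelation estimate: $|\cov(u^\ep(z),u^\ep(w))|\le\ep^{c}$ for all $z,w\in B_\rho(0)$ with $|z-w|>\ep^\delta$ and some $c=c(\gamma)>0$. Taking $\alpha_0=\tfrac12\min(c,2\delta)$ then closes the second-moment claim.

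\textbf{The decorrelation estimate — the main obstacle.} Fix $z,w$ with $|z-w|=\delta_0>\ep^\delta$ and put $r:=\tfrac14\delta_0$ (truncated below by a fixed constant so that $r\gg\ep^q$, $q$ the exponent of Lemma~\ref{lem-max-cell-diam}). On the event that every cell meeting $B_\rho(0)$ has diameter $\le\ep^q$ — which fails with probability at most a power of $\ep$, absorbed into the error — the quantity $u^\ep(z)$, including the degree term, is a measurable function of the restriction of $(h,\eta)$ to $B_r(z)$ together with finitely many ``offsets'' in $[0,\ep)$ recording the accumulated $\mu_h$-mass along $\eta$ modulo $\ep$ at the starts of the excursions of $\eta$ into $B_r(z)$ (these offsets are what is needed to locate the relevant cells inside the locally-known curve segments, via Lemma~\ref{lem-wpsf-determined}). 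I would then apply the whole-plane Markov property (Lemma~\ref{lem-whole-plane-markov}) to $U=B_r(z)\cup B_r(w)$, writing $h=\frk h+\rng h_z+\rng h_w$ with $\frk h$ harmonic on $U$ and $\rng h_z,\rng h_w$ independent zero-boundary GFFs on $B_r(z),B_r(w)$, and condition additionally on the whole curve $\eta$ (independent of $h$) and on the offsets; conditionally on all of this, $u^\ep(z)$ and $u^\ep(w)$ are independent, so $\cov(u^\ep(z),u^\ep(w))=\cov\big(\BB E[u^\ep(z)\mid\frk h,\eta,\text{offsets}],\,\BB E[u^\ep(w)\mid\frk h,\eta,\text{offsets}]\big)$. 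Finally I would bound this covariance by showing each conditional expectation is within $\ep^{c}$ of $\BB E[u^\ep(z)]$ in $L^2$: the harmonic function $\frk h$ has oscillation $O(\sqrt{\log\ep^{-1}})$ over the inner ball $B_{r/2}(z)$, and by the LQG coordinate-change formula $\mu_{h+\text{const}}=e^{\gamma\,\text{const}}\mu_h$ a near-constant shift of $h$ near $z$ just replaces $\ep$ by a comparable $\ep'$; combined with the scale-invariance of $\op{diam}^2/\op{area}$ and $\op{deg}$ and the $\ep$-uniform moment bounds, this forces the conditioning to perturb $\BB E[u^\ep(z)]$ by only a polynomially small amount, and Cauchy--Schwarz finishes. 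I expect this last step — making the closeness of the conditional to the unconditional expectation quantitative and uniform in $z,w$ (and hence in $D,f$) — to be the main technical difficulty; everything else is bookkeeping, including the absolute-continuity transfer between the two choices of $h$ here and, later, the transfer to the $\gamma$-quantum cone carried out in Section~\ref{sec-gamma-cone-proof}.
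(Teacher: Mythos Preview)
Your overall architecture matches the paper's: first moment bound, variance bound via a near/far diagonal split, Chebyshev. The gap is in the decorrelation step, and it is exactly the obstacle the paper spends Section~\ref{sec-area-diam-deg-localize} dismantling.

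You try to make $u^\ep(z)$ ``local'' by conditioning on the offsets in $[0,\ep)$ at which $\eta$ enters $B_r(z)$. But these offsets are global functionals of $h$: the offset at an entry time of $\eta$ into $B_r(z)$ is the total $\mu_h$-mass along $\eta$ up to that time, modulo $\ep$, and this depends on $h$ everywhere along that curve segment --- in particular on $h|_{B_r(w)}$ whenever $\eta$ visits $B_r(w)$ before reaching $B_r(z)$. Symmetrically, the offsets for $w$ depend on $h|_{B_r(z)}$. Once you condition on both sets of offsets, the zero-boundary pieces $\rng h_z$ and $\rng h_w$ are no longer conditionally independent, and your claimed conditional independence of $u^\ep(z)$ and $u^\ep(w)$ fails. (A related slip: you condition on ``$\eta$, independent of $h$'', but the \emph{parameterized} curve $\eta$ is not independent of $h$; only $h^{\op{IG}}$ is.)

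The paper's fix is to replace $u^\ep(z)$ by a strictly larger quantity $u_*^\ep(z)=\op{DA}^\ep(z)\bigl(\op{deg}_{\op{in}}^\ep(z)+\op{deg}_{\op{out}}^\ep(z)\bigr)$, where $\op{DA}^\ep(z)$ is the supremum of $\op{diam}^2/\op{area}$ over \emph{all} $\ep$-mass segments of $\eta$ containing $z$, and the degree is split and bounded analogously. This removes the offsets entirely; by Lemma~\ref{lem-u_*-msrble} the result is determined by $(h,h^{\op{IG}})$ restricted to a small ball around $z$. The off-diagonal covariance is then handled by the quantitative long-range independence estimate Lemma~\ref{lem-rn-mean}, a Radon--Nikodym comparison rather than the harmonic-oscillation heuristic you sketch. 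One further wrinkle: Lemma~\ref{lem-exp-tail} is for the $\gamma$-quantum cone and Theorem~\ref{thm-moment} is for the origin of the $0$-quantum cone, so your uniform bound $\sup_{z\in B_\rho(0)}\BB E[u^\ep(z)]\le A_1$ also needs the translation-and-recentering argument of Lemma~\ref{lem-moment-given-circle-avg}, not just a citation.
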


The reason why we first prove the statement for the 0-quantum cone given in Proposition~\ref{prop-area-diam-deg-lln} is as follows.
For a 0-quantum cone, the origin is a ``Lebesgue typical point"; in particular, there is no log singularity. 
Hence, estimates for $u^\ep(0)$ can be transferred to estimates for $u^\ep(z)$ for a deterministic point $z\in B_\rho(0)$ (or a point sampled uniformly from Lebesgue measure on $B_\rho(0)$); see Lemma~\ref{lem-moment-given-circle-avg}. 
This will allow us to apply the bounds for $\op{diam}^2(H_0^\ep) / \op{area}(H_0^\ep)$ and $\op{deg}(H_0^\ep)$ in the case of the 0-quantum cone from~\cite[Section 4]{gms-tutte} to estimate the integral appearing in~\eqref{eqn-area-diam-deg-lln}. 
At one point in the proof of Proposition~\ref{prop-area-diam-deg-lln} (in particular, Lemma~\ref{lem-max-deg-in}), we will need to prove an estimate for the $\gamma$-quantum cone, then transfer to the 0-quantum cone. The reason for this is that we want to use the degree bound of Lemma~\ref{lem-exp-tail}, which is proven using the Brownian motion definition of the mated-CRT map.


Throughout this section, for $z\in\BB C$ and $r\geq 0$, we let 
\allb \label{eqn-sle-hit-time}
 \tau_z := \inf\left\{ t \in \BB R : \eta(t) = z \right\} 
\alle 
and note that $\tau_z \in (x_z^\ep -\ep , x_z^\ep]$.

\subsection{Outline of the proof}
\label{sec-lln-outline}

Here we give an outline of the content of the rest of this section.

Throughout this section, we will work with ``localized" versions of $  \op{diam}(H_{x_z^\ep}^\ep )^2 / \op{area}(H_{x_z^\ep}^\ep )$ and $ \op{deg}(x_z^\ep ; \mcl G^\ep)$ for $z\in \BB C$ and $\ep > 0$ which we call $\op{DA}^\ep(z)$, $\op{deg}_{\op{in}}^\ep(z)$, and $\op{deg}_{\op{out}}^\ep(z)$, such that
\eqb \label{eqn-localized-compare0}
\frac{ \op{diam}(H_{x_z^\ep}^\ep )^2}{\op{area}(H_{x_z^\ep}^\ep )} \leq \op{DA}^\ep(z) ,\quad
 \op{deg}(x_z^\ep ; \mcl G^\ep) \leq 2 \left( \op{deg}_{\op{in}}^\ep(z) + \op{deg}_{\op{out}}^\ep(z) \right) ,
\eqe 
and, crucially, all three random variables depend locally on $h$ and $\eta$ (see Lemma~\ref{lem-u_*-msrble}). 
 
\begin{remark} \label{remark-cell-nonlocal}
Structure graph cells are not locally determined by $h$ and $\eta$. Indeed, if $z\in \BB C$ then the cell $H_{x_z^\ep}^\ep$ of $\mcl G^\ep$ containing $z$ is only determined locally modulo an index shift: if we only see the behavior of $ h$ and $\eta$ in an open set $U$ containing $z$, then a priori $H_{x_z^\ep}^\ep$ could be any segment of the form $\eta([t-\ep ,t])$ which contains $z$. There is an $\ep$-length interval of times $t$ for which this is the case. 
\end{remark}
 
The reason why we need things to depend locally on $h$ and $\eta$ is that in Section~\ref{sec-area-diam-deg-lln}, we will use long-range independence estimates for SLE and the GFF to get a second moment bound for the integral appearing in Proposition~\ref{prop-area-diam-deg-lln}. 

The localized quantities appearing in~\eqref{eqn-localized-compare0} are defined in Section~\ref{sec-area-diam-deg-localize} and illustrated in Figure~\ref{fig-deg-localize}.  
The quantity $\op{DA}^\ep(z)$ is the maximum of the ratio of square diameter to area over all $\ep$-length segments of $\eta$ contained in $\eta([\tau_z-\ep ,\tau_z+\ep])$ (one of which is equal to $H_{x_z^\ep}^\ep$). The localized degree is split into two parts: the ``inner degree" $\op{deg}_{\op{in}}^\ep(z)$, which counts the number of $\ep$-length segments of $\eta$ contained in the ball centered at $z$ with radius $4\op{diam}( \eta([\tau_z-\ep , \tau_z+\ep]) )$ (here we use the notation~\eqref{eqn-sle-hit-time}); and the ``outer degree" $\op{deg}_{\op{out}}^\ep(z)$, which counts the number of segments of $\eta$ which intersect both $  \eta([\tau_z-\ep , \tau_z+\ep]) $ and the boundary of this ball. 

In Section~\ref{sec-area-diam-deg-moment}, we state $\ep$-independent moment bounds for the above three quantities which were proven in~\cite{gms-tutte}. 
 
In Section~\ref{sec-diam-reg}, we prove a global regularity estimate (Proposition~\ref{prop-diam-reg}) which bounds the maximum over all $z\in B_{\rho}(0)$ of the localized versions of $  \op{diam}(H_{x_z^\ep}^\ep )$ and $ \op{deg}(x_z^\ep ; \mcl G^\ep)$.  
The bound for $  \op{diam}(H_{x_z^\ep}^\ep )$ follows from Lemma~\ref{lem-max-cell-diam}, but the bound for the localized degree will take a bit more work since Lemma~\ref{lem-max-deg-nonlocal} only provides a bound for the non-localized degree and only applies in the $\gamma$-quantum cone case.

In Section~\ref{sec-area-diam-deg-lln}, we prove Proposition~\ref{prop-area-diam-deg-lln} by, roughly speaking, using the moment bounds of Section~\ref{sec-area-diam-deg-moment} to bound the expectation of $\int_D f(z) u^\ep(z) \, dz$ and using long-range independence results for the Gaussian free field from Appendix~\ref{sec-long-range-ind} to show that the variance of $\int_D f(z) u^\ep(z) \, dz$ decays like a positive power of $\ep$. The fact that we use long-range independence for the GFF is the reason why we need to replace $u^\ep(z)$ by a localized version.

In Section~\ref{sec-gamma-cone-proof}, we deduce Proposition~\ref{prop-area-diam-deg-gamma} from Proposition~\ref{prop-area-diam-deg-lln} using the fact that sampling a point $\BB z$ uniformly from the $\gamma$-LQG measure on some domain and re-centering so that $\BB z$ is mapped to 0 produces a field with a $\gamma$-log singularity at 0, which locally looks like a $\gamma$-quantum cone.

\subsection{Localized versions of area, diameter, and degree}
\label{sec-area-diam-deg-localize}

Suppose we are in the setting  of Section~\ref{sec-standard-setup} with $h$ equal to the circle-average embedding of a 0-quantum cone or a $\gamma$-quantum cone, or a whole-plane GFF normalized so that $h_1(0) = 0$.  

In this subsection we will define modified versions of the quantities $  \op{diam}(H_{x_z^\ep}^\ep )^2 / \op{area}(H_{x_z^\ep}^\ep )$ and $  \op{deg}( x_z^\ep ; \mcl G^\ep)$ appearing in the definition~\eqref{eqn-area-diam-deg-def} of $u^\ep(z)$ which are locally determined by $h$ and $\eta$---in the sense of Lemma~\ref{lem-u_*-msrble} just below---and which we will work with throughout most of this subsection (recall from Remark~\ref{remark-cell-nonlocal} that the original quantities are not locally determined by $h$ and $\eta$). The definitions are illustrated in Figure~\ref{fig-deg-localize}.

\begin{figure}[ht!]
\begin{center}
\includegraphics[scale=.8]{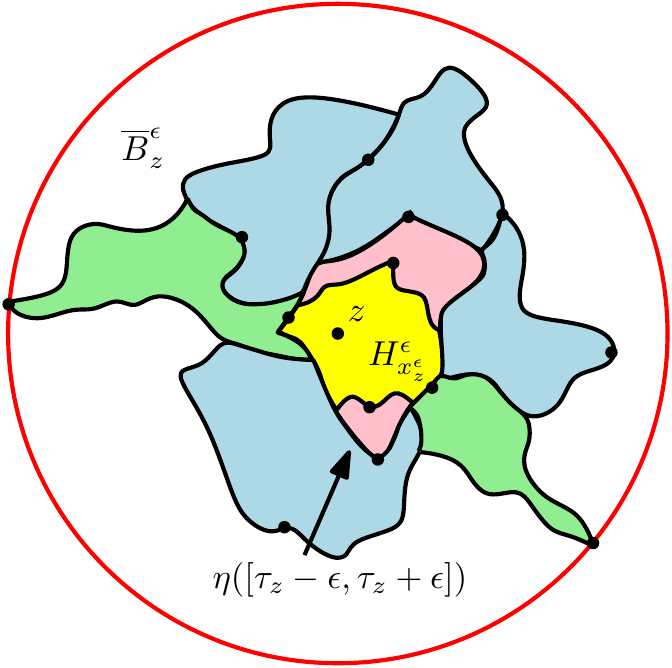} 
\caption{\label{fig-deg-localize} Illustration of the definitions of the localized quantities $\op{DA}^\ep(z)$, $\op{deg}_{\op{in}}^\ep(z)$, and $\op{deg}_{\op{out}}^\ep(z)$. The cell $H_{x_z^\ep}^\ep$ (yellow) is an $\ep$-length segment of $\eta$ contained in $\eta([\tau_z-\ep , \tau_z+\ep])$ (pink). The quantity $\op{DA}^\ep(z)$ is the maximum ratio of squared diameter to area over all of these $\ep$-length segments. The quantity $\op{deg}_{\op{in}}^\ep(z)$ counts the maximum possible number of disjoint $\ep$-length segments of $\eta$ which intersect $\eta([\tau_z-\ep , \tau_z+\ep])$ and are contained in $\ol B_z^\ep$ (disk with red boundary). Several such segments are shown in blue. The quantity $\op{deg}_{\op{out}}^\ep(z)$ counts the maximum possible number of disjoint segments of $\eta$ contained in $\ol B_z^\ep$ with one endpoint in $\eta([\tau_z-\ep ,\tau_z+\ep])$ and the other endpoint in $\bdy \ol B_z^\ep$. Two such segments are shown in green. 
}
\end{center}
\end{figure}

We start with the ratio of squared diameter to area. 
For $z\in \BB C$, let
\eqb \label{eqn-area-diam-localize}
\op{DA}^\ep(z) := \sup\left\{ \frac{\op{diam}\left(\eta([s-\ep , s   ]) \right)^2}{\op{area}\left(\eta([s-\ep , s   ]) \right) } \,:\, s\in\BB R ,\,  \tau_z \in [s-\ep ,s] \right\} ,
\eqe
with $\tau_z$ as in~\eqref{eqn-sle-hit-time}.  
In words, $\op{DA}^\ep(z)$ is a.s.\ equal to the maximum ratio of the squared diameter to the area over all of the segments of $\eta$ with quantum mass $\ep$ which contain $z$, so using $\op{DA}^\ep(z)$ instead of $ \op{diam}(H_{x_z^\ep}^\ep)^2 / \op{area}(H_{x_z^\ep}^\ep)$ removes the arbitrariness coming from the choice to define $\mcl G^\ep$ using elements of $\ep\BB Z  $ rather than $\ep\BB Z+t$ for some $t\in (0,\ep)$. 
By definition, the cell $H_{x_z^\ep}^\ep$ is one of these $\ep$-length segments of $\eta$, so 
\eqb \label{eqn-area-diam-localize-bound}
 \frac{\op{diam}(H_{x_z^\ep}^\ep)^2}{\op{area}(H_{x_z^\ep}^\ep)} \leq   \op{DA}^\ep(z) .
\eqe 

Since the degree depends on more than just the cell itself (unlike the area and the diameter), we need a slightly more complicated definition than~\eqref{eqn-area-diam-localize} to ``localize" the degree. 
Define the closed ball
\eqb \label{eqn-localize-set}
\ol B_z^\ep := \left\{ w \in \BB C : |z-w| \leq 4 \op{diam}\left( \eta([\tau_z-\ep , \tau_z+\ep]) \right) \right\} .
\eqe  
We will define two quantities whose sum provides an upper bound for $\op{deg}(x_z^\ep ; \mcl G^\ep)$. 
\begin{itemize}
\item Let $\op{deg}_{\op{in}}^\ep(z)$ be the largest number $N\in\BB N$ with the following property: there is a collection of $N$ intervals $\{[a_j , b_j]\}_{j\in [1,N]_{\BB Z}}$ which may intersect only at their endpoints, each of which has length $b_j  -a_j = \ep$, satisfies $\eta([a_j ,b_j]) \subset \ol B_z^\ep$, and is such that $\eta([a_j,b_j])\cap \eta([\tau_z-\ep ,\tau_z+\ep]) \not=\emptyset$. 
\item Let $\op{deg}_{\op{out}}^\ep(z)$ be the largest number $N' \in\BB N$ with the following property: there is a collection of $N'$ intervals $\{[a_j , b_j]\}_{j\in [1,N']_{\BB Z}}$ which intersect only at their endpoints such that for each $j\in [1,N']_{\BB Z}$, $\eta((a_j,b_j)) $ is contained in the interior of $\ol B_z^\ep$, one of the endpoints $\eta(a_j)$ or $\eta(b_j)$ is contained in $\bdy \ol B_z^\ep$, and the other endpoint is contained in $\eta([\tau_z-\ep , \tau_z+\ep])$.  
\end{itemize}

Since $H_{x_z^\ep}^\ep \subset \eta([\tau_z-\ep , \tau_z+\ep])$, the set of intervals $[x-\ep , x]$ for $x\in\ep\BB Z  $ such that $x\sim x_z^\ep$ in $\mcl G^\ep$ and $H_x^\ep \subset \ol B_z^\ep$ is a collection as in the definition of $\op{deg}_{\op{in}}^\ep(z)$, so the number of such $x$ is at most $\op{deg}_{\op{in}}^\ep(z)$. Similarly, the number of $x\in\ep\BB Z  $ such that $x$ is joined to $ x_z^\ep$ by an edge in $\mcl G^\ep$ and $H_x^\ep\not\subset \ol B_z^\ep$ is at most $\op{deg}_{\op{out}}^\ep(z)$, since for any such $x$ the cell $H_x^\ep$ contains a different interval $[a_j,b_j]$ as in the definition of $\op{deg}_{\op{out}}^\ep(z)$. Since any two vertices of $\mcl G^\ep$ are connected by at most 2 edges,
\eqb \label{eqn-deg-localize-bound}
\op{deg}\left(x_z^\ep ;\mcl G^\ep \right) \leq 2 \left( \op{deg}_{\op{in}}^\ep(z) +   \op{deg}_{\op{out}}^\ep(z) \right) .
\eqe  

The following lemma is our main reason for introducing the quantities $\op{DA}^\ep(z) $, $\op{deg}_{\op{in}}^\ep(z) $, and $  \op{deg}_{\op{out}}^\ep(z)$. 

\begin{lem} \label{lem-u_*-msrble}
Let $h^{\op{IG}}$ be the whole-plane GFF viewed modulo $2\pi \chi^{\op{IG}}$ which is used to construct $\eta$ in Section~\ref{sec-wpsf-prelim}.
For each open set $V\subset \BB C$, the random variable $ \op{DA}^\ep(z)  \BB 1_{(\ol B_z^\ep\subset V)}$ is a.s.\ determined by $h|_V$ and $h^{\op{IG}}|_V$, where here $\ol B_z^\ep$ is the ball defined in~\eqref{eqn-localize-set}. The same is true with $\op{DA}^\ep(z)$ replaced by either $\op{deg}_{\op{in}}^\ep(z) $ or $  \op{deg}_{\op{out}}^\ep(z)$
\end{lem}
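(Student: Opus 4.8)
The plan is to exhibit, on the event $\{\ol B_z^\ep\subset V\}$, each of $\op{DA}^\ep(z)$, $\op{deg}_{\op{in}}^\ep(z)$, and $\op{deg}_{\op{out}}^\ep(z)$ as a deterministic functional of two pieces of local data: the measure $\mu_h$ restricted to a compact neighbourhood of $\ol B_z^\ep$ inside $V$, and the curve-decorated plane $\eta$ restricted to the same region (i.e.\ the excursions of $\eta$ in $V$). The first of these is a measurable function of $h|_V$ by the usual locality of the circle-average/GMC construction: for $w$ in a compact subset of $V$ and $\delta$ small, $h_\delta(w)$ is determined by $h$ on a slightly larger compact subset of $V$, hence so is $\mu_h$ on the smaller set. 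The second is supplied by Lemma~\ref{lem-wpsf-determined}: for an open set $U$ with $B_\delta(U)\subset V$, the field $h^{\op{IG}}|_{B_\delta(U)}$, and a fortiori $h^{\op{IG}}|_V$, determines the excursions of $\eta$ in $U$.

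The geometric heart of the argument is that all three quantities are built only from sub-arcs of $\eta$ of $\mu_h$-mass exactly $\ep$ together with their Euclidean diameters, areas, and incidences, and therefore do not depend on the global time parametrization of $\eta$ (which is pinned only by $\eta(0)=0$, a constraint that need not be visible inside $V$): knowing $\eta$ as a curve-decorated plane and knowing $\mu_h$ suffices to identify every $\mu_h$-mass-$\ep$ arc and hence to evaluate the quantities. Concretely, I would first record the containments: since $z\in\eta([\tau_z-\ep,\tau_z+\ep])$ we have $\eta([\tau_z-\ep,\tau_z+\ep])\subset\ol B_z^\ep$, and since any $\mu_h$-mass-$\ep$ sub-arc of $\eta$ through $z$ carries $\mu_h$-mass at most $\ep$ on each side of $z$, every arc entering the definition of $\op{DA}^\ep(z)$, every arc in the definition of $\op{deg}_{\op{in}}^\ep(z)$, and the closure of every arc in the definition of $\op{deg}_{\op{out}}^\ep(z)$ is contained in $\ol B_z^\ep$. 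On $\{\ol B_z^\ep\subset V\}$ all the relevant geometry is thus confined to $\ol B_z^\ep\subset V$. One then recovers, in order: the arc $\eta([\tau_z-\ep,\tau_z+\ep])$ as the unique sub-arc, of the excursion of $\eta$ in $V$ that passes through $z$, carrying $\mu_h$-mass exactly $\ep$ on each side of $z$; then the ball $\ol B_z^\ep$; then $\op{DA}^\ep(z)$ as a supremum of squared-diameter-over-area over $\mu_h$-mass-$\ep$ sub-arcs of this arc; and $\op{deg}_{\op{in}}^\ep(z)$, $\op{deg}_{\op{out}}^\ep(z)$ as the stated combinatorial optima over $\mu_h$-mass-$\ep$ arcs of $\eta$ lying in $\ol B_z^\ep$ and meeting $\eta([\tau_z-\ep,\tau_z+\ep])$ --- all using only the excursions of $\eta$ in $V$ and $\mu_h|_V$ (and for deterministic $z$ there is a.s.\ no ambiguity, since $\eta$ hits $z$ at a single time $\tau_z$).

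To package this measurably, I would run a countable scheme: fix a countable family of open sets $U_n$ (finite unions of rational balls) and rationals $\delta_n>0$, and observe that on the event $\{\ol{B_{\delta_n}(U_n)}\subset V,\ \ol B_z^\ep\subset U_n\}$ the quantity of interest equals a fixed measurable function of $h|_{U_n}$ and $h^{\op{IG}}|_{B_{\delta_n}(U_n)}$, both of which are read off from $(h|_V,h^{\op{IG}}|_V)$. Since on $\{\ol B_z^\ep\subset V\}$ the compact set $\ol B_z^\ep$ has a positive-radius neighbourhood inside the open set $V$, this event is, up to a null set, the increasing union of these pieces as $U_n\uparrow$ and $\delta_n\downarrow0$, and taking the limit gives the claim (verbatim for $\op{deg}_{\op{in}}^\ep(z)$ and $\op{deg}_{\op{out}}^\ep(z)$). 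The main obstacle I anticipate is the careful handling of Lemma~\ref{lem-wpsf-determined}: it is stated for $\eta$ parametrized by Lebesgue measure and for the slightly enlarged field $h^{\op{IG}}|_{B_\delta(U)}$, so one must verify that only the curve-as-a-set-with-order-structure of the excursions in $V$ is actually used --- making the Lebesgue parametrization harmless once $\mu_h$ is known, so that $\mu_h$-mass-$\ep$ arcs are identified by integrating $\mu_h$ along sub-arcs --- and absorb the gap between $B_\delta(U)$ and $V$ into the countable scheme above; the remaining ingredients (locality of $\mu_h$, the containments, and parametrization-independence) are routine.
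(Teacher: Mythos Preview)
Your proposal is correct and follows essentially the same approach as the paper's proof, which is a terse one-liner invoking the locality of $\mu_h$ (from the circle-average construction), Lemma~\ref{lem-wpsf-determined}, and the definitions. Your version is considerably more detailed --- spelling out the containments $\eta([\tau_z-\ep,\tau_z+\ep])\subset\ol B_z^\ep$, the parametrization-independence of the quantities, and the countable scheme bridging the $B_\delta(U)$ gap in Lemma~\ref{lem-wpsf-determined} --- but these are exactly the implicit steps the paper leaves to the reader rather than a different route.
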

\begin{proof}
For each open set $V\subset \BB C$, the measure $\mu_h$ is determined by $h|_V$; indeed, this follows from the circle average construction of $\mu_h$~\cite{shef-kpz}. From this, Lemma~\ref{lem-wpsf-determined}, and the definitions of $\op{DA}^\ep(z)$, $\op{deg}_{\op{in}}^\ep(z)$, and $\op{deg}_{\op{out}}^\ep(z)$, we obtain the statement of the lemma. 
\end{proof}

\subsection{Moment bounds for localized area, diameter, and degree}
\label{sec-area-diam-deg-moment}

The starting point of our proof is the following theorem, which follows from results in~\cite[Section 4]{gms-tutte}.
 
\begin{thm} \label{thm-moment}
Suppose $h$ is the circle-average embedding of a 0-quantum cone and define $\op{DA}^\ep(0)$, $\op{deg}_{\op{in}}^\ep(0)$, and $\op{deg}_{\op{out}}^\ep(0)$ for $\ep > 0$ as in Section~\ref{sec-area-diam-deg-localize}. Then for $\ep > 0$, 
\eqb \label{eqn-area-diam}
 \BB E\left[     \op{DA}^\ep(0)^p \right] \preceq 1 , \quad \forall p \geq 1 
\eqe 
\eqb \label{eqn-fixed-pt-deg-in}
\BB E\left[   \op{deg}_{\op{in}}^\ep(0)^p \right] \preceq 1 ,\quad  \forall p \in [1,4/\gamma^2)  , \quad \op{and} \quad
\eqe  
\eqb \label{eqn-fixed-pt-deg-out}
\BB E\left[    \op{deg}_{\op{out}}^\ep(0)^p  \right] \preceq 1 ,\quad   \forall p\geq 1 ,
\eqe
with the implicit constants depending only on $p$. 
\end{thm}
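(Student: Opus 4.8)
The three bounds are, at bottom, the cell-size moment estimates of \cite[Section~4]{gms-tutte}: if \cite[Theorem~4.1]{gms-tutte} is already phrased in terms of the localized quantities $\op{DA}^\ep(0)$, $\op{deg}_{\op{in}}^\ep(0)$, $\op{deg}_{\op{out}}^\ep(0)$ for the circle-average embedding of a $0$-quantum cone, there is nothing to prove; otherwise the plan is to quote the moment bounds for the cells $H_x^\ep$ themselves and supply the short combinatorial bookkeeping that passes to the quantities of Section~\ref{sec-area-diam-deg-localize}. It is convenient to note first that it suffices to treat one value of $\ep$: $\op{DA}^\ep(0)$ is a scale-invariant ratio and the two degrees are combinatorial invariants of the curve-decorated surface, so the exact scaling property of the $0$-quantum cone (see \cite[Proposition~4.13]{wedges}) shows that the law of $(\op{DA}^\ep(0),\op{deg}_{\op{in}}^\ep(0),\op{deg}_{\op{out}}^\ep(0))$ does not depend on $\ep$; alternatively one checks directly that the arguments below are uniform in $\ep$.

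For \eqref{eqn-area-diam}, the supremum in \eqref{eqn-area-diam-localize} runs over $s$ in the length-$\ep$ interval $[\tau_0,\tau_0+\ep]$, so every segment $\eta([s-\ep,s])$ appearing there is contained in $\eta([\tau_0-\ep,\tau_0+\ep])$, which is the union of the at most three cells $H_x^\ep$ with $x\in\{x_0^\ep-\ep,\,x_0^\ep,\,x_0^\ep+\ep\}$. A short deterministic argument then bounds $\op{DA}^\ep(0)$ above by a universal constant times a quantity of the form $\bigl(\max_x\op{diam}(H_x^\ep)\bigr)^2\big/\bigl(\min_x\op{area}(H_x^\ep)\bigr)$ over these $O(1)$ values of $x$. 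Taking the $p$-th moment, expanding over the bounded number of pairs $(x,x')$, and applying Cauchy--Schwarz reduces \eqref{eqn-area-diam} to the two cell estimates $\BB E[\op{diam}(H_0^\ep)^q]\preceq 1$ and $\BB E[\op{area}(H_0^\ep)^{-q}]\preceq 1$ for all $q\ge 1$, together with the $\ep$-translation invariance of the embedded mated-CRT map, which makes each $H_x^\ep$ above equidistributed with $H_0^\ep$. The negative-moment bound for $\op{area}(H_0^\ep)$ is the substantive one; it is proven in \cite[Section~4]{gms-tutte} via a change-of-measure (Girsanov) argument for the quantum-mass parameterization together with a lower-tail estimate for the $\gamma$-LQG mass of a Euclidean ball in the spirit of Lemma~\ref{lem-min-ball}. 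The degree bounds \eqref{eqn-fixed-pt-deg-in} and \eqref{eqn-fixed-pt-deg-out} are treated identically: by construction $\op{deg}_{\op{in}}^\ep(0)$ and $\op{deg}_{\op{out}}^\ep(0)$ count families of disjoint-up-to-endpoints $\ep$-mass segments of $\eta$ inside the ball $\ol B_0^\ep$ of \eqref{eqn-localize-set}, and after covering $\eta([\tau_0-\ep,\tau_0+\ep])$ and $\ol B_0^\ep$ by $O(1)$ canonical cells and balls these become exactly the counts estimated for the $0$-quantum cone in \cite[Section~4]{gms-tutte}, at the cost of only a constant factor in the moments.

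The only place where genuine SLE/LQG input is required --- and hence the main obstacle --- is the sharp moment range $p<4/\gamma^2$ in \eqref{eqn-fixed-pt-deg-in}. The number of cells that can crowd into $\ol B_0^\ep$ is governed by the Euclidean diameter $D$ of the central segment $\eta([\tau_0-\ep,\tau_0+\ep])$: the ball has radius $4D$, so it can contain on the order of $D^2/(\text{typical cell area})$ cells, and $\op{deg}_{\op{in}}^\ep(0)$ is large precisely when $D$ is atypically large. The probability that $D$ exceeds $\delta$ decays only polynomially in $\delta$, and the relevant polynomial exponent --- coming from the upper-tail behaviour of the diameter of an $\ep$-quantum-mass segment of space-filling SLE$_{\kappa'}$ on a $0$-quantum cone, equivalently from the typical-point exponents for the pair $(h,\eta)$ --- is exactly what produces the threshold $4/\gamma^2$; this computation is the content of \cite[Section~4]{gms-tutte}. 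By contrast $\op{deg}_{\op{out}}^\ep(0)$ has all finite moments because an ``outer'' segment must stretch, while carrying quantum mass only $\ep$, from the central segment all the way to $\bdy \ol B_0^\ep$, a Euclidean distance of order $D$; such long-and-light segments are rare enough that no heavy tail survives. Everything else is bookkeeping around the definitions of Section~\ref{sec-area-diam-deg-localize} and the $\ep$-translation invariance of $\mcl G^\ep$.
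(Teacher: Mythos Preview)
Your approach matches the paper's: cite \cite[Section~4]{gms-tutte} for a fixed $\ep$ and use the exact scaling of the $0$-quantum cone to extend to all $\ep$. The paper cites \cite[Propositions~4.4 and~4.5]{gms-tutte}, which are already stated for the localized quantities $\op{DA}^1(0)$, $\op{deg}_{\op{in}}^1(0)$, $\op{deg}_{\op{out}}^1(0)$ on the $0$-quantum cone, so your ``if already phrased \dots\ nothing to prove'' branch is the one that applies and the combinatorial backup reduction is unnecessary; the scale-invariance step is then spelled out via $R^\ep$, $h^\ep := h(R^\ep\cdot)+Q\log R^\ep-\gamma^{-1}\log\ep$, and $\eta^\ep := (R^\ep)^{-1}\eta(\ep\cdot)$, exactly as you outline.

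One imprecision in your backup plan is worth flagging: you invoke ``$\ep$-translation invariance of the embedded mated-CRT map'' to make the neighboring cells $H_{x_0^\ep\pm\ep}^\ep$ equidistributed with $H_0^\ep$, but the mated-CRT map corresponds to the $\gamma$-quantum cone, whereas the theorem concerns the $0$-quantum cone, whose structure graph does not enjoy that shift invariance (the paper stresses this distinction at the start of Section~\ref{sec-area-diam-deg}). Since the cited propositions already deliver the localized bounds directly, this does not affect the actual proof, but the alternative route as written would need a different justification at that step.
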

\begin{proof}
The bounds~\eqref{eqn-area-diam},~\eqref{eqn-fixed-pt-deg-in}, and~\eqref{eqn-fixed-pt-deg-out} in the case $\ep = 1$ follow from~\cite[Propositions 4.4 and 4.5]{gms-tutte}. We will now use the scaling property of the 0-quantum cone to argue that the laws of $\op{DA}^\ep(0)$, $\op{deg}_{\op{in}}^\ep(0)$, and $\op{deg}_{\op{out}}^\ep(0)$ do not depend on $\ep$. To this end, let $R^\ep  := \sup\left\{ r > 0 :  h_r(0)   +   Q \log r = \frac{1}{\gamma} \log \ep  \right\} $, where $h_r(0)$ denotes the circle average, and let
\eqbn
 h^\ep  :=     h(R^\ep \cdot  )    + Q\log R^\ep  - \frac{1}{\gamma} \log \ep \quad \op{and} \quad \eta^\ep(t) :=   \eta(\ep t) / R^\ep .
\eqen
By~\cite[Proposition 4.13(i)]{wedges}, $h^\ep \eqD h  $. 
By the $\gamma$-LQG coordinate change formula~\eqref{eqn-lqg-coord}, $\mu_h(X) = \ep \mu_{h^\ep}(  X / R^\ep)$ for each Borel set $X\subset \BB C$, hence $\eta^\ep$ is parameterized by $\gamma$-quantum mass with respect to $h^\ep$. From this and the scale invariance of the law of space-filling SLE$_{\kappa'}$\footnote{This scale invariance is immediate from the construction of whole-plane space-filling SLE$_{\kappa'}$ from~\cite{ig4}, as described in Section~\ref{sec-wpsf-prelim}, and the scale invariance of the law of the whole-plane GFF modulo additive constant.}, we get $(h^\ep , \eta^\ep) \eqD  (h  ,  \eta )$. On the other hand, since the definitions of $\op{DA}^\ep(0)$, $\op{deg}_{\op{in}}^\ep(0)$, and $\op{deg}_{\op{out}}^\ep(0)$ are unaffected by spatial scaling,  
$\op{DA}^\ep(0)$ is determined by $(h^\ep , \eta^\ep)$ in the same manner that $\op{DA}^1(0)$ is determined by $(h,\eta)$, and similarly for $\op{deg}_{\op{in}}^\ep(0)$ and $\op{deg}_{\op{out}}^\ep(0)$. Hence the laws of these three quantities do not depend on $\ep$ and the theorem statement follows.
\end{proof}

\subsection{Global regularity event for area, diameter, and degree}
\label{sec-diam-reg}

Theorem~\ref{thm-moment} and the translation invariance of the law of the whole-plane GFF modulo additive constant allow us to control the quantities $\op{DA}^\ep(z)$, $\op{deg}_{\op{in}}^\ep(z)$, and $\op{deg}_{\op{out}}^\ep(z)$ for one point $z$ at a time (see Lemma~\ref{lem-moment-given-circle-avg}), but to control various error terms in our estimates we will also need global regularity bounds for these quantities which hold for all points $z$ in a fixed Euclidean ball simultaneously. 
One does not expect $\ep$-independent global bounds (since the number of cells in a fixed ball tends to $\infty$ as $\ep\rta 0$) but the following proposition will be sufficient for our purposes.

\begin{prop} \label{prop-diam-reg} 
Suppose we are in the setting of Section~\ref{sec-standard-setup} with $h$ equal to either a whole-plane GFF normalized so that $h_1(0) = 0$ or the circle-average embedding of a 0-quantum cone into $(\BB C , 0 , \infty)$. Also let $\rho \in (0,1)$, $q\in \left( 0 , \tfrac{2}{(2+\gamma)^2} \right)$, and $\zeta \in \left( 0 , q/4 \right)$.  For $\ep \in (0,1)$, let $E^\ep = E^\ep(\rho,q,\zeta )$ be the event that the following is true.
\begin{enumerate} 
\item For each $z\in B_{\rho}(0)$, $\op{diam}\left(\eta([\tau_z-\ep,\tau_z+\ep]) \right) \leq \ep^{q} $ and $\op{deg}_{\op{in}}^\ep(z) + \op{deg}_{\op{out}}^\ep(z) \leq \ep^{-\zeta}$. 
\label{item-diam-reg-single}
\item The circle average process of $h$ satisfies
\eqb \label{eqn-diam-reg-int}
\sup_{z\in B_{\rho}(0)} | h_{\ep^{q- 4\zeta} }(z)| \leq 2q \log \ep^{-1} ,
\eqe 
and the same is true with $ h$ replaced by the whole-plane GFF $h^{\op{IG}}$ used to construct $\eta$ in Section~\ref{sec-wpsf-prelim} (when it is normalized so that $h^{\op{IG}}_1(0) = 0$). 
\label{item-diam-reg-circle}
\end{enumerate} 
There exists $\alpha = \alpha( \gamma,q,\zeta) > 0$  such that for $\ep\in (0,1)$, 
\eqbn
\BB P\left[ E^\ep \right] \geq 1 - O_\ep(\ep^\alpha) 
\eqen
at a rate depending only on $q$, $\zeta$, $\rho$, and $\gamma$.
\end{prop}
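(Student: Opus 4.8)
The plan is to verify items~\ref{item-diam-reg-single} and~\ref{item-diam-reg-circle} in the definition of $E^\ep$ separately and then intersect, choosing $\alpha$ to be the minimum of the exponents produced. For the diameter bound in item~\ref{item-diam-reg-single}, note that since $\tau_z\in(x_z^\ep-\ep,x_z^\ep]$ the real interval $[\tau_z-\ep,\tau_z+\ep]$ lies in $[x_z^\ep-2\ep,x_z^\ep+\ep]$, so $\eta([\tau_z-\ep,\tau_z+\ep])\subseteq H^\ep_{x_z^\ep-\ep}\cup H^\ep_{x_z^\ep}\cup H^\ep_{x_z^\ep+\ep}$, a union of three consecutive cells which pairwise intersect (consecutive vertices of $\mcl G^\ep$ are always joined by an edge). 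Fixing $q'\in\bigl(q,\tfrac{2}{(2+\gamma)^2}\bigr)$ and applying Lemma~\ref{lem-max-cell-diam} with exponent $q'$ and radii slightly larger than $\rho$, off an event of probability $\ep^{\alpha(q',\gamma)+o_\ep(1)}$ every cell meeting $B_{(1+\rho)/2}(0)$ has diameter $\le\ep^{q'}$; since the cell containing $z\in B_\rho(0)$ then lies in $B_{(1+\rho)/2}(0)$ for $\ep$ small, so do its two neighbors, and $\op{diam}(\eta([\tau_z-\ep,\tau_z+\ep]))\le 3\ep^{q'}\le\ep^q$. The same event forces $\ol B_z^\ep$ to lie in a fixed ball $B_{\rho''}(0)$ with $\rho''<1$ and to have diameter at most a fixed power of $\ep$, which we use below.

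For the degree bound in item~\ref{item-diam-reg-single}, I would first prove the elementary deterministic inequality
\[
\op{deg}_{\op{in}}^\ep(z)+\op{deg}_{\op{out}}^\ep(z)\le C_0\Bigl(\max_{w\in\{x_z^\ep-\ep,\,x_z^\ep,\,x_z^\ep+\ep\}}\op{deg}(w;\mcl G^\ep)+1\Bigr),\qquad\forall z\in\BB C,
\]
for a universal $C_0$. Indeed, each of the interior-disjoint length-$\ep$ intervals $[a_j,b_j]$ counted by $\op{deg}_{\op{in}}^\ep(z)$ has $\eta([a_j,b_j])$ covered by two consecutive cells, one of which meets one of the three central cells; since $\mcl G^\ep$ is a triangulation, each cell meets $O(\op{deg})$ other cells, so there are only $O\bigl(\sum_{\text{central}}(\op{deg}+1)\bigr)$ possibilities for this covering pair, and at most two of the interior-disjoint intervals fit in each pair. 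A similar argument — using that each interval counted by $\op{deg}_{\op{out}}^\ep(z)$ reaches from a cell meeting the central cells out to a cell not contained in $\ol B_z^\ep$ — handles $\op{deg}_{\op{out}}^\ep(z)$. Hence it suffices to bound $\max_{w:\,H_w^\ep\cap B_{(1+\rho)/2}(0)\ne\emptyset}\op{deg}(w;\mcl G^\ep)$ by, say, $(\log\ep^{-1})^2\ll\ep^{-\zeta}$ off an event of probability $O_\ep(\ep^\alpha)$. For the $\gamma$-quantum cone this is immediate from Lemma~\ref{lem-max-deg-nonlocal}, but here $h$ is a whole-plane GFF or a $0$-quantum cone, and the transfer of this bound is the main obstacle.

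The difficulty is that $h^{\gamma\text{-cone}}$ and $h^{0\text{-cone}}$ (or the whole-plane GFF), restricted to a neighborhood of the origin, are mutually singular because of the $-\gamma\log|\cdot|$ singularity, so one cannot transfer by a change of measure on a fixed ball. Instead I would bound the expected number of bad cells: by Markov's inequality and $\mu_h(H_w^\ep)=\ep$, the probability that some $w$ with $H_w^\ep\cap B_{(1+\rho)/2}(0)\ne\emptyset$ has $\op{deg}(w;\mcl G^\ep)>(\log\ep^{-1})^2$ is, up to the $O_\ep(\ep^\alpha)$ error from the diameter event, at most $\ep^{-1}\BB E\bigl[\mu_h\bigl(\{z\in B_{\rho''}(0):\op{deg}_{\op{in}}^\ep(z)+\op{deg}_{\op{out}}^\ep(z)>\tfrac12(\log\ep^{-1})^2\}\bigr)\bigr]$ (using~\eqref{eqn-deg-localize-bound} to pass to the \emph{localized} degrees, which by Lemma~\ref{lem-u_*-msrble} are determined by $(h,\eta)$ in a neighborhood of $z$ of diameter a fixed power of $\ep$, since $\ol B_z^\ep$ is small on the good event). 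Now I invoke the fact that recentering at a $\mu_h$-typical point produces a $\gamma$-quantum cone: by the Cameron--Martin/Girsanov theorem, reweighting the law of $h$ by the regularized density of $\mu_h$ at $z$ and recentering yields a field whose restriction to any fixed neighborhood of the recentering point is absolutely continuous with respect to a $\gamma$-quantum cone with $L^p$-bounded Radon--Nikodym derivative for some $p>1$, uniformly over $z\in B_{\rho''}(0)$ (see~\cite{wedges}). Combining this with the deterministic inequality above and Lemma~\ref{lem-max-deg-nonlocal} for the $\gamma$-quantum cone (which gives $\op{deg}_{\op{in}}^\ep(0)+\op{deg}_{\op{out}}^\ep(0)\le C_0((\log\ep^{-1})^{1+\zeta'}+1)<\tfrac12(\log\ep^{-1})^2$ off an event of probability $o_\ep^\infty(\ep)$), Hölder's inequality bounds the reweighted probability by $o_\ep^\infty(\ep)$, so the displayed expectation is $o_\ep^\infty(\ep)$ and the whole quantity is $\ep^{-1}o_\ep^\infty(\ep)=o_\ep^\infty(\ep)=O_\ep(\ep^\alpha)$.

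Finally, for item~\ref{item-diam-reg-circle}, take $h$ a whole-plane GFF normalized by $h_1(0)=0$ — the $0$-quantum cone case is identical since $h|_{\BB D}$ has exactly this law, and $h^{\op{IG}}$ is such a field. Then for $z\in B_\rho(0)$ and $r=\ep^{q-4\zeta}$, the circle average $h_r(z)$ is a centered Gaussian of variance $\log r^{-1}+O(1)=(q-4\zeta)\log\ep^{-1}+O(1)$. A union bound over an $r$-net of $B_\rho(0)$ of cardinality $O(r^{-2})$ — between net points at distance $\asymp r$ the difference $h_r(z)-h_r(z')$ has $O(1)$ variance, contributing only $O(\sqrt{\log\ep^{-1}})$, which is negligible against $\log\ep^{-1}$ — gives $\sup_{z\in B_\rho(0)}|h_r(z)|\le b\log r^{-1}$ off an event of probability $O(r^{-2+b^2/2})$ for any fixed $b>2$. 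Choosing $b=\tfrac{2q}{q-4\zeta}$ (which exceeds $2$ precisely because $\zeta>0$) yields $b\log r^{-1}=2q\log\ep^{-1}$ with error $O\bigl(\ep^{(q-4\zeta)(b^2/2-2)}\bigr)$, a positive power of $\ep$. Intersecting the three estimates completes the proof, with rates depending only on $q,\zeta,\rho,\gamma$.
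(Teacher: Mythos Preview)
Your diameter bound (via Lemma~\ref{lem-max-cell-diam} on three consecutive cells) and your circle-average bound for item~\ref{item-diam-reg-circle} are both fine and match the paper's treatment.

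The genuine gap is the claimed deterministic inequality for $\op{deg}_{\op{out}}^\ep(z)$. Your argument for $\op{deg}_{\op{in}}^\ep(z)$ works because each interval there has length exactly $\ep$ and hence lies in at most two consecutive cells, one of which is a neighbor of a central cell; disjointness then caps the count by a constant times the sum of the central degrees. But the intervals $[a_j,b_j]$ counted by $\op{deg}_{\op{out}}^\ep(z)$ carry \emph{no} length constraint: they can have arbitrarily small $\mu_h$-mass. A single cell $H_x^\ep$ adjacent to the central region could host many disjoint sub-intervals of $[x-\ep,x]$, each an excursion from $\eta([\tau_z-\ep,\tau_z+\ep])$ out to $\partial\ol B_z^\ep$. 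So ``a similar argument handles $\op{deg}_{\op{out}}^\ep(z)$'' does not go through, and your entire degree strategy --- including the $\mu_h$-reweighting transfer, which relies on this inequality to pass from a cone-degree bound to a bound on the localized degrees --- breaks at this point.

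The paper separates the two cases. For $\op{deg}_{\op{in}}^\ep(z)$ (Lemma~\ref{lem-max-deg-in}) it proves essentially your deterministic inequality $\op{deg}_{\op{in}}^\ep(z)\le 2\sum_{j=-1}^{1}\op{deg}(x_z^\ep+j\ep;\mcl G^\ep)+6$, and then transfers the $\gamma$-cone degree bound of Lemma~\ref{lem-max-deg-nonlocal} --- but by a much simpler trick than your $\mu_h$-reweighting: it takes a $\gamma$-quantum cone centered at a point $z_0$ \emph{outside} $\BB D$ (chosen so that $\BB D\subset B_{2R/3}(z_0)\setminus B_{R/3}(z_0)$), so that on $B_\rho(0)$ the log singularity is smooth and the Radon--Nikodym derivative against the whole-plane GFF has all moments. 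For $\op{deg}_{\op{out}}^\ep(z)$ (Lemma~\ref{lem-max-deg-out}) it abandons degrees altogether and uses an area argument: each of the disjoint segments $\eta([a_j,b_j])$ has diameter at least $\op{diam}(\eta([\tau_z-\ep,\tau_z+\ep]))$, and by the ball-containment regularity of space-filling SLE (\cite[Proposition~3.4]{ghm-kpz}) each contains a Euclidean ball of radius at least $\op{diam}(\eta([\tau_z-\ep,\tau_z+\ep]))^{1+\zeta/(2p_1)}$. Comparing the total area of these disjoint balls with $\op{area}(\ol B_z^\ep)$ forces $\op{deg}_{\op{out}}^\ep(z)\preceq\ep^{-\zeta}$. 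This area mechanism is exactly what is missing from your proposal.
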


The hardest part of the proof of Proposition~\ref{prop-diam-reg} is the upper bound for the localized degree (as defined just after~\eqref{eqn-localize-set}) which we treat in the following two lemmas. We note that the needed bound is not immediate from Lemma~\ref{lem-max-deg-nonlocal} since we are working with a 0-quantum cone rather than a $\gamma$-quantum cone and we need a bound for localized degree, rather than un-localized degree.
We first consider the inner localized degree, in which case we get a polylogarithmic upper bound with extremely high probability thanks to Lemma~\ref{lem-max-deg-nonlocal}. 

\begin{lem} \label{lem-max-deg-in}
Suppose we are in the setting of Proposition~\ref{prop-diam-reg}. For $\zeta \in (0,1)$, 
\eqb \label{eqn-max-deg-in}
\BB P\left[ \max_{z\in B_{\rho}(0)}   \op{deg}_{\op{in}}^\ep(z)   \leq (\log \ep^{-1})^{1+\zeta} \right] \geq 1  - o_\ep^\infty(\ep)  .
\eqe  
\end{lem}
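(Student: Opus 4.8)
The plan is to reduce the localized inner degree $\op{deg}_{\op{in}}^\ep(z)$ to a bounded sum of ordinary vertex degrees of $\mcl G^\ep$, and then to import the degree bound from the $\gamma$-quantum cone case (where Lemma~\ref{lem-max-deg-nonlocal} applies) by absolute continuity.

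\emph{Step 1: combinatorial reduction.} Let $\{[a_j,b_j]\}_{j=1}^N$ be a collection witnessing $\op{deg}_{\op{in}}^\ep(z) = N$: each $[a_j,b_j]$ has length $\ep$, the interiors are disjoint, $\eta([a_j,b_j]) \subset \ol B_z^\ep$, and $\eta([a_j,b_j]) \cap \eta([\tau_z-\ep,\tau_z+\ep]) \neq \emptyset$. Since $\tau_z \in (x_z^\ep - \ep, x_z^\ep]$ we have $\eta([\tau_z-\ep,\tau_z+\ep]) \subset H_{x_z^\ep-\ep}^\ep \cup H_{x_z^\ep}^\ep \cup H_{x_z^\ep+\ep}^\ep$, and since $[a_j,b_j]$ has length $\ep$ it is contained in $[y_j-\ep,y_j+\ep]$ for some $y_j\in\ep\BB Z$, so $\eta([a_j,b_j]) \subset H_{y_j}^\ep\cup H_{y_j+\ep}^\ep$. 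Hence one of the cells $H_{y_j}^\ep,H_{y_j+\ep}^\ep$ shares a point with one of $H_{x_z^\ep-\ep}^\ep,H_{x_z^\ep}^\ep,H_{x_z^\ep+\ep}^\ep$. Because $\mcl G^\ep$ is a triangulation, two cells that intersect are equal or joined by an edge of $\mcl G^\ep$ (see Figure~\ref{fig-structure-graph-def}), so $y_j$ or $y_j+\ep$ lies in
\eqbn
\mcl N := \bigcup_{x\in\{x_z^\ep-\ep,\,x_z^\ep,\,x_z^\ep+\ep\}} \big( \{x\} \cup \{x'\in\ep\BB Z : x'\sim x \text{ in } \mcl G^\ep\} \big) ,
\eqen
which has cardinality at most $3 + \op{deg}(x_z^\ep-\ep;\mcl G^\ep) + \op{deg}(x_z^\ep;\mcl G^\ep) + \op{deg}(x_z^\ep+\ep;\mcl G^\ep)$. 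Therefore $\bigcup_j [a_j,b_j]$ lies in a union of at most $3\#\mcl N$ cell intervals $[x'-\ep,x']$ with $x'\in\ep\BB Z$; since the $[a_j,b_j]$ have disjoint interiors and common length $\ep$, this forces $N\leq 3\#\mcl N$, i.e.
\eqb \label{eqn-deg-in-reduce}
\op{deg}_{\op{in}}^\ep(z) \leq 9 + 3\big( \op{deg}(x_z^\ep-\ep;\mcl G^\ep) + \op{deg}(x_z^\ep;\mcl G^\ep) + \op{deg}(x_z^\ep+\ep;\mcl G^\ep) \big) ,\quad \forall z\in\BB C .
\eqe
This inequality holds for any choice of $h$ in the standard setup.

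\emph{Step 2: the $\gamma$-quantum cone case.} Suppose momentarily that $h$ is the circle-average embedding of a $\gamma$-quantum cone. The proof of Lemma~\ref{lem-max-deg-nonlocal} (which uses only Lemma~\ref{lem-exp-tail} and a union bound) applies verbatim with $\BB D$ replaced by $B_2(0)$, so with probability $1-o_\ep^\infty(\ep)$ we have $\max_{x\in\mcl V\mcl G^\ep(B_2(0))} \op{deg}(x;\mcl G^\ep) \leq (\log\ep^{-1})^{1+\zeta/2}$. On the intersection of this event with the event of Lemma~\ref{lem-max-cell-diam} applied on $B_{(1+\rho)/2}(0)$, the cell diameters near $B_\rho(0)$ are at most $\ep^q$, so for $\ep$ small the three cells on the right of~\eqref{eqn-deg-in-reduce} meet $B_2(0)$ for every $z\in B_\rho(0)$; hence $\max_{z\in B_\rho(0)}\op{deg}_{\op{in}}^\ep(z) \leq (\log\ep^{-1})^{1+\zeta}$ with probability $1-o_\ep^\infty(\ep)$, establishing~\eqref{eqn-max-deg-in} in the $\gamma$-quantum cone case.

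\emph{Step 3: transfer to the $0$-quantum cone and whole-plane GFF.} Since $h|_{\BB D}$ has the same law whether $h$ is the circle-average embedding of a $0$-quantum cone or a whole-plane GFF normalized so that $h_1(0)=0$, and since on the event of Lemma~\ref{lem-max-cell-diam} the quantities $\op{deg}_{\op{in}}^\ep(z)$ for $z\in B_\rho(0)$ are all determined by $h$ and $h^{\op{IG}}$ restricted to a subset of $\BB D$ (by Lemma~\ref{lem-u_*-msrble}), it suffices to treat the whole-plane GFF. Fix $z\in B_\rho(0)$ and a reference point $w_0:=1/2$. Using translation invariance of the whole-plane GFF modulo additive constant --- together with that of space-filling SLE and the covariance of $\mu_h$ under spatial isometries --- one checks that, on the small-cell event, $\op{deg}_{\op{in}}^\ep(z)$ has the same law as $\op{deg}_{\op{in}}^{\tilde\ep}(w_0)$ for a whole-plane GFF, where $\tilde\ep = \ep e^{\gamma c}$ and $c$ is a centered Gaussian of variance $O(1)$ (a circle average of the whole-plane GFF over a fixed circle). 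On $\{|c|\leq (\log\ep^{-1})^{2/3}\}$, of probability $1-o_\ep^\infty(\ep)$, we have $\tilde\ep\in[\ep^{3/2},\ep^{1/2}]$. Finally, the joint law of $(h|_{B_r(w_0)}, h^{\op{IG}}|_{B_r(w_0)})$ for a fixed ball $B_r(w_0)\subset\BB D\setminus\{0\}$ is mutually absolutely continuous for a whole-plane GFF and for a $\gamma$-quantum cone, with Radon--Nikodym derivative having moments of all orders bounded in terms of $r$ alone (the two fields differ on $B_r(w_0)$ by the deterministic function $-\gamma\log|\cdot|$, harmonic there with bounded circle average and negligible Dirichlet energy), which is an instance of the GFF estimates of Appendix~\ref{sec-gff-abs-cont}. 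Combining this absolute continuity, H\"older's inequality, Step 2, and a union bound over dyadic scales in $[\ep^{3/2},\ep^{1/2}]$ gives $\BB P[\op{deg}_{\op{in}}^\ep(z) > (\log\ep^{-1})^{1+\zeta}] = o_\ep^\infty(\ep)$ uniformly over $z\in B_\rho(0)$. A union bound over an $\ep$-net of $B_\rho(0)$, together with a routine regularization of $\op{deg}_{\op{in}}^\ep(\cdot)$ that dominates it on each $\ep$-ball while staying locally determined, then yields~\eqref{eqn-max-deg-in}.

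\emph{Main obstacle.} The delicate point is Step 3, specifically handling points $z$ near the origin: the $0$-quantum cone and the $\gamma$-quantum cone differ near $0$ by a $\gamma$-log singularity, so a direct absolute continuity comparison on a ball centered at such a $z$ fails, because the mismatch of circle averages grows like $\log\ep^{-1}$ and destroys all moment bounds of the density. The remedy is to first transport the relevant window to a fixed point $w_0$ away from the origin via translation invariance of the GFF, and then to control the random additive constant produced by this translation, which only perturbs the effective quantum scale by a sub-polynomial factor off an event of probability $o_\ep^\infty(\ep)$.
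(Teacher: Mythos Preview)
Your Steps 1 and 2 follow the paper closely: the combinatorial reduction of $\op{deg}_{\op{in}}^\ep(z)$ to a bounded sum of ordinary degrees of three consecutive vertices, and the derivation of the uniform bound in the $\gamma$-quantum cone case via Lemma~\ref{lem-max-deg-nonlocal}, are essentially the paper's argument (with inessential differences in constants).

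Step 3, however, has a genuine gap and takes a more complicated route than necessary. Your plan is to prove a \emph{pointwise} bound $\BB P[\op{deg}_{\op{in}}^\ep(z) > (\log\ep^{-1})^{1+\zeta}] = o_\ep^\infty(\ep)$ for each fixed $z$ (by translating to $w_0=1/2$, comparing to the $\gamma$-quantum cone there, and controlling the random scale change), and then to pass to the uniform statement by a union bound over a net together with a ``routine regularization of $\op{deg}_{\op{in}}^\ep(\cdot)$ that dominates it on each $\ep$-ball.'' This last step is not routine and is not justified. The function $z\mapsto \op{deg}_{\op{in}}^\ep(z)$ is neither constant on cells nor monotone in any obvious way: both the window $\eta([\tau_z-\ep,\tau_z+\ep])$ and the ball $\ol B_z^\ep$ vary with $z$ even inside a single cell, and a Euclidean $\ep$-ball can intersect many cells. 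You would need to construct an explicit dominating local quantity, prove it satisfies the same pointwise tail bound, show it controls $\op{deg}_{\op{in}}^\ep$ on entire cells, and then choose a net fine enough to hit every cell (which in turn requires a lower bound on cell diameters). None of this is carried out.

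The paper avoids all of this with a single observation. Rather than fighting the $\gamma$-log singularity at $0$ point by point, it places the singularity \emph{outside} the region of interest: one takes $h'$ to be the $R$-circle-average embedding of a $\gamma$-quantum cone in $(\BB C,z_0,\infty)$ with $R,z_0$ chosen so that $\BB D\subset B_{2R/3}(z_0)\setminus B_{R/3}(z_0)$. Then on $B_{\rho'}(0)\subset\BB D$ the fields $h$ and $h'$ differ only by a deterministic smooth function (namely a bump-function cutoff of $-\gamma\log|\cdot - z_0|$, which is harmonic on $\BB D$), so the Radon--Nikodym derivative of $h|_{B_{\rho'}(0)}$ with respect to $h'|_{B_{\rho'}(0)}$ has finite moments of all orders. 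Since (on the small-cell event) $\max_{z\in B_\rho(0)}\op{deg}_{\op{in}}^\ep(z)$ is determined by $(h,h^{\op{IG}})|_{B_{\rho'}(0)}$ (Lemma~\ref{lem-u_*-msrble}), H\"older's inequality transfers the \emph{uniform} bound from Step 2 directly --- no net, no regularization, and no per-point translation. This is the idea you should use in place of Step 3.
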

\begin{proof}
Here we want to apply Lemma~\ref{lem-max-deg-nonlocal} (which is proven using the Brownian motion definition of the mated-CRT map), so we first consider the case when $h$ is the $R$-circle-average embedding of a $\gamma$-quantum cone in $(\BB C   , z_0 , \infty)$ for some $R>0$ and $z_0 \in \BB C$. This means that $h$ is normalized so that $R$ is the largest radius $r>0$ for which $h_r(z_0) + Q\log r =  0$ and $h|_{B_R(z_0)}$ agrees in law with the corresponding restriction of a whole-plane GFF plus $-\gamma \log |\cdot-z_0|$, normalized so that its circle average over $\bdy B_R(z_0)$ is 0.  
From here until~\eqref{eqn-in-deg-reg-no-localize'}, we work in this setting and define the objects involved in Proposition~\ref{prop-diam-reg} with this choice of $h$ (note that the estimates of Section~\ref{sec-area-diam-deg-moment} do not apply in this setting due to the log singularity of $h$ at $z_0$, but we will not need these estimates here).  

By Lemma~\ref{lem-max-deg-nonlocal} (applied to the structure graph generated from the field $h(R\cdot + z_0) +  Q\log R$, which is the circle average embedding of a $\gamma$-quantum cone in $(\BB C , 0 ,\infty)$, and the SLE$_{\kappa'}$ curve $R^{-1} \eta' - z_0$, which is parametrized by $\gamma$-LQG mass w.r.t.\ this field), 
\eqb \label{eqn-in-deg-reg-no-localize}
 \BB P\left[ \max_{z\in B_R(z_0) }  \op{deg} \left(x_z^\ep;\mcl G^\ep\right)    \leq (\log \ep^{-1})^{1+\zeta}   \right] \geq 1 - o_\ep^\infty(\ep) .
\eqe 
In fact, the proof of Lemma~\ref{lem-max-deg-nonlocal} shows that also
\eqb \label{eqn-in-deg-reg-no-localize-bdy}
 \BB P\left[ \max_{z\in B_R(z_0) } \max_{j\in \{-1,1\}}  \op{deg} \left(x_z^\ep + j \ep ;\mcl G^\ep\right)    \leq (\log \ep^{-1})^{1+\zeta}   \right] \geq 1 - o_\ep^\infty(\ep) ,
\eqe
which is not quite implied by~\eqref{eqn-in-deg-reg-no-localize} since it could be that $H_{x_z^\ep+j \ep}^\ep$ does not intersect $B_R(z_0)$ if $z$ is close to $\bdy B_R(z_0)$. 
Still assuming that $h$ is the $R$-circle-average embedding $\gamma$-quantum cone, we now transfer from~\eqref{eqn-in-deg-reg-no-localize} and~\eqref{eqn-in-deg-reg-no-localize-bdy} to an estimate for $\op{deg}_{\op{in}}^\ep(z)$. 

By definition, if we set $N := \op{deg}_{\op{in}}^\ep(z)$ then there is a collection of $N$ intervals $\{[a_j , b_j]\}_{j\in [1,N]_{\BB Z}}$ which intersect only at their endpoints, each of which has length $b_j  -a_j = \ep$, satisfies $\eta([a_j ,b_j]) \subset \ol B_z^\ep$, and is such that $\eta([a_j,b_j]) \cap \eta([\tau_z-\ep ,\tau_z+\ep])$ contains a non-trivial connected set.
We have
\eqbn
  \eta([\tau_z-\ep,\tau_z+\ep]) \subset H_{x_z^\ep-\ep}^\ep \cup  H_{x_z^\ep }^\ep \cup H_{x_z^\ep+\ep}^\ep ,
\eqen  
so for each $j\in [1,N]_{\BB Z}$ the segment $\eta([a_j , b_j])$ intersects either $H_{x_z^\ep-\ep}^\ep $, $H_{x_z^\ep }^\ep $, or $H_{x_z^\ep+\ep}^\ep$ along a non-trivial boundary arc or at an interior point. Hence $\eta([a_j,b_j]) $ intersects the interior of $H_y^\ep$ for some $y\in\ep\BB Z$ such that $y$ is either equal to or connected by an edge in $\mcl G^\ep$ to one of $x_z^\ep-\ep$, $x_z^\ep$, or $x_z^\ep+\ep$. Since the intervals $[a_j , b_j]$ intersect only at their endpoints and each has length $\ep$, each such vertex $y$ can correspond to at most 2 of the intervals $[a_j,b_j]$. 
Furthermore, the total number of such vertices $y$ is at most 3 plus the sum of the degrees of $x_z^\ep-\ep$, $x_z^\ep$, and $x_z^\ep+\ep$ in $\mcl G^\ep$. Therefore, 
\eqbn
\op{deg}_{\op{in}}^\ep(z)
\leq 2 \sum_{j=-1}^1 \op{deg}(x_z^\ep + j  \ep ; \mcl G^\ep) + 6  .
\eqen
 By~\eqref{eqn-in-deg-reg-no-localize} and~\eqref{eqn-in-deg-reg-no-localize-bdy}, the maximum of this last quantity over all $z\in B_R(z_0)$ is at most $ 6 (\log \ep^{-1})^{1+\zeta} + 6$ except on an event of probability $o_\ep^\infty(\ep)$. Applying this estimate with a slightly smaller value of $\zeta$ (to get rid of the 6's) shows that in the case when $h$ is the $R$-circle average embedding of a $\gamma$-quantum cone into $(\BB C   , z_0 , \infty)$, 
 \eqb \label{eqn-in-deg-reg-no-localize'}
 \BB P\left[ \max_{z\in B_R(z_0) } \op{deg}_{\op{in}}^\ep(z)   \leq (\log \ep^{-1})^{1+\zeta}   \right] \geq 1 - o_\ep^\infty(\ep) .
\eqe 
 
We now transfer to the case when $h$ is equal to either a whole-plane GFF normalized so that $h_1(0) = 0$ or the circle-average embedding of a 0-quantum cone into $(\BB C , 0 , \infty)$ using local absolute continuity. 
The argument will not depend on which choice of $h$ we are considering since the restrictions of both fields to $\BB D$ agree in law. 
Choose $R > 0$ and $z_0 \in \BB C$ such that $\BB D \subset B_{2R/3}(z_0) \setminus B_{R/3}(z_0)$. Let $h'$ be the $R$-circle-average embedding of a $\gamma$-quantum cone in $(\BB C   , z_0 , \infty)$.
Recall that $h|_{B_R(z_0)}$ agrees in law with the corresponding restriction of a whole-plane GFF normalized so that its circle average over $\bdy B_R(z_0)$ is zero, plus $-\gamma\log|\cdot|$.
The proof of~\cite[Proposition~3.4]{ig1} therefore shows that the Radon-Nikodym derivative of the conditional law of $h|_{B_{\rho}(0) }$ with respect to the conditional law of $h'|_{B_{\rho}(0)}$ is equal to
\eqb \label{eqn-max-deg-in-rn}
M := \BB E\left[ \exp\left( (h' , g)_\nabla  -\frac12 (g,g)_\nabla  \right) \,\big|\, h'|_{B_{\rho}(0)}\right] 
\eqe 
where $(\cdot,\cdot)_\nabla$ is the Dirichlet inner product (as in~\eqref{eqn-dirichlet}) and $g$ is a deterministic function with finite Dirichlet energy supported on $B_{(1+\rho)/2}(0)$ which comes from multiplying the log singularity of $h'$ by a smooth bump function supported on $B_{(1+\rho)/2}(0)$. 
Since $(h',g)_\nabla$ is Gaussian with variance $(g,g)_\nabla$, the Radon-Nikodym derivative $M$ has finite moments of all orders. 

We will now argue that the quantity $\max_{z\in B_{\rho}(0)}   \op{deg}_{\op{in}}^\ep(z)$  appearing in~\eqref{eqn-max-deg-in} depends on $h$ in a sufficiently local manner, so we can apply the above Radon-Nikodym derivative bound to transfer from~\eqref{eqn-in-deg-reg-no-localize'} to~\eqref{eqn-max-deg-in}.
For this purpose let $q\in (0,2/(2+\gamma)^2)$. By Lemma~\ref{lem-max-cell-diam} and the definition~\eqref{eqn-localize-set} of $\ol B_z^\ep$,  with $\alpha(q,\gamma)$ as in Lemma~\ref{lem-max-cell-diam} it holds with probability $1- \ep^{\alpha(q,\gamma) + o_\ep(1)}$ that $\ol B_z^\ep \subset B_{\rho + \ep^q}(0)$ for each $z\in B_\rho(0)$. 
In particular, since $\alpha(q,\gamma) \rta \infty$ as $q\rta 0$, we can send $q\rta 0$ to get that for any fixed $\rho' \in (\rho,1)$ it holds with probability $1-o_\ep^\infty(\ep)$ that $\ol B_z^\ep \subset B_{\rho'}(0)$ for each $z\in B_\rho(0)$.
By Lemma~\ref{lem-u_*-msrble}, the event that $\ol B_z^\ep \subset B_{\rho'}(0)$ for each $z\in B_\rho(0)$ is determined by $(h,h^{\op{IG}})|_{B_{\rho'}(0)}$ and on this event the quantity  $\max_{z\in B_{\rho}(0)}   \op{deg}_{\op{in}}^\ep(z)$ is also determined by $(h,h^{\op{IG}})|_{B_{\rho'}(0)}$. 
Hence, we can apply the Radon-Nikodym derivative estimate of the preceding paragraph (with $\rho'$ in place of $\rho$) along with H\"older's inequality to deduce~\eqref{eqn-max-deg-in} from~\eqref{eqn-in-deg-reg-no-localize'}.
\end{proof}

We next turn our attention to the outer localized degree, which  we recall counts the number of crossings of $\eta$ between $\eta([\tau_z-\ep , \tau_z+\ep])$ and the boundary of the ball $\ol B_z^\ep$. 

\begin{lem} \label{lem-max-deg-out}
Suppose we are in the setting of Proposition~\ref{prop-diam-reg}. There exists $\alpha =\alpha(\gamma ) > 0$ such that for each $\zeta \in (0,1)$, 
\eqb \label{eqn-max-deg-out}
\BB P\left[ \sup_{z\in B_{ \rho}(0) }    \op{deg}_{\op{out}}^\ep(z)   >  \ep^{-\zeta} \right] = O_\ep(\ep^\alpha) .
\eqe 
\end{lem}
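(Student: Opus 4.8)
The strategy is to bundle all the needed regularity into a single event $G^\ep = G^\ep(\rho,\gamma,\zeta)$ of probability $1 - O_\ep(\ep^\alpha)$, with $\alpha = \alpha(\gamma)$ \emph{not depending on $\zeta$}, on which the bound $\op{deg}_{\op{out}}^\ep(z) \le \ep^{-\zeta}$ holds \emph{deterministically} and \emph{simultaneously} for every $z \in B_\rho(0)$; there is then no union bound over $z$. For $z \in B_\rho(0)$ write $\delta_z := \op{diam}\left( \eta([\tau_z-\ep,\tau_z+\ep]) \right)$, so that $\ol B_z^\ep = \ol B_{4\delta_z}(z)$ and $\eta([\tau_z-\ep,\tau_z+\ep]) \subset \ol B_{\delta_z}(z)$.

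First I would control $\delta_z$ from both sides, uniformly in $z$. The upper bound $\delta_z \le \ep^q$, for a fixed $q \in (0,\tfrac{2}{(2+\gamma)^2})$, is immediate from Lemma~\ref{lem-max-cell-diam}, since $\eta([\tau_z-\ep,\tau_z+\ep]) \subset H_{x_z^\ep-\ep}^\ep \cup H_{x_z^\ep}^\ep \cup H_{x_z^\ep+\ep}^\ep$. For the matching lower bound, $\eta([\tau_z-\ep,\tau_z+\ep])$ has $\mu_h$-mass $2\ep$ and lies in $\ol B_{\delta_z}(z)$, so $\delta_z < \ep^{q''}$ would force $\mu_h(B_{\ep^{q''}}(z)) \ge 2\ep$; once $q''$ is a fixed constant with $q'' > \tfrac{2}{(2-\gamma)^2}$, this contradicts the worst-case bound $\sup_{z\in B_\rho(0)}\mu_h(B_\delta(z)) \le \delta^{(2-\gamma)^2/2 - o_\delta(1)}$, which follows from the Gaussian estimate $\sup_{z\in B_\rho(0)}h_\delta(z) \le (2+o_\delta(1))\log\delta^{-1}$ together with the circle-average estimate behind Lemma~\ref{lem-min-ball}. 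Into $G^\ep$ I would also put: (i) for all $w$ in a ball slightly larger than $B_\rho(0)$ and all $r\in[\ep^{100},1]$, the two-sided bound $r^{2+\gamma^2/2+\zeta_1}e^{\gamma h_r(w)} \le \mu_h(B_r(w)) \le r^{2+\gamma^2/2-\zeta_1}e^{\gamma h_r(w)}$ (the lower bound is the displayed estimate in the proof of Lemma~\ref{lem-min-ball} plus a union bound over a net; the upper bound is its standard analogue); (ii) the event, of probability $1-o_\ep^\infty(\ep)$ by \cite[Proposition~3.4 and Remark~3.9]{ghm-kpz}, that every segment $\eta([a,b])\subset\BB D$ contains a Euclidean ball of radius at least $\op{diam}(\eta([a,b]))^{1+\kappa}$; and (iii) a multi-scale bound on the circle-average field guaranteeing $|h_s(w)-h_r(w')| \le C_\gamma\sqrt\kappa\log\ep^{-1} + C_\gamma\sqrt{\log\ep^{-1}}$ whenever $w,w'\in B_\rho(0)$ lie within $8\ep^q$ of each other and $s,r\in[(3\ep^{q''})^{1+\kappa},8\ep^q]$, of the type appearing in condition~(2) of Proposition~\ref{prop-diam-reg}. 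Here $\zeta_1,\kappa\in(0,1)$ are small parameters fixed below depending on $\zeta$; crucially the probabilities of (i) and (ii) are $1-o_\ep^\infty(\ep)$ for \emph{any} fixed $\zeta_1,\kappa$, while the cell-diameter events and (iii) carry fixed polynomial rates once $q,q''$ are fixed, so $\BB P[(G^\ep)^c] = O_\ep(\ep^\alpha)$ with $\alpha = \alpha(\gamma)>0$ independent of $\zeta$.

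Now I would work on $G^\ep$ and fix $z\in B_\rho(0)$; abbreviate $\delta := \delta_z$ and $N' := \op{deg}_{\op{out}}^\ep(z)$, so $\ep^{q''}\le\delta\le\ep^q$. Each of the $N'$ curve segments $\eta([a_j,b_j])$ in the definition of $\op{deg}_{\op{out}}^\ep(z)$ has one endpoint in $\eta([\tau_z-\ep,\tau_z+\ep])\subset\ol B_\delta(z)$ and one on $\bdy\ol B_z^\ep = \bdy B_{4\delta}(z)$, hence $\op{diam}(\eta([a_j,b_j]))\ge 3\delta$ while $\eta([a_j,b_j])\subset\ol B_z^\ep\subset\BB D$. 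By (ii), $\eta([a_j,b_j])$ contains a Euclidean ball $B_{r_j}(w_j)$ with $r_j\ge(3\delta)^{1+\kappa}$; since the intervals $[a_j,b_j]$ overlap only at endpoints and $\eta$ hits Lebesgue-a.e.\ point only once, distinct sets $\eta([a_j,b_j])$ meet in a set of zero area, so the open balls $B_{r_j}(w_j)$ are pairwise disjoint. Hence $\sum_{j=1}^{N'}\mu_h(B_{r_j}(w_j)) \le \mu_h(\ol B_z^\ep) = \mu_h(B_{4\delta}(z))$. Bounding the summands below and the right side above via (i), and using (iii) with $r_j\le 8\delta$ and $|w_j-z|\le 8\delta$ to see that $h_{r_j}(w_j)$, $h_{4\delta}(z)$ and $h_\delta(z)$ agree up to an additive error $O_\ep(\sqrt\kappa\log\ep^{-1}) + O_\ep(\sqrt{\log\ep^{-1}})$ — so that the common factor $e^{\gamma h_\delta(z)}$ cancels — I obtain
\eqb
\op{deg}_{\op{out}}^\ep(z) \le C_\gamma\,\delta^{\,-2\zeta_1 - \kappa(2+\gamma^2/2+\zeta_1)}\,\ep^{-C_\gamma\sqrt\kappa - o_\ep(1)} \le C_\gamma\,\ep^{-q''\left(2\zeta_1 + \kappa(2+\gamma^2/2+\zeta_1)\right) - C_\gamma\sqrt\kappa - o_\ep(1)},
\eqe
using $\delta\ge\ep^{q''}$ in the last step. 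The exponent tends to $0$ as $\zeta_1,\kappa\to 0$ with $q''$ held fixed, so choosing $\zeta_1,\kappa$ small enough depending only on $\zeta$ and $\gamma$ makes the right side at most $\ep^{-\zeta}$ for $\ep$ small. Since this holds for all $z\in B_\rho(0)$ on $G^\ep$, we get $\BB P[\sup_{z\in B_\rho(0)}\op{deg}_{\op{out}}^\ep(z) > \ep^{-\zeta}] \le \BB P[(G^\ep)^c] = O_\ep(\ep^\alpha)$.

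The main obstacle is the $\mu_h$-mass comparison in the last step: one must ensure that the ball guaranteed inside each crossing segment has radius $\delta^{1+o(1)}$ rather than merely a fixed power of $\ep$, so that the Gaussian multiplicative chaos densities cancel between $\mu_h(B_{4\delta}(z))$ and $\mu_h(B_{r_j}(w_j))$. The delicate case is a point $z$ at which $\delta_z$ is as small as $\ep^{q''}$: there $\mu_h$ is highly concentrated near $z$, so both $\mu_h(\ol B_z^\ep)$ and the mass of each crossing are polynomially large in $\ep$, and only their ratio stays under control. Making this cancellation effective uniformly over $z$ is precisely what forces the use of the scale-local form of the ball-in-a-segment estimate of \cite{ghm-kpz} and the circle-average control built into condition~(2) of Proposition~\ref{prop-diam-reg}, rather than the cruder worst-case bounds on $\mu_h$.
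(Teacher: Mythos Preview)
Your approach is correct in outline and shares the two essential ingredients with the paper's proof: a two-sided bound $\ep^{q''}\le\delta_z\le\ep^q$ on the local diameter, and the ball-in-a-segment estimate from \cite[Proposition~3.4 and Remark~3.9]{ghm-kpz}. Where you diverge is in the packing step. You compare $\mu_h$-masses of the disjoint balls $B_{r_j}(w_j)$ against $\mu_h(\ol B_z^\ep)$, which then forces you to introduce the two-sided GMC estimates~(i) and the multi-scale circle-average control~(iii) so that the factors $e^{\gamma h_\delta(z)}$ cancel. The paper instead compares \emph{Lebesgue} areas: the balls $B_{r_j}(w_j)$ are pairwise disjoint and contained in $\ol B_z^\ep$, so $N'\cdot \pi r_j^2 \le \pi(4\delta)^2$; with $r_j\ge (3\delta)^{1+\kappa}$ this gives $N'\preceq \delta^{-2\kappa}\le \ep^{-2\kappa q''}$, and choosing $\kappa$ small (say $\kappa=\zeta/(2q'')$, as the paper does with its $p_1$) yields $N'\preceq\ep^{-\zeta}$ directly.

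The ``main obstacle'' you flag in your last paragraph --- the worry that at points where $\delta_z$ is near $\ep^{q''}$ the $\mu_h$-densities blow up and only their ratio is controlled --- is thus entirely self-inflicted. With Lebesgue area the $\delta$ factors cancel algebraically and the argument is insensitive to how concentrated $\mu_h$ is near $z$; no circle-average comparison is needed at all. Your route works (modulo justifying the uniform bound in~(iii), which is plausible but requires a chaining or continuity argument for the circle-average process that you have not spelled out), but the paper's Lebesgue-area version is a strict simplification: it drops conditions~(i) and~(iii) from the regularity event and reduces the final computation to one line.
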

\begin{proof}  
By Lemmas~\ref{lem-min-ball} and~\ref{lem-max-cell-diam}, there exists $\alpha = \alpha(\gamma) >0$ and $p_1 > p_2 > 0$ such that with probability at least $1-O_\ep(\ep^\alpha)$, it holds that
\eqb \label{eqn-max-deg-out-reg}
\ep^{p_1} \leq \op{diam}\left(\eta([\tau_z-\ep , \tau_z+\ep]) \right) \leq \ep^{p_2} ,\quad \forall z \in B_{\rho}(0)  .
\eqe 
By~\cite[Proposition~3.4 and Remark~3.9]{ghm-kpz}, it holds except on an event of probability $o_\ep^\infty(\ep)$ that the following is true. For each $\delta \in (0,\ep^{p_2}]$ and each $a,b\in\BB R$ with $a < b$ such that $\eta ([a,b]) \subset \BB D$ and $\op{diam}(\eta([a,b]) \geq \delta$, the set $\eta([a,b])$ contains a Euclidean ball of radius at least $\delta^{1+  \zeta/(2p_1)}$. Let $F^\ep$ be the event that this is the case and~\eqref{eqn-max-deg-out-reg} holds, so that $\BB P[F^\ep ] = 1-O_\ep(\ep^\alpha)$. 

Suppose now that $F^\ep$ occurs. Let $z\in B_{\rho}(0)$ and let $\{[a_j , b_j]\}_{j\in [1,N']_{\BB Z}}$ be a collection of intervals as in the definition of $\op{deg}_{\op{out}}^\ep(z)$. Then each $\eta([a_j,b_j])$ is contained in $  \ol B_z^\ep$ and
\eqbn
\op{diam}(\eta([a_j ,b_j])) \geq \op{diam}\left(\eta([\tau_z-\ep , \tau_z+\ep]) \right) \in \left[ \ep^{p_1} , \ep^{p_2} \right] .
\eqen
Therefore, $\eta([a_j ,b_j])$ contains a Euclidean ball of radius at least
\eqbn
\op{diam}\left(\eta([\tau_z-\ep , \tau_z+\ep])\right)^{1 + \zeta/(2p_1)} \geq \ep^{\zeta/2}  \op{diam}\left(\eta([\tau_z-\ep , \tau_z+\ep])  \right)  
\eqen
which is itself contained in $\ol B_z^\ep$. Since $\op{area}(\ol B_z^\ep) \asymp \op{diam}\left(\eta([\tau_z-\ep , \tau_z+\ep]) \right)^2 $ and different segments of the form $\eta([a,b])$ as above intersect only along their boundaries, by comparing areas we find that $\op{deg}_{\op{out}}^\ep(z) \preceq \ep^{-\zeta} $. The statement of the lemma follows from this and our above estimate for $\BB P[F^\ep]$.
\end{proof}

\begin{proof}[Proof of Proposition~\ref{prop-diam-reg}] 
By Lemmas~\ref{lem-max-cell-diam},~\ref{lem-max-deg-in}, and~\ref{lem-max-deg-out}, condition~\ref{item-diam-reg-single} in the definition of $E^\ep$ holds except on an event of probability decaying faster than some positive power of $\ep$ (when we apply Lemma~\ref{lem-max-cell-diam}, we note that $\eta([\tau_z-\ep,\tau_z+\ep])$ is contained in the union of at most three of the cells $H_x^\ep$ for $x\in\ep\BB Z$). 
It follows from~\cite[Corollary~2.5]{lqg-tbm2} that the probability that condition~\ref{item-diam-reg-circle} in the definition of $E^\ep$ fails to occur decays like a positive power of $\ep$. 
\end{proof}

\subsection{Law of large numbers for integrals over structure graph cells}
\label{sec-area-diam-deg-lln}

In this subsection we will prove Proposition~\ref{prop-area-diam-deg-lln}. 
Let $h^{\op{IG}}$ be the whole-plane GFF viewed used to construct $\eta$ as in Section~\ref{sec-wpsf-prelim} (recall that IG stands for ``imaginary geometry"), and assume that $h^{\op{IG}}$ is normalized so its circle average over $\bdy\BB D$ is 0. 

For $z \in \BB C$ and $\ep\in (0,1)$, define $u^\ep(z)$ as in~\eqref{eqn-area-diam-deg-def} and define its localized analog
\eqb \label{eqn-area-diam-deg-def*} 
u^\ep_*(z) := \op{DA}^\ep(z) \left(\op{deg}_{\op{in}}^\ep(z)  + \op{deg}_{\op{out}}^\ep(z) \right) ,
\eqe
where here $\op{DA}^\ep(z)$, $\op{deg}_{\op{in}}^\ep(z)$, and $\op{deg}_{\op{out}}^\ep$ are defined as in Section~\ref{sec-area-diam-deg-localize}. 
We note that~\eqref{eqn-area-diam-localize-bound} and~\eqref{eqn-deg-localize-bound} together imply that
\eqb \label{eqn-u-u_*-compare}
u^\ep(z) \leq 2 u^\ep_*(z) .
\eqe  

The remainder of this subsection is devoted to the proof of the following proposition, which (by~\eqref{eqn-u-u_*-compare}) immediately implies Proposition~\ref{prop-area-diam-deg-lln}.

\begin{prop} \label{prop-area-diam-deg-lln*}
The statement of Proposition~\ref{prop-area-diam-deg-lln} is true with $u_*^\ep(z)$ in place of $u^\ep(z)$. 
\end{prop}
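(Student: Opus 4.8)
The plan is to prove Proposition~\ref{prop-area-diam-deg-lln*} — a one-sided law of large numbers for $\int_D f(z)\,u_*^\ep(z)\,dz$ in the case when $h$ is a whole-plane GFF normalized by $h_1(0)=0$ or the circle-average embedding of a $0$-quantum cone — via a truncated first and second moment argument. The key point is that $u_*^\ep(z)$, unlike $u^\ep(z)$, is (essentially) locally determined by $h$ and $h^{\op{IG}}$ (Lemma~\ref{lem-u_*-msrble}), so it is amenable to long-range independence estimates for the GFF.

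\textbf{Step 1: Reduction to a fixed function and a truncated quantity.} It suffices to treat $D = B_\rho(0)$ and a general bounded measurable $f\ge 0$; the case of general $D\subset B_\rho(0)$ follows by replacing $f$ with $f\mathbbm 1_D$, since no regularity of $D$ or $f$ is assumed in Proposition~\ref{prop-area-diam-deg-lln}. By scaling we may also normalize $\|f\|_\infty \le 1$ (the error term $\ep^\alpha\|f\|_\infty$ then scales correctly). Fix a regularity event $E^\ep = E^\ep(\rho, q, \zeta)$ as in Proposition~\ref{prop-diam-reg}, with $q$ close to $\tfrac{2}{(2+\gamma)^2}$ and $\zeta$ small, so that $\BB P[(E^\ep)^c] = O_\ep(\ep^\alpha)$, and on $E^\ep$ we have $\op{diam}(\eta([\tau_z-\ep,\tau_z+\ep])) \le \ep^q$ and $\op{deg}_{\op{in}}^\ep(z) + \op{deg}_{\op{out}}^\ep(z) \le \ep^{-\zeta}$ for all $z\in B_\rho(0)$, together with the circle-average bounds~\eqref{eqn-diam-reg-int}. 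Define a truncated integrand by replacing $u_*^\ep(z)$ with $\wt u^\ep(z) := u_*^\ep(z)\,\mathbbm 1_{G^\ep(z)}$, where $G^\ep(z)$ is the event that $\ol B_z^\ep \subset B_{\rho'}(0)$ (for a fixed $\rho'\in(\rho,1)$), that $\op{diam}(\eta([\tau_z-\ep,\tau_z+\ep]))\le\ep^q$, that $\op{deg}_{\op{in}}^\ep(z)+\op{deg}_{\op{out}}^\ep(z)\le \ep^{-\zeta}$, and that the relevant circle averages of $h$ and $h^{\op{IG}}$ near $z$ are bounded as in~\eqref{eqn-diam-reg-int}. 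On $E^\ep$ (intersected with the high-probability event $\ol B_z^\ep\subset B_{\rho'}(0)$ for all $z$, from Lemma~\ref{lem-max-cell-diam} and~\eqref{eqn-localize-set}) we have $u_*^\ep(z) = \wt u^\ep(z)$ for all $z\in B_\rho(0)$, so it suffices to prove the estimate for $\int_{B_\rho(0)} f(z)\wt u^\ep(z)\,dz$.

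\textbf{Step 2: First moment bound.} I would show $\BB E[\wt u^\ep(z)] \preceq 1$ uniformly over $z\in B_\rho(0)$ and $\ep\in(0,1)$. The idea: using the translation- and scale-invariance of the law of the whole-plane GFF modulo additive constant (and local absolute continuity of both choices of $h$ with respect to a $0$-quantum cone on a fixed ball, exactly as in the proof of Lemma~\ref{lem-max-deg-in}), one conditions on the circle average $h_{r}(z)$ for an appropriate radius $r$ comparable to $\op{diam}(\eta([\tau_z-\ep,\tau_z+\ep]))$; on the truncation event this circle average is controlled, and after shifting the field to remove it, the quantities $\op{DA}^\ep(z)$, $\op{deg}_{\op{in}}^\ep(z)$, $\op{deg}_{\op{out}}^\ep(z)$ have (on the truncation) the same conditional law as the corresponding quantities at the origin for a $0$-quantum cone, to which the moment bounds of Theorem~\ref{thm-moment} apply. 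Here it is important that the truncation removes the log-singularity issue and the $p\in[1,4/\gamma^2)$ restriction on $\op{deg}_{\op{in}}^\ep$ is harmless because on the truncation event $\op{deg}_{\op{in}}^\ep(z) \le \ep^{-\zeta}$, so one only needs the $p=1$ bound on a set of controlled probability plus the crude deterministic bound elsewhere. Integrating, $\BB E\big[\int_{B_\rho(0)} f(z)\wt u^\ep(z)\,dz\big] \le A_0 \int_{B_\rho(0)} f(z)\,dz$ for a constant $A_0 = A_0(\rho,\gamma)$.

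\textbf{Step 3: Second moment / variance bound and conclusion.} This is the main obstacle. I would bound $\var\big(\int_{B_\rho(0)} f(z)\wt u^\ep(z)\,dz\big)$ by a positive power of $\ep$. Write the variance as $\int\int f(z)f(w)\,\op{Cov}(\wt u^\ep(z), \wt u^\ep(w))\,dz\,dw$. When $|z-w|$ is much larger than $\ep^q$ (say $|z-w| \ge \ep^{q/2}$), the truncation forces $\ol B_z^\ep$ and $\ol B_w^\ep$ to be disjoint and at definite distance, and by Lemma~\ref{lem-u_*-msrble} the random variables $\wt u^\ep(z)$ and $\wt u^\ep(w)$ are measurable with respect to $(h, h^{\op{IG}})$ restricted to these two disjoint balls. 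Using the long-range (near-)independence estimates for the GFF — i.e., the restrictions of $h$ to two separated balls are absolutely continuous with respect to the product of their laws with a Radon–Nikodym derivative close to $1$, quantified in terms of the gap (this is the content of the appendix ``Appendix~\ref{sec-gff-abs-cont}'' referenced in the excerpt) — one shows $\op{Cov}(\wt u^\ep(z),\wt u^\ep(w)) \preceq \ep^{c}$ for such pairs, using the uniform first- and second-moment bounds from Step~2. For the ``diagonal'' contribution $|z-w| < \ep^{q/2}$, one simply uses Cauchy–Schwarz with the second moment bound $\BB E[\wt u^\ep(z)^2]\preceq 1$ (again from Theorem~\ref{thm-moment}, which gives all moments of $\op{DA}^\ep$ and $\op{deg}_{\op{out}}^\ep$ and, crucially combined with the truncation $\op{deg}_{\op{in}}^\ep\le\ep^{-\zeta}$, a controlled second moment of $u_*^\ep$), times $\op{area}\{(z,w): |z-w|<\ep^{q/2}\} \preceq \ep^{q/2}$. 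Combining, $\var\big(\int f\,\wt u^\ep\big) \preceq \ep^{\alpha_1}$ for some $\alpha_1 = \alpha_1(\gamma) > 0$. By Chebyshev's inequality, $\int_{B_\rho(0)} f(z)\wt u^\ep(z)\,dz \le A_0\int f + \ep^{\alpha_1/3}$ with probability $1 - O_\ep(\ep^{\alpha_1/3})$; intersecting with $E^\ep$ and the event $\{u_*^\ep = \wt u^\ep\}$ and using $\ep^{\alpha_1/3} \le \ep^{\alpha} \|f\|_\infty^{-1}\cdot\ep^\alpha$... more precisely, taking $\alpha$ below the minimum of $\alpha_1/3$ and the exponent in $\BB P[(E^\ep)^c]$, and recalling $\|f\|_\infty\le 1$ after the normalization, gives~\eqref{eqn-area-diam-deg-lln} with $u_*^\ep$ in place of $u^\ep$. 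The hard part throughout is making the long-range independence step quantitative with a clean power-of-$\ep$ bound while correctly handling the fact that $\op{deg}_{\op{in}}^\ep$ only has moments up to $4/\gamma^2$ — which is precisely why the truncation at $\ep^{-\zeta}$ is built into the argument from the start.
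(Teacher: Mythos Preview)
Your overall architecture --- truncate by the regularity event of Proposition~\ref{prop-diam-reg}, establish $\ep$-uniform moment bounds from Theorem~\ref{thm-moment}, then run a first/second-moment argument splitting the double integral into diagonal and off-diagonal regions --- is exactly the paper's strategy. Steps~1 and~2 are fine and essentially match the paper; the paper's Lemma~\ref{lem-moment-given-circle-avg} is a sharpened, \emph{conditional} version of your Step~2 (moments bounded given the circle averages $h_{\ep^{q-4\zeta}}(z)$ and $h^{\op{IG}}_{\ep^{q-4\zeta}}(z)$), but for the first-moment bound your unconditional version suffices.

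There is, however, a genuine gap in Step~3. Your claim that ``the restrictions of $h$ to two separated balls are absolutely continuous with respect to the product of their laws with a Radon--Nikodym derivative close to $1$'' is false for the whole-plane GFF at the scales you use. Concretely, your far pairs are separated by $\ep^{q/2}$ while the localization balls $\ol B_z^\ep$ have radius $\preceq \ep^q$. The circle averages $h_{\ep^q}(z)$ and $h_{\ep^q}(w)$ are jointly Gaussian with variances $\asymp q\log \ep^{-1}$ and covariance $\asymp (q/2)\log\ep^{-1}$, hence correlation $\asymp 1/2$ --- not tending to $0$ at all. Since $u_*^\ep(z)$ depends on the circle average (adding a constant to $h$ changes which segment has mass $\ep$), this contaminates $\op{Cov}(\wt u^\ep(z),\wt u^\ep(w))$ by a term that does not decay polynomially in $\ep$. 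Your variance bound would at best be $O(1/\log\ep^{-1})$, and Chebyshev then gives only a $1/\log\ep^{-1}$ probability bound, not the required $O_\ep(\ep^\alpha)$.

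The paper's fix is to center not at the unconditional mean but at the \emph{conditional} mean given the circle averages: it sets
\[
\ol u_*^\ep(z) := \BB 1_{F^\ep(z)} u_*^\ep(z) - \BB E\!\left[\BB 1_{F^\ep(z)} u_*^\ep(z)\,\middle|\, h_{\ep^{q-4\zeta}}(z),\, h^{\op{IG}}_{\ep^{q-4\zeta}}(z)\right],
\]
and uses Lemma~\ref{lem-moment-given-circle-avg} to see that the subtracted term is $\le A$ deterministically, so on $E^\ep$ one still has $\int_D f\,u_*^\ep \le A\int_D f + \int_D f\,\ol u_*^\ep$. Now $\ol u_*^\ep(z)$ has zero conditional mean given the circle averages, and this is precisely the hypothesis of Lemma~\ref{lem-rn-mean}, which says that for such a variable the conditional expectation given $(h,h^{\op{IG}})|_{\BB C\setminus B_{\ep^{q-4\zeta}}(z)}$ is small in $L^p$, \emph{polynomially} in the scale ratio. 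This is what yields $\BB E[\ol u_*^\ep(z)\ol u_*^\ep(w)] \preceq \ep^\zeta$ for off-diagonal pairs, hence the polynomial variance bound. The circle-average centering is not a cosmetic choice: it is what kills the logarithmic long-range correlation of the GFF and makes the ``local independence'' step genuinely polynomial.
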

 
Fix $\rho \in (0,1)$. Also fix 
\eqb \label{eqn-q-zeta-choice}
q \in \left( 0 ,  \min\left\{ \tfrac{2}{(2+\gamma)^2} , \gamma \right\} \right) \quad \op{and} \quad \zeta \in (0,q/100) , 
\eqe 
chosen in a manner depending only on $\gamma$, and define the regularity event $E^\ep = E^\ep(\rho,q,\zeta)$ as in Proposition~\ref{prop-diam-reg} for this choice of $\rho,q$, and $\zeta$. We note that the particular choice of $q$ and $\zeta$ satisfying~\eqref{eqn-q-zeta-choice} does not matter for the proof.

The idea of the proof of Proposition~\ref{prop-area-diam-deg-lln} is to show that (roughly speaking) the variance of $\int_{D} f(z) u_*^\ep(z)\,dz$ on $E^\ep$ decays like a positive power of $\ep$ using the local independence result Lemma~\ref{lem-rn-mean}; and bound the expectation of this integral on $E^\ep$ using Theorem~\ref{thm-moment}.

Since we will be using long-range independence for $(h,h^{\op{IG}})$, we need to consider a localized version of the event $E^\ep$. To this end, 
for $z\in \BB C$ and $\ep\in (0,1)$, let $F^\ep(z)$ be the event that the following is true.
\begin{enumerate}
\item $\op{diam}\left( \eta([\tau_z-\ep,\tau_z+\ep]) \right) \leq   \ep^{q   }$. \label{item-diam-reg-local}
\item $\op{deg}_{\op{in}}^\ep(z)  + \op{deg}_{\op{out}}^\ep(z) \leq \ep^{-\zeta}$. \label{item-deg-reg-local} 
\item $| h_{\ep^{q-4\zeta}}(z)| \vee | h_{\ep^{q-4\zeta}}^{\op{IG}}(z)| \leq 2q \log \ep^{-1}$. \label{item-ca-reg-local} 
\end{enumerate} 
By Lemma~\ref{lem-u_*-msrble},
\eqb \label{eqn-Fu_*-determined}
\BB 1_{F^\ep(z)} u_*^\ep(z) \: \text{is a.s.\ determined by} \: (h , h^{\op{IG}})|_{B_{4\ep^q}(z)} \: \op{and} \: ( h_{\ep^{q-4\zeta}}(z)   ,    h_{\ep^{q-4\zeta}}^{\op{IG}}(z) ) .
\eqe
We also note that by definition, 
\eqb \label{eqn-lln-event-contain}
\bigcap_{z\in B_{\rho}(0)} F^\ep(z) = E^\ep . 
\eqe 

To prove Proposition~\ref{prop-area-diam-deg-lln*}, we will need moment bounds for $\BB 1_{F^\ep(z)} u_*^\ep(z)$ for all $z\in B_{\rho}(0)$ (not just the moment bound when $z=0$ which comes from Theorem~\ref{thm-moment}). 
In fact, since our local independence result Lemma~\ref{lem-rn-mean} involves the conditional law of a random variable $X$ given the circle average of the field, we will need a moment bound for $\BB 1_{F^\ep(z)} u_*^\ep(z)$ when we condition on certain circle averages of $ h$ and $h^{\op{IG}}$.
 
\begin{lem} \label{lem-moment-given-circle-avg}
Suppose $h$ is a whole-plane GFF with $h_1(0) =0$ and define the events $F^\ep(z)$ as above.
There exists $\ep_* = \ep_*(\rho , \gamma ) \in (0,1)$ such that for each $p\geq 1 $,  
\eqb \label{eqn-moment-given-circle-avg-DA} 
\BB E\left[ \BB 1_{F^\ep(z)} \op{DA}^\ep(z)^p \,|\,  h_{\ep^{q- 4\zeta}}(z)  ,\,    h_{\ep^{q- 4\zeta}}^{\op{IG}}(z)  \right]  \preceq  1 ,\quad \forall z\in B_{\rho}(0) ,\quad \forall \ep \in (0,\ep_*] 
\eqe 
and for each $p \in (1,4/\gamma^2)$, 
\eqb \label{eqn-moment-given-circle-avg} 
\BB E\left[ \BB 1_{F^\ep(z)} u_*^\ep(z)^p \,|\,  h_{\ep^{q- 4\zeta}}(z)  ,\,    h_{\ep^{q- 4\zeta}}^{\op{IG}}(z)  \right]  \preceq  1 ,\quad \forall z\in B_{\rho}(0) ,\quad \forall \ep \in (0,\ep_*] 
\eqe 
with deterministic implicit constant depending only on $p$, $\rho$, and $\gamma$. 
\end{lem}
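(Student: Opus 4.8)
The plan is to reduce the estimates~\eqref{eqn-moment-given-circle-avg-DA} and~\eqref{eqn-moment-given-circle-avg} to the $z=0$, unconditioned moment bounds of Theorem~\ref{thm-moment} (which concern a $0$-quantum cone) via two changes of measure. First I would use the translation invariance of the law of the whole-plane GFF \emph{modulo additive constant}: for fixed $z\in B_\rho(0)$ the field $h(\cdot + z) - h_1(z)$ has the law of a whole-plane GFF normalized at $\bdy\BB D$, and likewise for $h^{\mathrm{IG}}$. Since $\mathrm{DA}^\ep(z)$, $\deg_{\mathrm{in}}^\ep(z)$, $\deg_{\mathrm{out}}^\ep(z)$ and the event $F^\ep(z)$ are all defined in a translation- and scale-covariant way in terms of $(h,h^{\mathrm{IG}})$ near $z$ (by their definitions in Section~\ref{sec-area-diam-deg-localize} together with Lemma~\ref{lem-u_*-msrble}), after translating it suffices to bound $\BB E[\BB 1_{F^\ep(0)} u_*^\ep(0)^p \mid h_{\ep^{q-4\zeta}}(0), h^{\mathrm{IG}}_{\ep^{q-4\zeta}}(0)]$ for a whole-plane GFF. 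Here one must be slightly careful that translating changes the normalization from ``circle average over $\bdy\BB D$ is $0$'' to a different additive constant; but $\mathrm{DA}^\ep$, the localized degrees, and the membership of $F^\ep$ in condition~\ref{item-diam-reg-local}--\ref{item-deg-reg-local} are all \emph{unaffected} by adding a global constant to $h$ (a constant shift of $h$ only reparametrizes $\eta$ by a deterministic scaling of time, and $u_*^\ep$ is a ratio that is scale-invariant), while condition~\ref{item-ca-reg-local} is exactly the condition controlling the relevant circle average, so it is preserved under the translation identification.

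Next I would handle the conditioning on the two circle averages. Condition on $\mcl F := \sigma(h_{\ep^{q-4\zeta}}(0), h^{\mathrm{IG}}_{\ep^{q-4\zeta}}(0))$. By the Markov property of the GFF (Lemma~\ref{lem-whole-plane-markov} and its IG analogue, or directly the circle-average/harmonic decomposition), conditionally on $\mcl F$ the restriction of $h$ to $B_{\ep^{q-4\zeta}}(0)$ equals a zero-boundary GFF on that ball plus the harmonic extension of the boundary data, and one can write down the Radon–Nikodym derivative of the conditional law of $h|_{B_{4\ep^q}(0)}$ with respect to the unconditioned law of $h|_{B_{4\ep^q}(0)}$ — this is an exponential martingale of the form $\exp((h,g)_\nabla - \tfrac12(g,g)_\nabla)$, exactly as in~\eqref{eqn-max-deg-in-rn} and the proof of~\cite[Proposition~3.4]{ig1}, where $g$ is a deterministic function, supported in $B_{4\ep^q}(0)^c$ (or in an annulus), whose Dirichlet norm is controlled by $(\log\ep^{-1})^2$ times the event-constrained size of the circle averages, hence $O((\log\ep^{-1})^2)$ on $F^\ep(0)$ by condition~\ref{item-ca-reg-local}. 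The same is done independently for $h^{\mathrm{IG}}$. Because $\BB 1_{F^\ep(0)} u_*^\ep(0)$ is determined by $(h,h^{\mathrm{IG}})|_{B_{4\ep^q}(0)}$ together with the conditioned circle averages (this is~\eqref{eqn-Fu_*-determined}), Hölder's inequality gives
\[
\BB E[\BB 1_{F^\ep(0)} u_*^\ep(0)^p \mid \mcl F] \le \BB E[M_h^{p'} M_{h^{\mathrm{IG}}}^{p'}]^{1/p'}\, \BB E[u_*^\ep(0)^{pp''}]^{1/p''}
\]
for conjugate exponents, where the first factor is bounded by $\exp(O_p((\log\ep^{-1})^2))$ using that $(h,g)_\nabla$ is Gaussian with variance $(g,g)_\nabla = O((\log\ep^{-1})^2)$, and the second factor is $O_p(1)$ by Theorem~\ref{thm-moment} (for the $\mathrm{DA}$ bound~\eqref{eqn-moment-given-circle-avg-DA} we use~\eqref{eqn-area-diam} for all $p$, and for~\eqref{eqn-moment-given-circle-avg} we need $pp'' < 4/\gamma^2$, so we choose $p''$ close to $1$, which is possible exactly because $p < 4/\gamma^2$ strictly). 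This yields a bound of the form $\exp(O((\log\ep^{-1})^2))$, which is \emph{not} $O(1)$ — so a naive Hölder bound is too lossy.

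The resolution, and the step I expect to be the main obstacle, is to avoid paying $\exp((\log\ep^{-1})^2)$ by exploiting that $\mathrm{DA}^\ep$, $\deg_{\mathrm{in}}^\ep$, $\deg_{\mathrm{out}}^\ep$ and the event $F^\ep(0)$ depend on the field only at scales $\le \ep^q \ll \ep^{q-4\zeta}$ near $0$, i.e.\ on the field in a ball of radius $\ep^q$, whereas the conditioning is on circle averages at radius $\ep^{q-4\zeta}$. Between these two scales the Radon–Nikodym derivative for the conditioning is actually the density of a \emph{Brownian-type} increment over a time span of order $(q-4\zeta)\log\ep^{-1} - q\log\ep^{-1} = 4\zeta\log\ep^{-1}$ for the radial part, plus a lateral part whose Dirichlet norm, after localizing $g$ to the annulus $B_{\ep^{q-4\zeta}}(0)\setminus B_{2\ep^q}(0)$, is $O(1)$ (not $O((\log\ep^{-1})^2)$), because harmonic functions on an annulus of fixed modulus interpolating between boundary values of size $O(\log\ep^{-1})$ have Dirichlet energy of order $(\log\ep^{-1})^2 / (\text{modulus})$, and here the modulus is of order $\log\ep^{-1}$, giving $O(\log\ep^{-1})$; one then uses that, conditioned on the coarse-scale circle average being of the constrained size, the fine-scale circle average $h_{\ep^q}(0)$ is Gaussian with bounded variance around that value, so the exponential moment of the genuine change of measure seen by $u_*^\ep(0)$ is $O(1)$. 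Concretely, I would insert $h_{2\ep^q}(0)$ and $h^{\mathrm{IG}}_{2\ep^q}(0)$ into the conditioning: conditionally on all four circle averages the change of measure on $B_{4\ep^q}(0)$ has Dirichlet-norm cost $O(1)$ (the difference of two harmonic functions across an annulus of modulus $O(1)$ with boundary values differing by $O(\log\ep^{-1})$... — here one must restructure so the boundary values that actually matter are the \emph{differences} $h_{\ep^{q-4\zeta}}(0) - h_{2\ep^q}(0)$, which on $F^\ep(0)$ are controlled, and the Gaussian tail of this difference at the relevant variance $\approx 4\zeta\log\ep^{-1}$ kills the exponential). Carrying out this two-scale bookkeeping carefully — choosing the cutoff function $g$, identifying the Dirichlet norms, and checking that the exponential moments of the resulting Gaussians are $O(1)$ uniformly in $\ep\le\ep_*$ and $z\in B_\rho(0)$ — is the technical heart of the argument; everything else is Theorem~\ref{thm-moment}, Hölder, and the Markov property. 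The final constant $\ep_*$ enters to guarantee $\ep^{q-4\zeta} < 1$ so the circle averages are defined and the annuli are nondegenerate, and to absorb the $O_\ep(1)$ discrepancies between $h_r$ and the idealized log-correlated field.
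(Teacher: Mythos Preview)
Your approach has a genuine gap, and the fix you sketch is both more complicated than necessary and not clearly correct as written. The core difficulty you identify --- that a na\"ive Radon--Nikodym comparison of the conditional law of $(h,h^{\mathrm{IG}})|_{B_{4\ep^q}(0)}$ given the circle averages at radius $\ep^{q-4\zeta}$ against the \emph{unconditional} law (both normalized so that $h_1(0)=0$) costs a factor like $\exp(O((\log\ep^{-1})^2))$ --- is real, and your proposed two-scale insertion of $h_{2\ep^q}(0)$ does not obviously fix it: the harmonic interpolant across an annulus of modulus $\asymp 4\zeta\log\ep^{-1}$ with boundary data of size $O(\log\ep^{-1})$ still has Dirichlet energy $\asymp \log\ep^{-1}$, not $O(1)$, so the exponential moment is still unbounded in $\ep$.

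The paper's argument avoids this entirely by choosing a different reference measure. Rather than comparing to the unconditional law with the original normalization, it compares the conditional law of $h|_{B_{\ep^{q-4\zeta}}(z)}$ given $\{h_{\ep^{q-4\zeta}}(z)=\frk a_1\}$ to a whole-plane GFF \emph{re-normalized so that its circle average over $\bdy B_{\ep^{q-4\zeta}}(z)$ equals $\frk a_1$}. This is exactly Lemma~\ref{lem-big-circle-rn'}, and the point is that with $\delta=\ep^{q-4\zeta}$ and $|\frk a_1|\le 2q\log\ep^{-1}$ (enforced by condition~\ref{item-ca-reg-local} of $F^\ep(z)$), the moment bound~\eqref{eqn-big-circle-rn'} gives $\BB E[M_{\frk a_1}^p]\preceq \exp(c|p|\,\ep^{2(q-4\zeta)})\preceq 1$, uniformly in $\ep$. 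One then applies the affine map $B_{\ep^{q-4\zeta}}(z)\to\BB D$ and subtracts $(\frk a_1,\frk a_2)$ to obtain a pair $(\wh h,\wh h^{\mathrm{IG}})$ with the same law as $(h,h^{\mathrm{IG}})|_{\BB D}$; by the LQG coordinate change formula~\eqref{eqn-lqg-coord}, $u_*^\ep(z)$ is determined by $(\wh h,\wh h^{\mathrm{IG}})$ in the same way that $u_*^{\wh\ep}(0)$ is determined by $(h,h^{\mathrm{IG}})$, where $\wh\ep = e^{-\gamma\frk a_1}\ep^{1-\gamma Q(q-4\zeta)}$. The crucial feature of Theorem~\ref{thm-moment} is that its bounds are \emph{uniform in $\ep$}, so the fact that $\wh\ep\neq\ep$ is irrelevant. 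H\"older then transfers the $p'$-th moment bound through the (now genuinely $O(1)$) Radon--Nikodym derivative to give the $p$-th conditional moment bound.

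Incidentally, your remark that ``a constant shift of $h$ only reparametrizes $\eta$ by a deterministic scaling of time, and $u_*^\ep$ is a ratio that is scale-invariant'' is slightly off: adding $c$ to $h$ turns $u_*^\ep$ into $u_*^{e^{-\gamma c}\ep}$, not $u_*^\ep$. This is not fatal, but it is exactly the mechanism the paper exploits (rather than tries to cancel) in the $\wh\ep$ step.
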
 

The larger range of possible values of $p$ in~\eqref{eqn-moment-given-circle-avg-DA} as compared to~\eqref{eqn-moment-given-circle-avg} is due to the difference in the range of possible $p$ values in Theorem~\ref{thm-moment}.

\begin{proof}[Proof of Lemma~\ref{lem-moment-given-circle-avg}]
We will prove~\eqref{eqn-moment-given-circle-avg}; the estimate~\eqref{eqn-moment-given-circle-avg-DA} is proven in an identical manner except that we only need to use~\eqref{eqn-area-diam} instead of all three of the estimates of Theorem~\ref{thm-moment}. 

Fix $\frk a_1,\frk a_2 \in [-2q\log \ep^{-1} , 2q\log \ep^{-1}]$. To prove~\eqref{eqn-moment-given-circle-avg}, we will condition on $\{h_{\ep^{q-4\zeta}}(z) = \frk a_1, \:  h_{\ep^{q-4\zeta}}^{\op{IG}}(z) = \frk a_2\}$, apply an affine transformation sending  $B_{\ep^{q-4\zeta}}(z)$ to $\BB D$ and re-normalize to get a new pair of fields with the same law as $(h,h^{\op{IG}})|_{\BB D}$, then apply Theorem~\ref{thm-moment} to this new field/curve pair with $\ep$ replaced by a larger ($\frk a_1$-dependent) value determined by the $\gamma$-LQG coordinate change formula. Note that we can restrict attention to this range of $\frk a_1,\frk a_2$ due to condition~\ref{item-ca-reg-local} in the definition of $F^\ep(z)$.
 
By Lemma~\ref{lem-big-circle-rn'} (applied to each of the independent fields $h$ and $h^{\op{IG}}$ and with $\delta = \ep^{q-4\zeta}$) there exists $\ep_* \in (0,1)$ as in the statement of the lemma such that for $\ep \in (0,\ep_*]$, the conditional law of $(h , h^{\op{IG}})|_{B_{\ep^{q-4\zeta}}(z)}$ given $\{h_{\ep^{q-4\zeta}}(z) = \frk a_1, \:  h_{\ep^{q-4\zeta}}^{\op{IG}}(z) = \frk a_2\}$ is absolutely continuous with respect to the law of the restriction to $B_{\ep^{q-4\zeta}}(z)$ of a pair $(  h^{\frk a_1} ,  h^{\op{IG}, \frk a_2})$ of independent whole-plane GFFs normalized to have circle averages $\frk a_1$ and $\frk a_2$, respectively, over $\bdy B_{\ep^{q-4\zeta}}(z)$. Furthermore, since $\frk a_1,\frk a_2 \in [-2q\log \ep^{-1} , 2q\log \ep^{-1}]$, for each $p>0$ the $p$th moment of the Radon-Nikodym derivative $M_{\frk a_1,\frk a_2}$ of Lemma~\ref{lem-big-circle-rn'} is bounded above by a constant depending only on $\rho$ and $p$. 

Let $X_{\frk a_1,\frk a_2}$ be the random variable which is determined by $(  h^{\frk a_1} ,  h^{\op{IG}, \frk a_2})|_{B_{\ep^{q-4\zeta}}(z)}$ in the same manner that $\BB 1_{F^\ep(z)} u_*^\ep(z)$ is determined by $( h , h^{\op{IG}})|_{B_{\ep^{q-4\zeta}}(z)}$ (c.f.~\eqref{eqn-Fu_*-determined}). The preceding paragraph together with H\"older's inequality shows that for $\ep \in (0,\ep_*]$ and $p , p'  \in (1,4/\gamma^2)$ with $p < p'$, 
\eqb \label{eqn-moment-give-circle-avg-compare}
\BB E\left[  \BB 1_{F^\ep(z)}  u_*^\ep(z)^p \,|\, h_{\ep^{q-4\zeta}}(z) = \frk a_1, \:  h_{\ep^{q-4\zeta}}^{\op{IG}}(z) = \frk a_2 \right] \preceq \BB E\left[ X_{\frk a_1, \frk a_2}^{p'} \right]^{p/p'}
\eqe 
with the implicit constant depending only on $\rho$, $p$, and $p'$. 

We now estimate the right side of~\eqref{eqn-moment-give-circle-avg-compare} using Theorem~\ref{thm-moment}. 
If we compose $(h^{\frk a_1} , h^{\op{IG},\frk a_2})$ with an affine transformation which takes $\BB D$ to $B_{\ep^{q-4\zeta}}(z)$ and subtract $(\frk a_1, \frk a_2)$, we obtain a new pair of fields $(\wh h  , \wh h^{\op{IG} })$ with the same law as $(h,h^{\op{IG}})|_{\BB D}$. 
Let
\eqbn
\wh \ep := e^{-\gamma \frk a_1} \ep^{1-\gamma Q (q-4\zeta)} ,
\eqen
recall the ball $\ol B_0^\ep$ from~\eqref{eqn-localize-set}, 
and let $\wh X_{\frk a_1,\frk a_2}$ be the random variable which is determined by $ (\wh h  , \wh h^{\op{IG} })$ in the same manner that $ u^{\wh\ep}_*(0) \BB 1_{\ol B_0^{\wh \ep} \subset \BB D}$ is determined by $(h ,h^{\op{IG}})|_{\BB D}$ (c.f.\ Lemma~\ref{lem-u_*-msrble}).

Using the $\gamma$-LQG coordinate change formula~\eqref{eqn-lqg-coord}, we find that $u_*^\ep(z)$ (resp.\ $\ep^{-(q-4\zeta)}(\ol B_z^\ep-z)$) is determined by $(\wh h  , \wh h^{\op{IG} })$ in the same manner that $u_*^{\wh\ep}(0)$ (resp.\ $\ol B_0^{\wh \ep}$) is determined by $(h , h^{\op{IG}})$. 
Since $\ol B_z^\ep \subset B_{\ep^{q-4\zeta}}(z)$ on $F^\ep(z)$, we infer that a.s.\ $X_{\frk a_1, \frk a_2} \leq \wh X_{\frk a_1, \frk a_2}$. 
Since the $h|_{\BB D}$ agrees in law with the corresponding restriction of the circle-average embedding of a 0-quantum cone, we can apply Theorem~\ref{thm-moment} with $\wh\ep$ in place of $\ep$ to get 
\eqbn
\BB E\left[ X_{\frk a_1,\frk a_2}^{p'} \right] \leq \BB E\left[ \wh X_{\frk a_1,\frk a_2}^{p'} \right]  \preceq 1
\eqen
for each $p' \in (1,4/\gamma^2)$. Combining this with~\eqref{eqn-moment-give-circle-avg-compare} concludes the proof.
\end{proof}

\begin{proof}[Proof of Proposition~\ref{prop-area-diam-deg-lln*}]
For most of the proof we consider the case of a whole-plane GFF normalized so that $h_1(0) =0$; we transfer to the case of a 0-quantum cone only at the very end.
\medskip

\noindent\textit{Step 0: setup.}
By~\eqref{eqn-moment-given-circle-avg} of Lemma~\ref{lem-moment-given-circle-avg}, there exists $A = A(\rho, \gamma) > 0$ and $\ep_* = \ep_*(\rho,\gamma ) \in (0,1)$  such that for $z\in B_{\rho}(0)$ and $\ep \in (0,\ep_*]$, a.s.\ 
\eqb \label{eqn-use-cond-moment}
\BB E\left[ \BB 1_{F^\ep(z)} u_*^\ep(z) \,|\,    h_{\ep^{q-4\zeta}}(z) ,\,  h_{\ep^{q- \zeta}}^{\op{IG}}(z)   \right] \leq A .
\eqe 
Let 
\eqb \label{eqn-ol-u-def}
\ol u_*^\ep(z) := \BB 1_{F^\ep(z)} u_*^\ep(z)  - \BB E\left[ \BB 1_{F^\ep(z)} u_*^\ep(z) \,|\,  h_{\ep^{q-4\zeta}}(z) ,\,  h_{\ep^{q-4\zeta}}^{\op{IG}}(z)  \right] ,
\eqe 
so that by~\eqref{eqn-Fu_*-determined}, $\ol u_*^\ep(z)$ is a.s.\ determined by $(  h , h^{\op{IG}} )|_{B_{4\ep^q}(z)}$, $ h_{ \ep^{q-4\zeta}}(z)$, and $h^{\op{IG}}_{ \ep^{q-4\zeta}}(z)$.
By~\eqref{eqn-lln-event-contain}, if $E^\ep$ occurs then $ F^\ep(z) $ occurs for each $z\in B_{\rho}(0)$. Hence~\eqref{eqn-use-cond-moment} implies that on $E^\ep$, a.s.\ 
\eqb \label{eqn-add-int-to-var}
\int_{D} f(z) u_*^\ep(z)\,dz  - A \int_{D} f(z) \,dz 
\leq \int_{D} f(z) \ol u_*^\ep(z) \, dz .
\eqe 
We will now show that the right side of~\eqref{eqn-add-int-to-var} is unlikely to be larger than a positive power of $\ep$ by showing that its second moment is small on $E^\ep$. 
 
We have
\allb \label{eqn-diam-area-deg-var-decomp}
 \BB E\left[ \BB 1_{E^\ep}  \left(  \int_{D} f(z) \ol u_*^\ep(z) \, dz \right)^2 \right] 
&= \int_{D} \int_{D} f(z) f(w) \BB E\left[  \BB 1_{E^\ep}  \ol u_*^\ep(z) \ol u_*^\ep(w) \right] \,dz \, dw \notag\\
&= \sum_{i=1}^2 \iint_{W_i^\ep} f(z) f(w) \BB E\left[  \BB 1_{E^\ep}  \ol u_*^\ep(z) \ol u_*^\ep(w) \right] \,dz \, dw   
\alle
where 
\eqb \label{eqn-W-def}
W_1^\ep := \left\{ (z,w) \in  D \times D : |z-w| \leq 2 \ep^{q- 4\zeta } \right\} \quad\op{and}\quad
W_2^\ep := (D \times D)\setminus W_1^\ep  .
\eqe
We will bound the integrals over $W_1^\ep$ and $W_2^\ep$ separately. 
\medskip

\noindent\textit{Step 1: the integral over $W_1^\ep$.}
Let $z,w\in B_\rho(0)$. Using the definition~\eqref{eqn-ol-u-def} of $\ol U_*^\ep(z)$, we make the following calculation, each line of which we justify just below. 
\allb \label{eqn-u-prod}
&\BB E\left[ \BB 1_{E^\ep} \ol u_*^\ep(z) \ol u_*^\ep(w) \right] \notag\\
&\qquad \leq \BB E\left[ \BB 1_{E^\ep} u_*^\ep(z) u_*^\ep(w) \right] + \BB E\bigg[ \BB E\left[ \BB 1_{F^\ep(z)} u_*^\ep(z) \,|\,  h_{\ep^{q-4\zeta}}(z) ,\,  h_{\ep^{q-4\zeta}}^{\op{IG}}(z)  \right] \BB E\left[ \BB 1_{F^\ep(w)} u_*^\ep(w) \,|\,  h_{\ep^{q-4\zeta}}(w) ,\,  h_{\ep^{q-4\zeta}}^{\op{IG}}(w)  \right] \bigg] \notag\\
&\qquad \leq \BB E\left[ \BB 1_{E^\ep} u_*^\ep(z) u_*^\ep(w) \right] + A^2 \quad \notag \\
&\qquad \leq \ep^{-2\zeta} \BB E\left[ \BB 1_{E^\ep} \op{DA}^\ep(z) \op{DA}^\ep(w)  \right] + A^2   .
\alle
The first inequality in~\eqref{eqn-u-prod} comes from expanding and dropping the negative terms.
The second inequality comes from~\eqref{eqn-use-cond-moment}.
The last inequality comes from the fact that $ \op{deg}_{\op{in}}^\ep(z) + \op{deg}_{\op{out}}^\ep(z)   \leq \ep^{-\zeta}$ for all $z\in B_{\rho}(0)$ on $E^\ep$.

By taking an unconditional expectation in~\eqref{eqn-moment-given-circle-avg-DA} of Lemma~\ref{lem-moment-given-circle-avg} and recalling that $F^\ep(z) \supset E^\ep$, we find that for small enough values of $\ep >0$, for each $p > 0$ the $p$th moment of $\BB 1_{E^\ep}  \op{DA}^\ep(z)$ is bounded above by a constant depending only on $p,\rho$, and $\gamma$ for $z \in B_{\rho}(0)$. 
Using this and the Cauchy-Scwarz inequality to bound the last line of~\eqref{eqn-u-prod}, we get
\eqb \label{eqn-u-prod'}
\BB E\left[ \BB 1_{E^\ep} \ol u_*^\ep(z) \ol u_*^\ep(w) \right] 
\preceq \ep^{-2\zeta}   + A^2 \preceq \ep^{-2\zeta} ,
\eqe 
with the implicit constant depending only on $p,\rho$, and $\gamma$. 

Using~\eqref{eqn-u-prod'} and the definition~\eqref{eqn-W-def} of $W_1^\ep$, we now get
\allb \label{eqn-diam-area-deg-diag}
 \iint_{W_1^\ep}   f(z) f(w)  \BB E\left[ \BB 1_{E^\ep}   \ol u_*^\ep(z) \ol u_*^\ep(w) \right] \,dz \, dw  
&\preceq   \iint_{W_1^\ep}   f(z) f(w) \ep^{-2\zeta}    \, dz\,dw  \notag\\
&\preceq \ep^{-2\zeta} \| f\|_\infty^2 \op{Vol}(W_1^\ep) \notag\\
&\preceq \ep^{ 2q - 10\zeta } \| f\|_\infty^2 \op{area}(D)  .
\alle  
Note that $ 2q- 10\zeta >0$ by our choice of $\zeta$ from~\eqref{eqn-q-zeta-choice}.
\medskip

\noindent\textit{Step 2: the integral over $W_2^\ep$.}
We now consider the integral over the off-diagonal region $W_2^\ep$. Here we need to use local independence. 
Recall that $\ol u_*^\ep(z)$ for $z\in B_{\rho}(0)$ is a.s.\ determined by $(h , h^{\op{IG}})|_{B_{4\ep^q}(z)}$, $ h_{ \ep^{q-4\zeta}}(z)$, and $h^{\op{IG}}_{ \ep^{q-4\zeta}}(z)$. Furthermore, by~\eqref{eqn-ol-u-def} we have $ \BB E\left[ \ol u_*^\ep(z) \,|\,  h_{\ep^{q-4\zeta}}(z) ,\,  h_{\ep^{q-4\zeta}}^{\op{IG}}(z)  \right]   = 0$.
Lemma~\ref{lem-rn-mean} (applied with $\delta=4\ep^{4\zeta}$, $s=1/2$, and $X = \ol u_*^\ep(z)$) together with the invariance of the law of the whole-plane GFF under translation and scaling, modulo additive constant, shows that for $1 < p < p' < 4/\gamma^2$, there exists $b = b(p,p') >0$ such that for small enough $\ep  > 0$ (how small depends only on $p,p'$, and $\zeta$),
\allb \label{eqn-use-rn-mean}
\BB E\left[ \left| \BB E\left[ \ol u_*^\ep(z) \,|\, (h, h^{\op{IG}})|_{\BB C\setminus B_{\ep^{q-4\zeta}}(z)}      \right] \right|^p \right] 
  \preceq  \ep^{2 p \zeta }  \BB E\left[ |\ol u_*^\ep(z)|^p   \right]  
+ e^{-b \ep^{-2 \zeta} } \BB E\left[ |\ol u_*^\ep(z)|^{p'} \right]
\alle
with implicit constant depending only on $\gamma$ provided we choose $p$ and $p'$ in a manner which depends only on $\gamma$.
By taking the unconditional expectation of both sides of~\eqref{eqn-moment-given-circle-avg} from Lemma~\ref{lem-moment-given-circle-avg}, we find that the right side of~\eqref{eqn-use-rn-mean} is bounded above by a constant (depending only on $\rho$ and $\gamma$) times $\ep^{2 p\zeta }$. 

For $(z,w) \in W_2^\ep$, we have $|z-w| > 2\ep^{q- 4\zeta}$. The random variable $\ol u_*^\ep(w)$ is a.s.\ determined by  $(h, h^{\op{IG}})|_{\BB C\setminus B_{\ep^{q-4\zeta}}(z)}  $, so by Lemma~\ref{lem-moment-given-circle-avg}, for each pair $(z,w) \in W_2^\ep$ and each $p\in (1,4/\gamma^2)$,   
\allb
\BB E\left[\BB 1_{E^\ep} \ol u_*^\ep(z) \ol u_*^\ep(w) \right] 
&= \BB E\left[ \BB E\left[  \ol u_*^\ep(z) \,|\,  (h , h^{\op{IG}})|_{\BB C\setminus B_{\ep^{q-4\zeta}}(z)}\right]  \ol u_*^\ep(w) \right] \notag\\
&\preceq \BB E\left[ \left| \BB E\left[  \ol u_*^\ep(z) \,|\,  (h , h^{\op{IG}})|_{\BB C\setminus B_{\ep^{q-4\zeta}}(z)}\right] \right|^p \right]^{1/p} \BB E\left[     u_*^\ep(w)^{\frac{p}{1-p}} \BB 1_{F^\ep(w)}  \right]^{1-1/p} \quad \text{(by H\"older)} \notag \\
&\preceq   \ep^{2\zeta}    \BB E\left[    u_*^\ep(w)^{\frac{p}{1-p}} \BB 1_{F^\ep(w)}  \right]^{1-1/p} \quad \text{(by~\eqref{eqn-use-rn-mean})} \notag \\
&\preceq    \ep^{ \zeta}  \BB E\left[  \op{DA}^\ep(w)^{\frac{p}{1-p}}   \BB 1_{F^\ep(w)}\right]^{1-1/p} \quad \text{(by condition~\ref{item-deg-reg-local} in the definition of $F^\ep(w)$)} \notag \\
&\preceq   \ep^{ \zeta}    \quad \text{(by~\eqref{eqn-moment-given-circle-avg-DA})} ,
\alle
with implicit constant depending only on $\rho$ and $\gamma$ provided we choose $p$ and $p'$ in a manner which depends only on $\gamma$.  
Hence
\allb \label{eqn-diam-area-deg-offdiag}
\iint_{W_2^\ep} f(z)f(w) \BB E\left[ \BB 1_{E^\ep}  \ol u_*^\ep(z) \ol u_*^\ep(w) \right] \,dz \, dw 
\preceq \ep^{\zeta} \|f\|_\infty^2 \op{area}(D)^2   .
\alle
\medskip

\noindent\textit{Step 3: conclusion.}
By plugging the estimates~\eqref{eqn-diam-area-deg-diag} and~\eqref{eqn-diam-area-deg-offdiag} into~\eqref{eqn-diam-area-deg-var-decomp}, we get
\eqb \label{eqn-lln-var-bound}
\BB E\left[ \BB 1_{E^\ep}  \left(  \int_{D} f(z) \ol u_*^\ep(z) \,dz  \right)^2 \right] \preceq \ep^{\alpha_0} \| f\|_\infty^2 \op{area}(D) \preceq \ep^{\alpha_0} \|f\|_\infty^2 
\eqe 
where $\alpha_0 = \alpha_0(\gamma)  = ( 2q-10\zeta) \wedge \zeta$ (recall that we have chosen $q$ and $\zeta$ depending on $\gamma$ above). 
By applying~\eqref{eqn-lln-var-bound} and the Chebyshev inequality to bound the right side of~\eqref{eqn-add-int-to-var}, we obtain
\eqbn
\BB P\left[ \BB 1_{E^\ep}  \int_{D} f(z)  u_*^\ep(z) \,dz  >   A \int_{D} f(z) \,dz +    \ep^{\alpha_0/4} \| f\|_\infty  \right] \preceq \ep^{\alpha_0/2} .
\eqen
Since $\BB P[(E^\ep)^c]$ decays like a positive power of $\ep$ (Proposition~\ref{prop-diam-reg}) we obtain~\eqref{eqn-area-diam-deg-lln} in the case of a whole-plane GFF. 

The case of a 0-quantum cone follows from the case of a whole-plane GFF and the fact that the restrictions of a 0-quantum cone and a whole-plane GFF to $\BB D$ agree in law together with Lemma~\ref{lem-u_*-msrble} and Lemma~\ref{lem-max-cell-diam} (the latter is used to make sure the balls $\ol B_z^\ep$ for $z\in B_{\rho}(0)$ are contained in $\BB D$ with high probability). 
\end{proof}

\subsection{Proof of Proposition~\ref{prop-area-diam-deg-gamma}}
\label{sec-gamma-cone-proof}
 
Proposition~\ref{prop-area-diam-deg-lln} gives an analog of Proposition~\ref{prop-area-diam-deg-gamma} in the case of the whole-plane GFF or the 0-quantum cone. In this subsection we will transfer from the case of the whole-plane GFF to the case of the $\gamma$-quantum cone.
In fact, it will be convenient to work with quantities which are locally determined by the field, so we will actually transfer Proposition~\ref{prop-area-diam-deg-lln*} instead of Proposition~\ref{prop-area-diam-deg-lln}.

\begin{prop} \label{prop-area-diam-deg-gamma*}
In the case when $h$ is the circle-average embedding of a $\gamma$-quantum cone into $(\BB C , 0 , \infty)$, 
the statement of Proposition~\ref{prop-area-diam-deg-gamma} is true with $u_*^\ep(z)$, defined as in~\eqref{eqn-area-diam-deg-def*}, in place of $u^\ep(z)$. 
\end{prop}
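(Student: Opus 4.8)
The plan is to transfer the law-of-large-numbers estimate in Proposition~\ref{prop-area-diam-deg-lln*} from the whole-plane GFF (equivalently, the 0-quantum cone) to the $\gamma$-quantum cone by a local absolute continuity argument, exactly in the spirit of the transfer step at the end of the proof of Lemma~\ref{lem-max-deg-in}. The key observation is that the only difference between the circle-average embeddings of a $0$-quantum cone and a $\gamma$-quantum cone, as far as their restrictions to a neighborhood of the origin are concerned, is the insertion of a $-\gamma\log|\cdot|$ singularity at $0$. Since $-\gamma\log|\cdot|$ is harmonic away from the origin, we can multiply it by a smooth bump function supported in $B_{(1+\rho)/2}(0)$ and away from $0$ to get a deterministic function $g$ of finite Dirichlet energy such that, as in the proof of~\cite[Proposition~3.4]{ig1}, the Radon-Nikodym derivative of the law of $h|_{B_{\rho'}(0)}$ (for the $\gamma$-quantum cone, restricted away from a tiny ball around $0$) with respect to the law of $h'|_{B_{\rho'}(0)}$ (for the $0$-quantum cone) is of the form $M=\BB E[\exp((h',g)_\nabla-\tfrac12(g,g)_\nabla)\mid h'|_{B_{\rho'}(0)}]$, which has finite moments of all orders.

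First I would fix $\rho\in(0,1)$ and a slightly larger $\rho'\in(\rho,1)$, and let $\alpha,\beta,A$ be the constants produced by Proposition~\ref{prop-area-diam-deg-lln*} applied with $\rho'$ in place of $\rho$. Given $C\geq 1$, a domain $D\subset B_\rho(0)$ with $\op{area}(B_r(\bdy D))\leq Cr$ and a function $f:D\to[0,\infty)$ which is $C\ep^{-\beta}$-Lipschitz with $\|f\|_\infty\leq C\ep^{-\beta}$, the target event is $G^\ep:=\{\int_D f(z)u_*^\ep(z)\,dz\leq A'\int_D f(z)\,dz+\ep^{\alpha'}\}$ for suitable $A'=A'(\rho,\gamma)$ and $\alpha'=\alpha'(\gamma)$. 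The main point is that $G^\ep$, intersected with the event that the localization balls $\ol B_z^\ep$ for $z\in D$ are all contained in $B_{\rho'}(0)$, is determined by $(h,h^{\op{IG}})|_{B_{\rho'}(0)}$: this follows from Lemma~\ref{lem-u_*-msrble} (which says $\BB 1_{\ol B_z^\ep\subset V}u_*^\ep(z)$ is determined by $(h,h^{\op{IG}})|_V$) together with the fact that $\op{area}(D)\leq\op{area}(B_\rho(0))$ is bounded, so the integral is a measurable function of the locally-determined integrand. By Lemma~\ref{lem-max-cell-diam} (applied with $q$ small, using $\alpha(q,\gamma)\to\infty$ as $q\to 0$) the event $\{\op{diam}(H_x^\ep)\leq\ep^q,\ \forall x\in\mcl V\mcl G^\ep(B_\rho(0))\}$ holds with probability $1-o_\ep^\infty(\ep)$, and on this event $\ol B_z^\ep\subset B_{\rho+C'\ep^q}(0)\subset B_{\rho'}(0)$ for all $z\in B_\rho(0)$ once $\ep$ is small.

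Then I would finish as follows: let $h'$ be the circle-average embedding of a $0$-quantum cone, and couple $h'$ and the $\gamma$-quantum cone $h$ (along with a common, independent $h^{\op{IG}}$) so that the law of $(h,h^{\op{IG}})|_{B_{\rho'}(0)}$ restricted to the event $\{\ol B_z^\ep\subset B_{\rho'}(0)\ \forall z\in D\}$ is absolutely continuous with respect to the corresponding law for $(h',h^{\op{IG}})$, with Radon-Nikodym derivative $M$ having all moments bounded by constants depending only on $\rho,\rho'$. Since $G^\ep\cap\{\ol B_z^\ep\subset B_{\rho'}(0)\ \forall z\in D\}$ is determined by $(h,h^{\op{IG}})|_{B_{\rho'}(0)}$, we get
\eqbn
\BB P_h\left[ (G^\ep)^c \right] \leq \BB P_{h'}\left[ (G^\ep)^c \cap \{\ol B_z^\ep\subset B_{\rho'}(0)\ \forall z\} \right]^{1/2} \BB E_{h'}[M^2]^{1/2} + \BB P_h\left[ \exists z: \ol B_z^\ep\not\subset B_{\rho'}(0) \right]
\eqen
by H\"older's inequality, where $\BB P_{h}$, $\BB P_{h'}$, $\BB E_{h'}$ denote probabilities/expectations in the $\gamma$-cone and $0$-cone settings. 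Proposition~\ref{prop-area-diam-deg-lln*} (in the $0$-quantum cone case) applied with $f$ and with $\BB 1_D$ bounds the first term by a power of $\ep$; the last term is $o_\ep^\infty(\ep)$ by Lemma~\ref{lem-max-cell-diam} as above. This gives $\BB P_h[G^\ep]\geq 1-O_\ep(\ep^{\alpha'})$ for a suitable $\alpha'=\alpha'(\gamma)>0$ and $A'=A'(\rho,\gamma)$, which is the desired statement (with the $+\ep^{\alpha'}$ term of Proposition~\ref{prop-area-diam-deg-gamma} in place of the $\ep^\alpha\|f\|_\infty$ term, absorbed using $\|f\|_\infty\leq C\ep^{-\beta}$ and choosing $\alpha'$ smaller than $\alpha-\beta$). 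The main obstacle is a bookkeeping one rather than a conceptual one: making sure the relevant quantity is genuinely determined by the field restricted to $B_{\rho'}(0)$ — this is exactly why we went to the trouble of introducing the localized quantities $\op{DA}^\ep$, $\op{deg}_{\op{in}}^\ep$, $\op{deg}_{\op{out}}^\ep$ and proving Lemma~\ref{lem-u_*-msrble} — and handling the $\ep$-dependence of the Lipschitz and sup-norm bounds on $f$ so that the absolute continuity constants remain $\ep$-independent (they are, since $g$ is deterministic and does not depend on $f$, $D$, or $\ep$).
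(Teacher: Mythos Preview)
There is a genuine gap in your approach: the absolute continuity you invoke between the $0$-quantum cone and the $\gamma$-quantum cone on a neighborhood of the origin simply does not hold. The difference between the two fields on $\BB D$ is $-\gamma\log|\cdot|$, which has infinite Dirichlet energy on any neighborhood of $0$ (since $\int_{B_\delta(0)} |z|^{-2}\,dz$ diverges). By the Cameron--Martin theorem, the laws of a GFF and of a GFF plus a deterministic function are mutually absolutely continuous if and only if that function lies in the Cameron--Martin space, i.e.\ has finite Dirichlet energy. So the two laws are in fact mutually singular on $B_{\rho'}(0)$, and no choice of bump function $g$ with finite Dirichlet energy will make $h'+g$ agree in law with the $\gamma$-quantum cone near~$0$. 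The transfer you cite from the proof of Lemma~\ref{lem-max-deg-in} worked precisely because there the log singularity was placed at a point $z_0$ with $\BB D\subset B_{2R/3}(z_0)\setminus B_{R/3}(z_0)$, i.e.\ \emph{outside} the region of interest; in the present setting the singularity sits at~$0$, right in the middle of $D\subset B_\rho(0)$, and the same trick is unavailable. Your parenthetical ``restricted away from a tiny ball around $0$'' hints at the obstruction but does not resolve it: the event $G^\ep$ involves the integral of $f(z)u_*^\ep(z)$ over all of $D$, and the contribution from a fixed-radius ball around $0$ is not negligible, nor is it determined by the field away from $0$.

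This is exactly why the paper takes a different and more elaborate route. Rather than attempting a direct Girsanov shift, the paper exploits the fact that the $\gamma$-log singularity arises \emph{naturally} under an absolutely continuous change of measure: weighting the law of a whole-plane GFF $h^0$ by $\mu_{h^0}(\BB D)/\BB E[\mu_{h^0}(\BB D)]$ is equivalent (by~\cite[Lemma~A.10]{wedges}) to sampling $\BB z$ uniformly from Lebesgue measure on $\BB D$ and considering $h^0-\gamma\log|\cdot-\BB z|+\gamma\log\max(|\cdot|,1)$. The Radon--Nikodym derivative $\mu_{h^0}(\BB D)/\BB E[\mu_{h^0}(\BB D)]$ has a finite $p$th moment for some $p>1$, so H\"older transfers the estimate. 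One then recenters at $\BB z$, renormalizes, and uses the LQG coordinate change formula; the Lipschitz and boundary-regularity hypotheses on $f$ and $D$ in Proposition~\ref{prop-area-diam-deg-gamma} enter precisely to control the error from the (random) translation by $\BB z$ in~\eqref{eqn-0-to-gamma-affine}. In short, the obstacle you identified as ``a bookkeeping one rather than a conceptual one'' is in fact conceptual: one cannot pass between the two cones by a finite-energy shift, and the LQG rooting trick is the mechanism that makes the transfer possible.
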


We will deduce Proposition~\ref{prop-area-diam-deg-gamma*} from Proposition~\ref{prop-area-diam-deg-lln*} and the following basic fact about the $\gamma$-LQG measure: if $h^0$ is a whole-plane GFF and we sample $\BB z$ uniformly from Lebesgue measure on $\BB D$, independently from everything else, and set $h^1 = h^1  - \gamma \log|\cdot-\BB z|  + \gamma \log \max(|\cdot|, 1)$, then the laws of $h^0$ and $h^1$ are mutually absolutely continuous, with an explicit Radon-Nikodym derivative (see, e.g.,~\cite[Lemma A.10]{wedges}). Roughly speaking, the reason for this is that $h^0$ a.s.\ has a $\gamma$-log singularity at a point sampled uniformly from its $\gamma$-LQG measure.

The field $h^1(\cdot - \BB z)$ is close to a $\gamma$-quantum cone on a neighborhood of 0 (modulo normalization), so we can transfer statements about a whole-plane GFF to statements about a $\gamma$-quantum cone. The details of the proof will involve several steps in which we control the Radon-Nikodym derivatives between the laws of successive fields.
 
Fix $q\in \left( 0, \tfrac{2}{(2+\gamma)^2} \right)$, chosen in a manner depending only on $\gamma$. For a given random distribution $h$ on $\BB C$, let $u_*^\ep(z)$ for $z\in \BB C$ be defined as in~\eqref{eqn-area-diam-deg-def*}. For $\rho \in (0,1]$, define the regularity event 
\eqb \label{eqn-lln-E_0}
 E^\ep_0(\rho) := \left\{ \op{diam}(\eta([\tau_z-\ep , \tau_z+\ep]) \leq \ep^q ,\: \forall x \in \mcl V\mcl G^\ep(B_{\rho}(0)) \right\} .
\eqe 
For $\alpha >0$, $A > 0$ and a bounded measurable function $f : B_{\rho}(0) \rta [0,\infty)$, also define
\eqbn
G_f^\ep(h ; \rho) = G_f^\ep(h;  \rho ,  A , \alpha , q   ) 
:= E_0^\ep(\rho) \cap \left\{ \int_{B_{\rho}(0)} f(z)  u_*^\ep(z) \, dz \leq A \int_{B_{\rho}(0)} f(z)\,dz + \ep^\alpha \|f\|_\infty \right\} .
\eqen
We observe that $G_f^\ep(h)$ is a.s.\ determined by $h|_{B_{\rho + 4\ep^q}(0)}$ and the imaginary geometry field $h^{\op{IG}}$. 
We will prove the following statement for several different choices of $h$. 
\allb \label{eqn-lln-statement}
&\text{There exists $\alpha = \alpha(\gamma) > 0$ and and $A = A(\rho,\gamma) > 0$ such that for each bounded measurable} \notag \\
&\text{function $f : B_{\rho}(0) \rta [0,\infty)$, we have $\BB P[G_f^\ep(h ; \rho) ] \geq 1 - O_\ep( \ep^\alpha )$,} \notag\\
&\text{at a rate depending only on $\rho$ and $\gamma$.}
\alle

\noindent\textit{Case 0.} Let $h^0$ be a whole-plane GFF normalized so that the circle average $h^0_1(0)$ is $0$. 
By Lemma~\ref{lem-max-cell-diam} and Proposition~\ref{prop-area-diam-deg-lln*}, we know that~\eqref{eqn-lln-statement} is true with $h = h^0$. 
\medskip

\noindent\textit{Case 1.} Let $\BB z$ be sampled uniformly from Lebesgue measure on $\BB D$, independently from everything else, and define the field
\eqbn
h^1  := h^0  - \gamma \log |\cdot - \BB z| +  \gamma \log \max(|\cdot|,1)  .
\eqen 
By~\cite[Lemma A.10]{wedges}, the law of $h^1$ is the same as the law of $h^0$ weighted by $\mu_{h^0}(\BB D) / \BB E[\mu_{h^0}(\BB D)]$. 
Since $\mu_{ h^0}(\BB D)$ has a finite $p$th moment for some $p >1$~\cite[Theorem 2.11]{rhodes-vargas-review} and by H\"older's inequality,~\eqref{eqn-lln-statement} for $h= h^1$ and any $\rho \in (0,1)$ follows from~\eqref{eqn-lln-statement} for $h=h^0$ (note that here the value of $\alpha$ corresponding to $h^1$ is equal to $1-1/p$ times the value of $\alpha$ corresponding to $h^1$).   

In fact, for small enough $\xi = \xi(\gamma) >0$, we have $\BB P[\BB z \in B_{\ep^\xi}(0)] \succeq \ep^{\alpha/2}$ with universal implicit constant (where here the value of $\alpha$ is the one corresponding to $h^1$). Hence,~\eqref{eqn-lln-statement} also holds with $h$ sampled from the \emph{conditional} law of $h^1$ given $\{\BB z \in B_{ \ep^\xi}(0) \}$. Henceforth fix such a value of $\xi$; in what follows we will frequently condition on $\{\BB z \in B_{ \ep^\xi}(0) \}$. 
\medskip

\noindent\textit{Case 2.} We next establish~\eqref{eqn-lln-statement} for $\rho \in (0,1)$ and $h$ replaced by the field 
\eqbn
h^2 := h^1 - \gamma \log \max(|\cdot|,1) =  h^0 - \gamma \log |\cdot - \BB z| . 
\eqen
Indeed, since $\gamma \log \max(|\cdot|,1)$ is identically equal to 0 on $\BB D$, this case is immediate from the preceding one. In fact, by the last paragraph of the preceding case we get that~\eqref{eqn-lln-statement} is satisfied with $h$ sampled from the conditional law of $h^2$ given $\{ \BB z \in B_{\ep^\xi}(0) \}$, for any $\rho \in (0,1)$. 
\medskip

\noindent\textit{Case 3.} We next consider a value of $\rho \in (0,1/2)$ and the field
\eqbn
h^3 := h^2  - h^2_{1/2}(\BB z)  = h^0 - \gamma \log |\cdot - \BB z|    - h^0_{1/2}(\BB z)  + \gamma \log(1/2)  ,
\eqen
i.e., $h^2$ with the additive constant chosen so that $h^2_{1/2}(\BB z) =0$. 

For $\ep > 0$ and $i\in \{2,3\}$, let $u_*^{i ,  \ep}(\cdot)$ be as in~\eqref{eqn-area-diam-deg-def*} with $h = h^i$. 
Then for each $z\in\BB C$,
\eqb \label{eqn-u-23}
u_*^{3,S\ep}(z) =  u_*^{2, \ep}(z) \quad \op{for} \quad  S := e^{\gamma h^2_{1/2}(\BB z)} .
\eqe

We can bound integrals against $u_*^{2,\ep} $ by case 2, so we need to convert from integrals against $u_*^{3,\ep}$ to integrals against $u_*^{3,S\ep}$. 
To this end, we will compare the unconditional law of $h^3|_{B_{1/2}(\BB z)}$ given only $\BB z$ to its conditional law given $\BB z$ and $\frk a$. 
By Lemma~\ref{lem-big-circle-rn'} (applied with $\delta =1/2$, $w = \BB z$, and $h = h^0$), for $\frk a \in \BB R$ and $\frk z \in B_{\ep^\xi}(0)$, the conditional law of $h^3|_{B_{1/2}(\BB z)}$ given $\{h^2_{1/2}(\BB z) = \frk a\} \cap \{\BB z = \frk z\}$ is mutually absolutely continuous with respect to the unconditional law of $h^3|_{B_{1/2}(\BB z)}$ given only $\{\BB z = \frk z\}$. Furthermore, for $p > 1$ there exists $r_p > 0$ (depending only on $p$ and $\rho$) such that for small enough $\ep > 0$, the $-p$th moment of the Radon-Nikodym derivative $M_{\frk a , \frk z}$ is bounded above by a constant depending only on $p$ provided $\frk a \in [-r_p ,r_p]$ (which happens with uniformly positive probability). For each $\rho \in (0,1/2)$, we have $B_{\rho}(0) \subset B_{1/2}(\frk z)$ for small enough $\ep$. Consequently, for each such $\rho$ and each $\frk a \in [-r_p , r_p]$, 
\alb
\BB P\left[ G_f^\ep(h^3; \rho)^c \,|\, \BB z = \frk z \right] 
&\preceq \BB P\left[ G_f^\ep(h^3; \rho)^c \,|\, \BB z = \frk z ,\, h_{1/2}(\BB z) = \frk a \right] \quad \text{(by H\"older)} \\
&\preceq \BB P\left[ G_f^{e^{-\gamma \frk a} \ep} (h^2; \rho)^c \,|\, \BB z = \frk z ,\, h_{1/2}(\BB z) = \frk a \right] \quad \text{(by~\eqref{eqn-u-23})} .
\ale
We now integrate both sides of this inequality over $B_{\ep^\xi}(0)$ with respect to the law of $\BB z$ and over $[-r_p ,r_p]$ with respect to the law of $h_{1/2}(\BB z)$.
The right side is at most $O_\ep(\ep^\alpha)$ for appropriate $\alpha=\alpha(\gamma)$ since we know that $\BB P[h_{1/2}(\BB z) \in [-r_p , r_p]] \succeq 1$ and by~\eqref{eqn-lln-statement} in the case when $h$ is sampled from the conditional law  $h^2$ given $\{ \BB z \in B_{\ep^\xi}(0) \}$, but with $e^{\gamma \frk a} \ep$ in place of $\ep$. We thus obtain~\eqref{eqn-lln-statement} in the case when $h$ is sampled from the conditional law of $h^3$ given $\{ \BB z \in B_{\ep^\xi}(0) \}$ and $\rho \in (0,1/2)$.
\medskip

\noindent\textit{Case 4.} We next consider the field
\eqbn
h^4 := h^3\left( \frac12 \cdot + \BB z \right) ,
\eqen
which has the law of a whole-plane GFF plus $-\gamma \log |\cdot|$, normalized so that its circle average over $\bdy \BB D$ is 0 (even if we condition on $\BB z$). 
Fix $\rho \in (0,1)$. 
If we define $u_*^{3,\ep}(\cdot)$ and $u_*^{4,\ep}(\cdot)$ as in~\eqref{eqn-area-diam-deg-def*} with $h=h^3$ and $h = h^4$, respectively, then by the $\gamma$-LQG coordinate change formula $u_*^{3,\ep}(\cdot) = u_*^{4, 2^{-\gamma Q} \ep}( 2(\cdot + \BB z)    )$. Hence for a bounded measurable function $f : B_{\rho}(0) \rta [0,\infty)$, 
\eqb \label{eqn-0-to-gamma-coord}
\int_D f(z)  u_*^{4,\ep}(z) \, dz = \frac12 \int_{ D_{\BB z} } f\left( 2 (w + \BB z ) \right) u_*^{3, c \ep}(w) \, dw  \quad \op{for} \quad D_{\BB z} = \frac12 D + \BB z 
\quad \op{and} \quad c = 2^{-\gamma Q} .
\eqe
This does not immediately imply~\eqref{eqn-lln-statement} for $ h = h^4$ since $f\left( 2 (w+\BB z) \right)$ is not deterministic (it depends on $\BB z$). 
To get around this difficulty, we will compare $f(2(w+\BB z))$ to $f(2w)$. 
For this purpose we need to impose the continuity assumptions on $f$ and $D$ appearing in Proposition~\ref{prop-area-diam-deg-gamma}.  

Fix $\beta \in (0,\xi)$ and $C >0$ and assume that $D$ is such that $\op{area}(B_r(\bdy D)) \leq C r$ for each $r > 0$ and $f : D \rta [0,\infty)$ is $C \ep^{-\beta}$-Lipschitz and bounded above by $C\ep^{-\beta}$. 
We also extend $f$ to be identically equal to $C \ep^{-\beta}$ outside of $D$. 
Then if $\BB z \in B_{\ep^\xi}(0)$ and $w \in D_{\BB z} \setminus B_{\ep^\xi}(\bdy D_{\BB z} )$, 
\eqbn
\left| f\left( 2 (w +\BB z) \right)  - f\left( 2w \right) \right| \leq 2 C \ep^{-\beta+ \xi} 
\eqen 
Consequently,
\eqb \label{eqn-0-to-gamma-affine}
\int_{ D_{\BB z} } f\left( 2 (w + \BB z ) \right) u_*^{3, c\ep}(z) \, dz 
 \leq \int_{(1/2 + \ep^\xi) D} \left(   f(2w) + 2 C \ep^{-\beta + \xi} \right)  u_*^{3, c\ep}(w) \, dw  + C \ep^{-\beta} \int_{B_{2\ep^\xi}(\bdy D)} u_*^{3,c\ep}(w) \,dw .
\eqe 
Note that the second term comes from the integral over $D_{\BB z} \cap  B_{\ep^\xi}(\bdy D_{\BB z} )$. 
By~\eqref{eqn-lln-statement} in the case when $h$ is sampled from the conditional law of $h^3$ given $\{\BB z \in B_{\ep^\xi}(0)\}$ (applied with  $\rho/2$ in place of $\rho$ and to each of the function/domain pairs $(f(2\cdot) , (1/2 + \ep^\xi) D)$, $(1 , (1/2 + \ep^\xi) D)$, and $(1 , B_{2\ep^\xi}(\bdy D)$), there exists $\alpha_0 =\alpha_0 (\gamma ) > 0$ and $A_0 = A_0(\rho,\gamma) > 0$ such that with probability at least $1- O_\ep(\ep^{\alpha_0} )$, the event in~\eqref{eqn-lln-E_0} holds with $h = h^3$ and the right side of~\eqref{eqn-0-to-gamma-affine} is at most
\eqb \label{eqn-0-to-gamma-affine'}
A_0 \int_{ \frac12 D}  f(2w) \,dw + A_0 C \ep^{-\beta + \xi} +  C A_0 \ep^{-\beta} \op{area}\left( B_{2\ep^\xi}\left( \tfrac12 \bdy D \right) \right) + \ep^{\alpha_0} \| f\|_\infty .
\eqe
Note that here we bound the integral of $f$ over $(1/2 +\ep^\xi) D\setminus \frac12 D$ by $C \ep^{-\beta}  \op{area}\left( B_{2\ep^\xi}\left( \tfrac12 \bdy D \right) \right) $. 
By assumption, $\|f\|_\infty \leq C\ep^{-\beta}$ and $ \op{area}\left( B_{2\ep^\xi}\left( \tfrac12 \bdy D \right) \right)  \leq O_\ep(\ep^\xi)$, so (if we take $\beta < \alpha_0$) then the sum of the last three terms on right side of~\eqref{eqn-0-to-gamma-affine'} is bounded above by $O_\ep(\ep^\alpha)$ for an appropriate $\alpha=\alpha(\gamma) > 0$. 
From this,~\eqref{eqn-0-to-gamma-coord}, and~\eqref{eqn-0-to-gamma-affine}, we infer that the conclusion of Proposition~\ref{prop-area-diam-deg-gamma} is true with $h^4$ in place of $h$. 
\medskip

\noindent\textit{Proofs of Propositions~\ref{prop-area-diam-deg-gamma*} and~\ref{prop-area-diam-deg-gamma}.} If $h$ is the circle-average embedding of a $\gamma$-quantum cone in $(\BB C , 0,\infty)$, then $h^4|_{\BB D} \eqD h|_{\BB D}$. Furthermore, by Lemma~\ref{lem-u_*-msrble}, $u_*^\ep(z)$ is a.s.\ determined by $h^{\op{IG}}$ and $h|_{B_{4\ep^q}(z)}$ on the event $\{\op{diam}(H_{x_z^\ep}^\ep \leq \ep^q\}$. From this, we infer~\eqref{eqn-lln-statement} with this choice of $h$ and any $\rho \in (0,1)$, and hence Proposition~\ref{prop-area-diam-deg-gamma*}, from~\eqref{eqn-lln-statement} with $h^4$ in place of $h$. 

Proposition~\ref{prop-area-diam-deg-gamma} is an immediate consequence of Proposition~\ref{prop-area-diam-deg-gamma*} and~\eqref{eqn-u-u_*-compare}. \qed

\appendix

\section{Estimates for the GFF}
\label{sec-gff-abs-cont}

In this appendix, we record several facts about various types of Gaussian free field which are needed in the proofs of our main results. Many of these lemmas state that certain GFF-type distributions are absolutely continuous with respect to one another, often with quantitative bounds for the Radon-Nikodym derivatives. The results of this appendix are technical in nature and their proofs do not rely on any other results from the paper (actually, we use only standard formulas for the GFF), so we collect them here to avoid interrupting the flow of the main argument.

\subsection{Conditioning on the average over a large circle}
\label{sec-big-circle-rn}

In this subsection, we record a lemma which makes the following intuitively obvious statement precise. If $h$ is a whole-plane GFF normalized so that $h_1(0) = 0$, then conditioning on the circle-average of $h$ over a large circle $\bdy B_R(w)$ which surrounds $\BB D$ does not have a large effect on the conditional law of its restriction to $\BB D$. The main point of the lemma is that the circle $\bdy B_R(w)$ is not required to be centered at 0, even though we normalize the field so that its circle average over $\bdy\BB D$ is 0. In the case when $w = 0$, conditioning on $h_R(w)$ has no effect on $h|_{\BB D}$ since $t\mapsto h_{e^{-t}}(0)$ evolves as a standard linear Brownian motion~\cite[Proposition 3.3]{shef-kpz}.

\begin{lem} \label{lem-big-circle-rn}
Let $h$ be a whole-plane GFF normalized so that $h_1(0) =0$. 
Also fix $\rho  \in (0,1)$ and let $R \geq (1-\rho)^{-1}$ and $w\in\BB C$ be such that $|w| \leq \rho R$, so that $\BB D\subset B_R(w)$. For $\frk a \in \BB R$, the conditional law of $h|_{\BB D}$ given $\{h_R(w) = \frk a\}$ is absolutely continuous with respect to the unconditional law of $h|_{\BB D}$. Furthermore, for each $p_0 > 0$ there are constants $R_* \geq (1-\rho)^{-1}$ and $c > 0$ depending only on $p_0$ and $\rho$ such that for each $p \in [-p_0, \infty)$ and $R\geq R_*$, the Radon-Nikodym derivative $M_{\frk a}$ satisfies
\eqb \label{eqn-big-circle-rn}
\BB E\left[ M_{\frk a}^p \right] \preceq \exp\left(  \frac{c |p| \frk a^2 | w|^2 }{R^4  (\log R)^2} \right) ,
\eqe
with the implicit constant depending only on $\rho$ and $p$. 
\end{lem}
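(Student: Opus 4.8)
The plan is to reduce the statement to a computation involving a single Gaussian linear functional of the whole‑plane GFF. Write $h = h' - h'_1(0)$ where $h'$ is the whole‑plane GFF viewed modulo additive constant, and recall the notation $\xi_r^z(v) = -\log\max(r,|v-z|)$ from Section~\ref{sec-gff-prelim}, so that $h_R(w) = h'_R(w) - h'_1(0) = (h' , g)_\nabla$ with $g := \xi_R^w - \xi_1^0$. The reason for passing from $\xi_R^w$ to $g$ is that $\xi_R^w$ has infinite Dirichlet energy (its gradient decays only like $1/|v|$), whereas the logarithmic tails cancel in $g$, so $g$ has finite Dirichlet energy and $(h',g)_\nabla$ is a genuine centered Gaussian. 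I would compute its variance $\|g\|_\nabla^2$ from $-\Delta g = 2\pi(\mu_{\bdy B_R(w)} - \mu_{\bdy\BB D})$ (uniform probability measures) together with the elementary identity $\frac1{2\pi}\int_0^{2\pi}\log|c + \rho e^{i\theta}| \, d\theta = \log\max(|c|,\rho)$; the outcome is $\|g\|_\nabla^2 = \log R$, so $h_R(w)\sim N(0,\log R)$, consistently with the $w=0$ Brownian‑motion picture.

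The key observation is a purely geometric fact about $g$. The hypotheses $|w|\le\rho R$ and $R\ge (1-\rho)^{-1}$ give $1+|w|\le R$, hence $\max(R,|v-w|) = R$ and $\max(1,|v|)=1$ for every $v\in\ol{\BB D}$, so that $g\equiv -\log R$ is \emph{constant} on $\ol{\BB D}$; in particular its circle average over $\bdy\BB D$ is $g_1(0) = -\log R$. Now decompose $h' = h'_\perp + \tfrac{(h',g)_\nabla}{\|g\|_\nabla^2}\, g$ orthogonally with respect to $(\cdot,\cdot)_\nabla$. Since the whole‑plane GFF is the standard Gaussian on $\mcl H(\BB C)$, the component $h'_\perp$ is independent of $(h',g)_\nabla = h_R(w)$. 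Restricting to $\ol{\BB D}$ and subtracting the circle average over $\bdy\BB D$, the $h_R(w)$‑dependent term becomes $\tfrac{h_R(w)}{\log R}\,(g - g_1(0))|_{\BB D} = 0$ because $g$ is constant there; thus $h|_{\BB D} = (h' - h'_1(0))|_{\BB D}$ is a measurable function of $h'_\perp$ alone, and is therefore independent of $h_R(w)$. Consequently the conditional law of $h|_{\BB D}$ given $\{h_R(w) = \frk a\}$ coincides with the unconditional law, the Radon–Nikodym derivative $M_{\frk a}$ equals $1$ almost surely, and the asserted bound $\BB E[M_{\frk a}^p]\preceq \exp(c|p|\frk a^2|w|^2 / (R^4(\log R)^2))$ holds trivially since its right‑hand side is at least $1$. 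Equivalently, by Cameron–Martin, conditioning on $\{h_R(w)=\frk a\}$ shifts $h'$ by the deterministic function $\tfrac{\frk a}{\log R}g$, and this shift is annihilated by the passage to $h|_{\BB D}$.

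The one point requiring care is the bookkeeping around the modulo‑constant normalization: one must justify rigorously that $h_R(w) = (h',g)_\nabla$, that $g$ lies in (the completion) $\mcl H(\BB C)$ so that $h'_\perp$ and $(h',g)_\nabla$ are genuinely independent, and that subtracting $h'_1(0)$ neutralizes the $g$‑direction on $\BB D$. I would handle this either via the whole‑plane Markov property (Lemma~\ref{lem-whole-plane-markov'}) — splitting off the zero‑boundary part of $h|_{\BB D}$, which is automatically independent of $h_R(w)$, and treating the harmonic part through its Fourier series on $\bdy\BB D$, whose nonconstant modes are uncorrelated with $h_R(w)$ precisely because $g|_{\bdy\BB D}$ is constant — or by a direct truncation–mollification argument approximating $g$ by mean‑zero compactly supported smooth functions in the Dirichlet norm. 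If one instead preferred an argument robust to perturbations of the geometry (so that the exact cancellation is not used), one could bound the Dirichlet norm of the effective shift direction $(g - g_1(0))|_{\BB D}$ crudely — this is where a factor of the shape $|w|^2/R^4$ would naturally enter, through a Taylor estimate of $\log|v-w|$ — and read off a bound of exactly the stated form; under the present hypotheses this norm is in fact zero. No step here is genuinely hard; the main thing to avoid is conflating the $L^2$ pairing with the Dirichlet pairing.
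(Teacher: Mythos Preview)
Your argument is correct and in fact establishes something sharper than the lemma asserts: under the stated hypotheses $h|_{\BB D}$ is genuinely \emph{independent} of $h_R(w)$, so $M_{\frk a}\equiv 1$ and~\eqref{eqn-big-circle-rn} holds trivially. The key observation --- that $g := \xi_R^w - \xi_1^0$ is identically $-\log R$ on $\ol{\BB D}$, so the Cameron--Martin shift in the $g$-direction is annihilated by the normalization $h = h' - h_1'(0)$ --- is clean and correct; the bookkeeping you flag (in particular that $g$ has finite Dirichlet energy, since $\nabla\xi_R^w - \nabla\xi_1^0 = O(|v|^{-2})$ at infinity) is routine.

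The paper proceeds differently: it Markov-decomposes $h|_{\BB C\setminus\ol{\BB D}} = \frk h + \rng h$, identifies the conditional law of $h_R(w)$ given $h|_{\BB D}$ as Gaussian with mean $\frk h_R(w)$ and variance $\op{Var}(\rng h_R(w))$, then applies Bayes' rule and a Gaussian integral to bound the moments of $M_{\frk a}$. This framework is more robust (it would still give a useful bound if the cancellation were only approximate), but the execution appears to contain a slip: the inversion step tacitly identifies the circle average of $\frk h$ (resp.\ $\rng h$) over $\bdy B_R(w)$ with the circle average of $\frk h(1/\cdot)$ (resp.\ $\rng h(1/\cdot)$) over the image circle under the \emph{uniform} measure, whereas $v\mapsto 1/v$ does not push uniform arc-length to uniform arc-length. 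Done correctly --- e.g.\ noting that the exterior Poisson kernel $v\mapsto \tfrac{1}{2\pi}\op{Re}\tfrac{v+\zeta}{v-\zeta}$ is harmonic on $\BB C\setminus\{\zeta\}$ with limit $\tfrac{1}{2\pi}$ at $\infty$, so its average over any $\bdy B_R(w)\subset \BB C\setminus\ol{\BB D}$ is $\tfrac{1}{2\pi}$ --- one gets $\frk h_R(w) = h_1(0) = 0$ identically and $\op{Var}(\rng h_R(w)) = \log R$ (rather than $\log\wt R + \log(1-|\wt w|^2)$), and the paper's Bayes' rule computation then also collapses to $M_{\frk a}\equiv 1$. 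So the two approaches ultimately agree; yours simply reaches the conclusion more directly by spotting the exact cancellation up front.
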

\begin{proof}
Since $t\mapsto h_{e^{-t}}(0)$ evolves as a standard linear Brownian motion~\cite[Proposition~3.3]{shef-kpz}, the statement of the lemma in the case $w=0$ is immediate. Henceforth assume $w\not=0$. 
We will compute the conditional law of $h_R(w)$ given $h|_{\BB D}$ and apply Bayes' rule.

By Lemma~\ref{lem-whole-plane-markov}, we can write $h|_{\BB C \setminus \ol{\BB D}} = \frk h   + \rng h$, where $\frk h$ is a random harmonic function on $\BB C\setminus \ol{\BB D}$ which is determined by $h|_{\ol{\BB D}}$ and $\rng h$ is a zero-boundary GFF on $\BB C\setminus \ol{\BB D}$ which is independent from $h|_{\ol{\BB D}}$.  
The image of the circle $B_R(w)$ under the inversion map $z\mapsto 1/z$ is the circle of radius $1/\wt R$ and center $\wt w$, where
\eqb  \label{eqn-big-circle-rn-invert}
\wt R: = \frac{R^2-|w|^2}{R}   \quad\op{and} \quad \wt w :=  - \frac{w}{R^2 - |w|^2} 
\eqe 
Note that $\wt R \in [(1-\rho^2) R , R]$ and $|\wt w| \leq   \rho (1-\rho^2)^{-1} R^{-1}$. 

By applying~\cite[Proposition~3.2]{shef-kpz} to the inverted GFF $h(1/\cdot)$, we see that $\rng h_R(w)$ is centered Gaussian with variance $\log \wt R + \log(1-|\wt w|^2)$. Hence the conditional law of $h_R(w)$ given $h|_{\BB D}$ is Gaussian with mean equal to the circle average $\frk h_R(w)$ and variance $\log \wt R + \log(1-|\wt w|^2)$. 

Furthermore, $\frk h(1/\cdot)$ is the harmonic part of $h(1/\cdot)|_{\BB D}$, so by the mean value property of harmonic functions, $\frk h_R(w) = \frk h(1/\wt w)$. By~\cite[Lemma~6.4]{ig1} applied to $h(1/\cdot)$, we infer that $\frk h_R(w)$ is centered Gaussian with variance $ \log( (1-|\wt w|^2)^{-1} ) \preceq 1$. 
Since $h_R(w) = \frk h_R(w) + h_R^0(w)$ and the two summands are independent, it follows that the marginal law of $h_R(w)$ is Gaussian with mean $\log \wt R$. 

By combining the above descriptions of the laws of $h_R(w)$ and $\rng h_R(w)$ and applying Bayes' rule for conditional densities, we get the absolute continuity in the statement of the lemma with Radon-Nikodym derivative
\alb
M_{\frk a}
&=   \frac{\sqrt{\log \wt R}}{\sqrt{ \log \wt R + \log(1-|\wt w|^2) }} \exp\left(  \frac{\frk a^2}{2\log \wt R}  - \frac{(\frk a - \frk h_R(w))^2}{2( \log \wt R + \log(1-|\wt w|^2) )}  \right) \\
&\asymp   \exp\left(  \frac{\frk a^2}{2\log \wt R}  - \frac{(\frk a - \frk h_R(w))^2}{2 (\log \wt R + \log(1-|\wt w|^2) )}  \right) .
\ale

We now estimate the moments of $M_{\frk a}$. 
Integrating against the law of $\frk h_R(w)$ and evaluating a Gaussian integral shows that if $p\in\BB R$ is such that
\eqb \label{eqn-circle-avg-min-p}
\frac{1}{\log((1-|\wt w|^2)^{-1})} + \frac{p}{\log \wt R + \log(1-|\wt w|^2)} > 0,
\eqe 
then
\allb  \label{eqn-circle-avg-compute}
\BB E\left[ M_{\frk a}^p    \right] 
&\asymp \frac{\exp\left(  \frac{\frk a^2 p }{2\log \wt R} - \frac{\frk a^2 p}{2\left( \log \wt R + \log(1-|\wt w|^2)   - p\log(1-|\wt w|^2)   \right)} \right)}{ \sqrt{\log((1-|\wt w|^2)^{-1} ) } \sqrt{ \frac{1}{\log((1-|\wt w|^2)^{-1} )    } + \frac{p}{ \log \wt R + \log(1-|\wt w|^2)    }   } } \notag \\
&\asymp  \exp\left(  \frac{\frk a^2 p }{2 }\left(  \frac{  (p-1) \log((1-|\wt w|^2)^{-1})  }{\log \wt R ( \log \wt R + (p-1) \log((1-|\wt w|^2)^{-1}) )}    \right)   \right) ,
\alle
where in the second proportionality we use that $ \log((1-|\wt w|^2)^{-1} )  \preceq 1$. 

By~\eqref{eqn-big-circle-rn-invert}, $\log( ( 1 - |\wt w|^2 )^{-1} ) \asymp |\wt w|^2 \asymp |w|^2/R^4$.  
Moreover, if we are given $p_0  >0$ and $R$ is sufficiently large, depending only on $\rho$ and $p_0$, then for each $p\geq -p_0$ the relation~\eqref{eqn-circle-avg-min-p} holds and in fact $\log \wt R + (p-1) \log((1-|\wt w|^2)^{-1}) \succeq \log R $,
 with the implicit constant depending only on $\rho$. Plugging these two estimates into~\eqref{eqn-circle-avg-compute} shows that~\eqref{eqn-big-circle-rn} holds for $p \geq -p_0$. 
\end{proof}

By translating and scaling, we deduce from Lemma~\ref{lem-big-circle-rn'} an estimate for a whole-plane GFF normalized so that $h_1(0) = 0$. 

\begin{lem} \label{lem-big-circle-rn'}
Let $h$ be a whole-plane GFF normalized so that its circle average over $\bdy\BB D$ is 0. Also let $\rho \in (0,1]$ and $\delta \in (0, 1-\rho)$. 
For $\frk a \in \BB R$ and $z\in B_{ \rho}(0)$, the conditional law of $h|_{B_\delta(z)}$ given $\{h_\delta(z) = \frk a\}$ is absolutely continuous with respect to the law of a whole-plane GFF in $B_\delta(z)$ normalized so that its circle average over $\bdy B_\delta(z)$ is $\frk a$. 
Furthermore, for each $p_0 > 0$ there are constants $c>0$ and $\delta_*  \in (0,\rho^2]$, depending only on $\rho$ and $p_0$, such that for each $p \in [-p_0,\infty)$ and each $\delta \in (0,\delta_*]$ the Radon-Nikodym derivative $M_{\frk a}$ satisfies
\eqb \label{eqn-big-circle-rn'}
\BB E\left[ M_{\frk a}^p \right] \preceq \exp\left(  \frac{c |p|  \delta^2 \frk a^2 | z|^2}{(\log \delta^{-1} )^2} \right)  
\eqe 
with the implicit constant depending only on $\rho$ and $p$.  
\end{lem}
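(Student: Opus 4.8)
The plan is to reduce the statement to Lemma~\ref{lem-big-circle-rn} by an affine change of coordinates together with a deterministic shift of the additive constant; there is essentially no analytic content beyond that lemma. We may assume $\rho < 1$ (otherwise the range $\delta \in (0,1-\rho)$ is empty) and $z \in B_\rho(0)$, so $|z| < \rho$. First I would introduce the affine map $f(\zeta) := \delta \zeta + z$ and set $g := h \circ f$. Since $f$ is the composition of a scaling and a translation, $g$ has the law of a whole-plane GFF, and --- because such maps send circles to circles and preserve circle averages (the average of $g$ over a circle $C$ equals the average of $h$ over $f(C)$) --- we have $g_1(0) = h_\delta(z)$, the field $g|_{\BB D}$ is the push-forward of $h|_{B_\delta(z)}$ under $f^{-1}$, and $g$ is normalized so that its circle average over $f^{-1}(\bdy \BB D) = \bdy B_R(w)$ vanishes, where $R := 1/\delta$ and $w := -z/\delta$. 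Here $|w| = |z|/\delta < \rho R$, so $\BB D \subset B_R(w)$, and $\delta < 1 - \rho$ gives $R > (1-\rho)^{-1}$.

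Next I would rephrase the conditioning so that it matches Lemma~\ref{lem-big-circle-rn}, whose setup normalizes the field on the \emph{small} circle and conditions on the \emph{large} one --- the opposite of our situation. Let $g^\circ$ be the representative of the underlying whole-plane GFF modulo additive constant with $g^\circ_1(0) = 0$; then $g = g^\circ - g^\circ_R(w)$, so $g_1(0) = -g^\circ_R(w)$ and $g|_{\BB D} = g^\circ|_{\BB D} + g_1(0)$. Hence conditioning on $\{g_1(0) = \frk a\}$ is the same as conditioning $g^\circ$ on $\{g^\circ_R(w) = -\frk a\}$ and then adding the deterministic constant $\frk a$. Applying Lemma~\ref{lem-big-circle-rn} to $g^\circ$ with $-\frk a$ in place of $\frk a$ and with the above $\rho, R, w$, the conditional law of $g^\circ|_{\BB D}$ given $\{g^\circ_R(w) = -\frk a\}$ is absolutely continuous with respect to the law of $g^\circ|_{\BB D}$, with Radon-Nikodym derivative $M$ satisfying $\BB E[M^p] \preceq \exp(c|p|\,\frk a^2 |w|^2 / (R^4 (\log R)^2))$ for every $p \geq -p_0$, provided $R$ exceeds the threshold $R_*$ of that lemma. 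Adding the deterministic constant $\frk a$ does not change $M$, and $g^\circ|_{\BB D} + \frk a$ is the restriction to $\BB D$ of a whole-plane GFF normalized to have circle average $\frk a$ over $\bdy \BB D$. Pushing everything back through $f^{-1}$, which is a bijection and hence preserves absolute continuity and the law of $M$, yields the absolute continuity asserted in Lemma~\ref{lem-big-circle-rn'} with the same $M$.

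It then remains to substitute $R = 1/\delta$ and $|w| = |z|/\delta$ into the moment bound: $|w|^2 / R^4 = |z|^2 \delta^2$ and $(\log R)^2 = (\log \delta^{-1})^2$, so $\BB E[M^p] \preceq \exp(c|p|\,\delta^2 \frk a^2 |z|^2 / (\log \delta^{-1})^2)$, which is exactly~\eqref{eqn-big-circle-rn'}. The constraint $R \geq R_*$ becomes $\delta \leq 1/R_*$, so taking $\delta_* := \min\{\rho^2,\, 1-\rho,\, 1/R_*\} \in (0,\rho^2]$ gives the moment bound for all $\delta \in (0,\delta_*]$ while keeping $\delta_*$ within the required range. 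The only step that needs any thought --- and the closest thing to an obstacle --- is the observation in the second paragraph that, up to a deterministic shift, conditioning a GFF normalized on the large circle $\bdy B_R(w)$ on its average over the enclosed small circle $\bdy \BB D$ is identical to conditioning a GFF normalized on $\bdy \BB D$ on its average over $\bdy B_R(w)$, which is precisely the situation of Lemma~\ref{lem-big-circle-rn}; once this is noted, everything else is routine bookkeeping with the conformal covariance of the whole-plane GFF, and of its circle averages, under translations and scalings.
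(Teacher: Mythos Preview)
Your argument is correct and is essentially the same as the paper's own proof: both reduce to Lemma~\ref{lem-big-circle-rn} via the affine map $\zeta\mapsto \delta\zeta+z$ together with a shift of the additive constant, applying that lemma with $R=\delta^{-1}$ and $w=-\delta^{-1}z$. Your field $g^\circ$ is precisely the paper's $h^{z,\delta}:=h(\delta\cdot+z)-h_\delta(z)$, and your observation that conditioning the $\bdy B_R(w)$-normalized field on its $\bdy\BB D$-average is (up to adding~$\frk a$) the same as conditioning the $\bdy\BB D$-normalized field on its $\bdy B_R(w)$-average is exactly the identity $h_\delta(z)=-h^{z,\delta}_{\delta^{-1}}(-\delta^{-1}z)$ used there; you just spell out the bookkeeping (the verification of the hypotheses $|w|<\rho R$ and $R>(1-\rho)^{-1}$, and the explicit choice of~$\delta_*$) in more detail.
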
 
\begin{proof} 
Let $h^{z,\delta} := h(\delta \cdot + z) - h_\delta(z) $. Then $h^{z,\delta}$ has the law of a whole-plane GFF normalized so that its circle average over $\bdy \BB D$ is 0 and $h_\delta(z) = -h^{z,\delta}_{\delta^{-1}}(-\delta^{-1} z)$. The conditional law of $h|_{B_\delta(z)}$ given $\{h_\delta(z) = \frk a\}$ is the same as the conditional law of $h^{z,\delta}(\delta^{-1}(\cdot - z)) + \frk a$ given $\{ h^{z,\delta}_{\delta^{-1}}(-\delta^{-1} z) =  - \frk a\}$. The statement of the lemma therefore follows from the invariance of the law of the whole-plane GFF under complex affine transformations, modulo additive constant, together with Lemma~\ref{lem-big-circle-rn} applied with $R=\delta^{-1}$ and $w = -\delta^{-1} z$.
\end{proof}

\subsection{Long-range independence}
\label{sec-long-range-ind}

The goal of this subsection is to prove the following lemma, which tells us that (roughly speaking) for a whole-plane GFF $h$ and a small $\delta \in (0,1)$, the only information we need about $h|_{\BB C\setminus \BB D}$ to determine most of the information about $h|_{B_\delta(0)}$ is the circle average $h_1(0)$. 
We will eventually need to apply the analogous fact for a pair of GFFs, namely an embedding of a $\gamma$-quantum cone together with the independent whole-plane GFF used to generate an independent whole-plane space-filling SLE$_{\kappa'}$ as in Section~\ref{sec-wpsf-prelim}. 
So, we state the lemma for an $N$-tuple of independent whole-plane GFFs rather than a single GFF.

\begin{lem} \label{lem-rn-mean} 
Let $N\in\BB N$ and let $B_R(w)$ be a ball which contains $\BB D$. Let $  \bd h  = ( h^1,\dots , h^N)$ be an $N$-tuple of i.i.d.\ whole-plane GFFs, each normalized so that its circle average over $\bdy B_R(w)$ is 0. For $z\in \BB C$ and $r >0$, let $ \bd h_r(z) := ( h^1_r(z) ,\dots ,   h_r^N(z))$ be the $N$-tuple of radius-$r$ circle averages at $z$. 
For $p' > p > 1$ and $s \in (0,1)$, there is are constants $a = a(p,p' ,N)>0$ and $b = b(p,p',s,N) > 0$ (which do not depend on $w,R$) such that for $\delta\in \left(0, a  \right]$, the following is true. Let $X$ be a random variable which is a.s.\ determined by $\bd h|_{B_\delta(0)}$ and $\bd h_1(0)$. Then
\allb \label{eqn-rn-mean}
\BB E\left[ \bigg| \BB E\left[ X   \,|\,   \bd h|_{\BB C\setminus \BB D}    \right] - \BB E\left[ X  \,|\, \bd h_1(0) \right]    \bigg|^p \right] 
\preceq   \delta^{s p} \BB E\left[ |X|^p   \right]  +  e^{-b/\delta^{1-s}} \BB E\left[ |X|^{p'} \right]^{p/p'}
\alle 
with implicit constant depending only on $p,p',s,$ and $N$. 
\end{lem}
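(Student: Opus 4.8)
\textbf{Proof proposal for Lemma~\ref{lem-rn-mean}.}

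The plan is to reduce the $N$-tuple case to the single-GFF case by a tensorization argument, and then to exploit the Markov property of the whole-plane GFF together with the decay of the harmonic extension of the boundary data. First I would condition on the $N$-tuple of circle averages $\bd h_1(0)$ and work with the centered fields $\wt{\bd h} := \bd h - (\text{harmonic extension to }\BB D\text{ of }\bd h|_{\bdy \BB D})$, using Lemma~\ref{lem-whole-plane-markov} (applied coordinatewise) to write, for each $i$, $h^i|_{\BB D} = \frk h^i + \rng h^i$ where $\frk h^i$ is the harmonic part (determined by $h^i|_{\BB C\setminus \BB D}$) and $\rng h^i$ is an independent zero-boundary GFF on $\BB D$. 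The key point is that $X$ is determined by $\bd h|_{B_\delta(0)}$ and $\bd h_1(0)$, and on $B_\delta(0)$ the harmonic part $\frk h^i$ is very close to the constant $\frk h^i_1(0)$: indeed by the Poisson kernel / harmonicity, $\sup_{B_\delta(0)}|\frk h^i - \frk h^i(0)| \preceq \delta \, \mathrm{osc}_{\bdy \BB D}(\frk h^i|_{\bdy\BB D})$, and standard Gaussian estimates for the GFF boundary data (e.g.\ via the circle-average process and a union bound, as in Appendix~\ref{sec-gff-abs-cont}) give that this oscillation is $O(\log\delta^{-1})$ except on an event of probability $o_\delta^\infty(\delta)$. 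Moreover $\frk h^i_1(0)$ differs from the observed circle average $h^i_1(0)$ only through the zero-boundary part's contribution, which is again controlled.

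The core estimate is an absolute-continuity statement: conditionally on $\bd h|_{\BB C\setminus\BB D}$, the law of $\bd h|_{B_\delta(0)}$ is that of $\rng{\bd h}|_{B_\delta(0)}$ shifted by the (random, $\bd h|_{\BB C\setminus\BB D}$-measurable) harmonic functions $\frk h^i$; and the law of $\rng h^i|_{B_\delta(0)}$ is, by the Markov property of the zero-boundary GFF on $\BB D$ restricted to the smaller disk, absolutely continuous with respect to that of a field depending only on a constant (the average $\rng h^i_s(0)$ for an intermediate radius), with Radon-Nikodym derivative having all moments bounded by $\exp(O(\delta^{2(1-s)}(\text{circle average})^2))$ — this is exactly the content of Cameron-Martin shifts for the GFF, controlled by the Dirichlet energy of a bump function interpolating between $B_\delta(0)$ and $B_{\delta^s}(0)$, which is $O(1/\log(\delta^{s-1}))$ so contributes a negligible multiplicative error. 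Writing $Y := \BB E[X \mid \bd h|_{\BB C\setminus\BB D}]$ and $Z := \BB E[X\mid \bd h_1(0)]$, I would bound $|Y - Z|$ by splitting according to whether the ``good event'' $G_\delta$ (on which the harmonic parts are within $\delta\log\delta^{-1}$ of their values at $0$ and the RN derivatives are moment-bounded) occurs: on $G_\delta$ the conditional law of $X$ given $\bd h|_{\BB C\setminus\BB D}$ is within total-variation-type distance $\delta^s$ (in the appropriate $L^p$ sense) of its conditional law given only $\bd h_1(0)$, giving the first term $\delta^{sp}\BB E[|X|^p]$; on $G_\delta^c$, whose probability is $e^{-b/\delta^{1-s}}$ (from the Gaussian tail for the oscillation of the boundary data at scale $\delta$, choosing the threshold $\asymp \delta^{-(1-s)/2}$), Hölder's inequality with exponents $p'/p$ and its conjugate yields the second term $e^{-b/\delta^{1-s}}\BB E[|X|^{p'}]^{p/p'}$.

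The main obstacle will be making the ``within $L^p$ distance $\delta^s$'' step rigorous: one needs to control $\BB E\big[|\BB E[X\mid \bd h|_{\BB C\setminus\BB D}] - \BB E[X \mid \text{coarser }\sigma\text{-algebra}]|^p\big]$ where the coarser $\sigma$-algebra records only $\bd h_1(0)$. The cleanest route is to interpolate through the $\sigma$-algebra generated by $\bd h_{\delta^{s}}(0)$ (and a few other intermediate circle averages), using that given the field outside $B_{\delta^s}(0)$ the conditional law of $\bd h|_{B_\delta(0)}$ is a Cameron-Martin perturbation (by a harmonic function with small oscillation on $B_\delta(0)$) of a field determined by $\bd h_{\delta^s}(0)$, and then that $\bd h_{\delta^s}(0) - \bd h_1(0)$ is a centered Gaussian of variance $O(\log\delta^{-s}) = O(\log\delta^{-1})$ independent of the relevant part — so conditioning on it versus not changes the mean of $X$ by a factor controlled via a second Cameron-Martin / Gaussian-density argument, again with error a power of $\delta$. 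One must be careful that all implicit constants are independent of $w$ and $R$; this follows because, after centering the circle averages at $\bdy B_R(w)$ and restricting attention to $\BB D \subset B_R(w)$, the Markov decomposition on $\BB D$ and all the circle-average variances at radii $\leq 1$ are exactly those of a standard whole-plane GFF (the dependence on $w,R$ enters only through $\frk h^i|_{\bdy\BB D}$, whose law is comparable to the standard one uniformly over admissible $w,R$ by Lemma~\ref{lem-big-circle-rn}). Assembling these pieces and optimizing the threshold gives \eqref{eqn-rn-mean}.
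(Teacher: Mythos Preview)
Your high-level architecture---Markov decomposition $h^i|_{\BB D} = \frk h^i + \rng h^i$, Cameron--Martin comparison, and a good/bad-event split giving the two terms of~\eqref{eqn-rn-mean}---is exactly right, and matches the paper's strategy. But the execution is vague at the crucial quantitative step, and a couple of the ingredients you name are red herrings.

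The paper's proof does not interpolate through an intermediate radius $\delta^s$, and it does not use the Dirichlet energy of a bump function between $B_\delta(0)$ and $B_{\delta^s}(0)$; that quantity (which is $\asymp 1/\log\delta^{-1}$) is not what controls the perturbation. Instead, the paper introduces a coupled copy $\ol{\bd h}$ with $\ol{\bd h}_1(0)=\bd h_1(0)$ and proves (Lemma~\ref{lem-harmonic-part-rn}) that the Radon--Nikodym derivative $M_\delta^k$ of the conditional law of $h^k|_{B_\delta(0)}$ given $h^k|_{\BB C\setminus\BB D}$ with respect to its conditional law given $h^k_1(0)$ satisfies $\BB E[(M_\delta^k)^{\pm a/\delta}]\preceq 1$ for a universal $a>0$. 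The relevant Dirichlet norm is that of $g_\delta=(\frk h-\ol{\frk h})\phi_\delta$ with $\phi_\delta$ a bump at scale $\delta$; integration by parts gives $(g_\delta,g_\delta)_\nabla\preceq \sup_{B_{2\delta}(0)}(\frk h-\ol{\frk h})^2$, and since $\frk h(z)-\frk h(0)$ has variance $\asymp \delta^2$ on $B_\delta(0)$ (Lemma~\ref{lem-harmonic-part-law}), one gets $\BB E[\exp(\tfrac{a^2}{2\delta^2}(g_\delta,g_\delta)_\nabla)]\preceq 1$. This moment bound of order $1/\delta$ is the heart of the argument: Chebyshev at exponent $a'/\delta$ then gives $\BB P[\bd M_\delta>1+\delta^s]\preceq(1+\delta^s)^{-a'/\delta}\preceq e^{-b/\delta^{1-s}}$, which is exactly the exponential tail you want. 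On the complementary event $\{|\bd M_\delta-1|\leq\delta^s\}$ the conditional expectations differ by at most $\delta^s\,\BB E[\ol X\mid\cdots]$, yielding the first term directly.

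Your proposal does not articulate this mechanism. Your ``threshold $\delta^{-(1-s)/2}$'' heuristic can be made to work (it amounts to controlling $\sup|\frk h-\frk h(0)|/\delta$, which is $O(1)$ in law, at that level), but you have not connected a bound on the harmonic oscillation to an $L^p$ comparison of the two conditional expectations; that link is precisely the RN-derivative computation above. Your worry about $w,R$-dependence is also misplaced: because $\bdy B_R(w)\cap\BB D=\emptyset$, the last clause of Lemma~\ref{lem-whole-plane-markov} applies directly and $\rng h$ is a genuine zero-boundary GFF on $\BB D$ independent of $h|_{\BB C\setminus\BB D}$, so no appeal to Lemma~\ref{lem-big-circle-rn} is needed. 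Drop the intermediate-radius interpolation, compute the RN derivative explicitly via the Cameron--Martin shift $g_\delta=(\frk h-\ol{\frk h})\phi_\delta$, and prove the $a/\delta$-moment bound; the rest of your outline then goes through cleanly.
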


The reason why we normalize the fields in Lemma~\ref{lem-rn-mean} so that their circle averages over $\bdy B_R(w)$ are 0 is because $\bdy B_R(w) $ is disjoint from $\BB D$, so we can apply the last assertion of Lemma~\ref{lem-whole-plane-markov} (with $h(R\cdot+w)$ in place of $h$). The proof of Lemma~\ref{lem-whole-plane-markov}, and hence also the proof of Lemma~\ref{lem-rn-mean}, works verbatim if we replace the circle average over $\bdy B_R(w)$ by, e.g., the distributional pairing $(h,\psi)$ for some fixed test function $\psi$ which is supported on $h|_{\BB C\setminus \BB D}$ and whose inverse Laplacian has finite Dirichlet energy (the constants $a$ and $b$ and the implicit constants in~\eqref{eqn-rn-mean} do not depend on $\psi$). 

To prove Lemma~\ref{lem-rn-mean}, we will bound for a whole-plane GFF $ h$ the Radon-Nikodym derivative of the conditional law of $ h|_{B_\delta(0) }$ given $ h|_{\BB C\setminus \BB D}$ with respect to its conditional law given only $h_1(0)$. For this purpose we need the following estimate for the harmonic part of $h|_{\BB D}$.

\begin{lem} \label{lem-harmonic-part-law}
Let $ h$ be a whole-plane GFF normalized (with any choice of additive constant) and let $\frk h$ be the harmonic part of $ h|_{\BB D}$ as defined just after Lemma~\ref{lem-whole-plane-markov'}. There is a universal constant $a_0 > 0$ such that for $\delta \in (0,1/4)$, 
\eqbn
\BB E\left[ \sup_{z\in B_{\delta  }(0)} \exp\left( \frac{a_0}{\delta^2}    (\frk h(z) - \frk h(0))^2 \right) \right] \preceq 1
\eqen
with universal implicit constant.  
\end{lem}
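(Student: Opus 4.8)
The plan is to reduce everything to the standard Gaussian tail / chaining bound for the whole-plane GFF circle average process and its harmonic part. Recall from Lemma~\ref{lem-whole-plane-markov'} that $\frk h$ is a random harmonic function on $\BB D$ which is measurable with respect to $h|_{\BB C\setminus\BB D}$; moreover its law does not depend on the choice of additive constant for $h$ since any such constant cancels in $\frk h(z) - \frk h(0)$. So first I would fix, say, the normalization $h_1(0) = 0$. The key analytic input is that $\frk h$ is a real-analytic Gaussian field on $\BB D$ whose covariance is smooth up to $\ol{B_{1/2}(0)}$; concretely, for $z,w\in \ol{B_{1/2}(0)}$ one has $\op{Cov}(\frk h(z),\frk h(w)) = -\log|1 - z\bar w| = \sum_{k\ge 1} \frac{1}{k}\re((z\bar w)^k)$ (this is the standard formula for the harmonic extension of the whole-plane GFF restricted to $\BB D$; it also appears implicitly in~\cite[Lemma 6.4]{ig1}, which was already invoked in the proof of Lemma~\ref{lem-big-circle-rn}). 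From this one reads off directly that $\frk h$ has $C^\infty$, in fact uniformly bounded-in-all-derivatives, sample paths on $B_{1/2}(0)$.

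The main estimate I would extract is a gradient-type bound: there is a universal constant $K$ and a mean-zero Gaussian random variable $\Xi := \sup_{z\in B_{1/2}(0)}|\nabla \frk h(z)|$ (a supremum of a Gaussian process with uniformly bounded variance over a compact set, hence itself having a Gaussian tail by Borell--TIS) such that $\BB E[\exp(t\Xi^2)] < \infty$ for all sufficiently small $t>0$; indeed $\var(\partial_z\frk h(z))$ is bounded by a universal constant for $z\in B_{1/2}(0)$, and Fernique's theorem gives $\BB E[e^{t\Xi^2}]<\infty$ for $t$ below some universal threshold $t_0$. Then for $\delta \in (0,1/4)$ and $z\in B_\delta(0)$, the fundamental theorem of calculus along the segment from $0$ to $z$ gives $|\frk h(z) - \frk h(0)| \le \delta\,\Xi$, so $\sup_{z\in B_\delta(0)}(\frk h(z)-\frk h(0))^2 \le \delta^2 \Xi^2$. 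Therefore $\BB E\big[\sup_{z\in B_\delta(0)}\exp(\tfrac{a_0}{\delta^2}(\frk h(z)-\frk h(0))^2)\big] \le \BB E[\exp(a_0 \Xi^2)]$, which is a universal finite constant as soon as $a_0 < t_0$. Choosing $a_0$ to be this universal threshold (say $a_0 = t_0/2$) finishes the proof, and the bound is uniform in $\delta\in(0,1/4)$ as claimed.

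The one place requiring a little care — and the step I expect to be the main obstacle — is justifying that $\frk h$ genuinely has $C^1$ (indeed smooth) modifications on $B_{1/2}(0)$ with a Gaussian gradient of uniformly bounded variance, rather than merely being a distribution. The cleanest route is the explicit series representation: writing $\frk h(z) = \re\sum_{k\ge 1} a_k z^k$ with $a_k$ complex Gaussians satisfying $\BB E|a_k|^2 \asymp 1/k$ and independent across $k$, one checks that $\sum_k k\,\BB E|a_k|^2 r^{2k-2} < \infty$ for $r<1$, so the differentiated series converges uniformly on $\ol{B_r(0)}$ a.s.\ and in $L^2$, giving $\partial_z\frk h(z) = \sum_k k a_k z^{k-1}$ with $\var(\partial_z\frk h(z)) = \sum_k k^2 \BB E|a_k|^2 |z|^{2k-2} \le C < \infty$ for $|z|\le 1/2$. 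Alternatively — and perhaps more in keeping with the rest of the appendix — one can avoid the series and instead use the Markov decomposition of Lemma~\ref{lem-whole-plane-markov} applied to $h$ with $U = B_{3/4}(0)$: then $\frk h|_{B_{3/4}(0)}$ is the harmonic extension into $B_{3/4}(0)$ of the restriction of $h$ to an annular neighborhood, and the covariance kernel of $\frk h$ on $B_{1/2}(0)$ is the Poisson-type integral of $-\log|\cdot - \cdot|$, which is smooth and has all derivatives bounded on $B_{1/2}(0)\times B_{1/2}(0)$; Kolmogorov's continuity criterion then upgrades $\frk h$ and each $\partial^\alpha\frk h$ to Hölder-continuous modifications with the required uniform variance bounds. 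Either way, once the smoothness and the uniform-variance gradient bound are in hand, the Fernique/Borell--TIS step and the final integration are routine, so the whole argument is short.
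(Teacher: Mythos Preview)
Your proof is correct and the argument is sound, modulo one cosmetic slip: $\Xi := \sup_{z\in B_{1/2}(0)}|\nabla \frk h(z)|$ is not a mean-zero Gaussian random variable --- it is the supremum of a centered Gaussian process, hence has positive mean and is not itself Gaussian --- but Fernique's theorem (or Borell--TIS) still gives $\BB E[e^{t\Xi^2}]<\infty$ for small universal $t>0$, which is all you use.

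Your route is genuinely different from the paper's. The paper avoids any appeal to smoothness of the Gaussian field or to Fernique/Borell--TIS: instead it uses the mean value property of harmonic functions to dominate the supremum over $B_\delta(0)$ by the average $\delta^{-2}\int_{B_{2\delta}(0)}|\frk h(z)-\frk h(0)|\,dz$, pushes the exponential through this average via Jensen, and then takes the expectation pointwise using only that $\frk h(z)-\frk h(0)$ is centered Gaussian with variance $-\log(1-|z|^2)\preceq \delta^2$ for $z\in B_{2\delta}(0)$ (the same covariance formula you quote). This is more elementary and entirely self-contained --- no concentration inequality for suprema, no need to justify that $\frk h$ has a $C^1$ modification --- whereas your approach is conceptually more direct (bound the gradient once and for all, then the Lipschitz bound is immediate and uniform in $\delta$) at the price of invoking Fernique and checking differentiability. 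Both yield the same universal bound; the paper's version is cheaper in prerequisites, yours in bookkeeping.
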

\begin{proof}
By the mean value property of harmonic functions,  
\eqbn
\sup_{z\in B_{\delta}(0) } |\frk h(z) -\frk h(0) |   \preceq \frac{1}{\delta^2 } \int_{B_{2\delta }(0) } |\frk h(z)-\frk h(0) |  \, dz  ,
\eqen 
with universal implicit constant. 
By combining this with Jensen's inequality, applied to the convex function $x\mapsto e^{a_0 x^2}$, we find that for $a_0  > 0$, 
\eqbn
\sup_{z\in B_{\delta }(0) } \exp\left( \frac{a_0}{\delta^2}  (\frk h(z)-\frk h(0))^2   \right) 
\preceq  \frac{1}{  \delta^2 } \int_{B_{2\delta }(0) } \exp\left( \frac{4\pi a_0}{\delta^2} (\frk h(z) -\frk h(0))^2 \right) \, dz   
\eqen 
with universal implicit constant. 
By~\cite[Lemma~6.4]{ig1}, for $z\in B_\delta(0)$ the random variable $\frk h(z) -\frk h(0) $ is centered Gaussian with variance $-\log \left( 1 - \left| z \right|^2 \right)$. This variance is bounded above by a universal constant times $\delta^2$ for $z\in B_{\delta}(0)$. Hence for a small enough universal choice of $a_0 > 0$, 
\eqbn
\BB E\left[\frac{1}{  \delta^2 } \int_{B_{2\delta }(0) } \exp\left( \frac{4\pi a_0}{\delta^2} (\frk h(z) -\frk h(0))^2 \right) \, dz     \right]  \preceq 1 . \qedhere
\eqen 
\end{proof}

The following Radon-Nikodym derivative estimate, which compares the conditional law of $h|_{B_\delta(0)}$ given $h|_{\BB C\setminus \BB D}$ to its conditional law given only $h_1(0)$, is the key input in the proof of Lemma~\ref{lem-rn-mean}. 
In the statement, we will actually compare the conditional law of $h|_{B_\delta(0)}$ given $h|_{\BB C\setminus \BB D}$ to the conditional law of $\ol h|_{B_\delta(0)}$ given $h_1(0)$, where here $\ol h$ is another field coupled with $h$ in such a way that $h_1(0) = \ol h_1(0)$; we find that this  makes our moment estimate for the Radon-Nikodym derivative more clear.

\begin{lem}  \label{lem-harmonic-part-rn}   
Fix $R > 1$ and $w \in\BB C$ such that $\BB D \subset B_R(w)$. 
Let $ h$ and $\ol h$ be whole-plane GFFs, normalized so that their circle averages over $\bdy B_R(w)$ are 0, coupled together so that the circle averages $h_1(0)$ and $\ol h_1(0)$ agree and $h$ and $\ol h$ are conditionally independent given these circle averages (note that the circle averages of our GFFs over both $\bdy B_R(w)$ and $\bdy \BB D$ agree).
For $\delta \in (0,1)$, the conditional law of $h|_{B_{ \delta}(0)}$ given $h|_{\BB C\setminus \BB D}$ is a.s.\ absolutely continuous with respect to the conditional law of $\ol h|_{B_{\delta  }(0)}$ given $\ol h_1(0)$.
Let $M_\delta  = M_\delta\left( h|_{\BB C\setminus \BB D}  , \ol h|_{B_\delta(0) }\right)$ be the Radon-Nikodym derivative. There is a universal constant $a \in (0,1)$ such that for $\delta \in (0,a ]$, 
\eqb \label{eqn-rn-moment}
\BB E\left[ M_\delta^{a/\delta }   \right] \preceq 1 \quad\op{and}\quad \BB E\left[ M_\delta^{-a/\delta }   \right] \preceq 1 
\eqe 
with universal implicit constants. 
\end{lem}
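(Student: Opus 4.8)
The plan is to compute the two conditional laws explicitly using the Markov decomposition of the whole-plane GFF, write down the Radon--Nikodym derivative in closed form, and then bound its exponential moments using Lemma~\ref{lem-harmonic-part-law}. First I would decompose $h|_{\ol{\BB D}}$ using Lemma~\ref{lem-whole-plane-markov}, applied to $h(R\cdot+w)$ so that $\BB D$ is disjoint from the circle $\bdy B_R(w)$ over which we normalize: write $h = \frk h + \rng h$ on $\BB D$, where $\frk h$ is the harmonic part, determined by $h|_{\BB C\setminus\BB D}$, and $\rng h$ is a zero-boundary GFF on $\BB D$ independent of $h|_{\BB C\setminus\BB D}$. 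Thus the conditional law of $h|_{B_\delta(0)}$ given $h|_{\BB C\setminus\BB D}$ is that of $\frk h|_{B_\delta(0)} + \rng h|_{B_\delta(0)}$ with $\frk h$ a fixed harmonic function. On the other side, conditioning only on $\ol h_1(0) = \frk a$ gives the law of a whole-plane GFF restricted to $B_\delta(0)$ with circle average $\frk a$ over $\bdy\BB D$; this in turn decomposes as $\wh{\frk h} + \rng{\ol h}$ on $\BB D$, where $\rng{\ol h}$ is an independent zero-boundary GFF on $\BB D$ (same law as $\rng h$) and $\wh{\frk h}$ is the harmonic part of $\ol h|_{\BB D}$, which given $\ol h_1(0) = \frk a$ is a Gaussian harmonic function with mean the constant $\frk a$ and covariance given by the harmonic-part covariance kernel (computed via \cite[Lemma~6.4]{ig1}).

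The key structural point is that both laws are Gaussian shifts of the \emph{same} zero-boundary GFF $\rng\cdot$ on $B_\delta(0)$: the first by the deterministic function $\frk h|_{B_\delta(0)}$, the second by the random Gaussian harmonic function $\wh{\frk h}|_{B_\delta(0)}$. By the Cameron--Martin theorem, the Radon--Nikodym derivative of a shift of a Gaussian measure by a Cameron--Martin vector $g$ has the form $\exp((\rng\cdot,g)_\nabla - \tfrac12\|g\|_\nabla^2)$, so after integrating out the (Gaussian) randomness of $\wh{\frk h}$ one obtains $M_\delta$ as an explicit exponential of a quadratic form in $\frk h|_{B_\delta(0)}$ and $\ol h|_{B_\delta(0)}$. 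The crucial observation for the moment bound is that the common constant mode $\frk a$ (equivalently $\frk h(0) = \wh{\frk h}$'s mean) cancels: only the fluctuations $\frk h(z) - \frk h(0)$ of the harmonic part over $B_\delta(0)$ enter, and these are of size $O(\delta)$ by the mean value property. More precisely, the harmonic extension into $B_\delta(0)$ of a function with total oscillation $O(\delta)$ on $\bdy B_\delta(0)$ has Dirichlet energy $O(\delta^2)\cdot O(1) = O(1)$, but critically, when we write out $M_\delta$ the relevant quadratic quantity is of order $\delta^{-2}$ times the squared oscillation $\sup_{z\in B_\delta(0)}(\frk h(z)-\frk h(0))^2$ (and an analogous term for $\ol h$). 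Raising to the power $a/\delta$ and using $\delta\cdot\delta^{-2} = \delta^{-1}$, we need to control $\BB E[\exp(C\delta^{-2}\sup_{z\in B_\delta(0)}(\frk h(z)-\frk h(0))^2)]$ for a small constant $C$ — which is exactly the content of Lemma~\ref{lem-harmonic-part-law}.

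I expect the main obstacle to be bookkeeping: writing the Radon--Nikodym derivative $M_\delta$ in a form where the cancellation of the $\frk a$-dependence is manifest and the remaining terms are visibly controlled by oscillations of the harmonic parts on $B_\delta(0)$ rather than on all of $\BB D$. One has to be careful that (i) the zero-boundary parts $\rng h$ and $\rng{\ol h}$ really do have the same law and can be coupled so the comparison is between pure Cameron--Martin shifts; (ii) the Gaussian harmonic field $\wh{\frk h}$ conditioned on its circle average has a covariance structure (from \cite[Lemma~6.4]{ig1}) whose contribution to $M_\delta$, after the Gaussian integration, is again a quadratic in oscillations of size $O(\delta)$ — so that the resulting exponent is $O(\delta^{-1})$ and both $M_\delta^{a/\delta}$ and $M_\delta^{-a/\delta}$ have bounded expectation for small enough universal $a$; and (iii) all implicit constants coming from the mean value property and from \cite[Lemma~6.4]{ig1} are uniform in $R>1$ and $w$ with $\BB D\subset B_R(w)$, which holds because the harmonic-part covariance kernel on $\BB D$ depends only on $\BB D$ once we normalize on $\bdy B_R(w)$ — the normalization circle enters only through the constant mode, which has cancelled. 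The symmetric estimate for $M_\delta^{-a/\delta}$ follows by the same computation with the roles of the two measures swapped, since the exponent is quadratic and hence the negative power just flips signs of the linear-in-Gaussian terms while the quadratic control is unchanged.
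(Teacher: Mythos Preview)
Your strategy is correct and is essentially the paper's: Markov decomposition on $\BB D$, Cameron--Martin shift, cancellation of the common constant mode $\frk h(0)=\ol{\frk h}(0)$, and control of the remaining oscillations via Lemma~\ref{lem-harmonic-part-law}. The paper dissolves your concern (ii) with one clean device: rather than comparing a deterministic shift against a random Gaussian shift (which would force you to compare Gaussian measures with different covariances), it first conditions on \emph{both} exteriors $(h,\ol h)|_{\BB C\setminus\BB D}$, so that both conditional laws on $B_\delta(0)$ are shifts of the \emph{same} zero-boundary GFF by deterministic harmonic functions $\frk h,\ol{\frk h}$, and the Radon--Nikodym derivative is the pure Cameron--Martin exponential for $g_\delta := (\frk h-\ol{\frk h})\phi_\delta$, with $\phi_\delta$ a bump function equal to $1$ on $B_\delta(0)$ and supported on $B_{2\delta}(0)$; the actual $M_\delta$ is then recovered by taking conditional expectation given $(\frk h,\ol h|_{B_\delta(0)})$, Jensen pushes the power $a/\delta$ inside, and an integration by parts on $(g_\delta,g_\delta)_\nabla$ reduces the exponent to $\tfrac{C}{\delta^2}\sup_{B_{2\delta}(0)}(\frk h-\ol{\frk h})^2$, which feeds directly into Lemma~\ref{lem-harmonic-part-law} since $\frk h-\ol{\frk h}\eqD\sqrt 2\,(\frk h-\frk h(0))$.
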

\begin{proof}
Since $\bdy B_R(w) \cap\BB D =\emptyset$, by Lemma~\ref{lem-whole-plane-markov} (applied with $h(R\cdot+w)$ in place of $h$) we can write $h|_{\BB D} = \frk h   + \rng h$, where $\frk h$ is a random harmonic function on $\BB D$ which is determined by $h|_{\BB C\setminus \BB D}$ and $\rng h$ is a zero-boundary GFF on $\BB D$ which is independent from $h|_{\BB C\setminus \BB D}$.  
Decompose $\ol h|_{\BB D} = \ol{\frk h} +  \rng{\ol h}$ analogously. 
By our choice of coupling, $\frk h(0) = \ol{\frk h}(0) = h_1(0) = \ol h_1(0)$.
Furthermore, conditional on $(h,\ol h)|_{\BB C\setminus \BB D}$ (which a.s.\ determines $\frk h$ and $\ol{\frk h}$) the fields $\rng h  $ and $\rng{\ol h}$ are conditionally independent zero-boundary GFFs on $\BB D$. 
 
Let $\phi_1$ be a deterministic smooth bump function taking values in $[0,1]$ which equals 1 on $B_1(0)$ and 0 on $\BB C\setminus B_2(0)$. Let $\phi_\delta(z) := \phi_1(z/\delta)$, so that $\phi$ is supported on $B_{2\delta }(0)$ and is identically equal to 1 on $B_{\delta }(0)$. Also let $g_\delta := (\frk h - \ol{\frk h}) \phi_\delta $. 
If we condition on $(h,\ol h)|_{\BB C\setminus \BB D}$, then the proof of~\cite[Proposition~3.4]{ig1} shows that the conditional laws of $h|_{B_{\delta }(0)}$ and $\ol h |_{B_{\delta  }(0)}$ are mutually absolutely continuous, and the Radon-Nikodym derivative of the former with respect to the latter is given by 
\eqbn
 \BB E\left[  \exp\left( (h , g_\delta )_\nabla - \frac12 (g_\delta ,g_\delta)_\nabla \right)  \,|\,  \frk h , \ol{\frk h}, \ol h|_{B_{\delta}(0)} \right] 
\eqen
where $(\cdot,\cdot)_\nabla$ denotes the Dirichlet inner product. Averaging over the possible realizations of $\ol{\frk h}$ shows that that the Radon-Nikodym derivative of the conditional law of $h|_{B_{\delta }(0)}$ given $h|_{\BB C\setminus \BB D}$ with respect to the conditional law of $\ol h|_{B_\delta(0)}$ given $h|_{\BB C\setminus \BB D}$ is equal to
\eqbn
M_\delta = \BB E\left[ \exp\left( (h , g_\delta)_\nabla - \frac12 (g_\delta , g_\delta)_\nabla \right)  \,|\, \frk h , \ol h|_{B_{\delta}(0)} \right] .
\eqen
Note that $M_\delta$ is also the Radon-Nikodym derivative of the conditional law of $h|_{B_{\delta }(0)}$ given $h|_{\BB C\setminus \BB D}$ with respect to the conditional law of $\ol h|_{B_\delta(0)}$ given $\ol h_1(0)$ since $h|_{\BB C\setminus \BB D}$ determines $h_1(0) = \ol h_1(0)$ and $\ol h$ is conditionally independent from $h$ given $\ol h_1(0)$. 

We now estimate $M_\delta $. By Jensen's inequality, for $\delta\in (0, a ]$, 
\eqbn
M_\delta^{a /\delta  } \leq \BB E\left[ \exp\left( \frac{a}{\delta } (h , g_\delta)_\nabla - \frac{a}{2\delta } (g_\delta, g_\delta)_\nabla \right)  \,|\, \frk h , \ol h|_{B_{\delta}(0)} \right] 
\eqen
so
\eqb \label{eqn-rn-moment-to-gff}
\BB E\left[ M_\delta^{a/\delta  }   \right] \leq \BB E\left[ \exp\left( \frac{a}{\delta } (h , g_\delta)_\nabla - \frac{a}{2\delta } (g_\delta,g_\delta)_\nabla \right)    \right] .
\eqe 
If we condition on $\frk h$ and $\ol{\frk h}$, the conditional law of $(h , g_\delta)_\nabla$ is centered Gaussian with variance $(g_\delta,g_\delta)_\nabla$ (note that $(\frk h , g_\delta)_\nabla = 0$ since $\frk h$ is harmonic in $\BB D$ and $g_\delta$ is compactly supported in $\BB D$). Hence, by first taking the conditional expectation given $\frk h$ and $\ol{\frk h}$,
\allb\label{eqn-rn-gff-moment}
\BB E\left[ \exp\left( \frac{a}{\delta } (h, g_\delta)_\nabla - \frac{a}{2\delta } (g_\delta,g_\delta)_\nabla \right)   \right]
&= \BB E\left[ \exp\left( \left( \frac{a^2}{2\delta^2}  - \frac{a}{2\delta } \right) (g_\delta,g_\delta)_\nabla \right)  \right] \notag  \\
&\leq  \BB E\left[ \exp\left( \frac{a^2}{2\delta^2}   (g_\delta,g_\delta)_\nabla \right)   \right] .
\alle 
 By integration by parts,
\eqb \label{eqn-rn-parts}
(g_\delta,g_\delta)_\nabla 
= \int_{\BB D}  \left( \frac12 \Delta(\phi_\delta^2)(z) - \phi_\delta(z) \Delta \phi_\delta(z) \right)    (\frk h(z) - \ol{\frk h}(z) )^2 \,dz .
\eqe 
Since $\phi_1$ is smooth and supported on $B_{2\delta}(0)$, the function $\frac12 \Delta(\phi_\delta^2)  - \phi_\delta  \Delta \phi_\delta $ is bounded above by a constant $c_0$ (depending only on $\phi_1$) times $\delta^{-2}$ and is supported on $B_{2\delta}(0)$. By combining this with~\eqref{eqn-rn-moment-to-gff},~\eqref{eqn-rn-gff-moment}, and~\eqref{eqn-rn-parts}, we get
\allb \label{eqn-rn-moment'} 
\BB E\left[ M_\delta^{a/\delta }  \right]
&\leq \BB E\left[ \exp\left( \frac{c_0 a^2  }{ 2 \delta^4} \int_{B_{2\delta  }(w)}  (\frk h(z) - \ol{\frk h}(z) )^2 \,dz    \right) \right] \notag\\
&\leq \BB E\left[ \sup_{z\in B_{2\delta}(0) }   \exp\left( \frac{2\pi c_0 a^2  }{ \delta^2}(\frk h(z) - \ol{\frk h}(z) )^2 \right)  \right].
\alle
The random functions $\frk h -  h_1(0)$ and $\ol{\frk h} -\ol h_1(0)$ are i.i.d.\ and centered Gaussian so $\frk h - \ol{\frk h} \eqD \sqrt 2( \frk h - h_1(0) )$. Hence the first estimate in~\eqref{eqn-rn-moment} for a small enough universal choice of $a > 0$ follows from Lemma~\ref{lem-harmonic-part-law}.
We similarly obtain the second estimate in~\eqref{eqn-rn-moment}. 
\end{proof}

\begin{proof}[Proof of Lemma~\ref{lem-rn-mean}]
By considering the positive and negative parts $X\BB 1_{(X\geq 0)}$ and $X\BB 1_{(X\leq 0)}$ separately, we can assume without loss of generality that $X$ is non-negative. We make this assumption throughout the proof.

In order to apply Lemma~\ref{lem-harmonic-part-rn}, we let $\ol{\bd h} = (\ol h^1,\dots, \ol h^N)$ be another $N$-tuple of independent GFFs with the same law as $\bd h$, coupled together with $\bd h$ in such a way that $\bd h_1(0) = \ol{\bd h}_1(0)$ and $\bd h$ and $\ol{\bd h}$ are conditionally independent given this circle average. Let $\ol X = X(\ol{\bd h}|_{B_\delta(0)} , \bd h_1(0) )$ be determined by $\ol{\bd h}|_{B_\delta(0)}$ and $\bd h_1(0)$ in the same manner that $X$ is determined by $\bd h|_{B_\delta(0)}$ and $\bd h_1(0)$.  

For $k\in [1,N]_{\BB Z}$, let $M_\delta^k = M_\delta^k(h^k|_{\BB C\setminus \BB D} , \ol h^k|_{B_\delta(0) } )$ be the Radon-Nikodym derivative of the conditional law of $h^k|_{B_\delta(0)}$ given $h^k|_{\BB C\setminus \BB D}$ with respect to the law of $\ol h^k |_{B_\delta(0)}$, as in Lemma~\ref{lem-harmonic-part-rn}.
Then the $M_\delta^k$'s are independent and
\eqb \label{eqn-prod-rn}
\bd M_\delta := \prod_{k=1}^N M_\delta^k 
\eqe 
is the Radon-Nikodym derivative of the conditional law of $\bd h|_{B_{ \delta}(0)}$ given $\bd h|_{\BB C\setminus \BB D}$ w.r.t.\ the conditional law of $\ol{\bd h}|_{B_\delta(0) }$ given $ \bd h_1(0)$ (equivalently, by conditional independence, w.r.t.\ the conditional law of $\ol{\bd h}_{B_\delta(0)}$ given $\bd h|_{\BB C\setminus \BB D}$). Hence
\allb \label{eqn-rn-mean-pos-decomp+}
\BB E\left[ X    \,|\,   \bd h|_{\BB C\setminus \BB D}   \right]
&= \BB E\left[ \bd M_\delta \ol X    \,|\, \bd h|_{\BB C\setminus \BB D} \right] \notag \\
&\leq (1+\delta^s) \BB E\left[ \ol X  \BB 1_{(\bd M_\delta \leq 1 + \delta^s)}   \,|\, \bd h|_{\BB C\setminus \BB D} \right] 
+ \BB E\left[ \bd M_\delta \ol X   \BB 1_{(\bd M_\delta > 1 + \delta^s)} \,|\, \bd h|_{\BB C\setminus \BB D} \right] \notag \\
&\leq (1+\delta^s) \BB E\left[ X  \,|\,  \bd h_1(0)    \right]
+ \BB E\left[ \bd M_\delta \ol X   \BB 1_{(\bd M_\delta > 1 + \delta^s)} \,|\, \bd h|_{\BB C\setminus \BB D} \right]   
\alle
where in the last line we use that the conditional law of $\ol X$ given ${\bd h}|_{\BB C\setminus \BB D}$ is the same as the conditional law of $X$ given $\bd h_1(0)$ (by our choice of coupling). Similarly,
\allb \label{eqn-rn-mean-pos-decomp-}
\BB E\left[ X  \,|\,   \bd h|_{\BB C\setminus \BB D}    \right]
&\geq  (1-\delta^s) \BB E\left[ \ol X  \BB 1_{(\bd M_\delta \geq 1 - \delta^s)}   \,|\, \bd h|_{\BB C\setminus \BB D} \right]   \notag \\
&\geq (1-\delta^s) \BB E\left[ X  \,|\,  \bd h_1(0)    \right]
- \BB E\left[ \ol X \BB 1_{(\bd M_\delta < 1 - \delta^s)} \,|\, \bd h|_{\BB C\setminus \BB D} \right]  . 
\alle
By~\eqref{eqn-rn-mean-pos-decomp+},~\eqref{eqn-rn-mean-pos-decomp-}, and Jensen's inequality,
\eqb \label{eqn-rn-mean-pos-abs}
\BB E\left[ \left| \BB E\left[ X   \,|\,   \bd h|_{\BB C\setminus \BB D}    \right] - \BB E\left[ X  \,|\,  \bd h_1(0)    \right]    \right|^p \right] 
\preceq   \delta^{s p} \BB E\left[ X^p       \right]  + \BB E\left[ \bd M_\delta^p \ol X^p   \BB 1_{(\bd M_\delta > 1 + \delta^s)} \right] 
   +   \BB E\left[   \ol X^p  \BB 1_{(\bd M_\delta < 1- \delta^s)} \right]  ,
\eqe 
with implicit constant depending only on $p$. 
 
By Lemma~\ref{lem-harmonic-part-rn}, there is a universal constant $a' > 0$ such that for each $\delta \in (0,a']$ and each $k\in [1,N]_{\BB Z}$, we have $\BB E[(M_\delta^k)^{a'/\delta}]\preceq 1$, with a universal implicit constant. 
By this and the Chebyshev inequality,
\eqb \label{eqn-rn-tail}
\BB P\left[  \bd M_\delta >  1 + \delta^{ s}  \right] 
\leq \sum_{k=1}^N \BB P\left[ M_\delta^k > (1+\delta^{ s} )^{1/N} \right] 
\preceq (1+\delta^{ s} )^{-  a' / (N\delta ) } \preceq e^{ - b' s /\delta^{1- s} } 
\eqe
with $b'  >0$ a constant which depends only on $N$ and the implicit constant in $\preceq$ also depends only on $N$.
Note that in the last inequality, we used that $(1+\delta^s)^{1/\delta}  = [(1+\delta^s)^{1/\delta^s}]^{1/\delta^{1-s}} \preceq e^{-1/\delta^{1-s}}$. 

By~\eqref{eqn-rn-tail} and H\"older's inequality (recall that $\bd M_\delta$ has constant-order moments up to order $a'/\delta$ by our choice of $a'$), if we choose $a \in (0,a']$ sufficiently small, in a manner depending only on $p,p',N,$ and $s$, then for $\delta \in (0,a]$,
\eqb \label{eqn-rn-mean-error+}
\BB E\left[ \bd M_\delta^p \ol X^p   \BB 1_{(\bd M_\delta > 1 + \delta^s)} \right]  \preceq e^{-b /\delta^{1-s} } \BB E\left[   X^{p'} \right]^{p/p'}
\eqe
for a constant $b= b(p,p',s,N) > 0$. Similarly,
\eqb \label{eqn-rn-mean-error-}
\BB E\left[  \ol X^p   \BB 1_{(\bd M_\delta < 1- \delta^s)} \right]  \preceq e^{-b  /\delta^{1-s} } \BB E\left[  X^{p'} \right]^{p/p'}
\eqe
for a possibly smaller choice of the constant $b$. 
Combining~\eqref{eqn-rn-mean-pos-abs},~\eqref{eqn-rn-mean-error+}, and~\eqref{eqn-rn-mean-error-} yields~\eqref{eqn-rn-mean} with the above choice of $a$ and $b$. 
\end{proof}

\section{Index of notation}
\label{sec-index}

Here we record some commonly used symbols in the paper, along with their meaning and the location where they are first defined. Other symbols not listed here are only used locally.

\begin{multicols}{2}
\begin{itemize}
\item $\gamma$: LQG parameter; Section~\ref{sec-overview}.
\item $\mcl G^\ep$: mated-CRT map; Sections~\ref{sec-overview}.
\item $h$: main GFF-type distribution; Section~\ref{sec-standard-setup}.
\item $Q = 2/\gamma + \gamma/2$: LQG coordinate change constant; \eqref{eqn-lqg-coord}.
\item $\kappa'  =16/\gamma^2$; SLE parameter; Section~\ref{sec-sle-lqg-prelim}.
\item $\eta$: space-filling SLE$_{\kappa'}$; Section~\ref{sec-wpsf-prelim}. 
\item $h^{\op{IG}}$: imaginary geometry GFF used to construct $\eta$; Section~\ref{sec-wpsf-prelim}.  
\item $o_C^\infty(C)$: a quantity decaying faster than any negative power of $C$; Section~\ref{sec-basic-notation}. 
\item $\mcl V(G)$ and $\mcl E(G)$; vertex and edge sets; Section~\ref{sec-basic-notation}. 
\item $H_x^\ep :=  \eta([x-\ep , x] )$, structure graph cell; \eqref{eqn-cell-def}.
\item $x_z^\ep$: element of $\mcl V\mcl G^\ep$ with $z\in H_x^\ep$; \eqref{eqn-pt-vertex}.
\item $\mcl G^\ep(D)$: subgraph of $\mcl G^\ep$ corresponding to domain $D\subset \BB C$; \eqref{eqn-structure-graph-domain}.
\item $h_r(z)$: circle average;~\cite[Section 3.1]{shef-kpz}.
\item $u^\ep(z)$: diameter$^2$/area times degree of cell $H_{x_z^\ep}^\ep$ containing $z$; \eqref{eqn-area-diam-deg-def}.
\item $\op{Energy}$: Dirichlet energy; Definitions~\ref{def-discrete-dirichlet} and~\ref{def-continuum-dirichlet}.
\item $\tau_z$: time when $\eta$ hits $z$; \eqref{eqn-sle-hit-time}. 
\item $\op{DA}^\ep(z)$: localized version of\\ $ \op{diam}(H_{x_z^\ep}^\ep)^2 / \op{area}(H_{x_z^\ep}^\ep)$; \eqref{eqn-area-diam-localize}.
\item $\ol B_z^\ep$: ball of radius $4 \op{diam}\left( \eta([\tau_z-\ep , \tau_z+\ep]) \right)$ centered at $z$; \eqref{eqn-localize-set}. 
\item $\op{deg}_{\op{in}}^\ep(z)$, $\op{deg}_{\op{out}}^\ep(z)$: localized versions of $\op{deg}(x_z^\ep ; \mcl G^\ep)$; Section~\ref{sec-area-diam-deg-localize}
\item $u_*^\ep(z)$: localized version of $u^\ep(z)$;~\eqref{eqn-area-diam-deg-def*}.
\end{itemize}
\end{multicols}

\bibliography{cibiblong,cibib}
\bibliographystyle{hmralphaabbrv}

\end{document}